\documentclass[11pt]{amsart}
\usepackage{tikz-cd}
\usepackage{enumitem}
\setlist[description]{leftmargin=\parindent,labelindent=\parindent}
\usetikzlibrary{matrix,arrows,decorations.pathmorphing}
\usepackage{url}
\usepackage{cite}
\usepackage{fullpage}
\usepackage{verbatim}
\usepackage{comment}
\usepackage{fancyvrb}
\usepackage{fvextra}
\usepackage{caption}
\usepackage{stmaryrd}
\usepackage{appendix}

\allowdisplaybreaks

\usepackage{amsmath}
\usepackage{mathtools}
\usepackage{amssymb}
\usepackage{amsthm}
\usepackage{mathrsfs}

\usetikzlibrary{calc,babel}
\tikzset{
  curve/.style={
    settings={#1},
    to path={
      (\tikztostart)
      .. controls ($(\tikztostart)!\pv{pos}!(\tikztotarget)!\pv{height}!270:(\tikztotarget)$)
      and ($(\tikztostart)!1-\pv{pos}!(\tikztotarget)!\pv{height}!270:(\tikztotarget)$)
      .. (\tikztotarget)\tikztonodes
    },
  },
  settings/.code={%
    \tikzset{quiver/.cd,#1}%
    \def\pv##1{\pgfkeysvalueof{/tikz/quiver/##1}}%
  },
  quiver/.cd,
  pos/.initial=0.35,
  height/.initial=0,
}

\usepackage{bbm}

\usepackage{hyperref}
\hypersetup{
	colorlinks=true,
	linkcolor=blue,
	citecolor=black,
	urlcolor=blue}

\newtheorem{theorem}{Theorem}[section]
\newtheorem{proposition}[theorem]{Proposition}
\newtheorem{lemma}[theorem]{Lemma}
\newtheorem{corollary}[theorem]{Corollary}

\newtheorem{conjecture}{Conjecture}

\theoremstyle{definition}
\newtheorem{definition}[theorem]{Definition}
\newtheorem{example}[theorem]{Example}

\theoremstyle{remark}

\newtheorem{remark}[theorem]{Remark}

\usepackage{wrapfig}

\DeclareMathOperator{\SHom}{\mathscr{H}\kern -.5pt om}

\DeclareMathOperator{\Spec}{Spec}

\DeclareMathOperator{\LogJac}{LogJac}
\DeclareMathOperator{\Ext}{Ext}
\DeclareMathOperator{\Span}{span}
\DeclareMathOperator{\im}{Im}

\DeclareMathOperator{\Aut}{Aut}
\DeclareMathOperator{\Conv}{Conv}
\def\log{\mathrm{log}}
\def\hom{\mathrm{hom}}
\def\gp{\mathrm{gp}}
\def\op{\mathrm{op}}
\def\trop{\mathrm{trop}}

\DeclareMathOperator{\LogSch}{\mathbf{LogSch}}
\DeclareMathOperator{\ShpMon}{\mathbf{ShpMon}}

\DeclareMathOperator{\RPC}{\mathbf{RPC}}

\DeclareMathOperator{\TroJac}{TroJac}
\def\colim{\qopname\relax m{colim}}
\DeclareMathOperator{\Hom}{\textup{Hom}}

\title{Birational Classification of Orbifold Compactified Jacobians}
\author{Jeremy Feusi and Sam Molcho}

\begin{document}

\begin{abstract}
	We study the equivariant orbifold birational classification problem for families of toroidal compactifications of a group $G$ over a toroidal base, in the cases where $G$ is an algebraic torus or a semiabelian scheme. The classification is reduced to the problem of finding the minimal orbifold toroidal compactifications of $G$ in the world of logarithmic geometry, which is shown to be a combinatorial problem. We solve the problem for families of algebraic tori, Jacobians of families of nodal curves, and semiabelian schemes with abelian generic fiber. The general semiabelian case is reduced to an open conjecture. These results generalize and geometrically interpret recent results of \cite{Schmitt}.
\end{abstract}

\maketitle

\tableofcontents

\section{Introduction}
The birational classification of algebraic varieties is one of the central
problems of algebraic geometry. The analogous problem for orbifolds is much
less studied, but exhibits several novel phenomena. From the outset, the definition
is interesting: it is an observation of Kresch and Tschinkel
\cite{KreschTschinkel} that to call two DM stacks $\mathcal{X}$ and
$\mathcal{Y}$ birationally equivalent, one should not simply require the
existence of a birational map between them -- a definition which would imply
for example that every orbifold is birational to a variety. Rather, one should
say that two Deligne-Mumford stacks $\mathcal{X}$ and $\mathcal{Y}$ are
birationally equivalent if there exists a correspondence
\[
\begin{tikzcd}
	& \mathcal{Z} \ar[rd,"p"] \ar[ld, swap, "q"] & \\ \mathcal{X}& &\mathcal{Y}
\end{tikzcd}
\]
with $\mathcal{Z}$ a DM stack and with the maps $p,q$ proper, birational and
\emph{representable}. We remark here that this definition is not equivalent to
the traditional one even when restricted to varieties, although it is
so if we demand that the varieties in question are proper.

The problem of birational classification of orbifolds is strictly
harder than that of algebraic varieties, and thus a complete classification is
currently out of reach. It is therefore natural to study the
classification problem upon restriction to subcategories (full or not) of the
category of orbifolds or DM stacks for which the classical problem is completely understood. For example, one may try to study the
birational classes of orbifolds $\mathcal{X}$ ``lifting'' a single given birational
class, i.e., the birational classification of orbifolds $\mathcal{X}$ whose
underlying good moduli space $X$ has a fixed birational type as an algebraic
variety. Alternatively, one may impose dimension constraints, group actions, et cetera.

Recently, Schmitt \cite{Schmitt} solved an instance of this problem, namely the
birational classification problem in the category of toric orbifolds with
respect to toric morphisms. The corresponding problem for varieties is trivial
here -- the only birational invariant of a complete toric variety is its
dimension -- but the orbifold problem remains interesting, with infinitely many
distinct orbifold birational classes. Using the correspondence between toric
orbifolds and stacky fans (see for example
\cite{Borisov_2004,fantechi2009smoothtoricdmstacks,Geraschenko_2014,ATheoryOfStaGillam2015}),
Schmitt gave a bijection between toric birational equivalence classes of toric
orbifolds and certain combinatorial objects which he called sublattice
colorings.

Our goal in this paper is twofold. On the one hand, we aim to explain the
geometric meaning of Schmitt's result and place it in what is, in our view,
its natural conceptual framework: Schmitt's sublattice colorings are precisely the fans of a generalized class of geometric objects which, while not traditional objects of algebraic geometry, arise naturally in logarithmic geometry. On the other hand, using this framework, we
can extend the result to other equivariant geometries -- namely compactified semiabelian schemes.
Both of those goals are achieved by studying this problem using the language of logarithmic geometry.

\subsection{Our results}
Let $S$ be a log smooth log
scheme (for example, a point, or a scheme $S$ with a normal crossings divisor),
and let $G$ be a semiabelian scheme over $S$. Assume that one of the two following
possibilities holds:

\begin{itemize}
	\item The generic fiber of $G$ is an abelian variety.
	\item $G$ is an algebraic torus over $S$.
\end{itemize}

\begin{definition}
	A log compactification of $\pi:G \to S$ is a diagram
	\[
	\begin{tikzcd}
		G \ar[r] \ar[dr,"\pi"] & X \ar[d,"f"] \\
		& S
	\end{tikzcd}
	\]
	such that
	\begin{itemize}
		\item $X$ is an orbifold containing $G$ as a dense open substack.
		\item $G$ acts on $X$ over $S$ extending the multiplication action of $G$ on itself.
		\item $f$ is log smooth and proper.
		\item If $G$ is an algebraic torus, then étale locally on $S$ the stack $X$ is isomorphic to $S\times Y$ for some toric DM stack $Y$.
	\end{itemize}
\end{definition}

Log compactifications of $G$ form a category $\mathcal{C}(G)$, where morphisms
$X \to Y$ are $G$-equivariant morphisms over $S$. An analogous subcategory
$\mathcal{C}_{\textup{rep}}(G)$ is obtained by demanding $f$ to be
representable. We remark here that equivariance of morphisms is a very strong condition; it forces morphisms in $\mathcal{C}_{\textup{rep}}(G)$ to be log modifications -- modifications with center disjoint from $G$ and which respect the induced stratification on $X$ -- and morphisms in $\mathcal{C}(G)$ to be log alterations, which are roughly speaking compositions of log modifications and root stacks along the strata.

We call two elements $X$, $Y$ in $\mathcal{C}(G)$ (resp.
$\mathcal{C}_{\textup{rep}}(G)$) $G$-birationally equivalent if there is a correspondence
\[
\begin{tikzcd}
	& Z \ar[rd,"p"] \ar[ld, swap, "q"] & \\ X& &Y
\end{tikzcd}
\]
with $p,q$ proper, birational and representable morphisms in $\mathcal{C}(G)$
(resp. $\mathcal{C}_{\textup{rep}}(G)$). We write
$\textup{Bir}(\mathcal{C}(G))$ (resp. $\textup{Bir}(\mathcal{C}_{\textup{rep}}(G))$)
for the set of $G$-birational classes in $\mathcal{C}(G)$ (resp. $\mathcal{C}_{\textup{rep}}(G)$).
In particular, contrary to the classical
results on the birational classification of universal compactified Jacobians, e.g. \cite{farkas2011classificationuniversaljacobiansmoduli},
our notion of birational equivalence is relative to the base $S$. In the case when $G=T$ is an
algebraic torus over a point, we have:
\begin{itemize}
	\item $\textup{Bir}(\mathcal{C}_{\textup{rep}}(T))=\left\{\bullet \right\}$.
	\item (Schmitt): $\textup{Bir}(\mathcal{C}(T))=\{\textup{Sublattice Colorings}\}$.
\end{itemize}
We extend the classification to arbitrary $G$ as above.

Our classification goes through a generalized class of geometric objects
which exist only in the setting of logarithmic geometry.

\begin{definition}
	A DM log stack over $S$ is a stack for the strict fppf
	topology (i.e., the topology whose covers are universally surjective strict fppf morphisms)
	on $\LogSch/S$ which has a log étale cover by a DM stack over $S$
	with a log structure and whose diagonal is representable by log algebraic spaces
	over $S$ with a log structure. We say that a DM log stack is an
	algebraic log space if it is fibered in setoids.
\end{definition}

\begin{remark}
	In \cite{LogarithmicAbeKajiwa2015}, algebraic log spaces are
	called log algebraic spaces in the second sense.
\end{remark}

We may then extend our category $\mathcal{C}(G)$ (resp.
$\mathcal{C}_{\textup{rep}}(G)$) to logarithmic versions
$\mathcal{C}(G)^{\log}$ (resp. $\mathcal{C}_{\textup{rep}}(G)^{\log}$) by
allowing $X$ to be a DM log stack (resp. algebraic log space). We call the additional objects introduced \emph{generalized log compactifications} of $G$. See Definition \ref{def:generalized_log_cpt}
for a precise definition of the two enlarged categories. It is easy to show that the additional objects do not change the birational classification:
\begin{align*}
	\textup{Bir}(\mathcal{C}(G)^{\log}) = \textup{Bir}(\mathcal{C}(G)) \\
	\textup{Bir}(\mathcal{C}_{\textup{rep}}(G)^{\log}) = \textup{Bir}(\mathcal{C}_{\textup{rep}}(G)).
\end{align*}

There is however an advantage in passing to the enlarged log category: every
birational class is now represented by a unique \emph{minimal} object. This is
interesting even for toric varieties. When $G=T$, the category
$\mathcal{C}_{\textup{rep}}(T)^{\log}$ has a minimal object, the logarithmic torus
$T_{\log}$, defined as the stack (in fact, sheaf) whose functor of points is given by
\[
\Hom_{\LogSch/S}(B,T_{\log}) = \Hom(M,M_{B}^{\gp}(B))
\]
where $M_B$ is the log structure of $B$ and $M$ is the character lattice of
$T$. Every toric variety $X$ maps to $T_{\log}$, and in a precise sense, this
map is a log modification. Thus, in the identification
\[
\textup{Bir}(\mathcal{C}_{\textup{rep}}(T))=\left\{\bullet \right\}
\]
the unique element can be canonically identified with the logarithmic torus $T_{\log}$.
This is in contrast with the standard minimal model program where, for example,
the minimal toric surfaces are the Hirzebruch surfaces or $\mathbb{P}^2$ (both
of which are in this setting blowups of $\mathbb{G}_{m,\log}^2$).

This situation is general. For any $G$ as above, the category $\mathcal{C}(G)^{\log}$
has a minimal object $G^{\log}$ and every map $X \to G^{\log}$
in $\mathcal{C}_{\textup{rep}}(G)^{\log}$ is birational. This implies
\[
\textup{Bir}(\mathcal{C}_{\textup{rep}}(G)) = \{[G^{\log}]\}.
\]
Orbifold birational classes are thus born on account of the Kresch-Tschinkel definition of
birational equivalence, which demands that the maps be representable. Any $X$ in $\mathcal{C}(G)$ maps to $G^{\log}$, but the corresponding map may be a log alteration, in which case $X$ is no longer in the birational equivalence class of $G^{\log}$. We thus
obtain
\[
\textup{Bir}(\mathcal{C}(G)^{\log}) = \{\textup{Minimal generalized log compactifications of } G\}
\]
Minimal means that the induced map $X \to G^{\log}$ does not factor through $X \to Y \to G^{\log}$ with $X \to Y$ representable --
see Definition \ref{def:minimal_cpt} for the precise condition.

The connection with combinatorics is crucial: it
gives an effective way to determine which of the generalized log compactifications above are minimal.
Recall that every toric variety $X$ has a fan $\Sigma_X$; similarly,
every toric orbifold $X$ has a ``stacky fan" $\Sigma_X$ (cf.
\cite{Borisov_2004,fantechi2009smoothtoricdmstacks,Geraschenko_2014,ATheoryOfStaGillam2015}). In fact, the same is true
for any $X$ in $\mathcal{C}(G)$ (as a special case of Theorem \ref{thm:trop_corresp}). What is more remarkable is that
every $X \in \mathcal{C}(G)^{\log}$ has a tropicalization (see Theorem \ref{thm:trop_corresp}), which however is no
longer a usual collection of convex rational polyhedral cones, but rather what
we call a \emph{tropical compactification} (see Definition \ref{def:partial_compact}). In other words, we have a ``tropicalization" functor
\begin{align*}
	\Sigma_?:\mathcal{C}(G)^{\log} &\to \{ \textup{tropical compactifications} \}\\
	X &\rightsquigarrow \Sigma_X.
\end{align*}
Corollary \ref{cor:trop_corresp_log} shows that the tropicalization functor is an equivalence of categories.
Furthermore, it makes sense to consider maps from a logarithmic scheme to a given tropical compactification $\Sigma$ (see Section \ref{sec:tropical_corresp} for an explanation), and Theorem \ref{thm:trop_corresp} asserts that there is an inverse to the tropicalization functor which we call the \emph{geometric realization functor}:
\begin{align*}
	\mathcal{R}_?:\{\textup{tropical compactifications}\} &\to \mathcal{C}(G)^{\log} \\
	\Sigma &\rightsquigarrow G^{\log} \times_{\Sigma_{G^{\log}}} \Sigma.
\end{align*}
Under this correspondence, morphisms to a given $X$ in $\mathcal{C}(G)^{\log}$ correspond to taking a finite index sublattice in a subdivision of $\Sigma_X$; and the representable morphisms are the subdivisions. Combinatorially, the result
\[
\textup{Bir}(\mathcal{C}_{\textup{rep}}(G))=\{[G^{\log}]\}
\]
can be interpreted as saying that the fan of any $X \in \mathcal{C}(G)$ is obtained by compatibly taking a finite index sublattice in some subdivision of $\Sigma_{G^{\log}}$. To find the minimal log compactifications of $X \to G^{\log}$, it suffices to find such choices of finite index sublattices in subdivisions which do not arise as subdivisions of a coarser object.

In Section \ref{sec:combinat_problem} we solve this problem by showing that there is a functor $\Sigma\mapsto M_{\Sigma}$ which sends $\Sigma$ to
the minimal object in the birational equivalence class of $\Sigma$. The space $M_{\Sigma}$ has slightly weaker representability
properties and we call it a generalized compactification.
We obtain a functor
\begin{align*}
	M_{?}: \{\textup{tropical compactifications}\}&\to \{\textup{generalized tropical compactifications}\}\\
	\Sigma &\to M_{\Sigma}.
\end{align*}
This map is a retraction onto the subcategory of minimal objects. When $G=T$, the minimal objects are exactly Schmitt's sublattice colorings (see Corollary \ref{cor:lattice_colorings}). This result can be summarized by the following picture, explained below:

	\noindent\resizebox{\textwidth}{!}{\begin{tikzpicture}[scale=1.5]

  \def\lo{-3.0}
  \def\hi{3.0}
  \def\cliplo{-3.15}
  \def\cliphi{3.15}


  \begin{scope}
    \clip (\cliplo,\cliplo) rectangle (\cliphi,\cliphi);

    \foreach \m in {0,1,...,6} {
      \foreach \n in {0,1,...,6} {
        \fill[blue!70!black] (0.5*\m,0.5*\n) circle (1.2pt);
      }
    }

    \foreach \m in {-12,-11,...,6} {
      \foreach \n in {-6,-5,...,-1} {
        \pgfmathtruncatemacro{\bound}{2*\n}
        \ifnum\m<\bound
        \else
          \fill[blue!70!black] (0.5*\m,0.5*\n) circle (1.2pt);
        \fi
      }
    }

    \foreach \b in {1,2,...,6} {
      \foreach \a in {0,1,...,12} {
        \pgfmathtruncatemacro{\ly}{\a-\b}
        \fill[red!70!orange] (-\b,0.5*\ly) circle (1.2pt);
      }
    }
  \end{scope}

  \draw[line width=2pt, black, ->] (0,0) -- (2.8,0);
  \draw[line width=2pt, black, ->] (0,0) -- (0,2.8);
  \draw[line width=2pt, black, ->] (0,0) -- (-2.8,-1.4);

  \node[right, font=\Huge] at (2.85,0) {$e_1$};
  \node[above, font=\Huge] at (0,2.85) {$e_2$};
  \node[below left, font=\Huge] at (-2.85,-1.4) {$-2e_1\!-\!e_2$};

  \draw[->, line width=2pt, shorten >=8pt, shorten <=8pt] (3.5,0) -- (5.5,0);

  \begin{scope}[shift={(9,0)}]
    \begin{scope}
      \clip (\cliplo,\cliplo) rectangle (\cliphi,\cliphi);

      \foreach \m in {0,1,...,6} {
        \foreach \n in {0,1,...,6} {
          \fill[blue!70!black] (0.5*\m,0.5*\n) circle (1.2pt);
        }
      }

      \foreach \m in {-12,-11,...,6} {
        \foreach \n in {-6,-5,...,-1} {
          \pgfmathtruncatemacro{\bound}{2*\n}
          \ifnum\m<\bound
          \else
            \fill[blue!70!black] (0.5*\m,0.5*\n) circle (1.2pt);
          \fi
        }
      }

      \foreach \b in {1,2,...,6} {
        \foreach \a in {0,1,...,12} {
          \pgfmathtruncatemacro{\ly}{\a-\b}
          \fill[red!70!orange] (-\b,0.5*\ly) circle (1.2pt);
        }
      }
    \end{scope}

  \end{scope}

  \draw[->, line width=2pt, shorten >=8pt, shorten <=8pt] (12.5,0) -- (14.5,0);

  \begin{scope}[shift={(18,0)}]
    \begin{scope}
      \clip (\cliplo,\cliplo) rectangle (\cliphi,\cliphi);

      \foreach \m in {-6,-5,...,6} {
        \foreach \n in {-6,-5,...,6} {
          \fill[black] (0.5*\m,0.5*\n) circle (1.2pt);
        }
      }
    \end{scope}
  \end{scope}

\end{tikzpicture}}

The lattice on the right depicts $\Sigma_{T^{\log}}$, and the fan on the left
is the fan of an element $X$ of $\mathcal{C}(T)$, i.e. a toric orbifold. The
red and blue colors represent finite index sublattices of the lattice on the
right. Certainly there is a map $\Sigma_X \to \Sigma_{T^{\log}}$, but it is not
representable because the lattice structures disagree on at least one cone.
Coarsening the subdivision in the unique way that respects the sublattice
structure, we obtain the object in the middle, which (i) maps to $\Sigma_{T^{\log}}$, albeit the map is not representable, as the integral structures disagree (and hence the corresponding algebraic object
is not birationally equivalent to $T^\log$) (ii) is such that $\Sigma_X$ is obtained from it
by a subdivision that does not alter the integral structure (iii)
cannot be further coarsened. Thus, in our language, it is $M_{\Sigma_X}$, and
its geometric realization $\mathcal{R}(M_{\Sigma_X})$ is a minimal log
compactification of $T^{\log}$, which is the canonical representative of the birational class of $X$ in
$\textup{Bir}(\mathcal{C}(G))$. Our main result is formally as follows:

\begin{theorem}
	Let $S$ be a log smooth log scheme, and $G^{\log}$ be a log semiabelian variety over $S$ with dense open semiabelian scheme $G\subseteq G^{\log}$.
	Assume that either:
\begin{itemize}
	\item $G$ is the Jacobian of a log smooth curve over $S$ or
	\item $G$ is an algebraic torus.
\end{itemize}
	Then we have a diagram
	\[
	\begin{tikzcd}
		\mathcal{C}(G)^{\log}	\ar[d] \ar[r] & \ar[l] \parbox{5cm}{\centering log alterations $X \to G^{\log}$} \ar[r,"\Sigma"] \ar[d,"\mathcal{R} \circ M_{\Sigma_{?}}"] & \ar[l,"\mathcal{R}"] \parbox{5cm}{\centering tropical compactifications $\Sigma_X$} \ar[d,"M_{?}"] \\
		\textup{Bir}(\mathcal{C}(G)^{\log})	\ar[r]   & \ar[l] \parbox{5cm}{\centering minimal generalized log compactifications} \ar[r,"\Sigma"] & \ar[l,"\mathcal{R}"] \parbox{5cm}{\centering minimal generalized tropical compactifications}
	\end{tikzcd}
	\]
	where the middle- and right-hand vertical maps identify a birational class with its minimal representative and the horizontal maps are isomorphisms.
\end{theorem}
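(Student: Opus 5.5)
The plan is to assemble the diagram from three ingredients, each announced in the introduction and treated in later sections: the tropicalization/realization equivalence (Theorem \ref{thm:trop_corresp} and Corollary \ref{cor:trop_corresp_log}), the existence of the minimal object $G^{\log}$ together with the description of equivariant morphisms as log alterations, and the combinatorial retraction $\Sigma\mapsto M_\Sigma$ of Section \ref{sec:combinat_problem}.

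\textbf{Top row.} First I identify $\mathcal{C}(G)^{\log}$ with the category of log alterations $X\to G^{\log}$. Minimality of $G^{\log}$ gives every $X$ a map $X\to G^{\log}$. For $G$ an algebraic torus or a Jacobian, I would check that $G$-equivariance forces this map to be a log alteration; the reason is that $G$ acts transitively on its dense open, so the map is determined on $G$ and extends. Conversely, any log alteration of $G^{\log}$ inherits the $G$-action, since the action lifts along log modifications and root stacks along strata. The right-hand square then comes from the equivalence $\Sigma_?\leftrightarrow\mathcal{R}_?$ of Theorem \ref{thm:trop_corresp}. Here log alterations of $G^{\log}$ correspond to finite-index sublattices on subdivisions of $\Sigma_{G^{\log}}$, and representable ones correspond to pure subdivisions.

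\textbf{Bottom row.} Next I show that two objects are $G$-birational exactly when they admit a common representable refinement. On the tropical side this means a common subdivision with compatible lattices. I then invoke the universal property of $M_\Sigma$: every representable map $\Sigma'\to\Sigma''$ induces an isomorphism $M_{\Sigma'}\cong M_{\Sigma''}$, and $\Sigma\to M_\Sigma$ is the terminal representable coarsening. Given a correspondence $X\leftarrow Z\to Y$, this yields
\[
M_{\Sigma_X}\cong M_{\Sigma_Z}\cong M_{\Sigma_Y}.
\]
Conversely, if $M_{\Sigma_X}\cong M_{\Sigma_Y}$, a common subdivision of $\Sigma_X$ and $\Sigma_Y$ inside $M$ gives a realizable $Z$. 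So $[X]\mapsto \mathcal{R}(M_{\Sigma_X})$ is well defined and injective. It is surjective onto minimal objects because $M$ is a retraction, i.e.\ $M_{M_\Sigma}=M_\Sigma$. Applying $\mathcal{R}$, and extending the realization equivalence to generalized tropical compactifications, gives the bottom horizontal isomorphisms. The squares commute by construction, and $\textup{Bir}(\mathcal{C}(G)^{\log})=\textup{Bir}(\mathcal{C}(G))$ was noted earlier.

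\textbf{Main obstacle.} The hardest step is the combinatorial one: constructing $M_\Sigma$, the unique maximal coarsening compatible with the sublattice data, and proving its universal property. For a torus, I would glue cones along which the lattices agree: take the equivalence relation generated by adjacent cones with equal lattice on their common face, then take convex hulls of classes. The difficulty is that the result may fail to be convex, which is why it is only a ``generalized'' compactification. For Jacobians, $\Sigma_{G^{\log}}$ is a family of tropical Jacobians: a quotient of $H_1$-cones by the period lattice over the tropical base. I would reduce to the torus case fiberwise over each cone of the base and show the gluing is compatible with the monodromy action of $H_1$ and with specialization of cones. Verifying this compatibility is exactly where the general semiabelian case breaks down, which explains the hypotheses of the theorem.
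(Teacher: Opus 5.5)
Your overall architecture is the paper's: tropicalize via Theorem \ref{thm:trop_corresp}, pass to the minimal object of each class, realize back. The genuine gap is in the step you treat as routine, namely identifying abstract objects of $\mathcal{C}(G)^{\log}$ with log alterations of $G^{\log}$.

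\textbf{The map to $G^{\log}$.} The claim that ``minimality of $G^{\log}$ gives every $X$ a map $X\to G^{\log}$'' is circular. That every abstract compactification maps equivariantly to $G^{\log}$ is exactly what must be proved. In the body of the paper, minimal objects are terminal only within their own birational class, and everything lives over $G^{\log}$ by definition. The claim that the map ``is determined on $G$ and extends'' is not an argument either. Transitivity of $G$ on its dense orbit pins the map down on $G$, indeed on $X|_U$, but gives no reason for it to extend across the boundary. That extension is a N\'eron-type theorem.
\begin{itemize}
\item For Jacobians it is Theorem \ref{thm:neron_mapping}. The paper uses it in Corollary \ref{cor:let_over_logjac} to produce the unique equivariant, unit-preserving map $\overline{J}_C\to\LogJac(C/S)$. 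You must then still show this map is a log \'etale monomorphism. Log \'etaleness comes from $[\overline{J}_C/J_C]\to\TroJac(C/S)$ and Lemma \ref{lem:let_when_both}. The monomorphism property comes from showing $\overline{J}_C\to\overline{J}_C\times_{\LogJac(C/S)}\overline{J}_C$ is a proper log \'etale birational monomorphism, hence an isomorphism, and comparing boundary divisors to match log structures.
\item For tori it is Lemma \ref{lem:exist_unique_map}, together with the argument in Proposition \ref{prop:equiv_cat_tor_compact}. That argument shows the log structure on an abstract $X$ must be the toric boundary one, so that characters give sections of $M_X^{\gp}$.
\end{itemize}
This step is exactly where general semiabelian $G$ fails; see Conjecture \ref{conj:neron_mapping}. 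Your diagnosis that it fails in the compatibility of combinatorial gluing with monodromy is wrong.

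\textbf{The combinatorial step.} This is not the obstacle, and your explicit recipe for it would fail. Section \ref{sec:combinat_problem} treats every split family of tropical semiabelian varieties uniformly. The reduction is to cones of the base (Proposition \ref{prop:minimal_iff_minimal_on_cones}), where one works directly with $M$-periodic stacky fans, not to tori. There, $M_Y$ is defined by a universal property: a map $\sigma\to X$ factors through $M_Y$ if and only if some subdivision of $\sigma$ lifts to $Y$. This makes three things essentially formal: that $Y\to M_Y$ is a subdivision, minimality, and uniqueness (Theorem \ref{thm:pre_part_comp_descr}). Concretely, $M_Y\cong\Sigma_S$ for the set of lattice points $S=\bigcup_{\tau\in\Delta}\tau\cap\tilde N_\tau$ (Corollary \ref{cor:combinat_descr_min}). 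This is a functor of points, not a union of cones.

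Your recipe has two problems:
\begin{itemize}
\item In a stacky fan, adjacent cones always have the same lattice on their common face; this is part of the definition. So your equivalence relation, as stated, identifies all cones of a connected fan.
\item Even with the intended relation (equal lattices on the full cones), taking convex hulls is wrong. In the introduction's example, the blue region is the union of the cones spanned by $e_1,e_2$ and by $e_1,-2e_1-e_2$. Its convex hull is the whole plane and swallows the red cone.
\end{itemize}

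\textbf{The bottom row.} ``Extending the realization equivalence to generalized tropical compactifications'' hides a real step. You need that the realization of a minimal tropical object is minimal geometrically, i.e.\ satisfies descent along log modifications. The paper proves this by reducing to log blowups, in the proposition preceding Corollary \ref{cor:unique_minimal_combinat}. It then combines this with uniqueness from Proposition \ref{prop:exists_unique_minimal_log_cpt}.
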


The proof is given in Corollary \ref{cor:main_thm_tor} for the case when $G$ is an algebraic torus
and in Corollary \ref{cor:main_thm_jac} for the case when $G$ is a Jacobian. For general $G$, our proof would go through verbatim if we knew that every abstractly defined log compactification of $G$ was a log alteration of $G^{\log}$. In other words, the theorem holds, with the same proofs, on the subcategory $\mathcal{D}(G)^{\log} \subset \mathcal{C}(G)^{\log}$ whose objects are log alterations of $G^{\log}$. The reason we are able to prove the theorem for abstractly defined log compactifications for tori and Jacobians is that we know that they satisfy the \emph{Néron mapping property}, whose formulation for the logarithmic Jacobian is as follows (and which is the same for logarithmic tori with a much easier proof):

\begin{theorem}[Theorem 1.7 in \cite{ModelsOfJacobHolmes2020}]
	Let $S$ be a log smooth log scheme and $C\to S$ a proper vertical log smooth log curve over $S$.
	Write $U\subseteq S$ for the dense open subset over which $C|_U\to U$ is smooth.
	The logarithmic Jacobian $\LogJac(C/S)$ satisfies the Néron mapping
	property for log smooth morphisms. That is, for every log smooth morphism
	$X\to S$, the restriction morphism
	$$\Hom_S(X,\LogJac(C/S))\to \Hom_U(X|_U,\LogJac(C|_U/U))$$
	is an isomorphism.
\end{theorem}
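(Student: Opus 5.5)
Since $\LogJac(C/S)$ is a sheaf on the strict fppf site of $\LogSch/S$, it suffices to prove the restriction map is bijective étale locally on $X$. Injectivity and surjectivity are handled separately.

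\emph{Injectivity.} We show that two maps $X\to\LogJac(C/S)$ agreeing over $X|_U$ agree everywhere. The log Jacobian is an extension of the tropical Jacobian $\TroJac(C/S)$ by the (separated) Picard-type group of degree-zero line bundles of bounded monodromy. So it suffices to show separatedness relative to the log structure. Because $X$ is log smooth over $S$ and $S$ is log smooth, $X|_U$ is schematically dense in $X$, and the log structure of $X$ is determined by the boundary divisor (the complement of the locus where $M_X$ is trivial). Agreement on the tropical part then follows from equality of the maps on characteristic monoids, which can be tested generically along each stratum. Agreement on the torsor part follows from separatedness of the Picard scheme together with density.

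\emph{Surjectivity.} A map $X|_U\to \LogJac(C|_U/U)=\mathrm{Pic}^0(C_U/U)$ is a degree-zero line bundle $L$ on $C_{X_U}$, up to pullback from $X_U$.
\begin{enumerate}
\item Since $C_X\to X$ is a log smooth curve over a log smooth base, $C_X$ is log smooth, hence normal (toroidal). We extend $L$ to a line bundle on a log modification of $C_X$: first extend it as a Weil divisor, then use that toroidal singularities have finite class groups on the relevant local charts. The extension can be made $\mathbb{Q}$-Cartier, and becomes Cartier after a log blowup, or equivalently after passing to the log Picard group, where roots and subdivisions are allowed.
\item Choose the extension of $L$ so that it has degree zero on every fiber component, after twisting by piecewise-linear functions on the tropicalization. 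This is exactly the datum of a log line bundle, i.e.\ a point of $\LogJac$.
\item Check the bounded-monodromy condition. The monodromy of the extended bundle around each cycle of the tropical curve is computed from intersection numbers of the extended divisor with vertical components. These are integers, hence lie in $\overline{M}^{\gp}_X$, and are automatically bounded by the edge lengths. This bound uses that $X$ is log smooth, so the relevant monoids are saturated.
\item Because $\LogJac$ is invariant under log modifications of $C_X$ and of $X$ (it is defined on the log site), the resulting object descends to a morphism $X\to\LogJac(C/S)$ extending the given one.
\end{enumerate}

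I expect step 3, the bounded monodromy condition, to be the main obstacle. It requires a careful local computation near the codimension-$\geq 2$ strata of $X$, where several nodes smooth simultaneously. My first approach would be to reduce to the case where $X$ is a toric chart of dimension at most 2 over $S$, using a purity argument: maps into the separated sheaf $\LogJac$ extend over codimension $\geq 2$ on log smooth bases.
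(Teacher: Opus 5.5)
The paper does not prove this statement. It quotes it from \cite{ModelsOfJacobHolmes2020} and uses it only as a black box (Theorem~\ref{thm:neron_mapping}, Corollary~\ref{cor:let_over_logjac}). Judged on its own, your sketch has genuine gaps.

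\emph{Injectivity.} Over $U$ the fibres are smooth, so the tropical curves have no cycles and every map $X|_U\to\TroJac(C/S)$ is trivial. The hypothesis $f|_{X|_U}=g|_{X|_U}$ therefore gives no information about the tropical parts of $f$ and $g$, which live entirely over $X\setminus X|_U$. ``Testing generically along each stratum'' presupposes an agreement you do not have.

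Codimension one does not suffice either. Let $X=C$ and let $x$ be the node of a nodal cubic fibre over $S=\mathbb{A}^1$. Then $\overline{M}_{X,x}=\mathbb{N}f_1\oplus\mathbb{N}f_2$, and the tropical curve at $x$ is a loop of length $\ell=f_1+f_2$. The tropical Jacobian at $x$ is $\mathbb{Z}^2/\mathbb{Z}\ell\cong\mathbb{Z}$, and the Abel--Jacobi map has tropical part $[f_2]\neq 0$ there. At both codimension-one generizations, however, $\ell\mapsto 1$ and the tropical Jacobian is zero.

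What forces tropical parts to agree is their coupling to the algebraic part: a nonzero tropical part forces the generic point to move. That is a genuine separatedness input, such as finiteness of the diagonal of $\LogJac$ \cite[Theorem 4.12.1]{TheLogarithmicMolcho2022}. With it, the equalizer of $f,g$ is a finite monomorphism to $X$ that is an isomorphism over the dense open $X|_U$. You would still have to show it is an isomorphism of log schemes.

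\emph{Surjectivity.}
\begin{itemize}
\item Step 1 relies on a false claim. Toroidal singularities can have infinite class groups and non-$\mathbb{Q}$-Cartier Weil divisors, and they do here: $C\times_S C$ has local equation $xy=x'y'$ at a pair of nodes. Pass directly to a toroidal resolution instead.
\item Step 2 is wrong as stated. Twists change multidegrees only by the image of the Laplacian. So no multidegree-zero extension exists for a point of $J_U$ specializing to a non-identity component of the N\'eron model (e.g.\ after the base change $s=t^2$). Multidegree-zero bundles would in any case only give points of $J_C$. Points of $\LogJac$ are log line bundles of total degree $0$ with bounded monodromy, and PL twists do not change their class.
\item Step 4 needs invariance of $H^1(-,M^{\gp})$ under log modifications of the total space. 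This must be proved or cited; it does not follow from $\LogJac$ living on the log site.
\item Step 3 misidentifies the mechanism, and this is the most serious problem. The monodromy lives in $\overline{M}^{\gp}_{X,x}$, not in $\mathbb{Z}$. Saturation is irrelevant: over an fs log point with $\overline{M}=\mathbb{N}e_1\oplus\mathbb{N}e_2$ and a loop of length $e_1$, there is an $M^{\gp}$-torsor with monodromy $e_2$, which is unbounded.
\end{itemize}

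The actual reason boundedness holds is log regularity of $X$, in three steps:
\begin{itemize}
\item every ray of $\overline{M}_{X,x}^{\vee}$ is the valuation of a boundary divisor through $x$;
\item boundedness can be checked ray by ray, exactly as in the proof of Proposition~\ref{prop:alternative_descr_trop_torus};
\item at the generic point $\xi$ of such a divisor, the image of the monodromy in $\overline{M}^{\gp}_{X,\xi}=\mathbb{Z}$ vanishes on the cycles contracted at $\xi$, by compatibility with generization.
\end{itemize}
So no codimension-two computation is needed. Your purity reduction would not work anyway. Separatedness yields uniqueness, not existence, of extensions across codimension two. The example above also shows that codimension-two tropical data is not determined in codimension one.

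A more efficient route to both halves uses log regular $Y$ with interior $j\colon Y^\circ\hookrightarrow Y$. One has $M^{\gp}_Y=j_*\mathcal{O}^\times_{Y^\circ}$ and $R^1j_*\mathcal{O}^\times_{Y^\circ}=0$, because local class groups of log regular rings are generated by boundary divisors. Hence $H^1(Y,M^{\gp}_Y)\cong\mathrm{Pic}(Y^\circ)$. Apply this to $C_X$ and to $X$, use the section of $C\to S$ to rigidify, and combine with the automatic boundedness above.
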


This theorem allows us to prove that every log compactification $X$ of the Jacobian $G$ of a log
smooth curve admits a $G$-equivariant morphism $X\to \LogJac(C/S)=G^{\log}$ (see Corollary \ref{cor:let_over_logjac}), and hence is in $\mathcal{D}(G)^{\log}$.

The Néron mapping property for more general log abelian varieties $A\to S$ is
not proven in the literature at the time of writing. However, it is expected to
hold and we formulate it as a conjecture:

\begin{conjecture}
	\label{conj:neron_mapping}
	Let $A\to S$ be a family of log abelian varieties over a log smooth base $S$. Let $U\subseteq S$ be the dense
	open subset of $S$ on which the log structure is trivial. Then $A$ satisfies the Néron mapping property for
	log smooth morphisms, i.e., if $X\to S$ is a log smooth morphism, then the restriction map:
	$$\Hom_S(X,A)\to \Hom_U(X|_U,A|_U)$$
	is an isomorphism.
\end{conjecture}

Assuming this conjecture, the results of this paper generalize to arbitrary families of semiabelian varieties
admitting a compactification by a log abelian variety.
\subsection{Organization of the paper.} The goal of sections \ref{sec:first_results}, \ref{sec:tropical_corresp} and \ref{sec:translate_geom} is to describe the tropicalization and geometric realization functors defined above. Many of the technical results required for this are currently missing from the literature in the necessary generality and are therefore developed within these sections. The expert comfortable with the foundations of tropicalization maps in logarithmic geometry may wish to skip some of the more technical results in these sections. In Section \ref{sec:first_results} we introduce the precise definition of a
logarithmic compactification and prove basic results, which require some
technical machinery related to stacks over the category of log schemes.
In Sections \ref{sec:tropical_corresp} and \ref{sec:translate_geom},
we establish the connection between the combinatorial and the logarithmic
category, which requires us to generalize the notion of an Artin fan
\cite{LogarithmicTauPandha2024} to our setting and prove several
generalizations of the results of \cite{SkeletonsAndFAbramo2015}.

\subsection*{Acknowledgements}

The authors would like to thank Johannes Schmitt for many discussions regarding
cone stacks and their combinatorics. They also thank Samir Canning, Lycka
Drakengren and Aitor Iribar Lopez for their comments regarding the present paper.

\subsection{Conventions}

\begin{itemize}
	\item All logarithmic schemes are assumed to be fs (fine and saturated) and locally of finite type. We denote the category
		of fs log schemes over a fixed connected Noetherian log smooth log scheme $S$ by $\LogSch/S$.
		We endow $\LogSch/S$ with the strict fppf topology (i.e., the topology whose covers are universally surjective strict
		fppf morphisms).
	\item All schemes are assumed to be schemes locally of finite type over an algebraically closed
		field $K$ of characteristic 0.
	\item Given an fs monoid (resp. lattice) $M$, we will denote by $M^{\vee}=N=\Hom(M,\mathbb{N})$ (resp. $M^{\vee}=N=\Hom(M,\mathbb{Z})$) its dual monoid (resp. dual lattice). Similarly, $N'$ is the dual of $M'$, etc. Given a lattice (i.e., a finitely-generated free abelian group)
		$N$, write $N_{\mathbb{R}}$ or $N\otimes \mathbb{R}$ for the real vector space $N\otimes _{\mathbb{Z}}\mathbb{R}$.
	\item We will often encounter a situation in which a rational polyhedral cone $\sigma$ is embedded in a vector space
		$N_{\mathbb{R}}$ for some lattice $N$. To lighten notation, we will often write $\sigma\subseteq N$ instead of
		$\sigma\subseteq N_{\mathbb{R}}$.
	\item Throughout the paper we will use the notation in \cite{AModuliStackCavali2017} for cone stacks, cone spaces
		etc. We will freely switch between cone stacks and combinatorial cone stacks which is justified by
		\cite[Proposition 2.19]{AModuliStackCavali2017}.
	\item We use the conventions of \cite{LogarithmicTauPandha2024} regarding Artin fans, see \cite[Definition 43]{LogarithmicTauPandha2024}.
	\item We write $\RPC$ for the category of rational polyhedral cones. We write $\ShpMon$ for the category of sharp integral saturated monoids.
	\item We call a DM stack equipped with a logarithmic structure a log DM stack.
\end{itemize}

\section{From Compactifications to Artin Fans}
\label{sec:first_results}

We start by setting up the situation in which we will be working. For the remainder of the paper, we fix a connected Noetherian log smooth log scheme $S$.

\begin{definition}
    Consider the sheaf $\mathbb{G}_{m,\log}$ on $\LogSch/S$ defined by
    \begin{align*}
		\Hom(Z,\mathbb{G}_{m,\log}) = M_Z^{\gp}(Z)
    \end{align*}
\end{definition}

\begin{definition}
	A log torus $T$ is a sheaf of groups on $\LogSch/S$ which is strict étale locally on $S$ isomorphic to
	$\mathbb{G}_{m,\log}^{r}$ for some $r\ge 0$.
	A log semiabelian variety is a sheaf $X$ of groups on $\LogSch/S$ which is an extension of a principally polarized log abelian
	variety by a log torus, such that strict étale locally on $S$, the extension $X\in \Ext(A,\mathbb{G}_{m,\log}^{r})$
	(with $A$ a log abelian variety) lies in the image of
	$$\Ext(A,\mathbb{G}_m^{r})\to \Ext(A,\mathbb{G}_{m,\log}^{r}).$$
\end{definition}

\begin{definition}
	Given an algebraic torus $T$, let $r$ be its rank. We write
	$$T_{\log}:=T\times^{\mathbb{G}_m^r}\mathbb{G}_{m,\log}^r.$$
	It is clear that $T_{\log}$ is a log torus.
\end{definition}

\begin{remark}
	All log abelian varieties are assumed to be principally
	polarized. Therefore we will henceforth drop the prefix
	``principally polarized''.
\end{remark}

\begin{lemma}
	A log torus $T$ is a log semiabelian variety.
\end{lemma}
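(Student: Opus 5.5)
We take the trivial log abelian variety $A=0$ over $S$, that is, the final sheaf $S$ itself, and exhibit $T$ as an extension of $A$ by itself. The conditions in the definition of a log semiabelian variety will then be checked one by one.

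First, the zero sheaf is a (principally polarized) log abelian variety over $S$. This is the degenerate case of dimension $0$: it is $S$ itself with its given log structure, trivially proper, log smooth and polarized. If one insists on positive dimension one could instead adjust conventions, but the paper's definition should allow $A=0$. With this choice the sequence
\[
0 \to T \xrightarrow{\ \mathrm{id}\ } T \to 0 \to 0
\]
is an exact sequence of sheaves of groups on $\LogSch/S$. So $T$ is an extension of $A$ by the log torus $T$. The definition only asks that the subobject be a log torus, which $T$ is by hypothesis.

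Second, we must check the local condition. Choose a strict étale cover of $S$ over which $T\cong \mathbb{G}_{m,\log}^r$. Over this cover the extension class lies in $\Ext(0,\mathbb{G}_{m,\log}^r)$. This group is $0$ because $\Hom(0,-)$ and all its derived functors vanish. Likewise $\Ext(0,\mathbb{G}_m^r)=0$. The map
\[
\Ext(0,\mathbb{G}_m^r)\to\Ext(0,\mathbb{G}_{m,\log}^r)
\]
is therefore the map $0\to 0$, which is surjective, so the class lies in its image.

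The only point needing any care is that the definitions, in particular principal polarizability, admit $A=0$ as a log abelian variety. This is standard (for instance in Kajiwara–Kato–Nakayama, where $0$ is an abelian variety and its polarization is the identity), so we expect no genuine obstacle; the rest is immediate.
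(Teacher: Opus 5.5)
Your proposal is correct and follows the paper's argument: $T$ is the trivial extension of the zero log abelian variety by itself, and after passing to a strict étale cover on which $T\cong\mathbb{G}_{m,\log}^r$, the image condition holds because $\Ext(0,-)=0$. Your version is, if anything, slightly more explicit than the paper's, since you first exhibit the global extension and only then check the local condition, whereas the paper simply asserts that the question is strict étale local.
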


\begin{proof}
	The question is strict étale local, so we may assume that $T=\mathbb{G}_{m,\log}^{r}$. In this case,
	$T$ is the trivial extension of the trivial log abelian variety $0$ and hence in particular lies in
	the image of $\Ext(0,\mathbb{G}_{m}^{r})\to \Ext(0,\mathbb{G}_{m,\log}^{r})$.
\end{proof}

\begin{example}
	\label{expl:log_cpt}
	Let $G'\to S$ be a semiabelian variety. By
	\cite[Proposition 2.9]{DegenerationOfFaltin1990}, we can
	write $G'$ as an extension of a semiabelian variety $G$
	by an algebraic torus $T$, where $G$ is an abelian variety
	over the generic point of $S$. Assume that $G=G_A\subseteq A$
	is the maximal semiabelian subsheaf of a log abelian variety $A$, i.e., $G$ admits a compactification
	by a log abelian variety. By \cite[Theorem 7.4 (2)]{LogarithmicAbeKajiwa2008}
	the pullback map $\Ext(A,\mathbb{G}_m)\to \Ext(G,\mathbb{G}_m)$
	is an isomorphism, so we can extend $G'$ uniquely to an
	extension of $A$ by $T$. Taking the image of this extension
	under $\Ext(A,T)\to \Ext(A,T_{\log})$,
	we obtain a log semiabelian variety $X$.
\end{example}

The space $X$ constructed in the above example is an example of
a log compactification of $G$.

\begin{lemma}
	Let $X$ be a log semiabelian variety, which is an extension of a log abelian variety $A$ by a log torus $T_{\log}$. Then there exists a unique
	maximal subsheaf $G_X\subseteq X$ represented by a commutative
	group scheme over $\underline{S}$ with connected fibers. When $X = T_{\log}$ is a log torus, $G_X = T$ is a torus, and when $X=A$ is a log abelian variety, $G_X$ is the semiabelian subscheme of $X$
	from \cite[4.4]{LogarithmicAbeKajiwa2008}. In general, $G_X$ is an extension of $G_A$ by $T$.
\end{lemma}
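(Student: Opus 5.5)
We first treat the two extreme cases and then the extension. For uniqueness, note that if $H_1,H_2\subseteq X$ are two subsheaves represented by commutative group schemes over $\underline{S}$ with connected fibers, then the subsheaf generated by them is the image of $H_1\times_S H_2\to X$. This image is again represented by a group scheme with connected fibers, namely the quotient of $H_1\times_S H_2$ by the kernel. So a maximal such subsheaf, if it exists, is unique, and it suffices to produce a candidate and show that every such $H$ lands in it.

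\emph{The log torus.} Work strict étale locally, so $X=\mathbb{G}_{m,\log}^r$. Let $H$ be a connected group scheme over $\underline{S}$ with a morphism $H\to \mathbb{G}_{m,\log}$, i.e.\ a section of $M_H^{\gp}$, where $H$ carries the pulled back (strict) log structure. Its image in $\overline{M}_H^{\gp}=\overline{M}_S^{\gp}$ pulled back is locally constant along the connected fibers. Since the map is a homomorphism, it sends the identity section to $0$, so the image in $\overline{M}^{\gp}$ vanishes. Hence the map factors through $\mathcal{O}_H^*=\mathbb{G}_m$, giving $G_X=\mathbb{G}_m^r$. The identification $T_{\log}=T\times^{\mathbb{G}_m^r}\mathbb{G}_{m,\log}^r$ makes this descend, so $G_{T_{\log}}=T$.

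\emph{The log abelian variety.} This case is \cite[4.4]{LogarithmicAbeKajiwa2008}. There $G_A$ is identified as the maximal subgroup representable by a semiabelian scheme. One checks that any connected group scheme $H\to A$ factors through $G_A$ by the same argument: after composing with the tropicalization map $A\to \overline{M}^{\gp}$-valued torsor data, the homomorphism $H\to A/G_A$ vanishes. This is because $A/G_A$ is locally of the form $\mathbb{G}_{m,\log}^r/\mathbb{G}_m^r$ modulo a lattice, which is a sheaf of ``discrete'' $\overline{M}^{\gp}$-type, and homomorphisms from connected $H$ kill it.

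\emph{The general case.} Let $0\to T_{\log}\to X\to A\to 0$. Strict étale locally the extension comes from $E\in\Ext(A,\mathbb{G}_m^r)$, and by Kajiwara--Kato--Nakayama (Theorem 7.4 cited above) this corresponds to an extension of $G_A$ by $T$, which is a group scheme $G_X$. It has connected fibers because $T$ and $G_A$ do, and it sits inside $X$ via $E\to X$. Now let $H\subseteq X$ be connected. Its image in $A$ lies in $G_A$, so $H$ lies in the preimage $X_{G_A}$, which is an extension of $G_A$ by $T_{\log}$ containing $G_X$ with quotient $T_{\log}/T$. The composite $H\to T_{\log}/T$ is a homomorphism from a connected scheme into an $\overline{M}^{\gp}$-valued sheaf, hence is zero by the torus argument. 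So $H\subseteq G_X$. Uniqueness makes the local constructions glue.

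The main obstacle is the vanishing claim: a homomorphism from a connected group scheme to $\overline{M}^{\gp}$-type sheaves is zero. We would prove it by noting that such a map is constant on the connected fibers, since $\overline{M}$ is pulled back from $S$ along the strict map, and then using the identity section.
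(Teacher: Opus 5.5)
Your proof follows the paper's route. You reduce strict étale locally to $X=E\times^{\mathbb{G}_m^r}\mathbb{G}_{m,\log}^r$ with $E\in\Ext(A,\mathbb{G}_m^r)$ and take $G_X=E\times_A G_A$; only the restriction of $E$ to $G_A$ is used here, not the isomorphism of \cite[Theorem 7.4]{LogarithmicAbeKajiwa2008}. You then show that a connected $H$ maps trivially first to $[A/G_A]$, and then to the fibre $T_{\log}/T$ over $0$. The paper cites \cite[9.2]{LogarithmicAbeKajiwa2008} and \cite[Lemma 6.1.1]{LogarithmicAbeKajiwa2008} for these two vanishing statements, whereas you prove them directly. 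Your connected-fibre argument does work uniformly, but you should apply it to the quotient sheaf itself rather than lifting to homomorphisms into $\overline{M}^{\gp}$, since a map into a quotient by a lattice need not lift globally. For $H$ strict over $S$, the restriction of $[A/G_A]$ (or $T_{\log}/T$) to the small étale site of $H$ is $\pi^{-1}$ of its restriction to $S$. So a section is constant on each geometric fibre of $\pi\colon H\to S$; these fibres are connected because they have a rational point. Hence the section agrees stalkwise with the pullback of its value along the unit section, which is $0$ because the map is a homomorphism.

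Two steps need repair. First, drop the uniqueness-via-images step. The claim that the image of $H_1\times_S H_2\to X$ is represented by a group scheme is not justified over a general base. It is also unnecessary: once every $H$ lands in your candidate, the candidate is the greatest element, so uniqueness is automatic. Second, and more important, ``uniqueness makes the local constructions glue'' only yields a subsheaf that is strict étale locally a scheme, i.e.\ an algebraic space. The lemma asserts representability by a group scheme, so you must explain why the glued object is a scheme. The paper does this via \cite[Theorem 1.9]{DegenerationOfFaltin1990}. Alternatively, the glued map $G_X\to G_A$ is étale locally on $G_A$ a trivial torsor under the affine torus $T$, so $G_X$ is representable by effective descent for affine morphisms.
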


\begin{proof}
	Note that connectedness and representability satisfy strict étale descent (for descent of representability, use that any
	abelian group object in the category of algebraic spaces is a group scheme by \cite[Theorem 1.9]{DegenerationOfFaltin1990}.
	For descent of connectedness, use that $S$ is irreducible and $G_X\to S$ is smooth.) and hence
	by uniqueness, we may assume that $X\in \Ext(A,\mathbb{G}_{m,\log}^{r})$ lies in the image of
	$\Ext(A,\mathbb{G}_{m}^{r})\to \Ext(A,\mathbb{G}_{m,\log}^{r})$. Let $X'\in \Ext(A,\mathbb{G}_{m}^{r})$
	such that $X=X'_{\log}$ (i.e., the pushout $X'\times^{\mathbb{G}_m^r}\mathbb{G}_{m,\log}^{r}$) and write $G_A\subseteq A$ for the semiabelian subscheme of $A$
	from \cite[4.4]{LogarithmicAbeKajiwa2008}. We set $G_X:=X'\times_{A}G_A$. This is an extension
	of $G_A$ by $\mathbb{G}_m^{r}$ and hence in particular representable by a commutative group scheme.
	To show that $G_X$ is the unique maximal such object, it remains to show that any morphism from
	a connected commutative group scheme $H$ over $\underline{S}$ with connected fibers to $[X/G_X]$ is trivial. Let $H\to [X/G_X]$
	be an arbitrary morphism. By construction of $G_X$, we have a projection map $[X/G_X]\to [A/G_A]$. The
	composition $H\to [X/G_X]\to [A/G_A]$ is trivial by \cite[9.2]{LogarithmicAbeKajiwa2008}. Hence we
	obtain a lift of $H\to [X/G_X]$ to $H\to [X/G_X]\times_{[A/G_A]}\{0\}=[\mathbb{G}_{m,\log}^{r}/\mathbb{G}_{m}^{r}]$.
	By \cite[Lemma 6.1.1]{LogarithmicAbeKajiwa2008}, this map is trivial, whence the claim.
\end{proof}

From now on we will fix a log semiabelian variety $X$ over $S$ with subsheaf $G_X\subseteq X$ as above.

\begin{definition}
	\label{def:log_cpt}
	A partial log compactification of $G_X\to S$ is a $G_X$-equivariant
	log étale monomorphism $Y\to X$ representable by (fs) log DM stacks
	of finite presentation, such that $Y$
	admits a section $S\to Y$ lifting the unit section of $X$. A partial log compactification $Y\to X$ of $G_X\to S$ is called a log compactification if $Y\to X$ is proper.
\end{definition}

\begin{definition}
	Morphisms of partial log compactifications of $G_X\to S$ are $G_X$-equivariant
	morphisms over $X$.
\end{definition}

\begin{lemma}
	\label{lem:morphs_commute_sec}
	The sections in Definition \ref{def:log_cpt} are unique
	and morphisms of partial log compactifications preserve this section.
\end{lemma}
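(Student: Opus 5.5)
The key input is that $Y \to X$ is a monomorphism, so $Y(B) \to X(B)$ is injective for every log scheme $B$ over $S$. Both claims will follow from this.

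For uniqueness, suppose $e_1, e_2 : S \to Y$ are two sections lifting the unit section $e_X : S \to X$. Write $\iota : Y \to X$ for the structure map. Then $\iota \circ e_1 = e_X = \iota \circ e_2$ as $S$-points of $X$. Since $\iota$ is a monomorphism, the induced map on $S$-points $Y(S) \to X(S)$ is injective. Hence $e_1 \cong e_2$.

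One subtlety is that $Y$ is a log DM stack rather than a sheaf. Here I use that a monomorphism of stacks is fully faithful on fibres. Then $e_1$ and $e_2$ are uniquely isomorphic as objects of $Y(S)$, which is exactly the appropriate notion of uniqueness for a section of a stack.

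For preservation, let $\phi : Y \to Y'$ be a morphism of partial log compactifications, with structure maps $\iota : Y \to X$ and $\iota' : Y' \to X$. Let $e, e'$ be the respective sections. Because $\phi$ is a morphism over $X$, we have $\iota' \circ \phi \circ e = \iota \circ e = e_X$. Thus $\phi \circ e$ is a section of $Y'$ lifting the unit section of $X$. By the uniqueness just proved, applied to $Y'$, it follows that $\phi \circ e \cong e'$.

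I expect no serious obstacle; the argument never uses $G_X$-equivariance. The only point needing care is the 2-categorical bookkeeping. One must record that a morphism over $X$ comes with a 2-isomorphism $\iota' \circ \phi \Rightarrow \iota$. One must also record that representable monomorphisms of stacks induce fully faithful functors on groupoids of $B$-points, so that equality of sections is interpreted up to unique 2-isomorphism.
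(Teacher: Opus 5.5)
Your proof is correct and follows the paper's argument exactly: uniqueness comes from $Y\to X$ being a monomorphism, and preservation follows because the composite of the section $S\to Y$ with the morphism is a section of $Y'$ over the unit section of $X$, so uniqueness applies. Your extra bookkeeping (monomorphisms of stacks are fully faithful on fibres, so sections are unique up to unique $2$-isomorphism) makes explicit what the paper leaves implicit.
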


\begin{proof}
	The uniqueness of the section follows from the fact that
	$Y\to X$ is a monomorphism.
	Let $f:Y\to Y'$ be a morphism of partial log compactifications of
	$G_X\to S$. Since $f$
	is a morphism over $X$, the composition of the section
	$S\to Y$ with $f$ is a section of $Y'$ over the unit section
	of $X$ and the result follows from uniqueness.
\end{proof}

The goal of this paper is to study the $G_X$-equivariant birational
classification problem for partial log compactifications of $G_X\to S$.
For convenience, we repeat here the precise definition of
$G_X$-equivariant birational equivalence:

\begin{definition}
	Let $Y_1,Y_2\to X$ be partial log compactifications of $G_X\to S$. We
	say that $Y_1$ and $Y_2$ are birationally equivalent if there exists a
	diagram:
	$$\begin{tikzcd}
		&Y_3\arrow[dr]\arrow[dl]&\\
		Y_1&&Y_2
	\end{tikzcd}$$
	of proper morphisms of partial log compactifications of $G_X\to S$
	representable by log algebraic spaces. A morphism of partial
	log compactifications is said to be a birational equivalence
	if it is proper and representable by log algebraic spaces.
\end{definition}

\begin{remark}
	Since any fppf descent datum for log algebraic spaces is
	effective by \cite[0ADV]{stacks-project}, representability
	is local in the strict fppf topology.
	A fortiori, it is local in the strict étale topology.
\end{remark}

From the above definition it is not obvious that the morphisms in the diagram
are isomorphisms on dense open subsets. This is the content of the following lemma:

\begin{lemma}
	\label{lem:bir_is_bir}
	Let $S$ be a log smooth log scheme, $X\to S$ a log semiabelian variety and
	let $Y\to X$ be a partial log compactification of $G_X\to S$. Let $U\to Y$ be a morphism
	from a log scheme such that the composition $U\to Y\to S$ is log smooth. Then
	$U\times_Y G_X\to U$ has dense image in $U$. Moreover, given a morphism $Y\to Y'$ of
	partial log compactifications, the induced morphism $U\times_Y G_X\to U$ lifts to
	the identity $U\times_Y G_X\to U\times_{Y'}G_X$.
\end{lemma}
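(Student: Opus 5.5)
We reduce the density claim to the statement that the log structure of $U$ is trivial exactly on a dense open subset, and that over this subset $Y$ agrees with $G_X$.

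\textbf{Step 1: over the trivial locus, $Y$ restricts to $G_X$.} Let $S^\circ\subseteq S$ be the dense open locus where $M_S$ is trivial. Over $S^\circ$ we have $M_S^{\gp}=\mathcal{O}^\times$ strict étale locally, so $\mathbb{G}_{m,\log}^r=\mathbb{G}_m^r$. The log abelian part is an honest abelian scheme, so $X|_{S^\circ}=G_X|_{S^\circ}$. We then pass to $U$.

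\textbf{Step 2: the trivial locus of $U$ is dense.} Let $U^\circ\subseteq U$ be the locus where $M_U$ is trivial. Since $U\to S$ is log smooth and $S$ is log smooth over the base field, $U$ is log smooth. Hence $U$ is toroidal (étale locally a toric variety times a smooth scheme), and $U^\circ$ is open and dense.

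\textbf{Step 3: $U^\circ$ lies in the image of $U\times_Y G_X$.} A point $u\in U^\circ$ has trivial log structure, so its image in $X$ is a point of a log scheme with trivial log structure. I would argue that any morphism from a log scheme $V$ with trivial log structure to $X$ factors through $G_X$. Strict étale locally, $X$ is a pushout $X'\times^{\mathbb{G}_m^r}\mathbb{G}_{m,\log}^r$ built from $A$. For trivial $M_V$, both $\mathbb{G}_{m,\log}(V)=\mathbb{G}_m(V)$ and $A(V)=G_A(V)$ hold, the latter by Kajiwara–Kato–Nakayama: on log-trivial bases, $A$ has no non-classical points. Hence $X(V)=G_X(V)$. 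Applying this to $V=U^\circ$ (with the restricted log structure) gives $U^\circ\to Y\to X$ factoring through $G_X$. The composite $U^\circ\to Y$ then factors through $Y\times_X G_X$. Because $Y\to X$ is a monomorphism with $G_X\subseteq Y$ (by $G_X$-equivariance and the unit section), we get $Y\times_X G_X=G_X$, so $U^\circ\subseteq \im(U\times_Y G_X\to U)$. Density follows from Step 2.

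\textbf{Step 4: compatibility with morphisms.} For $Y\to Y'$ over $X$, we have $G_X\to Y\to Y'$ and $U\times_Y G_X\to U\times_{Y'}G_X$. Since $Y\times_X G_X=G_X=Y'\times_X G_X$ by Step 3, both fiber products equal $U\times_X G_X$. The induced map is therefore the identity, provided the embedding $G_X\to Y'$ is the composite $G_X\to Y\to Y'$. This holds because $Y'\to X$ is a monomorphism and both maps lie over the inclusion $G_X\to X$.

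The main obstacle is Step 3: justifying that $Y\times_X G_X=G_X$, and that $X(V)=G_X(V)$ for log-trivial $V$. The first should follow from the monomorphism property together with $G_X\to X$ factoring through $Y$ via the orbit map of the section $S\to Y$. The second needs the KKN description $A(V)=G_A(V)$ when $M_V$ is trivial. Note that Step 3 applies this to $U^\circ$ itself, so no hypothesis on $S$ beyond log smoothness of $U$ is needed, and Step 1 is not strictly used in the argument.
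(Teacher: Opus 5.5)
Your argument is correct, and it takes a different, shorter route than the paper.

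\textbf{Step 1 is false and should be deleted.} Over $S^\circ$ the sheaves $\mathbb{G}_{m,\log}$ and $\mathbb{G}_m$ on $\LogSch/S^\circ$ agree only on test objects with trivial log structure. For example, on the standard log point $\mathbb{G}_{m,\log}$ has $K^\times\times\mathbb{Z}$ as points. So $X|_{S^\circ}\neq G_X|_{S^\circ}$ as soon as $r>0$; take $S=\Spec K$ and $X=T_{\log}$. If Step 1 held, the lemma, and much of the paper, would be trivial. You never use it, so simply remove it.

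\textbf{Tightening Step 3.} What you actually need there, and use correctly, is the statement for log-trivial test objects $V$. It is cleanest to phrase it via the tropical part. Strict \'etale locally, $[X/G_X]$ is an extension of $[A/G_A]$ by $\mathbb{G}_{m,\trop}^r$, and both are built from $\overline{M}^{\gp}$; for instance $\mathbb{G}_{m,\trop}(V)=\overline{M}_V^{\gp}(V)$. Hence $[X/G_X](V)$ is a single point. Strict covers of log-trivial schemes are again log-trivial, so sheafification causes no trouble. Therefore $V\to X$ factors through the fibre over $0$, which is $G_X$.

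\textbf{Comparison with the paper.} The paper first restricts $U$ to the preimage of the log-trivial locus of $S$. It then works locally on $A$ to reduce to $X=\mathbb{G}_{m,\log}^r$, and then to $r=1$. Finally it pulls back the log modification $\mathbb{P}^1\to\mathbb{G}_{m,\log}$ of \cite[Lemma 2.2.7.3]{TheLogarithmicMolcho2022}, so that it only has to handle a morphism of log schemes $U'\to\mathbb{P}^1$ from a log smooth $U'$. There the preimage of $\mathbb{G}_m$ is dense.

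The underlying fact in both arguments is the same. Log-trivial points of a log smooth scheme are dense, and such points can only map to the log-trivial part of the target. You apply this directly to $U$ and $X$, so you avoid the reductions and the log-modification input.

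Your route also gives a stronger conclusion. You get that $U\times_Y G_X\to U$ is an isomorphism over the dense open $U^\circ$. The paper's conclusion is only that the image is dense, and that image need not be open; for example, the section $x/y$ on toric $\mathbb{A}^2$ gives image $\mathbb{G}_m^2\cup\{0\}$. Your stronger form is the one the paper actually invokes later, namely ``isomorphism on a dense open subset''. This appears in the proof that proper morphisms of generalized partial log compactifications are universally surjective, and in the proof of Theorem~\ref{thm:trop_corresp}.

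\textbf{The second assertion.} You deduce it directly from $Y'\to X$ being a monomorphism. The paper instead uses equivariance together with preservation of the unit section (Lemma~\ref{lem:morphs_commute_sec}). These are equivalent, since that lemma is itself proved from the monomorphism property.
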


\begin{proof}
	Note that $U\times_Y G_X\cong U\times_X G_X$ since $Y\to X$ is a monomorphism. Hence it suffices
	to consider the case $Y=X$. Let $Y\to Y'$ be a morphism of partial log compactifications. Note that $G_X$
	is a torsor under itself and since $Y\to Y'$ preserves the zero-section by Lemma \ref{lem:morphs_commute_sec}, it follows that $Y\to Y'$
	restricts to the identity $G_X\subseteq Y\to G_X\subseteq Y'$. In particular, the same holds after base-change
	to $U$. It remains to show density of $G_X$:

	Clearly, we may assume that $U$ is non-empty. Since $U\to S$ is log smooth, the locus
	in $U$ lying over the locus in $S$ with trivial log structure is dense in $U$. Hence
	we may assume that $S$ has trivial log structure. In this case, $X$ is an extension of
	an abelian variety $A$ by $\mathbb{G}_{m,\log}^{r}$ for some $r\ge 0$. The question is local
	on $A$, so replacing $S$ with $A$ and working locally on $S$ we may assume that $X=\mathbb{G}_{m,\log}^{r}$.
	We must show that $U\times_{\mathbb{G}_{m,\log}^{r}}\mathbb{G}_{m}^{r}\subseteq U$ is dense.
	Since
	$$U\times_{\mathbb{G}_{m,\log}^{r}}\mathbb{G}_{m}^{r}\cong (U\times_{\mathbb{G}_{m,\log}}\mathbb{G}_{m})\times\dots\times(U\times_{\mathbb{G}_{m,\log}}\mathbb{G}_{m}),$$
	it suffices to consider the case $r=1$. By \cite[Lemma 2.2.7.3]{TheLogarithmicMolcho2022},
	there is a log modification
	$\mathbb{P}^{1}\to \mathbb{G}_{m,\log}$. The fiber product
	$\mathbb{G}_{m}\times_{\mathbb{G}_{m,\log}}\mathbb{P}^{1}$
	is the dense open torus $\mathbb{G}_m$
	in $\mathbb{P}^{1}$. Let $U':=U\times_{\mathbb{G}_{m,\log}}\mathbb{P}^{1}$. Then $U'\to U$ is a log modification,
	so in particular $U'$ is log smooth. It follows that the preimage of $\mathbb{G}_{m}$ under $U'\to \mathbb{P}^{1}$ is dense
	in $U'$. Since $U'\times_{\mathbb{P}^{1}}\mathbb{G}_{m}\to U$ factors through $U\times_{\mathbb{G}_{m,\log}}\mathbb{G}_{m}$,
	it follows that $U\times_{\mathbb{G}_{m,\log}}\mathbb{G}_{m}$ is dense in $U$.
\end{proof}

To give a satisfying answer to the birational classification problem
for partial log compactifications, we establish a link to combinatorics. Recall
that for log abelian varieties, this link is achieved by studying the tropical
abelian variety $[A/G_A]$ associated with $A$. Motivated by this construction,
we define:

\begin{definition}
	Let $X$ be a log semiabelian variety. If $Y\to X$ is a partial log
	compactification of $G_X\to S$, we say that $[Y/G_X]$ is a partial log compactification
	of $[X/G_X]$. Morphisms of partial log compactifications are morphisms over $[X/G_X]$ preserving
	the lift of the zero-section $S\to [Y/G_X]$ induced by the lift $S\to Y$.
	The association $Y\mapsto [Y/G_X]$
	defines a functor from the category of partial log
	compactifications to the category of partial log compactifications of $[X/G_X]$.
	We call a partial log compactification a log compactification of $[X/G_X]$
	if $Y$ is a log compactification of $G_X\to S$, or equivalently, if $[Y/G_X]\to [X/G_X]$
	is proper.
\end{definition}

\begin{lemma}
	\label{lem:equiv_cat}
	The functor $Y\mapsto [Y/G_X]$ is an equivalence of categories. An inverse
	is given by sending a partial log compactification $F$ of $[X/G_X]$
	to $F\times_{[X/G_X]}X$.
\end{lemma}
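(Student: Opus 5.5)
The plan is to establish the equivalence by exhibiting the two composites as naturally isomorphic to the identities, using the fact that $X\to[X/G_X]$ is a $G_X$-torsor. Throughout, $[Y/G_X]$ denotes the quotient stack for the strict fppf topology on $\LogSch/S$. The map $Y\to X$ is $G_X$-equivariant, so it induces $[Y/G_X]\to[X/G_X]$. Since $G_X$ acts on itself by translation and $Y\to X$ is equivariant, the square
\[
\begin{tikzcd}
Y \ar[r]\ar[d] & X \ar[d]\\
{[Y/G_X]} \ar[r] & {[X/G_X]}
\end{tikzcd}
\]
is $2$-cartesian. This is the standard fact that for an equivariant map $Y\to X$ one has $Y\cong [Y/G]\times_{[X/G]}X$. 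I would check it directly: an object of the fiber product over $B$ is a $G_X$-torsor $P\to B$ with an equivariant map $P\to Y$, together with a trivialization of the induced torsor $P\to X$ pulled back along $B\to X$. Such a trivialization is exactly a section of $P$, hence a point of $Y$. This already gives $F\times_{[X/G_X]}X\cong Y$ when $F=[Y/G_X]$.

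Conversely, for a partial log compactification $F\to[X/G_X]$, set $Y_F:=F\times_{[X/G_X]}X$. Then $Y_F\to F$ is a $G_X$-torsor, being pulled back from $X\to[X/G_X]$, and $Y_F\to X$ is $G_X$-equivariant. Hence $[Y_F/G_X]\cong F$ canonically, by descent for torsors. I must verify that $Y_F\to X$ is a partial log compactification. The properties required by Definition~\ref{def:log_cpt} are being a log étale monomorphism, being representable by fs log DM stacks of finite presentation, and properness when relevant. Each of these is stable under base change and, conversely, descends along the torsor $X\to[X/G_X]$. The latter is a strict fppf cover because $G_X\to S$ is smooth and surjective. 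Here I read ``partial log compactification of $[X/G_X]$'' as meaning that $F\to[X/G_X]$ has these properties after pulling back to $X$. The definition is phrased via $Y$, so this is essentially tautological, but I will make it explicit so that $F$ ranges over exactly the essential image.

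For the zero sections: a lift $S\to Y$ of the unit induces $S\to[Y/G_X]$. In the other direction, a lift $S\to F$ of $S\to[X/G_X]$ combines with the unit section $S\to X$ to give a section $S\to Y_F$ lifting the unit. These two constructions are inverse under the identifications above.

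On morphisms, an equivariant morphism $Y\to Y'$ over $X$ induces $[Y/G_X]\to[Y'/G_X]$ over $[X/G_X]$ preserving the zero sections. A morphism $F\to F'$ over $[X/G_X]$ base changes to an equivariant morphism $Y_F\to Y_{F'}$ over $X$, and it preserves the section by Lemma~\ref{lem:morphs_commute_sec}. Since both categories have $\Hom$ sets that are at most singletons, full faithfulness reduces to checking that a morphism exists in one category if and only if it exists in the other. The monomorphism property of $Y\to X$ gives the singleton claim upstairs; downstairs it follows because $F\to[X/G_X]$ is a monomorphism, which descends from $Y\to X$. The step I expect to be the main obstacle is this bookkeeping: making precise that quotient stacks for the strict fppf topology on $\LogSch/S$ behave as in the classical setting. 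In particular, I need torsor descent and the cartesian square above in this setting, and I need to confirm that the stated properties descend along $X\to[X/G_X]$, which is not itself representable by log schemes in the naive sense but is a $G_X$-torsor and hence a strict smooth cover.
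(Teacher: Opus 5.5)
Your proposal is correct and follows the same route as the paper: both identify the two composites with the identity using that $X\to[X/G_X]$ is the universal $G_X$-torsor. This gives $[Y/G_X]\times_{[X/G_X]}X\cong Y$, and since forming the quotient commutes with base change along it, also $[(F\times_{[X/G_X]}X)/G_X]\cong F$. Your additional bookkeeping on the zero sections and on morphisms (via the monomorphism property and Lemma~\ref{lem:morphs_commute_sec}) only makes explicit what the paper's short proof leaves implicit.
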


\begin{proof}
	First, let $[Y/G_X]$ be a partial log compactification of $[X/G_X]$.
	Then:
	$$([Y/G_X]\times_{[X/G_X]}X)/G_X\cong[Y/G_X]\times_{[X/G_X]}[X/G_X]\cong [Y/G_X]$$
	showing that $\ast\mapsto [(\ast\times_{[X/G_X]}X)/G_X]$ is the
	identity functor. Similarly, if $Y$ is a partial log compactification
	of $G_X\to S$, the space $([Y/G_X])\times_{[X/G_X]}X$ is the
	base-change of the universal $G_X$-bundle $X\to [X/G_X]$
	over $[Y/G_X]$ and hence canonically isomorphic to $Y$.
\end{proof}

\begin{remark}
    The space $[Y/G_X]$ is commonly referred to as the \emph{relative Artin fan} or \emph{Olsson fan} of $Y$ in the literature.
\end{remark}

\begin{lemma}
	\label{lem:bir_on_quot}
	A morphism $Y_1\to Y_2$ of partial log compactifications of $G_X\to S$
	is a birational equivalence if and only if the induced morphism
	$[Y_1/G_X]\to [Y_2/G_X]$ is proper and representable by log algebraic spaces.
\end{lemma}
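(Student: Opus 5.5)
We will reduce both properties to the cartesian square supplied by Lemma \ref{lem:equiv_cat}. Write $f\colon Y_1\to Y_2$ for the given morphism and $\bar f\colon [Y_1/G_X]\to[Y_2/G_X]$ for the induced morphism. By Lemma \ref{lem:equiv_cat} we have $Y_i\cong [Y_i/G_X]\times_{[X/G_X]}X$, compatibly with $f$ and $\bar f$. Since both $[Y_i/G_X]$ live over $[X/G_X]$, the square
\[
\begin{tikzcd}
Y_1 \ar[r,"f"] \ar[d] & Y_2 \ar[d] \\
{[Y_1/G_X]} \ar[r,"\bar f"] & {[Y_2/G_X]}
\end{tikzcd}
\]
is $2$-cartesian. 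Indeed, $[Y_1/G_X]\times_{[Y_2/G_X]}Y_2\cong [Y_1/G_X]\times_{[Y_2/G_X]}[Y_2/G_X]\times_{[X/G_X]}X\cong Y_1$. The vertical maps are $G_X$-torsors, hence strict smooth surjections; in particular they are strict fppf covers. Morphisms of partial log compactifications are $G_X$-equivariant over $X$, so they are the same as morphisms of partial log compactifications of $[X/G_X]$. Thus $f$ is a birational equivalence exactly when it is proper and representable by log algebraic spaces, and the lemma reduces to showing that these two properties descend along the vertical maps.

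The easy direction is ``if''. Properness and representability by log algebraic spaces are both stable under base change, so if $\bar f$ has them, then so does its pullback $f$.

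For ``only if'', we descend along the strict fppf cover $[Y_2/G_X]\leftarrow Y_2$. For representability, let $B\to [Y_2/G_X]$ be a morphism from a log scheme. Then $B':=B\times_{[Y_2/G_X]}Y_2\to B$ is a $G_X$-torsor, hence a strict fppf cover, and $B'$ is a log algebraic space (a log scheme after passing to an fppf cover of $B$ trivialising the torsor). Also $B'\times_{[Y_2/G_X]}[Y_1/G_X]\cong B'\times_{Y_2}Y_1$ is a log algebraic space by hypothesis. Hence $B\times_{[Y_2/G_X]}[Y_1/G_X]$ is a strict fppf sheaf that becomes a log algebraic space after a strict fppf cover, with descent datum. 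By the Remark following the definition of birational equivalence, such descent data are effective, so $\bar f$ is representable by log algebraic spaces. Properness then descends as well: the underlying algebraic space map of $B\times_{[Y_2/G_X]}[Y_1/G_X]\to B$ becomes proper after the strict flat surjective base change $B'\to B$. Because the cover is strict, underlying spaces commute with the base change, and properness is fppf local on the target.

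The main obstacle is the representability descent. One has to check that the descended object is a log algebraic space in the sense used here: an fs log structure on an underlying algebraic space, with the log structure also descending. This is where strictness of the torsor cover is essential, since strict fppf descent of fs log structures is effective. One also has to handle properness for morphisms of log stacks, which by definition reduces to the underlying algebraic spaces once representability is known. I would first try to invoke \cite[0ADV]{stacks-project} on underlying spaces together with strict descent of log structures, both of which are routine.
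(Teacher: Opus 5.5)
Your proposal is correct and follows essentially the same route as the paper: for ``if'' you pull back along $Y_2\to[Y_2/G_X]$, and for ``only if'' you reduce to a cover over which the map to $[Y_2/G_X]$ lifts to $Y_2$, then use that properness and representability are local on the target. The only cosmetic difference is that you descend along the $G_X$-torsor $B\times_{[Y_2/G_X]}Y_2\to B$ as a strict fppf cover, whereas the paper passes to a strict étale cover of $B$ on which such a lift already exists.
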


\begin{proof}
	Assume first that $[Y_1/G_X]\to [Y_2/G_X]$ is proper and representable by log algebraic spaces.
	Then, taking the base-change over $Y_2$, the same is true for $Y_1\to Y_2$, i.e.,
	$Y_1\to Y_2$ is a birational equivalence.
	For the converse, let $Z\to [Y_2/G_X]$ be a morphism. Strict étale locally on $Z$ there exists a lift
	$Z\to Y_2$. Since properness and representability are strict étale local on the target,
	we may assume that $Z$ admits a global lift. In this case, we have $Z\times_{[Y_2/G_X]}[Y_1/G_X]\cong Z\times_{Y_2}Y_1$
	and hence after replacing $Z$ with a strict étale cover, we have that $Z\times_{[Y_2/G_X]}[Y_1/G_X]$ is a log
	algebraic space, proper over $Z$.
\end{proof}

\begin{lemma}
	A morphism $Y\to [X/G_X]$ from a log stack $Y$ is a partial log compactification
	of $[X/G_X]$ if and only if $Y\to [X/G_X]$ is
	a log étale monomorphism representable by log DM stacks of finite presentation such that
	there exists a section $S\to Y$ over the zero-section $S\to [X/G_X]$.
\end{lemma}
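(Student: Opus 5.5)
The plan is to reduce everything to Lemma \ref{lem:equiv_cat}, using the fact that the four conditions in the statement are local on the target and stable under base change. By definition, $F\to[X/G_X]$ is a partial log compactification exactly when $F\cong[Y/G_X]$ for some partial log compactification $Y\to X$ of $G_X\to S$. By Lemma \ref{lem:equiv_cat} one can then take $Y=F\times_{[X/G_X]}X$. So the statement becomes: for a morphism $F\to[X/G_X]$ with pullback $Y:=F\times_{[X/G_X]}X\to X$, the morphism $F\to[X/G_X]$ has the listed properties if and only if $Y\to X$ does, together with $G_X$-equivariance and the section $S\to Y$ over the unit section.

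\emph{Forward direction.} Assume $F=[Y/G_X]$ with $Y\to X$ a partial log compactification. Let $Z\to[X/G_X]$ be any morphism. Strict étale locally on $Z$ it lifts to $Z\to X$, because $X\to[X/G_X]$ is a $G_X$-torsor and $G_X$ is smooth. After such a lift,
\[
Z\times_{[X/G_X]}F\cong Z\times_X Y .
\]
The right-hand side is a log DM stack of finite presentation over $Z$, log étale, and a monomorphism. Each of these properties is strict étale local on the target (the paper uses the same argument in Lemma \ref{lem:bir_on_quot}), so $F\to[X/G_X]$ has them. The section $S\to Y$ over the unit section composes with $Y\to[Y/G_X]$ to give a section $S\to F$ over the zero-section of $[X/G_X]$.

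\emph{Converse.} Assume $F\to[X/G_X]$ has the properties and set $Y:=F\times_{[X/G_X]}X$. Then:
\begin{itemize}
\item $Y\to X$ is the base change of $F\to[X/G_X]$, so it is a log étale monomorphism representable by log DM stacks of finite presentation.
\item $G_X$ acts on $Y$ through the second factor, and $Y\to X$ is equivariant for this action.
\item Since $X\to[X/G_X]$ is a $G_X$-torsor, $[Y/G_X]\cong F$.
\item The section $S\to F$ over the zero-section, paired with the unit section $S\to X$, defines a map $S\to F\times_{[X/G_X]}X=Y$; this requires the two compositions to $[X/G_X]$ to agree. That map is a section of $Y$ lying over the unit section of $X$.
\end{itemize}
Hence $Y$ is a partial log compactification of $G_X\to S$ and $F\cong[Y/G_X]$.

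\emph{Main obstacle.} The one delicate point is the compatibility of the sections. The zero-section of $[X/G_X]$ is the composite $S\to X\to[X/G_X]$ of the unit section with the quotient map. The condition ``over the zero-section'' has to be read as the existence of a $2$-isomorphism between the two maps $S\to[X/G_X]$. That $2$-isomorphism is exactly the datum needed to produce the map $S\to Y$ in the fiber product. Because $Y\to X$ is a monomorphism, the resulting section is unique by Lemma \ref{lem:morphs_commute_sec}, so the choice of $2$-isomorphism does not matter. The remaining input is that representability, log étaleness, monomorphism and finite presentation all descend along strict étale covers; for representability this is the stacks-project remark quoted earlier in the paper.
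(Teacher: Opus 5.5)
Your proposal is correct and follows the paper's proof essentially step for step. For one direction you base change along $X\to[X/G_X]$ and use $F\cong[(F\times_{[X/G_X]}X)/G_X]$. For the other you take strict étale local lifts $Z\to X$ with $Z\times_{[X/G_X]}[Y/G_X]\cong Z\times_X Y$, and the section is handled exactly as in the paper.

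One small citation point: the remark you invoke only covers representability by log algebraic spaces. For strict étale locality of representability by log DM stacks, the paper elsewhere relies on \cite[Theorem 1.1 (3)]{fortman2025descentalgebraicstacks}.
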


\begin{proof}
	First assume that $\overline{Y}\to [X/G_X]$ is a log étale monomorphism
	representable by log DM stacks of finite presentation which admits a section $S\to \overline{Y}$
	over the zero-section $S\to [X/G_X]$. Then $\overline{Y}\times_{[X/G_X]}X\to X$
	is a $G_X$-equivariant log étale monomorphism representable
	by log DM stacks of finite presentation by base-change.
	Moreover, the zero-section $S\to X$ and the section $S\to \overline{Y}$
	over the zero-section of $[X/G_X]$ define a morphism
	$S\to \overline{Y}\times_{[X/G_X]}X$, i.e., $\overline{Y}\times_{[X/G_X]}X$ is a partial log compactification
	of $G_X\to S$. Since $\overline{Y}\cong [(\overline{Y}\times_{[X/G_X]}X)/G_X]$, we
	get that $\overline{Y}$ is a partial log compactification of $[X/G_X]$.
	Conversely assume that $[Y/G_X]$ is a partial log compactification
	of $[X/G_X]$. Note that strict étale locally any morphism $Z\to [X/G_X]$
	lifts to a morphism $Z\to X$ and $Z\times_{[X/G_X]}[Y/G_X]\cong Z\times_{X}Y$.
	Since being a log étale monomorphism representable by log DM stacks of finite presentation
	is strict étale local on the base, we get that $[Y/G_X]\to [X/G_X]$ satisfies these
	conditions. Finally, note that the composition $S\to Y\to [Y/G_X]$ of the zero-section
	$S\to Y$ with the projection to $[Y/G_X]$ lies over the zero-section $S\to [X/G_X]$, finishing
	the proof.
\end{proof}

In the next section we will see that partial log compactifications of $[X/G_X]$ have a
combinatorial description similar to the description of Artin fans as
stacks over the category of rational polyhedral cones.

We end this section by showing that partial log compactifications are DM log stacks:

\begin{lemma}
	\label{lem:part_cptifications_are_nice}
	Let $X$ be a log semiabelian variety, and let $Y$ be a partial log compactification of $G_X\to S$. Then:
	\begin{itemize}
		\item $Y$ has a log étale cover by log schemes, i.e., a universally surjective log étale morphism $U\to Y$ with $U$ a log scheme.
		\item The diagonal of $Y$ is representable by log algebraic spaces of finite presentation.
	\end{itemize}
	In particular, any partial log compactification is a DM log stack.
\end{lemma}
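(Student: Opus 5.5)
The plan is to reduce both claims to the corresponding properties of the log semiabelian variety $X$, using that $Y\to X$ is representable by log DM stacks of finite presentation and is a monomorphism.

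\emph{Cover.} I would first produce a log étale cover of $X$ by log schemes.
\begin{itemize}
	\item Strict étale locally on $S$, the sheaf $X$ is an extension of a log abelian variety $A$ by $\mathbb{G}_{m,\log}^r$, coming from an extension $X'$ of $A$ by $\mathbb{G}_m^r$.
	\item A log abelian variety admits log étale covers by log schemes: by the theory of Kajiwara--Kato--Nakayama, $A$ has log modifications by proper log schemes, and these are log étale surjections.
	\item Likewise $\mathbb{P}^1\to\mathbb{G}_{m,\log}$, or any complete toric variety mapping to $\mathbb{G}_{m,\log}^r$, is a log modification, as already used in the proof of Lemma \ref{lem:bir_is_bir}.
	\item Combining these, choose a log modification $P\to A$ with $P$ a log scheme, and a complete toric variety $\overline{T}\to\mathbb{G}_{m,\log}^r$. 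Form $X'|_P\times^{\mathbb{G}_m^r}\overline{T}$, the contracted product with the torsor $X'|_P\to P$. This is a log scheme (or log algebraic space, refinable by an étale cover to a scheme) mapping to $X$ by a log modification.
	\item Hence $X$ has a universally surjective log étale cover $V\to X$ by log schemes.
\end{itemize}
The fiber product $V\times_X Y$ is then a log DM stack, because $Y\to X$ is representable by log DM stacks. Take an étale atlas $U\to V\times_X Y$ by a scheme carrying the pulled-back log structure. The composite $U\to Y$ is a strict étale surjection followed by the base change of $V\to X$, so it is log étale and universally surjective.

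\emph{Diagonal.} The diagonal $Y\to Y\times_S Y$ factors as
\[
Y\to Y\times_X Y\to Y\times_S Y.
\]
\begin{itemize}
	\item The first map is an isomorphism, since $Y\to X$ is a monomorphism.
	\item The second map is the base change of $\Delta_X\colon X\to X\times_S X$ along $Y\times_S Y\to X\times_S X$.
\end{itemize}
So it suffices to show that $\Delta_X$ is representable by log algebraic spaces of finite presentation. Since $X$ is a group sheaf, $\Delta_X$ is the pullback of the unit section $e\colon S\to X$ along the difference map $X\times_S X\to X$. It therefore suffices that $e$ is representable by log algebraic spaces of finite presentation. For a test object $Z\to X$, the fiber $Z\times_X S$ is the locus where the map vanishes.

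This is where I expect the main obstacle. Working strict étale locally, I would use the extension structure $0\to T_{\log}\to X\to A\to 0$.
\begin{itemize}
	\item For $A$, representability of the diagonal by log algebraic spaces (indeed, the unit section being a closed immersion after log modification) is in Kajiwara--Kato--Nakayama.
	\item For $\mathbb{G}_{m,\log}$, the vanishing locus of a section of $M_Z^{\gp}$ is represented by an open-closed condition combined with a log modification. Concretely it is $Z\times_{\mathbb{G}_{m,\log}}\{1\}$, computed via $\mathbb{P}^1$, and is a log scheme of finite presentation over $Z$.
	\item An extension then gives the vanishing locus of $Z\to X$ as the vanishing locus of a map from $Z\times_A S$ to $T_{\log}$, which is representable by the two previous cases.
\end{itemize}
Descent of representability along strict fppf covers, by the Remark following the definition of birational equivalence, globalizes the argument.

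Finally, the two properties together with the strict fppf sheaf property (inherited from $X$, since $Y\to X$ is representable) give that $Y$ is a DM log stack.
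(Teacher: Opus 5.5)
Your proposal is correct and follows the paper's proof. Both arguments work strict étale locally on $S$, reduce to $Y=X$ using that $Y\to X$ is a monomorphism representable by log DM stacks, build a cover of $X$ from a log étale cover of $A$ together with $\mathbb{P}^1\to\mathbb{G}_{m,\log}$ and the $\mathbb{G}_m^r$-torsor $X'\to A$, and reduce the diagonal to the cases of $A$ and $\mathbb{G}_{m,\log}$. The only difference is in the diagonal step: you pull back the unit section along $0\to T_{\log}\to X\to A\to 0$ and realize the unit section of $\mathbb{G}_{m,\log}$ as a closed subscheme of the log modification $Z\times_{\mathbb{G}_{m,\log}}\mathbb{P}^1$, whereas the paper factors the diagonal through $X\times_A X\cong\mathbb{G}_{m,\log}^r\times_S X'$ (after base change to $X'$) and then uses the representability of $0\to\mathbb{G}_{m,\trop}$ from \cite{TheLogarithmicMolcho2022}.
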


\begin{proof}
	Note that these properties are strict étale local on $S$ (see also
	\cite[Theorem 1.1 (3)]{fortman2025descentalgebraicstacks})
	and hence
	we may assume that $X$ is an extension of a log abelian variety
	$A$ by $\mathbb{G}_{m,\log}^{r}$ which is obtained from an
	extension of $A$ by $\mathbb{G}_{m}^{r}$.

	We start by reducing both properties to the case $Y=X$.

	This is clear for the existence of a cover, since $Y\to X$
	is log étale and representable by log DM stacks, which have
	strict étale covers by log schemes.

	For the representability of the diagonal, note that since
	$Y\to X$ is a monomorphism, we have a fiber diagram:
	$$\begin{tikzcd}
		Y\arrow[r]\arrow[d]&X\arrow[d]\\
		Y\times_S Y\arrow[r]&X\times_S X,
	\end{tikzcd}$$
	where the vertical maps are the diagonal morphisms. Hence
	by base-change it suffices to show that $X\to X\times_S X$ is
	representable by log algebraic spaces of finite presentation.

	We reduce to showing the claims in the special case $Y=X$.
	We show first that $X$ has a log étale cover by log schemes.
	By \cite[Theorem 10.4]{LogarithmicAbeKajiwa2015}, we know that $A$ has a log étale cover by log algebraic spaces. Since
	the extension of $A$ by $\mathbb{G}_{m,\log}^{r}$ comes from an extension
	of $A$ by $\mathbb{G}_{m}^{r}$ and $\mathbb{G}_{m}^{r}$-torsors trivialize
	strict étale locally, we are reduced to showing that $\mathbb{G}_{m,\log}^{r}$
	has a log étale cover by log algebraic spaces. Clearly, it suffices to consider the case
	$r=1$. In this case, by \cite[Lemma 2.2.7.3 (2)]{TheLogarithmicMolcho2022},
	we know that $\mathbb{G}_{m,\log}$ has a log étale cover by
	$\mathbb{P}^{1}$.

	For the representability of the diagonal, we must show that the diagonal
	$X\to X\times_S X$ is representable by log algebraic spaces of finite presentation.
	For this, consider the following fiber diagram:
	$$\begin{tikzcd}
		X\times_A X\arrow[d]\arrow[r]&A\arrow[d]\\
		X\times_S X\arrow[r]&A\times_S A.
	\end{tikzcd}$$
	By \cite[Proposition 10.5]{LogarithmicAbeKajiwa2015}, the
	right-hand vertical morphism is representable by log algebraic
	spaces. The map $X\to X\times_S X$ factors as
	$X\to X\times_A X\to X\times_S X$. Hence it suffices to show
	that $X\to X\times_A X$ is representable by log algebraic spaces.
	For this, let $X'$ denote the $\mathbb{G}_{m}^{r}$-extension
	of $A$ such that
	$X=X'\times_{\mathbb{G}_{m}^{r}}\mathbb{G}_{m,\log}^{r}$.
	Let $X'\to X$ be the natural map. The multiplication map
	on $X$ induces a canonical morphism $\mathbb{G}_{m,\log}^{r}\times_S X'\to X$.
	Let $\mathbb{G}_{m,\log}^{r}\times_S X'\to X\times_A X'$
	be induced by the above map and the identity on the second factor.
	This is a map of $\mathbb{G}_{m,\log}^{r}$-torsors over $X'$
	and hence an isomorphism. Since $X'\to A$ is an fppf cover
	of $A$ and representability of the diagonal is fppf local by \cite[Tag 0ADV]{stacks-project},
	it suffices to show that the diagonal
	$\mathbb{G}_{m,\log}^{r}\to \mathbb{G}_{m,\log}^{r}\times_S \mathbb{G}_{m,\log}^{r}$
	is representable. Clearly, we may assume $r=1$. The diagonal
	is the base-change of the zero-section $0\to \mathbb{G}_{m,\log}$
	under the morphism $\mathbb{G}_{m,\log}\times \mathbb{G}_{m,\log}\to \mathbb{G}_{m,\log}$
	sending $(x,y)$ to $xy^{-1}$. Factor $0\to \mathbb{G}_{m,\log}$
	as $0\to \mathbb{G}_{m}\to \mathbb{G}_{m,\log}$. The first morphism
	is clearly representable and hence it suffices to show
	that $\mathbb{G}_{m}\to \mathbb{G}_{m,\log}$ is representable
	by morphisms of finite presentation.
	This is the base-change of the inclusion $0\to \mathbb{G}_{m,\trop}=[\mathbb{G}_{m,\log}/\mathbb{G}_{m}]$
	which is representable by morphisms of finite presentation \cite[Proposition 2.2.7.5]{TheLogarithmicMolcho2022}\footnote{The proposition
		statement only claims that it is representable by morphisms of finite type.
	However, the method of proof shows that it is in fact of finite presentation}.
\end{proof}

\subsection{Minimal log compactifications}

In this section we introduce so-called generalized partial log
compactifications. The motivation for doing so is that the minimal objects
in the birational equivalence classes of partial log compactifications will be
of this form (see Proposition \ref{prop:exists_unique_minimal_log_cpt} below
for details).

\begin{definition}
	\label{def:generalized_log_cpt}
	A generalized partial log compactification of $G_X\to S$ is a
	$G_X$-equivariant monomorphism $Y\to X$ such that there exists a partial
	log compactification $Y'\to X$ and a universally surjective
	proper log étale morphism $Y'\to Y$ over $X$
	representable by log algebraic spaces of finite type.
	Morphisms and birational
	equivalences of generalized partial log compactifications are defined as
	for partial log compactifications.
\end{definition}

\begin{definition}
	We say that a morphism of log schemes is universally surjective if it satisfies the
	condition in \cite[Proposition 2.2.4.2]{TheLogarithmicMolcho2022}. Moreover, we say
	that a morphism $X\to Y$ of log DM stacks is universally surjective if there exists a
	morphism $U\to X$ from a log scheme $U$ such that the base-change of $U\to X\to Y$ over
	any fs log scheme is universally surjective in the previous sense.
	Finally, we say that a morphism $X\to Y$ of stacks over $\LogSch/S$ representable
	by log DM stacks is universally surjective if its base-change
	over any fs log scheme is universally surjective in the previous sense.
\end{definition}

\begin{definition}
	\label{def:minimal_cpt}
	A generalized partial log compactification $Y\to X$ is called minimal if it satisfies descent with respect to log modifications.
	That is, if $T'\to T$ is a log modification of a log algebraic space
	$T$ (i.e., a universally surjective proper log étale monomorphism),
	then every descent datum for a morphism
	to $Y$ relative to $T'\to T$ is effective. Since log modifications are surjective monomorphisms, this equivalently means that for any morphism $T'\to Y$ there exists a unique morphism $T\to Y$ such that the diagram:
    $$\begin{tikzcd}
        T'\arrow[d]\arrow[r]&Y\\
        T\arrow[ur]&
    \end{tikzcd}$$
    commutes.
\end{definition}

To lighten the notation we will drop the word ``generalized''
from the term ``minimal generalized partial log compactification''.

\begin{proposition}
	\label{prop:exists_unique_minimal_log_cpt}
	Every birational equivalence class of generalized partial log
	compactifications contains a unique minimal partial log compactification,
	which is the terminal object in the birational equivalence
	class.
\end{proposition}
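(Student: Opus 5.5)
Given a generalized partial log compactification $Y\to X$, I will construct its minimal model as the sheafification of $Y$ with respect to log modifications. Concretely, let $M_Y$ be the subsheaf of $X$ (recall $Y\to X$ is a monomorphism, so $Y$ is a subsheaf of $X$ on $\LogSch/S$) consisting of those sections $T\to X$ for which there is a log modification $T'\to T$ such that $T'\to T\to X$ factors through $Y$. Since log modifications are stable under base change and composition, and $X$ itself satisfies descent for log modifications, $M_Y$ is a $G_X$-stable subsheaf containing $Y$. It is the smallest subsheaf of $X$ containing $Y$ that satisfies the condition of Definition \ref{def:minimal_cpt}. Being a subsheaf of $X$ gives uniqueness of factorizations for free, so only existence needs to be checked.

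The steps, in order, are as follows.

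\textbf{(1) $Y\to M_Y$ is a birational equivalence.} Locally over a log scheme $T\to M_Y$, the fiber product $T\times_{M_Y}Y$ is the subsheaf of $T$ of maps factoring through $Y$. I must show this is representable by a proper log algebraic space over $T$. After pulling back along a log modification $T'\to T$ it becomes $T'\times_X Y$. Here I will use a partial log compactification $Y'\to Y$ as in Definition \ref{def:generalized_log_cpt}, and the fact (Lemma \ref{lem:bir_on_quot}, Lemma \ref{lem:equiv_cat}) that everything may be checked on the Artin fans $[Y/G_X]\to[X/G_X]$. Over such a local chart, the map $T\times_{M_Y}Y\to T$ should be a log étale monomorphism which becomes proper and surjective after a log modification. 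Hence it is itself a log modification, so it is representable and proper.

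\textbf{(2) $M_Y$ is a generalized partial log compactification.} The composite $Y'\to Y\to M_Y$ is universally surjective, proper, log étale and representable, which is exactly what Definition \ref{def:generalized_log_cpt} requires. $G_X$-equivariance and the section lifting the unit section are inherited from $Y$.

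\textbf{(3) $M_Y$ is terminal in the class.} If $Y_1\to Y_2$ is a birational equivalence, then for any $T\to Y_2$ the base change $T\times_{Y_2}Y_1\to T$ is a proper, log étale monomorphism between subsheaves of $X$. It is surjective by Lemma \ref{lem:bir_is_bir} together with properness, so it is a log modification. Hence $Y_1\subseteq Y_2\subseteq M_{Y_1}$, and $M_{Y_1}=M_{Y_2}$. Given a correspondence $Y_1\leftarrow Y_3\to Y_2$, this yields $M_{Y_1}=M_{Y_3}=M_{Y_2}$. Thus every member $Z$ of the class maps to $M_Y$ over $X$. The map is unique because everything is a subsheaf of $X$, which makes $M_Y$ terminal. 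If $Z$ is any minimal member, then $M_Z=Z$ by the descent property, so $Z=M_Y$. This gives uniqueness of the minimal object.

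The main obstacle is step (1). One must show that "factoring through $Y$ after some log modification" defines a representable, proper map. This requires a uniform way to bound the needed log modification locally on $T$, namely a finiteness statement. I would first try to reduce, via Lemma \ref{lem:equiv_cat} and strict étale localization on $S$, to a local combinatorial statement about cones: locally $Y'$ corresponds to finitely many cones, and log modifications correspond to subdivisions. The needed modification of $T$ is then the pullback of the subdivision determined by $Y'$, and it is proper.
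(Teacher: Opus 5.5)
Your architecture is sound, and your $M_Y$ turns out to coincide with the paper's, but the definition is set up the other way round. The paper puts $T\to X$ in $M_Y$ if and only if $T\times_X Y\to T$ is proper, universally surjective and representable by log algebraic spaces. With that definition, both the stack property and the fact that $Y\to M_Y$ is a birational equivalence are tautological, and the geometric work goes into minimality. You instead ask for the existence of \emph{some} log modification $T'\to T$ with $T'\to X$ factoring through $Y$. This makes minimality and the invariance $M_{Y_1}=M_{Y_2}$ formal, but puts everything into step (1).

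As written, step (1) is a gap. It is the only non-formal point of the proposition, and you only assert it (``should be \dots\ hence it is itself a log modification''). The same unproved fact is also needed for your claim that $M_Y$ is a strict fppf subsheaf at all, since log modifications chosen locally on a cover of $T$ need not glue.

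The missing idea is that no finiteness or uniform bound is needed, because there is a canonical candidate. After replacing $Y$ by $Y'$ (note $M_Y=M_{Y'}$), let $T'\to T$ be a log modification with $T'\to X$ factoring through $Y$, and put $W:=T\times_X Y$.
\begin{itemize}
\item Since $Y\to X$ is a monomorphism, $T'\times_T W=T'\times_X Y\cong T'$. So $W\to T$ becomes an isomorphism after base change along $T'\to T$.
\item $W\to T$ is a log étale monomorphism of finite presentation, representable by log algebraic spaces.
\item $W\to T$ is universally surjective, because the universally surjective map $T'\to T$ factors through it.
\item Properness of the quasi-compact map $W\to T$ then follows by descent along the log modification $T'\to T$. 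This is exactly what the paper extracts from \cite[Proposition 5.7 and Proposition 2.6]{Grothendieck_to_Hu_Xi_2025} in its minimality step, and it is the input you need to cite or prove.
\end{itemize}
Hence $W\to T$ is itself a log modification. This shows your $M_Y$ equals the paper's. Since $W$ commutes with base change and the relevant properties are strict fppf local, it also gives your sheaf property.

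Your combinatorial route could be made to work, but it is more roundabout. By Theorem \ref{thm:trop_corresp}(1), $Y$ is only an open substack of a combinatorial partial log compactification, so you would first have to show that this open immersion becomes an isomorphism over $T$.

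Finally, in step (3), Lemma \ref{lem:bir_is_bir} only gives density for honest partial log compactifications and for test objects log smooth over $S$. You therefore need the reduction the paper makes in the lemma just before the proposition: pass to the $Y_i'$ and test on log étale charts. This is the same input the paper uses for uniqueness and terminality.
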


For the proof we need the following lemma:

\begin{lemma}
	Any proper morphism of generalized partial log compactifications
	representable by log DM stacks is universally surjective.
\end{lemma}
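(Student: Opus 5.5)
Let $f:Y_1\to Y_2$ be a proper morphism of generalized partial log compactifications of $G_X\to S$, representable by log DM stacks. To show $f$ is universally surjective, I would base change along an arbitrary fs log scheme $T\to Y_2$ and show that $T\times_{Y_2}Y_1\to T$ is universally surjective in the sense of \cite[Proposition 2.2.4.2]{TheLogarithmicMolcho2022}. The idea is to reduce to log smooth test objects, where Lemma \ref{lem:bir_is_bir} gives density of $G_X$, and then use properness to close up the image.

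\textbf{Step 1.} Replace $Y_2$ by a partial log compactification that covers it. By Definition \ref{def:generalized_log_cpt} there is a partial log compactification $Y_2'\to X$ with a universally surjective proper log étale map $Y_2'\to Y_2$ over $X$. Universal surjectivity descends along universally surjective maps, so it suffices to treat $Y_1\times_{Y_2}Y_2'\to Y_2'$. Next I use Lemma \ref{lem:part_cptifications_are_nice}: $Y_2'$ has a log étale cover $U\to Y_2'$ by log schemes. Since $Y_2'\to X\to S$ is log étale over the log smooth $X\to S$, each such $U$ is log smooth over $S$, and it is enough to show $V:=U\times_{Y_2}Y_1\to U$ is surjective and stays so after arbitrary base change.

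\textbf{Step 2.} By Lemma \ref{lem:bir_is_bir}, $U\times_{Y_2}G_X$ is dense in $U$. The lemma also says that for the morphism $Y_1\to Y_2$ the restriction to $G_X$ is the identity. Hence $U\times_{Y_2}G_X$ lifts to $V$, so the image of $V\to U$ contains a dense subset. Since $f$ is proper and representable by log DM stacks, $V\to U$ is proper and its image is closed, so $V\to U$ is surjective on underlying spaces.

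\textbf{Step 3.} This is the main obstacle: upgrading surjectivity to universal surjectivity in the logarithmic sense, which requires surjectivity after base change along all fs log points, including non-strict ones such as higher-rank log points mapping to $U$. I would first try to reduce the test along $\operatorname{Spec}$ of a log point $P\to U$ to a log smooth situation by factoring through a log blowup or log étale chart, and then rerun the density argument of Step 2 on a log modification $U'\to U$ that is log smooth and through which the point factors; log modifications are universally surjective, so nothing is lost. Concretely, a valuative point of $U$ (a monoid map $\overline{M}_{U,u}\to \mathbb{R}_{\ge0}$, or a map from a standard log point) can be realized on some log modification $U'$ of $U$ as an honest point. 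Applying Step 2 to $U'$, which is still log smooth over $S$, produces a lift. I would also consider whether the tropical statement that a proper log étale map has surjective cone complex map gives a cleaner route. Either way, the key inputs are density of $G_X$ and closedness of the image under properness.
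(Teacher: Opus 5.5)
Your Steps 1--2 are essentially the paper's proof. The paper reduces to partial log compactifications, replacing both $Y_1$ and $Y_2$ so that Lemma \ref{lem:bir_is_bir} applies literally; replacing only the target, as you do, is harmless since $G_X$ still lifts to $Y_1$. It then uses the dense open from Lemma \ref{lem:bir_is_bir} together with properness to get surjectivity of the base change along every log étale $Z\to Y_2$ from a log scheme, and stops there; it implicitly treats surjectivity after all log étale base changes (which include log modifications of your $U$) as enough for universal surjectivity in the sense of \cite[Proposition 2.2.4.2]{TheLogarithmicMolcho2022}, so your Step 3 just makes this explicit. Step 3 is better phrased as testing on rank-one fs log points, which lift to a ray stratum of a star subdivision up to a harmless Kummer index, rather than as claiming every valuative point becomes an honest point.
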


\begin{proof}
	Let $Y_1\to Y_2$ be a proper morphism of generalized partial
	log compactifications representable by log DM stacks.
	For $i\in \{1,2\}$, consider universally surjective proper morphisms
	$Y_{i}'\to Y_i$ representable by log algebraic spaces,
	where $Y_{i}'$ is a partial log compactification. Since universal
	surjectivity descends along arbitrary morphisms,
	it suffices to show that $Y_1'\to Y_2$ is universally surjective.
	Since $Y_2'\to Y_2$ is universally surjective and universal
	surjectivity is stable under composition, it suffices to
	show that the base-change $Y_1'\times_{Y_2}Y_2'=Y_1'\times_{X}Y_2'\to Y_2'$ is
	universally surjective. Replacing $Y_2$ with $Y_2'$ and
	$Y_1$ with $Y_1'\times_{X}Y_2'$, we hence reduce to the case
	when $Y_1$ and $Y_2$ are partial log compactifications.

	In this case, let $Z\to Y_2$ be a log étale morphism from a
	log scheme $Z$. Note that
	By Lemma \ref{lem:bir_is_bir} there is a dense open subset
	on which $Z\times_{Y_2}Y_1\to Z$ is an
	isomorphism. Hence by properness it is surjective.
\end{proof}

\begin{proof}[Proof of Proposition \ref{prop:exists_unique_minimal_log_cpt}]
	For uniqueness, let $Y_1,Y_2\to X$ be two minimal partial log compactifications and assume
	that
	$$\begin{tikzcd}
	 &Y_3\arrow[dr]\arrow[dl]&\\
		Y_1&&Y_2
	\end{tikzcd}$$
	is a birational equivalence.
	By the preceding lemma, the morphisms $Y_{3}\to Y_i$ for $i\in \{1,2\}$
	are universally surjective. They are clearly monomorphisms,
	since the compositions with the maps to $X$ are. Finally,
	they are log étale since the $Y_i$ are log étale over $X$ (see Lemma \ref{lem:let_when_both}). Hence by the minimality of
	$Y_1$ and $Y_2$ we get unique morphisms $Y_1\to Y_2$
	and $Y_2\to Y_1$ descending these. By uniqueness of the descended
	morphism, one sees that the compositions of these morphisms
	are the identity and hence $Y_1=Y_2$.

	For existence, let $Y$ be a generalized partial log compactification.
	Since $Y$ is birationally equivalent to a partial log compactification by
	the definition of a generalized partial log compactification, we may assume
	that $Y$ is a partial log compactification.
	Let $M_Y$ be the stack over $\LogSch/X$ which to
	$Z\to X$ associates the singleton if $Z\times_X Y\to Z$
	is a proper, universally surjective morphism representable by log algebraic spaces and
	the empty set otherwise. To see that this is indeed a stack,
	note that universal surjectivity, properness and representability are local in the
	strict fppf topology \cite[Tags 02L1, 02KV and 0ADV]{stacks-project}.
	We claim that $M_Y\to X$ is a generalized partial
	log compactification. First note that the map $M_Y\to X$ is clearly
	a monomorphism. Let $Z\to M_Y$ be a morphism. Then
	$Z\times_{M_Y}Y=Z\times_{X}Y\to Z$ is a proper, universally surjective morphism representable
	by log algebraic spaces. Since $Y\to X$ is log étale, so is $Z\times_X Y\to Z$,
	so $Y\to M_Y$ is a proper, universally surjective log étale morphism representable
	by log algebraic spaces, showing that $M_Y$ is a generalized partial log compactification.

	We now show that $M_Y$ is minimal. For this,
	let $Z'\to Z$ be a universally surjective, proper log étale monomorphism
	and assume we are given a morphism $Z'\to M_Y$ over $Z\to X$. Then
	$Z'\times_X Y\to Z'\to Z$ is universally surjective and since $Z'\times_X Y\to Z\times_X Y$
	is universally surjective and log étale, it follows from \cite{On_log_flat_des_Illusi_2013}
	that $Z\times_X Y\to Z$ is log étale. Since $Z\times_X Y\to Z$
	is of finite presentation, hence quasi-compact, it follows
	from \cite[Proposition 5.7 and Proposition 2.6]{Grothendieck_to_Hu_Xi_2025}
	that $Z\times_X Y\to Z$ is proper, so we obtain a morphism
	$Z\to M_Y$ descending $Z'\to M_Y$.

	Finally, it remains to show that $M_Y$ is the terminal object
	in the birational equivalence class. Equivalently, if $Y'$ is a generalized partial log compactification
	birationally equivalent to $Y$ then we must show that there exists a unique morphism $Y'\to M_Y$ over $X$.
	Uniqueness is clear since $M_Y\to X$ is a monomorphism. For existence, consider a birational equivalence
	$Y'\leftarrow Y''\rightarrow Y$. The map $Y''\to Y'$ is a proper
	morphism of generalized partial log compactifications representable by log
	algebraic spaces. By the preceding lemma, it is hence universally surjective. Since $M_Y$
	is minimal, the morphism $Y''\to Y\to M_Y$ descends uniquely to a morphism $Y'\to M_Y$, whence the claim.
\end{proof}

\section{Tropical correspondence theorem}
\label{sec:tropical_corresp}

In this section we continue our goal of converting the birational
classification problem for partial log compactifications into a combinatorial
problem. The tropical counterparts of log semiabelian varieties are
called split families of tropical semiabelian varieties and are defined below
(Definition \ref{def:trop_semiab}) after some initial setup:

\begin{definition}
	\label{def:trop_torus}
	Let $N$ be a lattice. We write $T_N^{\trop}$ for the stack
	over $\RPC$ which to $(\sigma,N_{\sigma})\in \RPC$ assigns
	the set of homomorphisms $N_{\sigma}\to N$. We also write $\mathbb{G}_{m,\trop}:=T_{\mathbb{Z}}^{\trop}$.
	The group structure on $N$ induces a group structure on $T_N^{\trop}$.
	A morphism $X\to Y$ of stacks over $\RPC$ is called a family of tropical
	tori if $X$ is a group object over $Y$ such that for every morphism $(\sigma,N_{\sigma})\to Y$ from an RPC
	to $Y$, the base-change $X\times_Y \sigma$ is isomorphic to $\sigma\times T_N^{\trop}$
	for some lattice $N$.
\end{definition}

The definition of a tropical abelian variety is somewhat more involved and requires
further notation:

\begin{definition}
	Let $(\sigma,N_{\sigma})\in \RPC$ and $a,b\in N_{\sigma}$. We say $a\le b$ if $b-a\in \sigma$. Similarly,
	if $a,b\in M_{\sigma}$, we say $a\le b$ if $b-a\in \sigma^{\vee}$.
\end{definition}

\begin{definition}
	Let $(\sigma_0,N_0)\in \RPC$ and let $N$ be a lattice. Fix a symmetric bilinear map $Q:M\times M\to M_0$.

	\begin{enumerate}
		\item $Q$ is called positive-definite if for every $m\in M\setminus \{0\}$ we have $Q(m,m)\in \sigma_0^{\vee}\setminus \{0\}$.
		\item If $\tau\to \sigma_0$ is a morphism of RPCs, we denote the composition of $Q$ with $M_0\to M_{\tau}$ by $Q_{\tau}$.
		\item We write $Q_{\hom}:M\to \Hom(M,M_0)$ for the morphism defined by $m\mapsto Q(m,\ast)$.
		\item Observe that $\Hom(M,M_0)\cong \Hom(N_0,N)$ by dualizing. We write $Q_{\hom,N}:M\to \Hom(N_0,N)$ for the morphism obtained
			by composing $Q_{\hom}$ with this isomorphism.
		\item Write $\underline{M}$ for the constant sheaf of groups on $\RPC$ with fiber $M$. We denote by
			$\Hom(\underline{M},\mathbb{G}_{m,\trop})^{(M)}$ the subsheaf of $\Hom(\underline{M},\mathbb{G}_{m,\trop})$ on $\RPC/\sigma_0$
			which to $\tau\to \sigma_0$ assigns the set of $\phi\in \Hom(M,M_{\tau})$ such that for every $m\in M$ there exist
			$m_1,m_2\in M$ with:
			$$Q_{\tau}(m_1,m)\le \phi(m)\le Q_{\tau}(m_2,m).$$
	\end{enumerate}
\end{definition}

The following property of positive-definite symmetric bilinear forms will be key to many properties of tropical abelian varieties:

\begin{lemma}
	\label{lem:pos_definite_vanishing}
	Fix a symmetric positive-definite bilinear form $Q$ over $\sigma\in \RPC$.
	Let $m\in M$ and $n\in N_{\sigma}\cap \sigma$. If $n(Q(m,m))=0$ then $n(Q(m,m'))=0$ for all $m'\in M$.
\end{lemma}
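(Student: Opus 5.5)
The plan is a Cauchy–Schwarz type argument applied to the real-valued symmetric bilinear form obtained by pairing $Q$ with $n$. Fix $n\in N_{\sigma}\cap\sigma$ and define
$$B:M\times M\to \mathbb{Z},\qquad B(a,b):=n(Q(a,b)).$$
This is a symmetric bilinear form on the lattice $M$, and we extend it $\mathbb{R}$-linearly to $M_{\mathbb{R}}$.

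The first step is to show that $B$ is positive semidefinite. For $a\in M\setminus\{0\}$, positive-definiteness of $Q$ gives $Q(a,a)\in\sigma^{\vee}$. Since $n\in\sigma$, pairing gives $B(a,a)=n(Q(a,a))\ge 0$, so $B(a,a)\ge 0$ for all $a\in M$. By homogeneity, $B(qa,qa)=q^2B(a,a)\ge 0$ for every rational multiple $qa$, and by continuity of the quadratic form, $B(x,x)\ge 0$ for all $x\in M_{\mathbb{R}}$. Only membership of $Q(a,a)$ in $\sigma^{\vee}$ is used here, not the strict part of positive-definiteness.

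The second step is the standard semidefinite argument. Assume $B(m,m)=0$ and fix $m'\in M$. For every $t\in\mathbb{R}$ we have
$$0\le B(tm+m',tm+m')=2t\,B(m,m')+B(m',m').$$
An affine function of $t$ that is nonnegative on all of $\mathbb{R}$ must have zero slope, so $B(m,m')=0$, that is, $n(Q(m,m'))=0$. One can also stay within integers by taking $t\in\mathbb{Z}$ large of either sign, so the real extension is not actually needed. Equivalently, Cauchy–Schwarz gives $B(m,m')^2\le B(m,m)B(m',m')=0$.

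There is no real obstacle in this argument. The only point that needs care is the sign convention: the pairing between $\sigma\subseteq N_{\sigma,\mathbb{R}}$ and $\sigma^{\vee}\subseteq M_{\sigma,\mathbb{R}}$ must be nonnegative. This holds by the definition of the dual cone.
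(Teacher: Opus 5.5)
Your proof is correct and is essentially the paper's argument. The paper also expands $n(Q(\lambda m+m',\lambda m+m'))\ge 0$ to get $0\le 2\lambda\, n(Q(m,m'))+n(Q(m',m'))$, arranges $n(Q(m,m'))<0$ by replacing $m'$ with $-m'$, and lets $\lambda\to\infty$; your remark that one may take $t\in\mathbb{Z}$ (or extend $B$ to $M_{\mathbb{R}}$) makes explicit a point the paper leaves implicit, since $\lambda m+m'$ lies in $M$ only for integral $\lambda$.
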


\begin{proof}
	Assume that there exists $m'\in M$ such that $n(Q(m,m'))\neq 0$. Replacing $m'$ with $-m'$ if necessary, we can assume
	that $n(Q(m,m'))<0$. By the positive-definiteness of $Q$, we have for each $\lambda>0$ that $n(Q(\lambda m+m',\lambda m+m'))\ge 0$.
	Expanding this expression, we get:
	$$0\le 2\lambda n(Q(m,m'))+n(Q(m',m')).$$
	Letting $\lambda\to \infty $ yields a contradiction.
\end{proof}

\begin{definition}
	Let $(\sigma_0,N_{\sigma_0})\in \RPC$ and $Q:M\times M\to M_{\sigma_0}$ a symmetric positive-definite bilinear form. We
	call the sheaf
	$$\Hom(\underline{M},\mathbb{G}_{m,\trop})^{(M)}/Q_{\hom}(\underline{M})$$
	on $\RPC/\sigma_0$
	a family of principally polarized tropical abelian varieties. The group structure on $\mathbb{G}_{m,\trop}$ induces the structure of a group object
	over $\sigma_0$ on $\Hom(\underline{M},\mathbb{G}_{m,\trop})^{(M)}/Q_{\hom}(\underline{M})$.

	A morphism $X\to Y$ of stacks over $\RPC$ is called a family of
	principally polarized tropical abelian varieties if $X$ is a group object over $Y$ and
	for every $\tau\in \RPC$ we have that $X\times_Y \tau\to \tau$ is a family of tropical
	abelian varieties in the previous sense.
\end{definition}

\begin{remark}
	All families of tropical abelian varieties in this article will
	be principally polarized. We will therefore drop the prefix
	``principally polarized'' in what follows. The results carry over similarly to more general
	non-principally polarized tropical abelian varieties. In this case, the lattice $M$ and its dual $N$
	can no longer be canonically identified and more care is required in the definition of a tropical abelian
	variety (see \cite{LogarithmicAbeKajiwa2008} for details).
	We restrict to the principally polarized case to avoid these additional complications.
\end{remark}

The above definition of a tropical abelian variety matches the definition in \cite{LogarithmicAbeKajiwa2008}.
Below we give an alternative description of the $\tau$-points of a tropical abelian variety over an RPC which
is more similar to the way the functor of points of a cone complex is defined. This will be used later when
we study the combinatorial structure of partial log compactifications.

\begin{definition}
	Let $\sigma_0\in \RPC$ and $Q:M\times M\to M_{\sigma_0}$ be a symmetric positive-definite bilinear form.
	We define $(N_{\sigma_0}\times N)^{(M)}\subseteq N_{\sigma_0}\times N$ to be the subset of all tuples $(n,n')$
	such that $n\in \sigma_0$ and $n'$ lies in the intersection of $N$ and the $\mathbb{Q}$-span of $Q_{\hom,N}(M)(n)\subseteq N$.
\end{definition}

\begin{lemma}
	In the above situation, $m\in M$ acts on $(N_{\sigma_0}\times N)^{(M)}$ by
	$$m\cdot (n,n'):=(n,n'+Q_{\hom,N}(m)(n)).$$
	We denote the action of $m\in M$ on $(N_{\sigma_0}\times N)^{(M)}$ by $T_m$.
\end{lemma}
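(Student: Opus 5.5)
We must show that $T_m$ is well defined, i.e.\ that it sends $(N_{\sigma_0}\times N)^{(M)}$ to itself, and that $m\mapsto T_m$ is a group action of $M$. Write $V(n)\subseteq N_{\mathbb{Q}}$ for the $\mathbb{Q}$-span of $Q_{\hom,N}(M)(n)=\{Q_{\hom,N}(m')(n): m'\in M\}$.

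\textbf{Well-definedness.} Let $(n,n')\in (N_{\sigma_0}\times N)^{(M)}$ and $m\in M$. The first coordinate of $T_m(n,n')$ is $n$, which still lies in $\sigma_0$. For the second coordinate, the element $Q_{\hom,N}(m)(n)$ lies in $N$: indeed $Q_{\hom,N}(m)\in\Hom(N_0,N)$ and $n\in N_{\sigma_0}$. It also lies in $V(n)$, since it is one of the generators. By hypothesis $n'\in N\cap V(n)$. Since $N\cap V(n)$ is a subgroup of $N$, it follows that $n'+Q_{\hom,N}(m)(n)\in N\cap V(n)$. Hence $T_m(n,n')\in (N_{\sigma_0}\times N)^{(M)}$.

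\textbf{Action axioms.} The map $Q_{\hom,N}$ is additive in $m$, because $Q$ is bilinear and dualization $\Hom(M,M_0)\cong\Hom(N_0,N)$ is a group isomorphism. Evaluation at $n$ is also additive. Therefore
$$Q_{\hom,N}(m_1+m_2)(n)=Q_{\hom,N}(m_1)(n)+Q_{\hom,N}(m_2)(n),$$
which gives $T_{m_1+m_2}=T_{m_1}\circ T_{m_2}$. Since $Q_{\hom,N}(0)=0$, we also get $T_0=\mathrm{id}$.

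\textbf{Expected obstacle.} The only point needing any care is checking that the dualization isomorphism is additive and that $Q_{\hom,N}(m)$ really sends $N_{\sigma_0}$ into $N$. Both follow directly from the definitions, so I expect no real obstacle.
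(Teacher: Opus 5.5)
Your proof is correct and follows the paper's argument. Both rest on the observation that $Q_{\hom,N}(m)(n)$ lies in $N$ and in the $\mathbb{Q}$-span of $Q_{\hom,N}(M)(n)$, so the translate stays in $(N_{\sigma_0}\times N)^{(M)}$; you additionally check the action axioms via additivity of $Q_{\hom,N}$, which the paper leaves implicit.
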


\begin{proof}
	It suffices to show that if $(n,n')\in (N_{\sigma_0}\times N)^{(M)}$ then also $(n,n'+Q_{\hom,N}(m)(n))\in (N_{\sigma_0}\times N)^{(M)}$.
	This is clear since $Q_{\hom,N}(m)(n)$ obviously lies in the span of $Q_{\hom,N}(M)(n)$.
\end{proof}

\begin{proposition}
	\label{prop:alternative_descr_trop_torus}
	Let $\tau\to \sigma_0$ be an RPC over $\sigma_0$ in the situation above. Write $\Hom(\tau,(N_{\sigma_0}\times N)^{(M)})$
	for the set of homomorphisms $N_{\tau}\to N_{\sigma_0}\times N$ over $N_{\tau}\to N_{\sigma_0}$ such that the image
	of $N_{\tau}\cap \tau$ lies in $(N_{\sigma_0}\times N)^{(M)}$. By the preceding lemma, there is an action of $M$
	on $\Hom(\tau,(N_{\sigma_0}\times N)^{(M)})$ by translation. With this action, we have:
	$$\Hom(\tau,\Hom(\underline{M},\mathbb{G}_{m,\trop})^{(M)}/Q_{\hom}(\underline{M}))\cong \Hom(\tau,(N_{\sigma_0}\times N)^{(M)})/M.$$
\end{proposition}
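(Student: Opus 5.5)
The plan is to prove the bijection before quotienting, i.e. to identify the $\tau$-points of $\Hom(\underline{M},\mathbb{G}_{m,\trop})^{(M)}$ with $\Hom(\tau,(N_{\sigma_0}\times N)^{(M)})$, and then check that the two $M$-actions correspond. A $\tau$-point of $\Hom(\underline{M},\mathbb{G}_{m,\trop})$ over $\tau\to\sigma_0$ is a homomorphism $\phi:M\to M_\tau$. Dualizing gives a homomorphism $\phi^\vee:N_\tau\to N$. Pairing $\phi^\vee$ with the structure map $N_\tau\to N_{\sigma_0}$ gives a homomorphism $N_\tau\to N_{\sigma_0}\times N$ over $N_\tau\to N_{\sigma_0}$, and conversely every such homomorphism arises in this way. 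For $n\in\tau\cap N_\tau$ the first component $\bar n$ automatically lies in $\sigma_0$. So the only thing to prove is that the bounding condition $Q_\tau(m_1,m)\le\phi(m)\le Q_\tau(m_2,m)$ is equivalent to $\phi^\vee(n)\in \mathbb{Q}\,Q_{\hom,N}(M)(\bar n)$ for all $n\in\tau\cap N_\tau$.

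The key linear-algebra step is a description of the annihilator of $V_{\bar n}:=\mathbb{Q}\text{-span of }Q_{\hom,N}(M)(\bar n)$ inside $M_{\mathbb{Q}}$. It is $\{m:\bar n(Q(m',m))=0\ \forall m'\}$, and by Lemma \ref{lem:pos_definite_vanishing} this equals $\{m:\bar n(Q(m,m))=0\}$. Consequently $\phi^\vee(n)\in V_{\bar n}$ if and only if $\phi(m)(n)=0$ for every $m$ with $\bar n(Q(m,m))=0$.

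For the forward direction, suppose the bounds hold and $\bar n(Q(m,m))=0$. Evaluating the bounds at $n$ gives $\bar n(Q(m_1,m))\le\phi(m)(n)\le\bar n(Q(m_2,m))$. Both outer terms vanish by the lemma, so $\phi(m)(n)=0$.

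For the converse, I work with ray generators $n_1,\dots,n_k$ of $\tau$. Since the inequalities in $M_\tau$ mean membership in $\tau^\vee$, it suffices to check them on these generators. Fix $m$ and try $m_2=km$ and $m_1=-km$ for $k\gg0$. At each $n_i$ there are two cases.
\begin{itemize}
\item If $\bar n_i(Q(m,m))>0$, then $|\phi(m)(n_i)|\le k\,\bar n_i(Q(m,m))$ for $k$ large.
\item If $\bar n_i(Q(m,m))=0$, then $\phi(m)(n_i)=0$ by the span condition and the annihilator description.
\end{itemize}
Only finitely many generators occur, so one $k$ works for all of them.

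For the $M$-actions: translating $\phi$ by $Q_{\hom}(m)$ dualizes to adding $n\mapsto Q_{\hom,N}(m)(\bar n)$ to $\phi^\vee$, which is exactly $T_m$. Hence the bijection is $M$-equivariant and descends to orbit sets.

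I expect the main obstacle to be the passage from the presheaf quotient to the sheaf quotient $\Hom(\underline M,\mathbb{G}_{m,\trop})^{(M)}/Q_{\hom}(\underline M)$. The right-hand side of the statement is a naive orbit set, so one must show that sheafification does not alter $\tau$-points. My plan is to use that the action is free, which holds because $Q_{\hom}$ is injective by positive-definiteness. The remaining point is that for the topology on $\RPC$ a single cone $\tau$ behaves like a point, so that every cover of $\tau$ has a section. With that, $M$-torsors over $\tau$ are trivial and the orbit presheaf already agrees with the sheaf on $\tau$. If the topology is nontrivial, I would instead argue by gluing along faces, using that $\underline M$ is constant.
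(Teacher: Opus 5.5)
Your proposal is correct and follows the paper's proof: you dualize $\phi$ to $\phi^\vee$, show that the bounding condition is equivalent to the span condition via Lemma \ref{lem:pos_definite_vanishing}, and check that the two translation actions correspond, and your choice $m_2=km$, $m_1=-km$ is a cleaner version of the paper's per-ray choice $m_{2,\rho}+\lambda m$. The paper takes the orbit description of $\tau$-points as holding by definition, which is justified for the reason you give (a cone has no nontrivial covers); drop the freeness claim, though, since it is unnecessary and false on $\tau$-points in general, because $Q_{\tau,\hom}$ need not be injective (for instance when $\tau$ maps to the vertex of $\sigma_0$, $M$ acts trivially).
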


\begin{proof}
	We first show that a morphism $\tau\to \Hom(\underline{M},\mathbb{G}_{m,\trop})/Q_{\hom}(\underline{M})$ is equivalent to the
	datum of a morphism $N_{\tau}\to (N_{\sigma_0}\times N)$ modulo the action of $M$. By definition, a morphism
	$\tau\to \Hom(\underline{M},\mathbb{G}_{m,\trop})/Q_{\hom}(\underline{M})$ is the datum of an equivalence class
	$[\phi]\in \Hom(M,M_{\tau})/Q_{\tau,\hom}(\underline{M})$. We obtain a morphism $\phi^{\vee}:N_{\tau}\to N$ and taking
	the product with $f:N_{\tau}\to N_{\sigma_0}$ gives a morphism $N_{\tau}\to N_{\sigma_0}\times N$. Clearly, this construction
	is a bijection. Note that $\phi'\in [\phi]$ if and only if
	$\phi'-\phi=Q_{\tau,\hom}(m)$ for some $m\in M$. This is equivalent to ${\phi'}^{\vee}-\phi^{\vee}=Q_{\tau,\hom,N}(m)$,
	so the quotients coincide.

	Finally, it remains to show that $\phi\in \Hom(M,M_{\tau})^{(M)}$ if and only if $f\times\phi^{\vee}$ sends $\tau\cap N_{\tau}$
	to $(N_{\sigma_0}\times N)^{(M)}$. First assume that $\phi\in \Hom(M,M_{\tau})^{(M)}$. Assume for a contradiction
	that there exists $n\in \tau\cap N_{\tau}$
	such that $\phi^{\vee}(n)$ does not lie in the $\mathbb{Q}$-span of $Q_{\tau,\hom}(M)(n)$. Then there exists $m\in M$ such that
	$m(\phi^{\vee}(n))>0$ and $m|_{Q_{\tau,\hom,N}(M)(n)}=0$, viewing $m$ as a map $N\to \mathbb{Z}$.
	Let $m_1\in M$ be arbitrary. Then $n(Q_{\tau}(m_1,m))=0<n(\phi(m))$. It follows
	that $Q_{\tau}(m_1,m)-\phi(m)\notin \tau^{\vee}$ which contradicts the definition of $\Hom(M,M_{\tau})^{(M)}$.

	The converse is more involved: Assume that for each $n\in N_{\tau}\cap \tau$, we have that
	$\phi^{\vee}(n)$ lies in the $\mathbb{Q}$-span of $Q_{\tau,\hom}(M)(n)$
	and let $m\in M$ be arbitrary. We must show that there exist $m_1,m_2\in M$ such that
	$$Q_{\tau}(m_1,m)\le \phi(m)\le Q_{\tau}(m_2,m).$$
	Note that $\phi(m)\ge Q_{\tau}(m_1,m)\Leftrightarrow Q_{\tau}(-m_1,m)\ge \phi(-m)$. So it suffices to construct $m_2$
	such that $\phi(m)\le Q_{\tau}(m_2,m)$. This inequality is equivalent to $m(\phi^{\vee}(n))\le m(Q_{\tau,\hom,N}(m_2)(n))$
	for all $n\in \tau$. Note that the locus of $n$ such that this inequality holds is convex, so it suffices to find $m_2$
	such that the inequality holds on each ray $\rho$ of $\tau$. Since $\phi^{\vee}(n)$ lies in the $\mathbb{Q}$-span of $Q_{\tau,\hom,N}(M)(n)$,
	we can find for each $\rho$ some $m_{2,\rho}\in M$ such that $m(\phi^{\vee}(n_{\rho}))\le m(Q_{\tau,\hom,N}(m_{2,\rho})(n_{\rho}))$
	for any $n_{\rho}\in \rho$. Setting $m_2:=\sum_{\rho}^{}m_{2,\rho}$, it suffices to show that we may choose $m_{2,\rho}$ such that $m(Q_{\tau,\hom,N}(m_{2,\rho})(n))\ge 0$
	for any $n\in \tau$. In other words, we must show that we may choose $m_{2,\rho}\in Q_{\tau}(m,\ast)^{-1}(\tau^{\vee})$. For
	this, note that since $Q_{\tau}(m,m)\ge 0$ by assumption, we may replace $m_{2,\rho}$ by $m_{2,\rho}+\lambda m$ for any $\lambda\in \mathbb{N}$.
	So we reduce to showing that $Q_{\tau}(m,m_{2,\rho}+\lambda m)\in \tau^{\vee}$ for $\lambda$ sufficiently large. Let $n\in \tau$
	be a primitive element of a ray of $\tau$. Then:
	$$n(Q_{\tau}(m,m_{2,\rho}+\lambda m))=n(Q_{\tau}(m,m_{2,\rho}))+\lambda n(Q_{\tau}(m,m)).$$
	If $n(Q_{\tau}(m,m))\neq 0$ then it must be positive by positive semi-definiteness of $Q_{\tau}$, so choosing $\lambda$
	large enough, we may assume that $n(Q_{\tau}(m,m_{2,\rho}+\lambda m))\ge 0$. Otherwise, by Lemma \ref{lem:pos_definite_vanishing},
	we know that $n(Q_{\tau}(m,m_{2,\rho}+\lambda m))=0$. Since there are only finitely many rays, we may choose $\lambda$ large
	enough to work for all $n$. Since any element of $\tau$ is a positive $\mathbb{R}$-linear combination of primitive elements of rays in $\tau$
	it then follows that $n(Q_{\tau}(m,m_{2,\rho}+\lambda m))\ge 0$ for all $n\in \tau$, so $Q_{\tau}(m,m_{2,\rho}+\lambda m)\in \tau^{\vee}$
	as required.
\end{proof}

\begin{definition}
	\label{def:trop_semiab}
	A split family of tropical semiabelian varieties is a product of a family of tropical abelian varieties and a family
	of tropical tori.
\end{definition}

\begin{remark}
	In general, one could consider more general extensions of tropical abelian varieties by tropical tori.
	In this case one would need to add additional restrictions similar to the bounded monodromy condition
	in \cite{TheLogarithmicMolcho2022} to ensure that the extensions of log abelian varieties by log tori
	tropicalize. We will not need this, since any extension of a log abelian variety by a log torus
	which comes from an extension by an algebraic torus tropicalizes to the trivial extension.
\end{remark}

Although we will not make use of this perspective, we remark that split families of tropical semiabelian varieties
are piecewise linear spaces (or piecewise linear complexes) in the sense of \cite{kennedyhunt2025logarithmicquotspacefoundations}.

\begin{definition}
	\label{def:partial_compact}
	Let $X\to Z$ be a split family of tropical semiabelian varieties. A partial tropical compactification of $X\to Z$ is a
	monomorphism $Y\to X$ representable by finite cone stacks such that there exists a section
	$Z\to Y$ over the zero-section $Z\to X$. Morphisms of partial tropical compactifications are morphisms over $X$.
	We call $Y\to X$ a tropical compactification of $X\to Z$ if $Y\to X$ is surjective, i.e., if the base-change
	over any $\sigma\in \RPC$ is surjective.
\end{definition}

We explain what it means to be a finite cone stack:

\begin{definition}
	A cone stack $C$ is called finite if there are only finitely many objects in $C$ up to isomorphism and
	if there are only finitely many morphisms in $C$ up to 2-isomorphism.
\end{definition}

\begin{lemma}
	\label{lem:morph_part_comp_repr}
	Let $X\to Z$ be a split family of tropical semiabelian varieties.
	Any morphism $Y_1\to Y_2$ of partial tropical compactifications of $X\to Z$ is a monomorphism of stacks
	representable by finite cone stacks.
\end{lemma}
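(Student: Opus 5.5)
Write $i_1\colon Y_1\to X$ and $i_2\colon Y_2\to X$ for the structure maps and $f\colon Y_1\to Y_2$ for the given morphism, so that $i_2\circ f=i_1$. The plan is the standard cancellation argument for monomorphisms, together with a representability argument for the base change of $f$ over a cone.

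\emph{Monomorphism.} Take an RPC $\sigma$ (or any stack over $\RPC$) and two maps $a,b\colon\sigma\to Y_1$, with a 2-isomorphism $f\circ a\Rightarrow f\circ b$. Composing with $i_2$ gives a 2-isomorphism $i_1\circ a\Rightarrow i_1\circ b$. Since $i_1$ is a monomorphism, $a\cong b$, and the 2-isomorphism between them is unique. So $f$ is fully faithful on fibers, i.e.\ the diagonal $Y_1\to Y_1\times_{Y_2}Y_1$ is an equivalence. Another way to see this is to factor $Y_1\times_{Y_2}Y_1\to Y_1\times_X Y_1\simeq Y_1$, where the last equivalence uses that $i_1$ is a monomorphism, and to observe that the first map is a monomorphism because $Y_2\to X$ is one.

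\emph{Representability.} Take $\sigma\to Y_2$ from an RPC. We want $\sigma\times_{Y_2}Y_1$ to be a finite cone stack. Since $i_2$ is a monomorphism, the graph-type map
\[
\sigma\times_{Y_2}Y_1\;\longrightarrow\;\sigma\times_X Y_1
\]
is an equivalence. Indeed, $Y_1\times_{Y_2}(Y_2\times_X\sigma')\simeq Y_1\times_X\sigma'$, and $Y_2\times_X Y_2\simeq Y_2$ because $i_2$ is a monomorphism. More precisely, the comparison map is a base change of the diagonal $Y_2\to Y_2\times_X Y_2$, which is an equivalence. The stack $\sigma\times_X Y_1$ is the base change of $i_1$ along the composite $\sigma\to Y_2\to X$. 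This composite is a map from an RPC to $X$, so by the hypothesis that $i_1$ is representable by finite cone stacks, $\sigma\times_X Y_1$ is a finite cone stack. Hence $\sigma\times_{Y_2}Y_1$ is one as well.

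\emph{Main obstacle.} The only real point is the 2-categorical bookkeeping: checking that base change along the diagonal of $Y_2$ over $X$ really gives the equivalence above. This uses that a monomorphism of stacks has diagonal an equivalence. I would verify it objectwise on $\RPC$, where all the fiber categories are groupoids.
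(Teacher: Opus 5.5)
Your proof is correct and follows the paper's argument: $Y_1\to Y_2$ is a monomorphism because its composite with the monomorphism $Y_2\to X$ is one, and for $\sigma\to Y_2$ one has $\sigma\times_{Y_2}Y_1\simeq\sigma\times_X Y_1$ because $Y_2\to X$ is a monomorphism, which is a finite cone stack by the hypothesis on $Y_1\to X$. (A minor point on your first monomorphism argument: showing $f$ is full on fibers also uses that $i_2$ is faithful, to check that the lift $\beta$ of $i_2\alpha$ satisfies $f\beta=\alpha$; and since $X$ need not be fibered in setoids, ``the 2-isomorphism is unique'' should be read as uniqueness of that lift, which your diagonal argument handles cleanly.)
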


\begin{proof}
	Since the composition $Y_1\to Y_2\to X$ is a monomorphism by assumption, it follows that $Y_1\to Y_2$
	is also a monomorphism. Let $\sigma\to Y_2$ be a morphism from an RPC $\sigma$. Then
	$Y_1\times_{Y_2}\sigma\cong Y_1\times_{X}\sigma$ since $Y_2\to X$ is a monomorphism.
	It follows that $Y_1\times_{Y_2}\sigma$ is a finite cone stack, since $Y_1\to X$ is representable
	by finite cone stacks.
\end{proof}

\subsection{The tropical realization functor}

To obtain a tropical correspondence theorem we must translate split families
of tropical semiabelian varieties into stacks over the category of
fs log schemes. As explained in
\cite{AModuliStackCavali2017}, there is a standard
procedure for doing this. For convenience, we recall the key points of this procedure
below.

\begin{definition}
	Let $\mathcal{C}$ be a stack over $\RPC$ and let $M\in \ShpMon^{\op}$ be an
	arbitrary sharp saturated and integral monoid. We define:
	$$\Hom(M,\overline{\mathcal{C}}):=\colim_{Q\to M}\Hom_{\RPC}(Q^{\vee},\mathcal{C})$$
	where the 2-colimit is over the category of morphisms
	$Q\to M$ in $\ShpMon$ for an arbitrary toric monoid $Q$.
\end{definition}

\begin{remark}
	In the above definition we are using the equivalence of categories
	between $\RPC$ and the opposite category of the category of
	toric monoids. It is clear that the above definition defines a stack for the chaotic topology (i.e., the Grothendieck topology generated
	by the class of isomorphisms) on $\ShpMon^{\op}$.
\end{remark}

\begin{definition}
	Let $\mathcal{C}$ be a stack over $\RPC$. We define a
	prestack $\mathcal{A}_{\mathcal{C}}'$ on $\LogSch/\Spec(K)$ by:
	$$\Hom(X,\mathcal{A}_{\mathcal{C}}')=\Hom(\overline{M}_X(X),\overline{\mathcal{C}}),$$
	where $K$ denotes the ground field (which we have assumed to
	be an algebraically closed field of characteristic zero).
	Let $\mathcal{A}_{\mathcal{C}}$ be the strict étale stackification of
	$\mathcal{A}_{\mathcal{C}}'$.
	We call $\mathcal{A}_{\mathcal{C}}$ the geometric realization
	of $\mathcal{C}$. The functor sending $\mathcal{C}$
	to $\mathcal{A}_{\mathcal{C}}$ is called the geometric realization
	functor.
\end{definition}

\begin{example}
	If $\mathcal{C}=\sigma$ is an RPC, then $\mathcal{A}_{\sigma}=[X_{\sigma}/T]$, where $X_{\sigma}$
	is the affine toric variety with cone $\sigma$ and dense open torus $T$. The canonical quotient
	map $X_{\sigma}\to [X_{\sigma}/T]$ is an Artin fan for $X_{\sigma}$, i.e., $\mathcal{A}_{\sigma}=\mathcal{A}_{X_{\sigma}}$.
	We call $\mathcal{A}_{\sigma}$ an \emph{Artin cone}.
	More generally, if $\Sigma$ is a fan in some lattice, then $\mathcal{A}_{\Sigma}=[X_{\Sigma}/T]$ for the toric
	variety $X_{\Sigma}$ with fan $\Sigma$ and dense open torus $T$.
\end{example}

\begin{lemma}
	\label{lem:rpc_repr}
	Let $\sigma\in \RPC$. Then $\mathcal{A}_{\sigma}=\mathcal{A}_{\sigma}'$
	represents the functor $X\mapsto \Hom_{\ShpMon}(\sigma^{\vee}\cap M_{\sigma},\overline{M}_X(X)).$
\end{lemma}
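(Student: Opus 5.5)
The plan is in two steps. First I compute the colimit defining $\Hom(\overline{M}_X(X),\overline{\sigma})$ explicitly. Then I check that the resulting presheaf is already a strict étale sheaf, so that stackification changes nothing and $\mathcal{A}_\sigma=\mathcal{A}'_\sigma$.

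For the first step, fix a sharp fs monoid $P=\overline{M}_X(X)$ and consider the category of arrows $Q\to P$ with $Q$ a toric monoid. By the duality $\RPC\simeq(\text{toric monoids})^{\op}$, we have $\Hom_{\RPC}(Q^\vee,\sigma)=\Hom_{\ShpMon}(\sigma^\vee\cap M_\sigma,Q)$; here I use that $\sigma$ is a representable stack, i.e. a set-valued sheaf, so the 2-colimit is an honest colimit of sets. Composing with $Q\to P$ gives a natural map
\[
\colim_{Q\to P}\Hom(\sigma^\vee\cap M_\sigma,Q)\longrightarrow \Hom_{\ShpMon}(\sigma^\vee\cap M_\sigma,P).
\]
This map is surjective: a homomorphism $\sigma^\vee\cap M_\sigma\to P$ is itself an object of the indexing category, because $\sigma^\vee\cap M_\sigma$ is toric. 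It is injective because two elements of the colimit represented by $(Q\to P,\ \alpha)$ and $(Q'\to P,\ \alpha')$ with the same composite are both identified with the element $(\sigma^\vee\cap M_\sigma\to P,\ \mathrm{id})$, via the morphisms $\alpha$ and $\alpha'$ in the indexing category. Abstractly, this is the Yoneda lemma: $\Hom(\sigma^\vee\cap M_\sigma,-)$ restricted to arrows into $P$ has colimit evaluated at $P$, since the slice category has the relevant object and morphisms.

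One subtlety is that $P=\overline{M}_X(X)$ may not itself be toric, because global sections need not be finitely generated. The argument above never requires $P$ to be toric, only that $\sigma^\vee\cap M_\sigma$ is. So $\mathcal{A}'_\sigma(X)=\Hom(\sigma^\vee\cap M_\sigma,\overline{M}_X(X))$, functorially in $X$.

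For the second step, $\overline{M}_X$ is a strict étale sheaf on $X$, and for strict morphisms the pullback of $\overline{M}_X$ is $\overline{M}$ of the source. Hence $X\mapsto \overline{M}_X(X)$ is a strict étale sheaf. Since $\sigma^\vee\cap M_\sigma$ is finitely generated, $\Hom(\sigma^\vee\cap M_\sigma,-)$ commutes with the equalizers expressing the sheaf condition: a homomorphism is determined by, and can be glued from, the images of finitely many generators, and relations are checked locally. So $\mathcal{A}'_\sigma$ is already a sheaf, and the stackification does nothing.

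I expect the main obstacle to be the first step's bookkeeping. One must make sure the 2-colimit is a genuine colimit of sets, and that it is indexed correctly, covariantly in $Q\to P$. The sheaf step is routine.
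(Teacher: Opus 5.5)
Your proposal is correct and follows the paper's route: the paper also notes that $X\mapsto \Hom_{\ShpMon}(\sigma^{\vee}\cap M_{\sigma},\overline{M}_X(X))$ is a sheaf because $X\mapsto \overline{M}_X(X)$ is, and then evaluates the colimit by cone--monoid duality followed by the same co-Yoneda identification you spell out. The one justification to tighten is why the 2-colimit is a set: a 2-colimit of a set-valued diagram need not be discrete in general, but here it is, because the slice category of toric monoids over the sharp saturated monoid $\overline{M}_X(X)$ is filtered (alternatively, because each component of the category of elements has the initial object $(\sigma^{\vee}\cap M_{\sigma}\xrightarrow{f}\overline{M}_X(X),\mathrm{id})$).
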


\begin{proof}
	First observe that since $X\mapsto \overline{M}_X(X)$ is a sheaf, the
	functor $X\mapsto \Hom_{\ShpMon}(\sigma^{\vee}\cap M_{\sigma},\overline{M}_X(X))$
	is indeed a stack.
	Hence it suffices to show that $\mathcal{A}_{\sigma}'$ represents
	this functor. This is a direct computation:
\begin{align*}
	\Hom(X,\mathcal{A}_{\sigma}')&=\colim_{Q\to \overline{M}_X(X)}\Hom_{\ShpMon}(Q^{\vee},\sigma\cap N_{\sigma})\\
	&=\colim_{Q\to \overline{M}_X(X)}\Hom_{\ShpMon}(\sigma^{\vee}\cap M_{\sigma},Q)\\
	&=\Hom_{\ShpMon}(\sigma^{\vee}\cap M_{\sigma},\overline{M}_X(X)).
\end{align*}
\end{proof}

\begin{lemma}
	\label{lem:smooth_descent}
	With notation as above, the functor $\mathcal{A}_{\mathcal{C}}$
	is a stack for the strict smooth topology.
\end{lemma}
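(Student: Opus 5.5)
The key observation is that the functor $X\mapsto \overline{M}_X(X)$, and more generally the presheaf $\overline{M}_X$ on the small site, only sees the characteristic monoid. For a strict smooth morphism $U\to X$ we have $\overline{M}_U=g^{-1}\overline{M}_X$. Hence a strict smooth cover $U\to X$ can be refined strict étale locally. The plan is to reduce descent for strict smooth covers to descent for strict étale covers, for which $\mathcal{A}_{\mathcal{C}}$ is a stack by construction.

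\textbf{Step 1: the standard reduction.} A smooth surjective morphism of schemes $\underline{U}\to\underline{X}$ admits, étale locally on $\underline{X}$, a section; more precisely there is an étale surjection $\underline{X}'\to \underline{X}$ factoring through $\underline{U}$ (EGA IV 17.16.3). Giving $X'$ the pulled back log structure makes $X'\to X$ strict étale and $X'\to U$ strict, and in fact strict étale since both maps to $X$ are strict. By the general criterion for stacks (Stacks Project, Tag 0GQ3-type lemma), a stack for a topology $\tau$ is a stack for a finer topology $\tau'$ as soon as every $\tau'$-cover can be refined by a $\tau$-cover and $\mathcal{F}$ is a $\tau$-stack. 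This criterion requires checking descent only for covers that are refined in this way. The cleanest version: for a $\tau'$-covering $U\to X$ refined by the $\tau$-covering $X'\to X$ through $U$, effectivity and full faithfulness for $U\to X$ follow from those for $X'\to X$, $X'\times_X U\to U$, and their fiber products.

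\textbf{Step 2: check the hypotheses.} I will verify that all the fiber products that appear ($U\times_X U$, $X'\times_X U$, and so on) are again fs log schemes with strict structure maps. This holds because strict morphisms are stable under fs fiber products, and in this case the fs fiber product agrees with the underlying scheme fiber product. The base changes of $X'\to X$ to these are strict étale covers. So the $\tau$-stack property is available on every term of the relevant \v{C}ech diagrams.

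\textbf{Main obstacle.} The main obstacle is the bookkeeping in Step 1: making sure that the refinement argument for 2-categorical descent data genuinely needs only the $\tau$-stack property together with the fact that every $\tau'$-cover, after any base change, is refined by a $\tau$-cover. I would handle this by first proving separatedness, that is, that the Hom-sheaves are strict smooth sheaves, using the same refinement argument applied to the sheaves $\mathrm{Isom}$, which are strict étale sheaves since $\mathcal{A}_{\mathcal{C}}$ is a strict étale stack. Only then would I deduce effectivity: pull the descent datum on $U$ back to $X'$, descend it along the strict étale cover $X'\to X$, and use the faithfulness just established to identify its pullback to $U$ with the original object.
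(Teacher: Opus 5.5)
Your proposal is correct and uses the same key input as the paper: smooth surjections acquire sections étale locally (EGA IV 17.16.3), combined with the strict étale stack property. The paper localizes on the base so that the cover $f$ has a section $s$ and writes the descent explicitly as $\eta=s^*\xi$, with gluing isomorphism pulled back along $(s\circ f,\mathrm{id})$; you instead package the same bookkeeping into the general refinement lemma, and your outline of that lemma is sound. One aside is false, but you never use it: $X'\to U$ need not be étale (the zero section $X\to X\times\mathbb{A}^1$ is a closed immersion), and your argument only needs the base change $X'\times_X U\to U$ to be strict étale, which it is.
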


\begin{proof}
	Let $Y$ be an fs log scheme and $f:X\to Y$ a strict smooth cover. Let $\xi\in \Hom(X,\mathcal{A}_{\mathcal{C}})$ be equipped
	with a descent datum
	$$\alpha:\pi_1^*\xi\to \pi_2^*\xi$$
	on $X\times_Y X$. Since $\mathcal{A}_{\mathcal{C}}$ is a stack in the strict étale topology, effectiveness
	of this descent datum is strict étale local on $Y$. Since the underlying morphism of schemes of $f$
	is smooth, étale locally on $Y$ it admits a section. Hence after replacing $Y$ by a strict étale cover,
	we may assume that $f$ admits a section $s:Y\to X$. Since $f$ is strict, the section $s$ is strict as well.

	Set $\eta:=s^*\xi\in \Hom(Y,\mathcal{A}_{\mathcal{C}})$. We claim that $\eta$ descends $\xi$.
	Let
	$$\Delta=(s\circ f,id_X):X\to X\times_Y X.$$
	Pulling back $\alpha$ along $\Delta$, we obtain an isomorphism
	$$\beta:=\Delta^*(\alpha):f^*\eta=(s\circ f)^*\xi\to \xi.$$
	It remains to check that $\beta$ is compatible with the descent datum on $\xi$.
	Let
	$$\delta=(s\circ f\circ \pi_1,\pi_1,\pi_2):X\times_Y X\to X\times_Y X\times_Y X.$$
	Pulling back the cocycle condition
	$$\pi_{23}^*(\alpha)\circ \pi_{12}^*(\alpha)=\pi_{13}^*(\alpha)$$
	along $\delta$, we get
	$$\alpha\circ \pi_1^*(\beta)=\pi_2^*(\beta),$$
	which is exactly the compatibility condition. Thus $(\eta,\beta)$ is a descent of $(\xi,\alpha)$,
	showing that objects descend.

	It remains to show that morphisms descend. Let $\eta_1,\eta_2\in \Hom(Y,\mathcal{A}_{\mathcal{C}})$ and let
	$$\psi:f^*\eta_1\to f^*\eta_2$$
	be a morphism such that
	$$\pi_2^*(\psi)=\pi_1^*(\psi)$$
	as morphisms $\pi_1^*f^*\eta_1=\pi_2^*f^*\eta_1\to \pi_1^*f^*\eta_2=\pi_2^*f^*\eta_2$ on $X\times_Y X$.
	Set
	$$\overline{\psi}:=s^*(\psi):\eta_1\to \eta_2.$$
	We claim that $f^*(\overline{\psi})=\psi$. Let
	$$\Delta=(s\circ f,id_X):X\to X\times_Y X.$$
	Pulling back the equality $\pi_2^*(\psi)=\pi_1^*(\psi)$ along $\Delta$, we get
	$$\psi=f^*s^*(\psi)=f^*(\overline{\psi}),$$
	as required. This proves existence of descended morphisms.

	For uniqueness, let $\theta,\theta':\eta_1\to \eta_2$ be morphisms such that
	$f^*(\theta)=f^*(\theta')$. Pulling back along the section $s$, we obtain
	$$\theta=s^*f^*(\theta)=s^*f^*(\theta')=\theta'.$$
	Hence morphisms descend uniquely. Therefore $\mathcal{A}_{\mathcal{C}}$
	is a stack for the strict smooth topology.
\end{proof}

For later use, we record the following useful lemma.

\begin{lemma}
	\label{lem:loc_factorization}
	Let $Z$ be an fs log scheme and $\mathcal{C}$ a stack over $\RPC$. Let $Z\to \mathcal{A}_{\mathcal{C}}$
	be a morphism. Then strict étale locally on $Z$
	there exists a strict morphism $Z\to \mathcal{A}_{\sigma}$ and a morphism $\sigma\to \mathcal{C}$
	such that $Z\to \mathcal{A}_{\mathcal{C}}$ factors as $Z\to \mathcal{A}_{\sigma}\to \mathcal{A}_{\mathcal{C}}$.
\end{lemma}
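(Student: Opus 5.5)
Since $\mathcal{A}_{\mathcal{C}}$ is the strict étale stackification of the prestack $\mathcal{A}_{\mathcal{C}}'$, the given morphism $Z\to \mathcal{A}_{\mathcal{C}}$ comes, strict étale locally on $Z$, from an object of $\mathcal{A}_{\mathcal{C}}'(Z)=\Hom(\overline{M}_Z(Z),\overline{\mathcal{C}})$. After passing to such a cover we may therefore assume it is given by an element of the colimit $\colim_{Q\to \overline{M}_Z(Z)}\Hom_{\RPC}(Q^{\vee},\mathcal{C})$. By the definition of a 2-colimit, this element is represented by a toric monoid $Q$, a morphism $u:Q\to \overline{M}_Z(Z)$ in $\ShpMon$, and a morphism $\sigma:=Q^{\vee}\to \mathcal{C}$. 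Here $\sigma$ is the RPC with $\sigma^{\vee}\cap M_\sigma=Q$.

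By Lemma \ref{lem:rpc_repr}, $u$ is exactly a morphism $Z\to \mathcal{A}_\sigma$. Functoriality of the colimit in $\mathcal{C}$ shows that the composite $Z\to \mathcal{A}_\sigma\to \mathcal{A}_{\mathcal{C}}$ corresponds to the class of $(u,\sigma\to\mathcal{C})$, i.e.\ to the original morphism. So we already have a factorization; what remains is to arrange that $Z\to \mathcal{A}_\sigma$ is strict.

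To get strictness, we first shrink $Z$ strict étale locally around a geometric point $z$ so that $Z$ has a chart and $\overline{M}_Z(Z)\to \overline{M}_{Z,z}$ is an isomorphism; this is possible because $Z$ is fs. We then want to replace $Q$ by $P:=\overline{M}_{Z,z}$. The map $u$ gives $Q\to P$, and hence a face-type map of cones $P^{\vee}\to Q^{\vee}=\sigma$. Let $\tau:=P^{\vee}$, and compose $\tau\to\sigma\to\mathcal{C}$. Because $(Q\to P, \sigma\to\mathcal{C})$ and $(\mathrm{id}_P,\tau\to \mathcal{C})$ are related by the transition maps of the colimit diagram, they define the same element of the colimit. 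The map $Z\to\mathcal{A}_\tau$ is then given by the identity $P\to\overline{M}_Z(Z)$, i.e.\ by the chart $P\to M_Z$. Since the induced map on characteristic monoids is an isomorphism at $z$, it is strict near $z$, and hence on a strict étale neighbourhood of $z$. Covering $Z$ by such neighbourhoods gives the claim.

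The main subtlety is the last step. Strictness of $Z\to\mathcal{A}_\tau$ needs $\overline{M}_{\mathcal{A}_\tau}$ to pull back isomorphically onto $\overline{M}_Z$ at every point of the neighbourhood, not just at $z$. The neighbourhood must therefore be chosen so that the chart $P\to \overline{M}_{Z,z'}$ is the quotient by the appropriate face for each nearby $z'$. This should follow from the standard fact that fs charts at a point remain charts on a neighbourhood, and then from the compatibility of $\mathcal{A}_\tau$ with localization at faces. One should also check that $P$ is toric (fs and sharp), which holds since $Z$ is fs.
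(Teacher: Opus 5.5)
Your argument is correct and is essentially the paper's: pass to a section of the prestack, shrink to an atomic neighbourhood (the paper cites \cite[Proposition 2.2.2.5]{TheLogarithmicMolcho2022}) on which $\overline{M}_Z(Z)\cong\overline{M}_{Z,z}$ is toric, and take $\sigma$ dual to this monoid; the paper saves a step by noting that the colimit's index category then has the terminal object $\mathrm{id}_{\overline{M}_Z(Z)}$, so no detour through an arbitrary representative $(Q,u)$ is needed. The strictness worry in your last paragraph is already settled by your choice of neighbourhood, since a chart $\overline{M}_{Z,z}\to M_Z$ neat at $z$ and defined on all of $Z$ makes $Z\to\mathcal{A}_{\sigma}$ strict everywhere; also, $P^{\vee}\to Q^{\vee}$ need not be a face map, but nothing in your argument depends on that.
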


\begin{proof}
	Since the statement is strict étale local, we may assume that $Z\to \mathcal{A}_{\mathcal{C}}$ is represented by
	a morphism $Z\to \mathcal{A}_{\mathcal{C}}'$ of prestacks.
	By \cite[Proposition 2.2.2.5]{TheLogarithmicMolcho2022}, we may
	assume that $Z$ is atomic. Let $z\in Z$ lie in the closed stratum. The stalk $\overline{M}_{Z,z}$ is a sharp fs monoid, hence toric
	and $\overline{M}_Z(Z)\to \overline{M}_{Z,z}$ is an isomorphism. Let $\sigma\in \RPC$ be the cone such that
	$\sigma^{\vee}\cap M_{\sigma}\cong \overline{M}_{Z,z}$. The morphism $Z\to \mathcal{A}_{\mathcal{C}}'$
	corresponds to a morphism $\overline{M}_Z(Z)\to \mathcal{C}$, hence a morphism $\sigma\to \mathcal{C}$. The isomorphism
	$\overline{M}_Z(Z)\cong \sigma^{\vee}\cap M_{\sigma}$ induces a morphism $Z\to \mathcal{A}_{\sigma}$ and the composition
	$Z\to \mathcal{A}_{\sigma}\to \mathcal{A}_{\mathcal{C}}$ is equal to the given morphism by construction.
\end{proof}

\begin{definition}
	We define $\mathcal{A}^{1}:=(\mathbb{R}_{\ge 0},\mathbb{Z})$. Note
	that $\mathcal{A}^{1}$ is a monoid object in $\RPC$ via the map
	$(\mathbb{R}_{\ge 0},\mathbb{Z})\times(\mathbb{R}_{\ge 0},\mathbb{Z})\to
	(\mathbb{R}_{\ge 0},\mathbb{Z})$ sending $(x,y)$ to $x+y$.
\end{definition}

\begin{lemma}
	Let $X$ be a stack on $\LogSch/\Spec(K)$. Then
	$\Hom_{\mathrm{St}}(X,\mathcal{A}_{\mathcal{A}^{1}})$ is a sharp integral saturated monoid,
	where $\Hom_{\mathrm{St}}$ denotes morphisms of stacks.
\end{lemma}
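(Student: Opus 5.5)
The plan is to identify $\mathcal{A}_{\mathcal{A}^{1}}$ with a sheaf of monoids and then compute global sections. By Lemma \ref{lem:rpc_repr}, $\mathcal{A}_{\mathcal{A}^{1}}=\mathcal{A}'_{\mathcal{A}^{1}}$ represents the functor
\[
Z\mapsto \Hom_{\ShpMon}(\mathbb{N},\overline{M}_Z(Z))=\overline{M}_Z(Z).
\]
This is a sheaf of sets, so $\mathcal{A}_{\mathcal{A}^{1}}$ is fibered in setoids, and $\Hom_{\mathrm{St}}(X,\mathcal{A}_{\mathcal{A}^{1}})$ is an honest set. The monoid structure on $\mathcal{A}^{1}$ makes $\mathcal{A}_{\mathcal{A}^{1}}$ a sheaf of commutative monoids. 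Under the identification above, the induced operation on $Z$-points is addition in $\overline{M}_Z(Z)$, because the addition map $\mathbb{R}_{\ge0}\times\mathbb{R}_{\ge0}\to\mathbb{R}_{\ge0}$ is dual to the diagonal $\mathbb{N}\to\mathbb{N}\oplus\mathbb{N}$. Pointwise addition therefore makes $\Hom_{\mathrm{St}}(X,\mathcal{A}_{\mathcal{A}^{1}})$ a commutative monoid.

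Next I would write this monoid as a limit. Choose the canonical presentation of $X$ as the colimit of its points, so that
\[
\Hom_{\mathrm{St}}(X,\mathcal{A}_{\mathcal{A}^{1}})=\lim_{(Z,\,Z\to X)}\overline{M}_Z(Z),
\]
where the limit runs over log schemes $Z$ with a morphism to $X$, and is taken in sets, equivalently in monoids. Concretely, an element is a compatible family $(a_Z)$ with $a_Z\in\overline{M}_Z(Z)$, stable under pullback. Each $\overline{M}_Z(Z)$ is sharp, integral and saturated, since $\overline{M}_Z$ is a sheaf of sharp fs monoids and these properties pass to global sections: they are checked stalkwise on the subsheaf of sections. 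So it suffices to show that the three properties are preserved by limits of monoids computed in sets.

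Each property follows componentwise.
\begin{itemize}
\item \emph{Integral:} if $a+b=a+c$, then $a_Z+b_Z=a_Z+c_Z$ for every $Z$, so $b_Z=c_Z$ and hence $b=c$.
\item \emph{Sharp:} if $a+b=0$, then $a_Z+b_Z=0$ for every $Z$, so $a_Z=0$ and hence $a=0$.
\item \emph{Saturated:} the groupification of a limit of integral monoids is not a limit, so I would phrase saturation intrinsically. Suppose $a,b,c$ satisfy $n a+b=n c$ for some $n\ge1$ (the condition that $c-a/n$ type elements exist). Saturation of each $\overline{M}_Z(Z)$ gives a unique $d_Z$ with $n d_Z=nc_Z-\ldots$; more cleanly, take $x\in$ the limit group with $nx$ in the monoid. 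The clean route is this: an integral monoid $P$ is saturated iff for $u,v\in P$ with $nu=nv+w$, $w\in P$, there is a unique $e\in P$ with $u=v+e$ and $ne=w$. The elements $e_Z$ are unique, hence compatible with pullback, and so they glue to an element of the limit.
\end{itemize}

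I expect the saturation step to be the main subtlety, namely phrasing saturation so that it visibly commutes with limits. Uniqueness of the root, which comes from integrality, is what makes the gluing work. If "fs" is also intended, finite generation is not claimed, consistent with the statement listing only sharp, integral and saturated.
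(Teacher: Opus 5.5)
Your proof is correct, but it is organized differently from the paper's.

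\textbf{The paper's route.} The paper never passes to a limit description. It uses the monomorphism $j\colon \mathcal{A}_{\mathcal{A}^{1}}\to\mathbb{G}_{m,\trop}$, which sends a section of $\overline{M}_Z(Z)$ to its image in $\overline{M}_Z^{\gp}(Z)$. Thus $\Hom_{\mathrm{St}}(X,\mathcal{A}_{\mathcal{A}^{1}})$ sits inside the group $\Hom_{\mathrm{St}}(X,\mathbb{G}_{m,\trop})$.
\begin{itemize}
\item Integrality is then immediate.
\item For saturation, the paper shows that if $n\phi$ lifts through $j$, then so does $\phi$. This uses pointwise saturation of $\overline{M}_Z(Z)$, and the pointwise lifts glue because $j$ is a monomorphism.
\item Sharpness comes from the units of $\mathcal{A}^{1}$ being the zero cone, whose realization $\mathcal{A}_0$ is terminal.
\end{itemize}

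\textbf{Your route.} You write the Hom-set as a limit of the monoids $\overline{M}_Z(Z)$ and prove that sharp, integral, saturated monoids are closed under limits. This forces you to phrase saturation without a groupification. Your criterion is that $nu=nv+w$ implies $u=v+e$ for a unique $e$, and cancellation plays the role that the monomorphism $j$ plays in the paper.

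\textbf{What each buys.} The paper's argument is shorter and gives a slightly stronger conclusion: the monoid is saturated inside the explicit ambient group $\Hom_{\mathrm{St}}(X,\mathbb{G}_{m,\trop})$, not merely in its own groupification. Yours is a purely formal closure statement that needs no ambient group sheaf. The same gluing-by-uniqueness argument also justifies your claim that $\overline{M}_Z(Z)$ is saturated: saturation of the stalks produces $e$ locally, and cancellation makes the local choices agree.

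\textbf{Cleanup.} In the write-up, delete the false start at the beginning of the saturation item (the expressions involving ``$c-a/n$'' and ``$nc_Z-\ldots$''). Keep only the clean criterion and the compatibility-by-uniqueness step.
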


\begin{proof}
	Note that since $\mathcal{A}_{\mathcal{A}^{1}}$ is fibered in
	setoids by Lemma \ref{lem:rpc_repr}, the Hom-object is indeed
	a set. We equip it with the monoid structure induced by
	the monoid structure on $\mathcal{A}^{1}$. Note that the submonoid
	of units of $(\mathbb{R}_{\ge 0},\mathbb{Z})$ is represented by
	the zero-cone $0$. Since $\mathcal{A}_{0}$ is the terminal object
	in the category of stacks, the monoid is sharp. To see that
	it is saturated, note that for any $Z\in \LogSch/\Spec(K)$,
	we have $\Hom(Z,\mathcal{A}_{\mathcal{A}^{1}})=\overline{M}_Z(Z)$
	by Lemma \ref{lem:rpc_repr}. Hence there is a natural monomorphism
	$j:\mathcal{A}_{\mathcal{A}^{1}}\to \mathbb{G}_{m,\trop}$ sending 
	a section of $\overline{M}_Z(Z)$ to its image in
	$\overline{M}^{\gp}_Z(Z)$. Let
	$\phi:X\to \mathbb{G}_{m,\trop}$ be a morphism and assume
	there exists $\psi:X\to \mathcal{A}_{\mathcal{A}^{1}}$ such
	that the composition $j\circ \psi$ is equal to $n\cdot \phi$.
	Let $f:Z\to X$ be an arbitrary morphism from a log scheme $Z$.
	Then $\phi\circ f$ corresponds to a section
	$s$ of $\overline{M}_Z^{\gp}(Z)$ and the statement that $j\circ \psi=n\cdot \phi$
	is equivalent to the statement that $n\cdot s\in \overline{M}_Z(Z)$.
	Since $\overline{M}_Z(Z)$ is saturated, it follows that
	$s\in \overline{M}_Z(Z)$ and hence we obtain a lift
	$Z\to \mathcal{A}_{\mathcal{A}^{1}}$. This construction is natural
	since the lift is unique, as $j$ is a monomorphism. We hence
	obtain a lift $X\to \mathcal{A}_{\mathcal{A}^{1}}$ of $\phi$.

	Finally, integrality follows from the fact that
	$\Hom_{\mathrm{St}}(X,\mathcal{A}_{\mathcal{A}^{1}})\to \Hom_{\mathrm{St}}(X,\mathbb{G}_{m,\trop})$
	is an injection into a group.
\end{proof}

We extend $\mathcal{A}_{\mathcal{C}}$ to a stack on the category of stacks over $\Spec(K)$ as follows:

\begin{definition}
	Let $\mathcal{C}$ be as above.
	We define
	a prestack $\mathcal{A}_{\mathcal{C}}''$ on the category of
	stacks on $\LogSch/\Spec(K)$ by
	$$\Hom(F,\mathcal{A}_{\mathcal{C}}''):=\Hom(\Hom_{\mathrm{St}}(F,\mathcal{A}_{\mathcal{A}^{1}}),\overline{\mathcal{C}}).$$
	Let $\mathcal{A}_{\mathcal{C}}$ be stackification of $\mathcal{A}_{\mathcal{C}}''$ with respect to the topology given by
	strict étale morphisms representable by log schemes.
\end{definition}

\begin{lemma}
	The new definition of $\mathcal{A}_{\mathcal{C}}$ restricts to the previous definition on the subcategory of representable stacks.
\end{lemma}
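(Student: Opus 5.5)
The idea is to compare the two constructions on a representable stack, i.e.\ on (the stack represented by) an fs log scheme $Z$. The key step is to compute $\Hom_{\mathrm{St}}(h_Z,\mathcal{A}_{\mathcal{A}^{1}})$. By the Yoneda lemma, this equals $\Hom(Z,\mathcal{A}_{\mathcal{A}^{1}})$. By Lemma \ref{lem:rpc_repr}, applied to $\sigma=\mathcal{A}^{1}=(\mathbb{R}_{\ge 0},\mathbb{Z})$, the latter is $\Hom_{\ShpMon}(\mathbb{N},\overline{M}_Z(Z))=\overline{M}_Z(Z)$. This identification is compatible with the monoid structures, since both are induced by addition on $\mathcal{A}^1$, resp.\ on $\mathbb{N}$. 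It is also natural in $Z$ with respect to morphisms of log schemes. Hence
\[
\Hom(h_Z,\mathcal{A}_{\mathcal{C}}'')=\Hom(\overline{M}_Z(Z),\overline{\mathcal{C}})=\Hom(Z,\mathcal{A}_{\mathcal{C}}'),
\]
functorially in $Z$. In other words, the restriction of the prestack $\mathcal{A}_{\mathcal{C}}''$ to the full subcategory of representable stacks coincides with $\mathcal{A}_{\mathcal{C}}'$.

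It then remains to compare the stackifications. On representable objects, the topology generated by strict étale morphisms representable by log schemes restricts to the strict étale topology on $\LogSch/\Spec(K)$. A cover $\{F_i\to h_Z\}$ by maps representable by log schemes has each $F_i$ representable, say $F_i=h_{Z_i}$, with $Z_i\to Z$ strict étale. So the covering sieves of $h_Z$ in the big site are exactly those generated by strict étale covers of $Z$.

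I would then argue via the explicit plus-construction. The value of the stackification at $h_Z$ is computed from descent data relative to covers of $h_Z$. All objects of such a cover, and all their fiber products, are again representable, since fiber products of log schemes are representable. The data involved therefore only involve the values of $\mathcal{A}_{\mathcal{C}}''$ on representable stacks, which agree with those of $\mathcal{A}_{\mathcal{C}}'$. Cofinality of covers by log schemes among all covers of $h_Z$ then gives the equality of stackifications evaluated at $Z$.

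The main obstacle is this last step: justifying that stackification on the larger site restricts to stackification on the subsite. This is essentially a comparison-lemma argument. The subcategory of representables is closed under fiber products, and every cover of a representable object can be refined by representables, so restriction of sheaves commutes with sheafification. I would invoke the standard comparison lemma, e.g.\ the Stacks Project results on continuous and cocontinuous inclusions of sites, applied to the inclusion of $\LogSch/\Spec(K)$ into stacks with this topology, rather than grinding through descent data by hand.
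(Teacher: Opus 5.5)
Your proposal is correct and follows the paper's argument. You identify $\Hom_{\mathrm{St}}(Z,\mathcal{A}_{\mathcal{A}^{1}})\cong\overline{M}_Z(Z)$ via Lemma \ref{lem:rpc_repr}, so that $\mathcal{A}_{\mathcal{C}}''$ restricts to $\mathcal{A}_{\mathcal{C}}'$ on log schemes, and conclude that the stackifications agree; the paper treats that last step as immediate, and your comparison-lemma justification (covers of representables are by representables, and the class is closed under fs fiber products) simply spells out what it leaves implicit.
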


\begin{proof}
	Let $Z\in \LogSch/\Spec(K)$. Then $\Hom_{\mathrm{St}}(Z,\mathcal{A}_{\mathcal{A}^{1}})\cong \overline{M}_Z(Z)$
	by Lemma \ref{lem:rpc_repr}. Hence $\mathcal{A}_{\mathcal{C}}''$ restricts to $\mathcal{A}_{\mathcal{C}}'$ on $\LogSch/\Spec(K)$
	and their stackifications hence agree.
\end{proof}

\begin{definition}
	Let $\mathcal{X}$ be a stack on $\LogSch/\Spec(K)$. We define
	a stack $\Sigma_{\mathcal{X}}$ over $\RPC$ by setting:
	$$\Hom(\sigma,\Sigma_{\mathcal{X}}):=\Hom(\mathcal{A}_{\sigma},\mathcal{X}).$$
\end{definition}

\begin{theorem}
	\label{thm:left_inv_geom}
	The functor $\mathcal{X}\mapsto \Sigma_{\mathcal{X}}$ is a
	left-inverse of the geometric realization functor.
\end{theorem}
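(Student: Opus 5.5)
We need to show that for a stack $\mathcal{C}$ over $\RPC$ there is a natural equivalence $\Sigma_{\mathcal{A}_{\mathcal{C}}}\simeq \mathcal{C}$. That is, for each $\sigma\in\RPC$ we need a natural equivalence of groupoids
$$\Hom(\mathcal{A}_\sigma,\mathcal{A}_{\mathcal{C}})\simeq \Hom(\sigma,\mathcal{C}),$$
compatible with pullback along morphisms $\tau\to\sigma$. The plan is to compute the left side using the extended definition of $\mathcal{A}_{\mathcal{C}}$ on stacks, via the prestack $\mathcal{A}''_{\mathcal{C}}$.

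\textbf{Step 1: the monoid of $\mathcal{A}_\sigma$.} We first show that $\Hom_{\mathrm{St}}(\mathcal{A}_\sigma,\mathcal{A}_{\mathcal{A}^1})\cong \sigma^\vee\cap M_\sigma$. By Lemma \ref{lem:rpc_repr}, both $\mathcal{A}_\sigma$ and $\mathcal{A}_{\mathcal{A}^1}$ are setoid-valued functors represented by $\Hom_{\ShpMon}(P,\overline{M}_Z(Z))$, with $P=\sigma^\vee\cap M_\sigma$ and $P=\mathbb{N}$ respectively. A morphism between them is a natural transformation
$$\Hom(\sigma^\vee\cap M_\sigma,\overline{M}_Z(Z))\to \overline{M}_Z(Z).$$
Evaluating on the universal point, the log scheme $Z=X_\sigma$ (or its Artin cone) with $\overline{M}_Z(Z)=\sigma^\vee\cap M_\sigma$ and the identity map, gives a Yoneda-type bijection. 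For the universal point to determine everything, every point $Z\to\mathcal{A}_\sigma$ should factor strict-étale locally through this universal chart, and a natural transformation is determined by its value on it. Consequently
$$\Hom(\mathcal{A}_\sigma,\mathcal{A}''_{\mathcal{C}})=\Hom(\sigma^\vee\cap M_\sigma,\overline{\mathcal{C}})=\colim_{Q\to\sigma^\vee\cap M_\sigma}\Hom(Q^\vee,\mathcal{C}).$$
Since $\sigma^\vee\cap M_\sigma$ is itself toric, the identity is a terminal object of the indexing category, so the colimit is $\Hom(\sigma,\mathcal{C})$.

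\textbf{Step 2: stackification does not change the value on $\mathcal{A}_\sigma$.} We must show that $\Hom(\mathcal{A}_\sigma,\mathcal{A}_{\mathcal{C}})=\Hom(\mathcal{A}_\sigma,\mathcal{A}''_{\mathcal{C}})$. Two approaches are available.

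1. Use the cover $X_\sigma\to\mathcal{A}_\sigma=[X_\sigma/T]$. It is strict smooth rather than étale, but Lemma \ref{lem:smooth_descent} makes $\mathcal{A}_{\mathcal{C}}$ a strict smooth stack. So $\Hom(\mathcal{A}_\sigma,\mathcal{A}_{\mathcal{C}})$ equals the $T$-equivariant objects in $\Hom(X_\sigma,\mathcal{A}_{\mathcal{C}})$, i.e. descent data along $X_\sigma\times T\rightrightarrows X_\sigma$.
2. Alternatively, observe that any strict étale cover of $\mathcal{A}_\sigma$ by a log scheme already lets sections extend, because $\mathcal{A}_\sigma$ has a unique closed point with stalk $\sigma^\vee\cap M_\sigma$.

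Following approach 1, apply Lemma \ref{lem:loc_factorization} to $X_\sigma\to\mathcal{A}_{\mathcal{C}}$. Since $X_\sigma$ is atomic with closed-stratum monoid $\sigma^\vee\cap M_\sigma$, the factorization holds globally and gives an object of $\Hom(\sigma,\mathcal{C})$. The descent datum along $T$ is automatic, because $X_\sigma\times T\to X_\sigma$ induces the identity on $\overline{M}$. This shows equivalence on objects; for morphisms, the same argument applies to 2-morphisms, which are determined on $\overline{M}(X_\sigma)=\sigma^\vee\cap M_\sigma$.

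\textbf{Step 3: naturality.} For $\tau\to\sigma$, the induced $\mathcal{A}_\tau\to\mathcal{A}_\sigma$ corresponds to the monoid map $\sigma^\vee\cap M_\sigma\to\tau^\vee\cap M_\tau$, and the identifications above are functorial in the monoid. Hence we obtain an equivalence of stacks $\Sigma_{\mathcal{A}_{\mathcal{C}}}\simeq\mathcal{C}$, natural in $\mathcal{C}$.

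The main obstacle is Step 2: controlling the stackification so that $\Hom(\mathcal{A}_\sigma,\mathcal{A}_{\mathcal{C}})$ acquires no extra objects or 2-morphisms beyond those of the prestack. In particular, we must check that a map from $X_\sigma$ which is only locally of the form given by Lemma \ref{lem:loc_factorization} glues to a single cone map $\sigma\to\mathcal{C}$. The plan is to use that $X_\sigma$ is connected and atomic, and that $\mathcal{C}$ is a stack, so that the local charts $\sigma_i\to\mathcal{C}$ all restrict from the chart at the closed stratum.
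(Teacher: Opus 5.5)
Your Step~1 is fine. It replaces the paper's smooth descent along $X_\sigma\to\mathcal{A}_\sigma$ with a Yoneda argument at $X_\sigma$, using that any map $\sigma^\vee\cap M_\sigma\to\overline{M}_Z(Z)$ lifts strict \'etale locally to a chart. Your terminal-object remark also makes explicit the identification $\Hom(\sigma^\vee\cap M_\sigma,\overline{\mathcal{C}})=\Hom(\sigma,\mathcal{C})$, which the paper leaves implicit.

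The gap is Step~2, the only non-formal point of the theorem. In approach~1 you assert that, because $X_\sigma$ is atomic, the factorization of Lemma~\ref{lem:loc_factorization} ``holds globally''. That is, you claim every object of the stackified $\mathcal{A}_{\mathcal{C}}(X_\sigma)$ is the prestack object of a single cone map $\sigma\to\mathcal{C}$. That lemma does not give this. It is only strict \'etale local, and its proof starts by assuming the morphism is already represented in $\mathcal{A}'_{\mathcal{C}}$. Passing from local to global is exactly the stackification problem. For the same reason, ``the descent datum along $T$ is automatic'' presupposes what is to be shown.

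Your closing sketch does not close the gap. A strict \'etale neighbourhood $U_0$ of the closed point, where you do get a chart $\sigma\to\mathcal{C}$, need not cover $X_\sigma$. The other members of a cover generally have non-toric $\overline{M}(U_i)$ and are tied to the $U_0$-chart only through gluing isomorphisms. You would have to show these assemble into an isomorphism with the pullback of one cone map. One could try $T_\sigma$-equivariance, since $T_\sigma\cdot U_0$ covers $X_\sigma$, but the comparison isomorphism would still have to be descended, and none of this is in the proposal.

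Note also that approach~1 never uses your Step~1, so the two steps do not connect. Step~1 computes the value of the prestack $\mathcal{A}''_{\mathcal{C}}$ at $\mathcal{A}_\sigma$. To use it you need precisely that the stack $\mathcal{A}_{\mathcal{C}}$ and the prestack $\mathcal{A}''_{\mathcal{C}}$ have the same value on $\mathcal{A}_\sigma$.

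The paper obtains this in one line from \cite[Corollary 2.2.8]{Birational_inva_Abramo_2013}: every strict \'etale cover of $\mathcal{A}_\sigma$ (by morphisms representable by log schemes) has a section. Hence stackification does not change the value at $\mathcal{A}_\sigma$, and your Step~1 then finishes the proof.

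Your approach~2 points at this but is not an argument as written. The relevant covers are morphisms representable by log schemes, not covers by a log scheme; the latter do not exist for $\sigma\neq 0$, since $\mathcal{A}_\sigma$ is then not Deligne--Mumford. Moreover, having a unique closed point with stalk $\sigma^\vee\cap M_\sigma$ does not by itself force strict \'etale covers to split. That splitting is the real content of the cited corollary.
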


\begin{proof}
	Let $\mathcal{C}$ be a stack over $\RPC$ and $\sigma\in \RPC$. By \cite[Corollary 2.2.8]{Birational_inva_Abramo_2013},
	every strict étale cover of $\mathcal{A}_{\sigma}$ has a section. It follows that
	$$\Hom(\mathcal{A}_{\sigma},\mathcal{A}_{\mathcal{C}})\cong \Hom(\mathcal{A}_{\sigma},\mathcal{A}_{\mathcal{C}}'')\cong \Hom(\Hom_{\mathrm{St}}(\mathcal{A}_{\sigma},\mathcal{A}_{\mathcal{A}^{1}}),\overline{\mathcal{C}}).$$
	It remains to show that $\Hom(\mathcal{A}_{\sigma},\mathcal{A}_{\mathcal{A}^{1}})\cong \sigma^{\vee}\cap M_{\sigma}$.
	Let $X_{\sigma}$ be the affine toric variety with cone $\sigma$ and $X_{\sigma}\to \mathcal{A}_{\sigma}=[X_{\sigma}/T_{\sigma}]$
	the quotient map. By Lemma \ref{lem:smooth_descent}, we know that $\Hom(\mathcal{A}_{\sigma},\mathcal{A}_{\mathcal{A}^{1}})$
	is canonically isomorphic to the set of $T_{\sigma}$-equivariant morphisms $X_{\sigma}\to \mathcal{A}_{\mathcal{A}^{1}}$
	via pullback. By Lemma \ref{lem:rpc_repr}, we know that $\Hom(X_{\sigma},\mathcal{A}_{\mathcal{A}^{1}})$
	is isomorphic to $\Hom(\mathbb{Z}_{\ge 0}, \overline{M}_{X_{\sigma}}(X_{\sigma}))=\overline{M}_{X_{\sigma}}(X_{\sigma})\cong \sigma^{\vee}\cap M_{\sigma}$.
	Since $\overline{M}_{X_{\sigma}}$ is $T_{\sigma}$-invariant, these morphisms are automatically $T_{\sigma}$-invariant and hence
	the claim follows.
\end{proof}

\begin{corollary}
	\label{cor:fib_prods_preserved}
	The functor $\mathcal{C}\mapsto \mathcal{A}_{\mathcal{C}}$ preserves fiber products.
\end{corollary}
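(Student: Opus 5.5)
We need to show $\mathcal{A}_{\mathcal{C}\times_{\mathcal{E}}\mathcal{D}}\simeq \mathcal{A}_{\mathcal{C}}\times_{\mathcal{A}_{\mathcal{E}}}\mathcal{A}_{\mathcal{D}}$ for stacks $\mathcal{C}\to\mathcal{E}\leftarrow\mathcal{D}$ over $\RPC$. The plan is to check this first at the level of the prestacks $\mathcal{A}'$, then pass to stackifications. The comparison map
$$\mathcal{A}_{\mathcal{C}\times_{\mathcal{E}}\mathcal{D}}\to \mathcal{A}_{\mathcal{C}}\times_{\mathcal{A}_{\mathcal{E}}}\mathcal{A}_{\mathcal{D}}$$
comes from functoriality. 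Strict étale stackification commutes with finite 2-limits, since sheafification is exact. It therefore suffices to show that for each fs log scheme $Z$ the induced functor
$$\Hom(\overline{M}_Z(Z),\overline{\mathcal{C}\times_{\mathcal{E}}\mathcal{D}})\to \Hom(\overline{M}_Z(Z),\overline{\mathcal{C}})\times_{\Hom(\overline{M}_Z(Z),\overline{\mathcal{E}})}\Hom(\overline{M}_Z(Z),\overline{\mathcal{D}})$$
is an equivalence of groupoids. The problem is thus purely combinatorial: show that $\mathcal{C}\mapsto \Hom(P,\overline{\mathcal{C}})$ preserves 2-fiber products for every sharp fs monoid $P$.

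For this I would use that the 2-colimit defining $\Hom(P,\overline{\mathcal{C}})$ is filtered. The index category of maps $Q\to P$ from toric monoids is filtered: two maps $Q_1\to P$, $Q_2\to P$ factor through the saturated image of $Q_1\oplus Q_2$, which is toric. Equalizing parallel arrows works similarly, by passing to the image in $P$. In fact, when $P=\overline{M}_Z(Z)$ is itself finitely generated (for instance after localizing to an atomic neighbourhood as in Lemma \ref{lem:loc_factorization}), $P$ is toric. The index category then has the terminal object $\mathrm{id}_P$, so the colimit is simply $\Hom_{\RPC}(P^{\vee},-)$.

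In that case $\Hom(P,\overline{\mathcal{C}})$ is evaluation of $\mathcal{C}$ at the cone $P^{\vee}$. Evaluation at an object commutes with 2-fiber products of stacks over $\RPC$, since fiber products of stacks are computed objectwise. In the general case I would invoke the standard fact that filtered 2-colimits of groupoids commute with finite 2-limits. Objectwise computation then gives the claim for each $\Hom_{\RPC}(Q^{\vee},-)$.

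The main obstacle is the bookkeeping of the reduction. Stackification is only strict étale, and $\overline{M}_Z(Z)$ need not be finitely generated for non-atomic $Z$. I would handle this by working strict étale locally: by \cite[Proposition 2.2.2.5]{TheLogarithmicMolcho2022}, atomic $Z$ form a basis of the strict étale topology. The comparison is then an equivalence on a basis, which suffices for the stackifications to agree.
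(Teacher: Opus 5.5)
Your proof is correct, but it follows a different route from the paper's.

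\textbf{The paper's route.} The paper never works with the prestacks $\mathcal{A}'_{\mathcal{C}}$. It tests the comparison map $\mathcal{A}_{\mathcal{C}_1\times_{\mathcal{C}_3}\mathcal{C}_2}\to \mathcal{A}_{\mathcal{C}_1}\times_{\mathcal{A}_{\mathcal{C}_3}}\mathcal{A}_{\mathcal{C}_2}$ by base change along maps from log schemes $Z$. It uses Lemma~\ref{lem:loc_factorization} to factor $Z$ strict \'etale locally through an Artin cone $\mathcal{A}_{\sigma}$. It then applies Theorem~\ref{thm:left_inv_geom}, i.e.\ $\Hom(\mathcal{A}_{\sigma},\mathcal{A}_{\mathcal{C}})\simeq\Hom(\sigma,\mathcal{C})$, to reduce to the objectwise computation of fiber products over $\RPC$.

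\textbf{Your route.} You prove the equivalence already at the level of $\mathcal{A}'$, in one of two ways. Either you argue on all $Z$, via filteredness of the index category and commutation of filtered 2-colimits of groupoids with finite 2-limits. Or you argue on the atomic basis, where $\mathrm{id}_P$ is terminal in the index category. You then pass to stackifications using two facts: stackification is left exact, and a map of prestacks that is an equivalence on a covering family of objects is a local equivalence.

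\textbf{Comparison.} Both routes rest on the same fact: for atomic $Z$ the value is $\mathcal{C}(\sigma)$ with $\sigma^{\vee}\cap M_{\sigma}\cong\overline{M}_Z(Z)$, and 2-fiber products over $\RPC$ are computed objectwise.

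Your version does not need Theorem~\ref{thm:left_inv_geom}. Consequently it also avoids the Abramovich--Wise result that strict \'etale covers of $\mathcal{A}_{\sigma}$ have sections, and Lemma~\ref{lem:smooth_descent}.

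It also handles a point the paper passes over quickly. Lemma~\ref{lem:loc_factorization} is stated for a map into a single $\mathcal{A}_{\mathcal{C}}$, but the paper applies it to a map into a fiber product. That requires factoring both components, together with the 2-isomorphism over $\mathcal{A}_{\mathcal{C}_3}$, through a single Artin cone. On atomic $Z$ your prestack description gives this for free.

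In exchange, the paper's argument uses only results already proved in the text. Yours imports general facts about stacks: exactness of stackification, plus either the filtered-colimit argument or the local-equivalence argument.
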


\begin{proof}
	Let
	$$\begin{tikzcd}
		\mathcal{C}_1\arrow[dr]&&\mathcal{C}_2\arrow[dl]\\
					 &\mathcal{C}_3&
	\end{tikzcd}$$
	be morphisms of stacks over $\RPC$.
	Note that we have a map $\mathcal{A}_{\mathcal{C}_1\times_{\mathcal{C}_3}\mathcal{C}_2}\to \mathcal{A}_{\mathcal{C}_1}\times_{\mathcal{A}_{\mathcal{C}_3}}\mathcal{A}_{\mathcal{C}_2}$
	by the universal property of the fiber product. It suffices to show that the base-change of this
	map along any morphism $Z\to \mathcal{A}_{\mathcal{C}_1}\times_{\mathcal{A}_{\mathcal{C}_3}}\mathcal{A}_{\mathcal{C}_2}$
	is an isomorphism, where $Z$ is an fs log scheme. This is strict étale local on $Z$.
	Hence by Lemma \ref{lem:loc_factorization}, we may assume that
	$Z\to \mathcal{A}_{\mathcal{C}_1}\times_{\mathcal{A}_{\mathcal{C}_3}}\mathcal{A}_{\mathcal{C}_2}$ factors as
	$Z\to \mathcal{A}_{\sigma}\to \mathcal{A}_{\mathcal{C}_1}\times_{\mathcal{A}_{\mathcal{C}_3}}\mathcal{A}_{\mathcal{C}_2}$
	for some RPC $\sigma$.
	The datum of a morphism $\mathcal{A}_{\sigma}\to \mathcal{A}_{\mathcal{C}_1}\times_{\mathcal{A}_{\mathcal{C}_3}}\mathcal{A}_{\mathcal{C}_2}$
	is equivalent to the datum of morphisms $\mathcal{A}_{\sigma}\to \mathcal{A}_{\mathcal{C}_1}$ and
	$\mathcal{A}_{\sigma}\to \mathcal{A}_{\mathcal{C}_2}$ such that the compositions with the maps $\mathcal{A}_{\mathcal{C}_i}\to \mathcal{A}_{\mathcal{C}_3}$
	agree. By Theorem \ref{thm:left_inv_geom}, this is equivalent to the datum of morphisms $\sigma\to \mathcal{C}_i$ for $i\in \{1,2\}$
	so that the compositions with the maps $\mathcal{C}_i\to \mathcal{C}_3$ for $i\in \{1,2\}$ agree. This in turn is equivalent
	to the datum of a map $\sigma\to \mathcal{C}_1\times_{\mathcal{C}_3}\mathcal{C}_2$ and hence the base-change
	of the map $\mathcal{A}_{\mathcal{C}_1\times_{\mathcal{C}_3}\mathcal{C}_2}\to \mathcal{A}_{\mathcal{C}_1}\times_{\mathcal{A}_{\mathcal{C}_3}}\mathcal{A}_{\mathcal{C}_2}$
	over $\mathcal{A}_{\sigma}$ is an isomorphism. Pulling back to $Z$ finishes the proof.
\end{proof}

\begin{definition}
	Given a property $P$ of morphisms of stacks over $\RPC$, we say that $\mathcal{A}_{\mathcal{C}}\to \mathcal{A}_{\mathcal{D}}$ satisfies $P$ if
	$\mathcal{C}\to \mathcal{D}$ satisfies $P$. For example, if $\mathcal{C}\to \mathcal{D}$ is a split family of tropical semiabelian varieties,
	we call $\mathcal{A}_{\mathcal{C}}\to \mathcal{A}_{\mathcal{D}}$ a split family of tropical semiabelian varieties.
	This notation is justified by Theorem \ref{thm:left_inv_geom}, which says that $\mathcal{C}\to \mathcal{D}$ is uniquely determined
	by $\mathcal{A}_{\mathcal{C}}\to \mathcal{A}_{\mathcal{D}}$.
\end{definition}

Using the tropical realization functor, we can translate partial tropical compactifications of split families of tropical semiabelian varieties
into the geometric setting:

\begin{definition}
	\label{def:combinat_prt_cpt}
	Let $X\to S$ be a log semiabelian variety. Assume that there exists an Artin fan $S\to \mathcal{A}_S$ for $S$ and
	a split family of tropical semiabelian varieties $\mathcal{X}\to \Sigma_{\mathcal{A}_S}$ such that
	$[X/G_X]\cong S\times_{\mathcal{A}_S}\mathcal{A}_{\mathcal{X}}$ (see Theorem \ref{thm:trop_corresp}). Let $\mathcal{Y}\to \mathcal{X}$
	be a partial tropical compactification of $\mathcal{X}\to \Sigma_{\mathcal{A}_S}$. We call the morphism
	$\mathcal{A}_{\mathcal{Y}}\times_{\mathcal{A}_{S}}S\to [X/G_X]$ a combinatorial partial log compactification of $[X/G_X]$.
	Similarly, we call the morphism $\mathcal{A}_{\mathcal{Y}}\times_{\mathcal{A}_{\mathcal{X}}}X\to X$
	a combinatorial partial log compactification. A combinatorial partial log compactification (of $[X/G_X]$) is said to be a combinatorial log compactification, if it is proper over $X$ (resp. $[X/G_X]$).
\end{definition}

\begin{proposition}
	\label{prop:combinat_cpt_is_cpt}
	A combinatorial partial log compactification is a partial
	log compactification. It is a log compactification
	if and only if the associated tropical partial compactification
	$\mathcal{Y}\to \mathcal{X}$ is a tropical compactification.
\end{proposition}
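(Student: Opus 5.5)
We reduce everything to statements about the tropical morphism $\mathcal{Y}\to\mathcal{X}$ and transport them through the geometric realization functor. By Lemma \ref{lem:equiv_cat} and the lemma characterizing partial log compactifications of $[X/G_X]$, it suffices to treat $\mathcal{A}_{\mathcal{Y}}\times_{\mathcal{A}_S}S\to[X/G_X]\cong\mathcal{A}_{\mathcal{X}}\times_{\mathcal{A}_S}S$. This map is the base change of $\mathcal{A}_{\mathcal{Y}}\to\mathcal{A}_{\mathcal{X}}$ along $[X/G_X]\to\mathcal{A}_{\mathcal{X}}$. Every required property (log étale, monomorphism, representable by log DM stacks of finite presentation, proper) is stable under base change and strict étale local on the target. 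We therefore check them for $\mathcal{A}_{\mathcal{Y}}\times_{\mathcal{A}_{\mathcal{X}}}Z\to Z$ with $Z$ an fs log scheme. By Lemma \ref{lem:loc_factorization}, strict étale locally $Z\to\mathcal{A}_{\mathcal{X}}$ factors strictly through $\mathcal{A}_\sigma$ for some $\sigma\to\mathcal{X}$. By Corollary \ref{cor:fib_prods_preserved}, $\mathcal{A}_{\mathcal{Y}}\times_{\mathcal{A}_{\mathcal{X}}}\mathcal{A}_\sigma\cong\mathcal{A}_{\mathcal{Y}\times_{\mathcal{X}}\sigma}$, and $\mathcal{Y}\times_{\mathcal{X}}\sigma$ is a finite cone stack. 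Since $\mathcal{Y}\to\mathcal{X}$ is a monomorphism, it is moreover a finite cone stack mapping monomorphically to $\sigma$.

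The key local claim is as follows. For a finite cone stack $\mathcal{C}$ with a monomorphism $\mathcal{C}\to\sigma$, the map $\mathcal{A}_{\mathcal{C}}\to\mathcal{A}_\sigma$ is a log étale monomorphism representable by log DM stacks of finite presentation. It is proper if and only if $|\mathcal{C}|\to\sigma$ is surjective.

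The monomorphism property is formal: $\Sigma$ is a left inverse of realization by Theorem \ref{thm:left_inv_geom}, and the diagonal of $\mathcal{C}\to\sigma$ is an isomorphism, so Corollary \ref{cor:fib_prods_preserved} shows the diagonal of $\mathcal{A}_{\mathcal{C}}\to\mathcal{A}_\sigma$ is too. For representability and log étaleness, we pull back along $X_\sigma\to\mathcal{A}_\sigma$ and describe $\mathcal{A}_{\mathcal{C}}\times_{\mathcal{A}_\sigma}X_\sigma$. The finite cone stack $\mathcal{C}$ is a finite colimit of cones $\tau_i\to\sigma$, each a finite-index sublattice of a subcone. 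Each $\mathcal{A}_{\tau_i}\times_{\mathcal{A}_\sigma}X_\sigma$ is then an open toric piece of a toric variety, up to a root-stack-type quotient by the finite group $N_\sigma'/N_{\tau_i}$, and hence a log étale log DM stack of finite type over $X_\sigma$. Gluing these along the face maps should present the pullback as a toric DM stack, étale locally a quotient of a toric variety, mapping log étale to $X_\sigma$. The main obstacle is that $\mathcal{C}$ is only a monomorphic finite cone stack, not an honest fan, and its realization need not be algebraic. I would first try to show that an fs log scheme chart is obtained by refining $\mathcal{C}$ to a subdivision by simplicial cones. The realization of such a refinement maps via a log modification followed by a root stack, and the pullback is a toric orbifold of finite presentation. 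The general case should then follow by descent, using Lemma \ref{lem:smooth_descent} and the fact that log étale covers suffice.

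For the section, the zero section $\Sigma_{\mathcal{A}_S}\to\mathcal{Y}$ lying over $\Sigma_{\mathcal{A}_S}\to\mathcal{X}$ realizes to $\mathcal{A}_S\to\mathcal{A}_{\mathcal{Y}}$. Base change along $S\to\mathcal{A}_S$ gives the required section over the zero section. For properness, we use the valuative criterion relative to $X_\sigma$. After the refinement above, this becomes the standard toric fact that a fan subdivision gives a proper map exactly when the supports coincide; a finite sublattice change is always proper. Surjectivity of $|\mathcal{Y}\times_{\mathcal{X}}\sigma|\to\sigma$ for all $\sigma$ is exactly the definition of a tropical compactification. Conversely, if some $\sigma$ is not covered, the toric valuative criterion fails along a ray outside the support, so the map is not proper.
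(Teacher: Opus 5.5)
Your reduction to the local claim for $\mathcal{A}_{\mathcal{C}}\to\mathcal{A}_\sigma$ and your formal monomorphism argument match what the paper does in Proposition~\ref{prop:mono_base_change}. The core step, however, is not established: representability by log DM stacks of finite presentation, and log \'etaleness. The ``main obstacle'' you name does not actually exist, and recognizing this is the missing idea. A finite cone stack $\mathcal{C}$ with a monomorphism to a single cone $\sigma$ is automatically isomorphic to a stacky fan $\Delta$ in $\sigma$ (Lemma~\ref{lem:mono_impl_fan}). It is fibered in setoids, its cones meet along common faces, and the sublattices restrict compatibly along faces. With this in hand, the Artin cones $\mathcal{A}_\tau$, $\tau\in\Delta$, form a Zariski open cover of $\mathcal{A}_\Delta$. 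On each piece one computes directly that $\mathcal{A}_\tau\times_{\mathcal{A}_\sigma}X_\sigma$ is a torus times $[X_\tau/\Hom(\underline{M_\tau/M_{\tau'}},\mathbb{G}_m)]$, where $\tau'$ is $\tau$ with the saturated lattice $N_\sigma\cap\Span(\tau)$. This is a log DM stack of finite presentation, log \'etale over $X_\sigma$. Your description of the local pieces is essentially this. What justifies gluing them into a toric DM stack is exactly the stacky-fan structure; in particular, the realization \emph{is} algebraic.

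The fallback you propose instead would fail. A subdivision $\mathcal{C}'\to\mathcal{C}$ realizes to a log modification, which is proper, log \'etale and surjective. Representability by log DM stacks does not descend along such maps, and Lemma~\ref{lem:smooth_descent} concerns strict smooth covers, so it does not help. For example, $\mathbb{P}^1\to\mathbb{G}_{m,\log}$ is a log modification from a scheme onto a non-algebraic target. The paper's minimal objects $M_Y$ are, in general, non-representable targets of such maps $Y\to M_Y$. So an argument of your shape proves too much unless it uses the specific structure of $\mathcal{C}$, which is exactly Lemma~\ref{lem:mono_impl_fan}.

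Your properness argument also runs through the refinement, and hence through the unproved representability. The paper instead passes to the fppf cover $X_{(\sigma,N)}\to X_\sigma$, with $N\subseteq N_\sigma$ small enough that $\Span(\tau)\cap N\subseteq N_\tau$ for all $\tau$. The pullback is then an honest toric variety and the support criterion applies.

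There are also three smaller omissions. First, for ``proper $\Rightarrow$ surjective'' you need $[X/G_X]\to\mathcal{A}_{\mathcal{X}}$ to be a strict smooth surjection (by log smoothness of $S$), so that properness descends. Second, you need each $\mathcal{Y}\times_{\mathcal{X}}\sigma$ to be non-empty; it contains the origin via the zero section. Otherwise the empty map is vacuously proper and your valuative-criterion argument has no generic point to start from. Third, you do not check that $Y$ is a strict fppf stack; the paper verifies this, since $\mathcal{A}_{\mathcal{Y}}$ is only defined as a strict \'etale stackification.
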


The proof will be given after some preliminary results.

\begin{proposition}
	\label{prop:mono_base_change}
	Let $\mathcal{Y}\to \mathcal{X}$ be a monomorphism of stacks over $\RPC$ representable by finite cone stacks.
	Then $\mathcal{A}_{\mathcal{Y}}\to \mathcal{A}_{\mathcal{X}}$ is a log étale monomorphism representable by
	log DM stacks of finite presentation. Moreover:
	\begin{enumerate}
		\item The morphism $\mathcal{A}_{\mathcal{Y}}\to \mathcal{A}_{\mathcal{X}}$ is representable by log
			algebraic spaces if and only if for every morphism $\sigma\to \mathcal{X}$ and every $\tau\in \sigma\times_{\mathcal{X}}\mathcal{Y}$
			the induced morphism $N_{\tau}\to N_{\sigma}$ surjects onto $N_{\sigma}\cap\Span(\tau)\subseteq N_{\sigma}$.
		\item If the base-change of $\mathcal{Y}\to \mathcal{X}$ over any cone is non-empty, The morphism $\mathcal{A}_{\mathcal{Y}}\to \mathcal{A}_{\mathcal{X}}$ is proper if and only if
			$\mathcal{Y}\to \mathcal{X}$ is surjective (i.e., for every $\sigma\to \mathcal{X}$, the morphism $\mathcal{Y}\times_{\mathcal{X}}\sigma\to \sigma$
			is surjective).
	\end{enumerate}
\end{proposition}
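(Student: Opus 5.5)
The plan is to reduce everything to the case where the target is an Artin cone, and then to known facts about toric morphisms.

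\textbf{Reduction to Artin cones.} All properties in question are strict étale local on the target: log étale, monomorphism, representability by log DM stacks or log algebraic spaces of finite presentation, and properness. So it suffices to check them after base change along an arbitrary $Z\to\mathcal{A}_{\mathcal{X}}$ with $Z$ an fs log scheme. By Lemma \ref{lem:loc_factorization}, strict étale locally such a map factors strictly through some $Z\to\mathcal{A}_\sigma\to\mathcal{A}_{\mathcal{X}}$. By Corollary \ref{cor:fib_prods_preserved},
$$\mathcal{A}_{\mathcal{Y}}\times_{\mathcal{A}_{\mathcal{X}}}\mathcal{A}_\sigma\cong \mathcal{A}_{\mathcal{Y}\times_{\mathcal{X}}\sigma}.$$
Here $\mathcal{Y}_\sigma:=\mathcal{Y}\times_{\mathcal{X}}\sigma$ is a finite cone stack with a monomorphism to $\sigma$. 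We are therefore reduced to studying $\mathcal{A}_{\mathcal{Y}_\sigma}\to\mathcal{A}_\sigma$ and its base change along the strict map $Z\to\mathcal{A}_\sigma$.

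\textbf{The local picture.} A finite cone stack with a monomorphism to $\sigma$ should be a colimit of finitely many cones $\tau_i\to\sigma$, each with image a rational subcone, and glued along faces. The monomorphism condition should force the images to meet along common faces, so $\mathcal{Y}_\sigma$ is a finite "stacky fan" in $\sigma$: a partial subdivision of $\sigma$ together with finite-index sublattices $N_{\tau}\to N_\sigma\cap\Span(\tau)$. Its realization is then $[X_{\mathcal{Y}_\sigma}/T_\sigma]$, where $X_{\mathcal{Y}_\sigma}$ is the toric DM stack (Borisov–Chen–Smith / Fantechi–Mann–Nironi) attached to that stacky fan. The map to $[X_\sigma/T_\sigma]$ is induced by a toric morphism $X_{\mathcal{Y}_\sigma}\to X_\sigma$ that is an isomorphism on tori up to finite kernel.

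The claimed properties then follow:
\begin{itemize}
\item \emph{Log étale, finite presentation, DM.} Toric morphisms between toric DM stacks with the same $N_{\mathbb{Q}}$ are log étale and of finite presentation, and toric DM stacks are DM.
\item \emph{Monomorphism.} It suffices to check on $\mathcal{A}_\rho$-points and automorphisms, which by Theorem \ref{thm:left_inv_geom} reduces to the monomorphism $\mathcal{Y}_\sigma\to\sigma$. Here one must be careful that the $\mathcal{A}_\rho$-points detect monomorphisms of log stacks. I would argue this via Lemma \ref{lem:loc_factorization} applied to $Z$-points: $Z$-points of $\mathcal{A}_{\mathcal{C}}$ are strict étale locally monoid maps $\overline{M}_Z\to\overline{\mathcal{C}}$, and for a sharp fs monoid these are maps from an RPC.
\end{itemize}

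\textbf{Part (1).} The stabilizers of $X_{\mathcal{Y}_\sigma}$ relative to $X_\sigma$ are the finite groups $(N_\sigma\cap\Span\tau)/N_\tau$, or their duals. The map is representable by algebraic spaces exactly when all these groups are trivial, i.e.\ when $N_\tau\to N_\sigma\cap\Span(\tau)$ is surjective. The "only if" direction can be checked directly: if the map fails to be surjective, base change along $\mathcal{A}_\tau$ realized in $\sigma$ with the saturated lattice produces a nontrivial $\mu$-gerbe, or equivalently a nontrivial automorphism of a $\rho$-point.

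\textbf{Part (2).} Properness of a toric morphism is equivalent to the support of the fan being the full preimage, here the whole of $\sigma$. This is the classical valuative criterion for toric varieties, extended to toric DM stacks via coarse spaces or the fact that root constructions are proper. Properness descends along the smooth cover $X_\sigma\to\mathcal{A}_\sigma$ by Lemma \ref{lem:smooth_descent}-type arguments.

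\textbf{Main obstacle.} The hard step will be the structural identification of a finite cone stack monomorphic to $\sigma$ with a stacky partial fan, so that the toric-stack dictionary applies. In particular one must rule out self-gluings or nontrivial automorphisms, which monomorphy should forbid. I would first attempt this directly from the definition of finite cone stacks in \cite{AModuliStackCavali2017}, using that monomorphisms have trivial automorphism groups relative to $\sigma$.
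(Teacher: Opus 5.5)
Your proposal follows essentially the same route as the paper. It reduces, via Lemma~\ref{lem:loc_factorization} and Corollary~\ref{cor:fib_prods_preserved}, to a finite cone stack monomorphic to a single cone $\sigma$. It identifies that stack with a stacky fan; your ``main obstacle'' is exactly Lemma~\ref{lem:mono_impl_fan}, stated just before the proposition. It then describes the base change to $X_\sigma$ as a toric DM stack that is locally a torus times $[X_\tau/\mu]$, with $\mu$ Cartier dual to $M_\tau/M_{\tau'}$, and reads off (1) from the stabilizers and (2) from toric properness.

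The differences are minor:
\begin{itemize}
\item The paper gets the monomorphism property in one line by applying Corollary~\ref{cor:fib_prods_preserved} to the diagonal.
\item The paper verifies the identification with toric DM stacks by an explicit functor-of-points computation. You instead cite Borisov--Chen--Smith and Fantechi--Mann--Nironi, which only cover simplicial or smooth cones. The relevant theory here is that of lattice KM fans in \cite{ATheoryOfStaGillam2015}.
\item For (2), the paper passes to the fppf cover $X_{(\sigma,N)}\to X_\sigma$, with $N$ small enough that all sublattices become saturated. Your coarse-space argument is an equally valid alternative.
\end{itemize}
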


For the proof we require the following definition and lemma:

\begin{definition}[{\cite[Definition 2.2.3]{ATheoryOfStaGillam2015}}]
	Let $\sigma$ be an RPC. A stacky fan in $\sigma$
	is a finite set of RPCs $\Delta$ contained in $\sigma$ and finite-index
	sublattices $N_{\tau}\subseteq N_{\sigma}\cap \Span(\tau)$ satisfying the following properties:
	\begin{enumerate}
		\item $\tau_1\cap \tau_2$ is a common face of $\tau_1,\tau_2\in \Delta$.
		\item If $\tau\in \Delta$ and $\tau'$ is a face of $\tau$, then $\tau'\in \Delta$.
		\item If $\tau_1\to \tau_2$ is a face inclusion in $\Delta$ then $N_{\tau_2}\cap \Span \tau_1=N_{\tau_1}$.
	\end{enumerate}
	A stacky fan defines a cone complex (also denoted by $\Delta$)
	obtained by gluing the cones $(\tau,N_{\tau})$ along the face inclusions in $\Delta$.
\end{definition}

\begin{lemma}
	\label{lem:mono_impl_fan}
	Let $\mathcal{Y}\to \sigma$ be a monomorphism from a finite cone stack $\mathcal{Y}$ to an RPC $\sigma$. Then $\mathcal{Y}$
	is isomorphic to a stacky fan in $\sigma$.
\end{lemma}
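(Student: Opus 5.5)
We will use the combinatorial description of finite cone stacks from \cite{AModuliStackCavali2017}: a finite cone stack $\mathcal{Y}$ is a finite colimit (a stacky quotient of a diagram) of RPCs along face morphisms, so it is determined by the category of cones $\tau\to\mathcal{Y}$ together with their face maps. Composing with $\mathcal{Y}\to\sigma$ assigns to each cone $(\tau,N_\tau)$ of $\mathcal{Y}$ a morphism of RPCs $f_\tau:(\tau,N_\tau)\to(\sigma,N_\sigma)$. The first goal is to show that each $f_\tau$ is injective, both on the cone $\tau$ and on the lattice $N_\tau$. The second goal is to show that distinct cones have images meeting along common faces. Then the data $\Delta=\{f_\tau(\tau)\}$ together with the sublattices $f_\tau(N_\tau)\subseteq N_\sigma\cap\Span f_\tau(\tau)$ will form the required stacky fan.

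First we show that $\mathcal{Y}$ has trivial automorphisms, so that it is fibered in setoids and is a cone space. An automorphism of an object of $\mathcal{Y}(\rho)$ maps to the identity in $\sigma(\rho)$, which is a set. Since $\mathcal{Y}\to\sigma$ is a monomorphism, the functor $\mathcal{Y}(\rho)\to\sigma(\rho)$ is fully faithful, so the automorphism is trivial.

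Next we show injectivity of each $f_\tau$. Suppose $\tau\to\sigma$ fails to be injective on $N_\tau$, or fails to be injective on $\tau$, or has image lattice not saturated in the span in the relevant sense. In each case we test against suitable cones $\rho$ to produce two distinct maps $\rho\to\tau\to\mathcal{Y}$ with the same composite to $\sigma$. The simplest test is $\rho=\mathcal{A}^1=(\mathbb{R}_{\ge0},\mathbb{Z})$: two distinct lattice points of $\tau$ with the same image give two distinct maps $\mathcal{A}^1\to\tau$. These maps stay distinct in $\mathcal{Y}$ because cones map to $\mathcal{Y}$ through faces, and the maps from a cone to a cone space are determined by which face they land in together with the map into that face. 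Monomorphicity then forces $f_\tau$ to be injective on $\tau\cap N_\tau$, hence on $N_\tau$ after passing to the group completion. The image $f_\tau(N_\tau)$ has finite index in $N_\sigma\cap\Span(f_\tau\tau)$ because $f_\tau$ is injective and preserves dimension.

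For the fan axioms, take two cones $\tau_1,\tau_2$ of $\mathcal{Y}$ and a point of $f(\tau_1)\cap f(\tau_2)$ lying in the relative interiors of faces $\tau_1'\subseteq\tau_1$ and $\tau_2'\subseteq\tau_2$. Consider the ray, or a small cone, through that point with lattice $f(N_{\tau_1'})\cap f(N_{\tau_2'})$. This gives two maps into $\mathcal{Y}$ with the same image in $\sigma$, so by monomorphicity they agree in $\mathcal{Y}$. Because $\mathcal{Y}$ is a cone space, agreement means that $\tau_1'$ and $\tau_2'$ are identified as the same cone of $\mathcal{Y}$. From this we deduce that $f(\tau_1)\cap f(\tau_2)$ is a union of common faces, and by convexity a single common face. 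The lattice compatibility $N_{\tau_2}\cap\Span\tau_1=N_{\tau_1}$ along face inclusions holds because face maps in a cone complex are saturated lattice embeddings. Closure under faces follows from the definition of a cone complex.

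Finally we check that the cone complex built by gluing the stacky fan $\Delta$ maps isomorphically to $\mathcal{Y}$. This follows because both are colimits of the same diagram of cones and face maps. The main obstacle is the overlap step. Cone stacks can have self-gluings, and a cone can meet another one in a non-face, so the argument must control the lattice choice carefully so that the test maps really factor through both cones. The plan for this step is to work with rays $\mathcal{A}^1$ generated by common lattice points, which always exist after scaling by an integer.
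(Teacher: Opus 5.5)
Your outline follows the paper's proof closely. Both arguments get setoid fibres from full faithfulness, define $\Delta$ as the set of images with the image lattices, and use a monomorphism test against a cone with the intersection lattice to force images to meet in common faces with matching lattices. Both then identify $\mathcal{Y}$ with the cone complex glued from $\Delta$.

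The differences are minor. For the overlap step you test with rays through relative interior points and then use convexity (a convex union of faces of a cone is a face). The paper instead uses the whole cone $(\tau_1\cap\tau_2,N_{\tau_1}\cap N_{\tau_2}\cap\Span(\tau_1\cap\tau_2))$ and inducts on dimension; your version is a fine substitute. For the last step, the paper checks directly that every map from a cone to $\Delta$ lifts uniquely to $\mathcal{Y}$, rather than comparing colimit diagrams. You are also more explicit than the paper about injectivity of each $f_\tau$; the paper simply asserts it when it says $\phi^{-1}(\tau)\to\tau$ is an isomorphism.

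The justification you give for that injectivity fails as written. Two distinct lattice points of $\tau$ with the same image need not give distinct maps $\mathcal{A}^1\to\mathcal{Y}$. If they lie on two faces of $\tau$ that are glued together in $\mathcal{Y}$ (the self-gluing you raise later), the two maps coincide. For example, let $\mathcal{Y}$ be $(\mathbb{R}_{\ge0}^2,\mathbb{Z}^2)$ with its two rays identified via $e_1\mapsto e_2$, mapping to $\sigma=(\mathbb{R}_{\ge0},\mathbb{Z})$ by $(a,b)\mapsto a+b$. The points $(1,0)$ and $(0,1)$ define the same map $\mathcal{A}^1\to\mathcal{Y}$, so your test gives no contradiction even though $f_\tau$ is not injective.

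The repair is to test only with points in the relative interior of $\tau$. If $f_\tau$ is not injective on $N_{\tau,\mathbb{R}}$, its kernel is a nonzero rational subspace. So there are distinct lattice points in the relative interior with the same image, here $(1,2)$ and $(2,1)$. A map from a cone landing in the relative interior of a cone of a cone space determines that cone and the map into it. This is exactly the principle you already invoke in the overlap step.

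This matters beyond the first step. Injectivity of $f_{\tau_2}$ is what shows that any two face maps $\tau_1\rightrightarrows\tau_2$ in $\mathcal{Y}$ coincide, since both have composite $f_{\tau_1}$ to $\sigma$. That in turn is what you need for $\mathcal{Y}$ and $\Delta$ to be colimits of the same diagram.

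Finally, remove ``image lattice not saturated'' from your list of failures to rule out. A stacky fan allows $f_\tau(N_\tau)$ to be a proper finite-index sublattice; for instance, $(\mathbb{R}_{\ge0},\mathbb{Z})\xrightarrow{\cdot 2}(\mathbb{R}_{\ge0},\mathbb{Z})$ is a monomorphism.
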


\begin{proof}
	First observe that since $\sigma$ is fibered in setoids and $\phi:\mathcal{Y}\to \sigma$ is a monomorphism, the stack $\mathcal{Y}$
	is fibered in setoids, i.e., $\mathcal{Y}$ is a cone space. Let $\Delta:=\{\phi(\tau)|\tau\in \mathcal{Y}\}$. The set $\Delta$
	is clearly finite and closed under taking faces. Since
	$\mathcal{Y}\to \sigma$ is a monomorphism, for each $\tau\in \Delta$ there is a unique cone $\phi^{-1}(\tau)\in \mathcal{Y}$
	such that $\phi^{-1}(\tau)\to \tau$ is an isomorphism.
	Let $N_{\tau}$ be the image of $N_{\phi^{-1}(\tau)}$ under $\phi$. Since $\phi$ is a morphism of cone spaces, we have
	$N_{\tau}\subseteq N_{\sigma}\cap \Span(\tau)$. Moreover, $N_{\tau}$ spans $\Span(\tau)$ as an $\mathbb{R}$-vector space
	and hence $N_{\tau}$ is of finite index in $N_{\sigma}\cap \Span(\tau)$. We show that $\Delta$ satisfies the properties in
	the statement of the lemma. Let $\tau:=\tau_1\cap \tau_2$ for $\tau_1,\tau_2\in \Delta$ and let
	$$N_{\tau}':=N_{\tau_1}\cap N_{\tau_2}\cap \Span(\tau).$$
	The pair $(\tau,N_{\tau}')$ is an RPC and the inclusions $\tau\to \tau_i$ lift to morphisms
	$\phi_i:(\tau,N_{\tau}')\to \phi^{-1}(\tau_i)\to \mathcal{Y}$. Note that the compositions of the $\phi_i$ with
	the projection $\mathcal{Y}\to \sigma$ are equal. Hence since $\mathcal{Y}\to \sigma$ is a monomorphism, the two morphisms
	$\phi_i$ are equal. If the image of $\phi_1$ intersects the interior of $\phi^{-1}(\tau_1)$ this is only possible if there is a face
	morphism $\phi^{-1}(\tau_1)\to \phi^{-1}(\tau_2)$ in $\mathcal{Y}$ so $\tau=\tau_1$ and $\tau_1$ is a face of $\tau_2$. If not,
	let $\phi^{-1}(\tau_1')\subseteq \phi^{-1}(\tau_1)$ be a proper face such that the image of $\phi_1$ factors through $\phi^{-1}(\tau_1')$.
	Then $\tau=\tau_1'\cap \tau_2$ and by induction on the dimension of $\tau_1$ it follows that (1) is satisfied.

	For (2), notice that if $\tau'$ is a face of $\tau\in \Delta$, then $\tau'$ is the image of some face of $\phi^{-1}(\tau)$,
	so $\tau'\in \Delta$. It remains to show point (3): Let $\tau_1\to \tau_2$ be a face morphism in $\Delta$. The argument
	above for the case $\tau=\tau_1$ shows that this lifts to a face morphism $\phi^{-1}(\tau_1)\to \phi^{-1}(\tau_2)$. In particular,
	$N_{\phi^{-1}(\tau_1)}=N_{\phi^{-1}(\tau_2)}\cap \Span(\tau_1)$ and finally $N_{\tau_1}=N_{\tau_2}\cap \Span(\tau_1)$.

	We now show that $\mathcal{Y}\cong \Delta$. First note that $\mathcal{Y}\to \sigma$ lifts to a monomorphism $\mathcal{Y}\to \Delta$
	by construction of $\Delta$. Hence it suffices to show that every morphism $f:\tau\to \Delta$ for $\tau$ an RPC lifts uniquely to
	a morphism $\tau\to \mathcal{Y}$. Fix such a morphism. Then $f(\tau)\subseteq \tau'$ for some $\tau'\in \Delta$. Since $f(N_{\tau})\subseteq N_{\tau'}$,
	the morphism $f$ lifts to a morphism $\tau\to \phi^{-1}(\tau')$. Composing with $\phi^{-1}(\tau')\to \mathcal{Y}$ gives a lift
	of $f$ to $\mathcal{Y}$. Uniqueness of this lift is clear since $\mathcal{Y}\to \Delta$ is a monomorphism.
\end{proof}

\begin{proof}[Proof of Proposition \ref{prop:mono_base_change}]
	Step 1 (Show that $\psi:\mathcal{A}_{\mathcal{Y}}\to \mathcal{A}_{\mathcal{X}}$ is a monomorphism): This
	is equivalent to $\mathcal{A}_{\mathcal{Y}}\times_{\mathcal{A}_{\mathcal{X}}}\mathcal{A}_{\mathcal{Y}}\cong \mathcal{A}_{\mathcal{Y}}$.
	By Corollary \ref{cor:fib_prods_preserved} we have
	$$\mathcal{A}_{\mathcal{Y}}\times_{\mathcal{A}_{\mathcal{X}}}\mathcal{A}_{\mathcal{Y}}\cong\mathcal{A}_{\mathcal{Y}\times_{\mathcal{X}}\mathcal{Y}}\cong \mathcal{A}_{\mathcal{Y}}$$
	where the second equality holds since $\mathcal{Y}\to \mathcal{X}$ is a monomorphism.

	Step 2 (Reduce to the case of stacky fans): We must show that the properties in the statement hold after taking the base-change over any log scheme $Z\in \LogSch/S$.
	Note that each of these statements is strict étale local on the target and hence it suffices to show that they hold
	strict étale locally on $Z$ (see \cite[Theorem 1.1 (3)]{fortman2025descentalgebraicstacks} for the proof that representability is local).
	So we may assume that $Z$ has a global chart by a sharp toric monoid.
	That is, we obtain a strict morphism $Z\to X_{\sigma}$ for some affine toric variety $X_{\sigma}$.
	By Lemma \ref{lem:loc_factorization}, we may assume that $Z\to \mathcal{A}_{\mathcal{X}}$ factors
	as $Z\to X_{\sigma}\to \mathcal{A}_{\sigma}\to \mathcal{A}_{\mathcal{X}}$ and by
	Corollary \ref{cor:fib_prods_preserved} we have $\mathcal{A}_{\mathcal{Y}}\times_{\mathcal{A}_{\mathcal{X}}}\mathcal{A}_{\sigma}=\mathcal{A}_{\mathcal{Y}\times_{\mathcal{X}}\sigma}$.
	Therefore, we can reduce to the case where $\mathcal{X}=\sigma$ and $\mathcal{Y}$ is a finite cone stack. By the preceding lemma,
	$\mathcal{Y}$ is a stacky fan $\Delta$ in $\sigma$.

	Step 3 (Cover $\mathcal{A}_{\Delta}$ by Artin cones): We start by showing that
	$$\bigsqcup_{\tau\in \Delta}\mathcal{A}_{\tau}\to \mathcal{A}_{\Delta}$$
	is a strict (Zariski) open cover of $\mathcal{A}_{\Delta}$.
	This can be checked after base-change to any log scheme $W$. By Lemma \ref{lem:loc_factorization} and \cite[Tag 02L3]{stacks-project}
	we may assume that there is
	a factorization $W\to \mathcal{A}_{\sigma'}\to \mathcal{A}_{\Delta}$ for some $\sigma'\in \RPC$. Let the image of $\sigma'$
	under $\phi:\sigma'\to N_{\sigma,\mathbb{R}}$ be contained in $\tau\in \Delta$. Then $\tau\times_{\Delta}\sigma'\cong \sigma'$, so $\bigsqcup_{\tau\to \Delta}\mathcal{A}_{\tau\times_{\Delta}\sigma'}\to \mathcal{A}_{\sigma'}$
	is surjective. For a general $\tau\in \Delta$, we have $\tau\times_{\Delta}\sigma'=\phi^{-1}(\tau)$, equipped with the lattice
	obtained by restricting the lattice on $\sigma'$. Hence it suffices to show that if $\sigma''\to \sigma'$ is a face morphism then $\mathcal{A}_{\sigma''}\to \mathcal{A}_{\sigma'}$
	is represented by open immersions. Note that being an open immersion is fpqc-local on the base (see \cite[Tag 02L3]{stacks-project}).
	Since $\mathcal{A}_{\sigma'}=[X_{\sigma'}/T_{\sigma'}]$ with $T_{\sigma'}\subseteq X_{\sigma'}$ the dense torus, the morphism
	$X_{\sigma'}\to \mathcal{A}_{\sigma'}$ is an fpqc cover and it suffices to show that
	$\mathcal{A}_{\sigma''}\times_{\mathcal{A}_{\sigma'}}X_{\sigma'}\to X_{\sigma'}$ is an open immersion. Note that
	$\mathcal{A}_{\sigma''}\times_{\mathcal{A}_{\sigma'}}X_{\sigma'}$ represents the functor which sends a log scheme $Z$
	to the set of homomorphisms in
	$$\Hom(M_{\sigma'},M_Z^{\gp}(Z))$$
	such that strict étale locally on $Z$, the induced
	morphism $\overline{M}_Z(Z)^{\vee}\to N_{\sigma'}$ factors through $\sigma''$. This is the same as the functor represented
	by the open subset of $X_{\sigma'}$ corresponding to $\sigma''$, showing the claim.

	Step 4 (Verify log étaleness and representability conditions): Since representability, log étaleness and (1) are Zariski local on the source it suffices to assume that $\Delta=\tau$. I.e.,
	we must show that $\mathcal{A}_{\tau}\to \mathcal{A}_{\sigma}$ is log étale and representable by log DM stacks of finite presentation for any
	monomorphism $\tau\to \sigma$ of RPCs and that $\mathcal{A}_{\tau}\to \mathcal{A}_{\sigma}$ is representable by
	log algebraic spaces if and only if $N_{\tau}=N_{\sigma}\cap \Span(\tau)$. Recall that $Z\to \mathcal{A}_{\sigma}$ factors through $X_{\sigma}$
	and consider the base-change $\mathcal{A}_{\tau}\times_{\mathcal{A}_{\sigma}}X_{\sigma}\to X_{\sigma}$.

	Step 4.1 (Prove ``if'' direction of point (1)): We factor the map $\tau\to \sigma$ as $\tau\to \tau'\to \sigma$ where $\tau'$
	is the cone $\tau$ equipped with the lattice $\Span(\tau)\cap N_{\sigma}$. Note that $\mathcal{A}_{\tau'}\times_{\mathcal{A}_{\sigma}}X_{\sigma}$
	is the toric variety $Y_{\tau'}$ associated to the fan $\tau'$ in $N_{\sigma}$. Since any toric morphism of toric varieties
	which is an isomorphism on the dense open torus is log étale it follows that $Y_{\tau'}\to X_{\sigma}$ is log étale and representable
	by log schemes of finite presentation. Taking the base-change over $Z$, this shows the ``if'' part of point (1),
	since in this case $\tau=\tau'$.

	Step 4.2 (Compute $Y_{\tau}$ and $Y_{\tau'}$): Pick a sublattice $N\subseteq N_{\sigma}$ such that $N_{\sigma}\cong N_{\tau'}\oplus N$.
	Observe that $Y_{\tau'}$ represents the functor which sends a log scheme $W$ to the set of morphisms
	$f\in \Hom(M_{\sigma},M_W^{\gp}(W))$ such that strict étale locally on $W$ the induced
	morphism $\overline{M}_W(W)^{\vee}\to N_{\sigma}$
	factors through $\tau'$. Under the above splitting this becomes the set of pairs of morphisms
	$$(f,g)\in \Hom(M,M_W^{\gp}(W))\times\Hom(M_{\tau'},M_W^{\gp}(W))$$
	such that strict étale locally the induced morphisms $\overline{M}_W(W)^{\vee}\to N$ and $\overline{M}_W(W)^{\vee}\to N_{\tau'}$
	factor through $\{0\}$ and $\tau'$ respectively. Since the kernel of $M_W\to \overline{M}_{W}$ is isomorphic to $\mathcal{O}_W^{\times}$,
	$f$ factors uniquely through $\mathcal{O}_W^{\times}(W)$. So $Y_{\tau'}\cong \Hom(\underline{M},\mathbb{G}_{m})\times X_{\tau'}$ where $X_{\tau'}$
	is the toric variety associated to $\tau'$. Similarly,
	$$Y_{\tau}\cong \Hom(\underline{M},\mathbb{G}_{m})\times (\mathcal{A}_{\tau}\times_{\mathcal{A}_{\tau'}}X_{\tau'}).$$

	Step 4.3 (Prove that $Y_{\tau}$ is a log DM stack of finite presentation): Write $Z_{\tau}:=\mathcal{A}_{\tau}\times_{\mathcal{A}_{\tau'}}X_{\tau'}$.
	The stack $Z_{\tau}$ represents the functor which to $W\in \LogSch/S$ associates the set of morphisms $\phi\in \Hom(M_{\tau'},M_{W}^{\gp}(W))$
	such that strict étale locally on $W$, the induced map $\overline{M}_W(W)^{\vee}\to N_{\tau'}$ factors through $\tau\cap N_{\tau}$.
	Equivalently, $Z_{\tau}$ represents the functor which sends $W$ to the set of morphisms $W\to X_{\tau'}$ (given by morphisms
	$\phi\in \Hom(M_{\tau'},M_{W}^{\gp}(W))$ such that $\phi$ is the restriction of a morphism in $\Hom(M_{\tau},M_W^{\gp}(W))$ to $M_{\tau'}$).
	Hence we have a morphism $X_{\tau}\to Z_{\tau}$ which sends $\phi\in \Hom(M_{\tau},M_W^{\gp}(W))$ to its restriction to $M_{\tau'}$.
	Note that $\Hom(M_{\tau}/M_{\tau'},M_W^{\gp}(W))$ acts on $\Hom(M_{\tau},M_W^{\gp}(W))$ by multiplication. Since $\overline{M}_W^{\gp}(W)$
	is torsion-free, the composition map
	$$\Hom(M_{\tau}/M_{\tau'},M_{W}^{\gp}(W))\to \Hom(M_{\tau}/M_{\tau'},\overline{M}_W^{\gp}(W))$$
	is the zero-map, i.e., we have $\Hom(M_{\tau}/M_{\tau'},M_W^{\gp}(W))=\Hom(M_{\tau}/M_{\tau'},\mathcal{O}_{W}^{\times}(W))$.
	It follows that the action on $\Hom(M_{\tau},M_W^{\gp}(W))$ restricts to an action on $\Hom(W,X_{\tau})$ and
	$$\Hom(W,Z_{\tau})=\Hom(W,X_{\tau})/\Hom(M_{\tau}/M_{\tau'},\mathcal{O}_{W}^{\times}(W))$$
	is the quotient by this action.
	We conclude that $Z_{\tau}=[X_{\tau}/\Hom(\underline{M_{\tau}/M_{\tau'}},\mathbb{G}_m)]$ which is a log DM stack of finite presentation since
	$M_{\tau}/M_{\tau'}$ is finite. It follows that $Y_{\tau}\cong \Hom(\underline{M},\mathbb{G}_m)\times [X_{\tau}/\Hom(\underline{M_{\tau}/M_{\tau'}},\mathbb{G}_m)]$
	is representable by log DM stacks of finite presentation.

	Step 4.4 (Prove the ``only if'' direction of (1)): Since $\Hom(\underline{M},\mathbb{G}_m)\times X_{\tau}$ is a strict étale cover of $Y_{\tau}$
	such that $\Hom(\underline{M},\mathbb{G}_m)\times X_{\tau}\to Y_{\tau'}$ is log étale, it follows that $Y_{\tau}$ is log étale over $X_{\sigma}$. Note that the stabilizer
	of the closed point of $Y_{\tau}$ is $\Hom(\underline{M_{\tau}/M_{\tau'}},\mathbb{G}_m)$ which is non-trivial when $M_{\tau}\neq M_{\tau'}$. This
	shows the ``only if'' direction in (1).

	Step 5 (Show properness is equivalent to surjectivity): It remains to show (2). By the above, we reduce to the following situation: Let $\sigma$ be an RPC and $\Delta$ a non-empty stacky fan in $\sigma$.
	We must show that $\mathcal{A}_{\Delta}\times_{\mathcal{A}_{\sigma}}X_{\sigma}\to X_{\sigma}$ is proper if and only if
	$\Delta\to \sigma$ is surjective. Note that this is fppf local on $X_{\sigma}$. Let $N\subseteq N_{\sigma}$ be a finite-index sublattice
	such that for each $\tau\in \Delta$ we have $\Span(\tau)\cap N\subseteq N_{\tau}$. The map $X_{(\sigma,N)}\to X_{(\sigma,N_{\sigma})}$
	is an fppf cover. Hence it suffices to show that $\mathcal{A}_{\Delta}\times_{\mathcal{A}_{\sigma}}X_{(\sigma,N)}\to X_{(\sigma,N)}$
	is proper if and only if $\Delta\to \sigma$ is surjective. Observe that $X_{(\sigma,N)}\to \mathcal{A}_{(\sigma,N_{\sigma})}$ factors through
	$\mathcal{A}_{(\sigma,N)}$. By Corollary \ref{cor:fib_prods_preserved}, we have
	$\mathcal{A}_{(\sigma,N)}\times_{\mathcal{A}_{(\sigma,N_{\sigma})}}\mathcal{A}_{\Delta}\cong \mathcal{A}_{(\sigma,N)\times_{(\sigma,N_{\sigma})}\Delta}$.
	Note that $(\sigma,N)\times_{(\sigma,N_{\sigma})}\Delta$ is a stacky fan $\Delta'$ in $(\sigma,N)$ where all $\tau\in \Delta'$
	satisfy $N_{\tau}=N\cap \Span(\tau)$. Hence $X_{(\sigma,N)}\times_{\mathcal{A}_{(\sigma,N)}}\mathcal{A}_{\Delta}$ is the toric
	variety with fan $\Delta'$. It follows from the standard theory of toric varieties that the map to $X_{(\sigma,N)}$ is proper if and
	only if $\Delta'\to \sigma$ is surjective.
\end{proof}

Using this result, we can give a combinatorial description of when a
combinatorial partial log compactification is a log compactification of $G_X\to S$:

\begin{lemma}
	\label{lem:proper_iff_surj}
	Let notation be as in Definition \ref{def:combinat_prt_cpt}.
	A partial tropical compactification $\mathcal{Y}\to \mathcal{X}$ is a tropical compactification if
	and only if $S\times_{\mathcal{A}_{S}}\mathcal{A}_{\mathcal{Y}}\to [X/G_X]$ is proper.
\end{lemma}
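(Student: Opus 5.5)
The plan is to reduce to Proposition \ref{prop:mono_base_change}(2) via base change along $S\to\mathcal{A}_S$. By Corollary \ref{cor:fib_prods_preserved} and the identification $[X/G_X]\cong S\times_{\mathcal{A}_S}\mathcal{A}_{\mathcal{X}}$, the morphism $S\times_{\mathcal{A}_S}\mathcal{A}_{\mathcal{Y}}\to [X/G_X]$ is the base change of $\mathcal{A}_{\mathcal{Y}}\to\mathcal{A}_{\mathcal{X}}$ along $[X/G_X]\to \mathcal{A}_{\mathcal{X}}$. Hence if $\mathcal{Y}\to\mathcal{X}$ is surjective, I will apply Proposition \ref{prop:mono_base_change}(2) and conclude by stability of properness under base change. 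The nonemptiness hypothesis there is handled by surjectivity itself: the base change over any cone surjects onto that cone, so it is nonempty.

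For the converse, assume $S\times_{\mathcal{A}_S}\mathcal{A}_{\mathcal{Y}}\to[X/G_X]$ is proper, and let $\sigma\to\mathcal{X}$ be a cone. I want to show $\mathcal{Y}\times_{\mathcal{X}}\sigma\to\sigma$ is surjective. The difficulty is that $S$ is fixed, so not every $\mathcal{A}_\sigma\to\mathcal{A}_{\mathcal{X}}$ factors through $[X/G_X]$. Surjectivity of a map to $\sigma$ can, however, be tested on rays: it suffices to check that each rational point $n\in\sigma\cap N_\sigma$ lies in the image. Such a point is a map $\mathcal{A}^1\to\sigma\to\mathcal{X}$. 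Realizing it geometrically requires a log point mapping to $[X/G_X]$, for example a standard log point $P$ over some point $s\in S$. I therefore need $[X/G_X]\to\mathcal{A}_{\mathcal{X}}$ to be \emph{surjective on tropical points}: every map $\sigma\to\mathcal{X}$ should be hit, up to subdivision and lattice refinement, by some $\Spec$ of a log point over $[X/G_X]$.

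This is the main obstacle. I would argue it using the fact that $\mathcal{X}\to\Sigma_{\mathcal{A}_S}$ is a family over the tropicalization of $S$, which is the cone complex of strata of $S$ because $S$ is log smooth with Artin fan $\mathcal{A}_S$. A cone $\sigma\to\mathcal{X}$ maps to a cone of $\Sigma_{\mathcal{A}_S}$, corresponding to a stratum of $S$. Choosing a geometric point $s$ in that stratum, the fiber $[X/G_X]_s$ is the realization of the fiber of $\mathcal{X}$ over the corresponding cone. Log points with monoid $\mathbb{N}$ over $s$ then realize every integral point of the relevant cones, since $S\to\mathcal{A}_S$ is strict and surjective and log points over $s$ with given $\overline{M}\to\mathbb{N}$ exist.

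Given such a log point $P\to[X/G_X]$ realizing $n$, I would argue as follows. Properness, together with the fact that on the dense part (Lemma \ref{lem:bir_is_bir}) the map is an isomorphism, should give a lift by the valuative criterion. Concretely, take a log trait $\Spec R\to[X/G_X]$ whose generic point lies over $G_X$ and whose special point tropicalizes to $n$; this is possible since $n$ is in the interior direction. Properness then gives a lift to $\mathcal{A}_{\mathcal{Y}}$ after a finite extension, and tropicalizing the lift gives a point of $\mathcal{Y}\times_{\mathcal{X}}\sigma$ over a positive multiple of $n$. By conicity this yields $n$ in the image, so $\mathcal{Y}\to\mathcal{X}$ is surjective.

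An alternative for the converse, if the trait argument is cumbersome, is to reduce strict-étale locally on $S$ to the toric picture of Step 5 in the proof of Proposition \ref{prop:mono_base_change}, where properness over a toric chart is equivalent to fan surjectivity.
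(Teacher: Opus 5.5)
Your forward direction is the paper's. The converse has a gap, and the obstacle motivating your detour is not real. The missing idea is \emph{descent of properness along $[X/G_X]\to\mathcal{A}_{\mathcal{X}}$}.

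Since $S$ is log smooth, $S\to\mathcal{A}_S$ is a strict \emph{smooth} surjection. Hence so is its base change $[X/G_X]=S\times_{\mathcal{A}_S}\mathcal{A}_{\mathcal{X}}\to\mathcal{A}_{\mathcal{X}}$, and your map $S\times_{\mathcal{A}_S}\mathcal{A}_{\mathcal{Y}}\to[X/G_X]$ is the base change of $\mathcal{A}_{\mathcal{Y}}\to\mathcal{A}_{\mathcal{X}}$ along it. Properness is local on the target for the fppf (in particular smooth) topology \cite[Tag 02L1]{stacks-project}. So properness of your map is \emph{equivalent} to properness of $\mathcal{A}_{\mathcal{Y}}\to\mathcal{A}_{\mathcal{X}}$, and Proposition \ref{prop:mono_base_change}(2) then gives both directions at once. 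It does not matter that not every $\mathcal{A}_\sigma\to\mathcal{A}_{\mathcal{X}}$ factors globally through $[X/G_X]$, because every map from a log scheme factors smooth-locally. In this direction, the nonemptiness hypothesis of Proposition \ref{prop:mono_base_change}(2) cannot come from surjectivity. It comes from the section of $\mathcal{Y}$ over the zero-section, so every fiber contains the origin.

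Your trait argument, as written, does not close this gap. The step carrying all the weight is the existence of a log trait $\Spec R\to[X/G_X]$ whose generic point lies in the trivial locus and whose closed point tropicalizes to a prescribed $n$, in both the base and the fiber directions. Your justification is that ``$S\to\mathcal{A}_S$ is strict and surjective,'' together with log points over a stratum point $s$. This does not suffice: a log point over $s$ is not a trait, and surjectivity alone does not let you lift a trait $\Spec R\to\mathcal{A}_{\mathcal{X}}$ to $[X/G_X]$. You need smoothness plus Hensel's lemma over a strictly henselian $R$ (or flatness and a quasi-finite multisection). That is exactly the input that makes the direct descent argument work, so the detour buys nothing.

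The valuative-criterion step also needs rephrasing. The criterion produces a map of underlying stacks, not a log map from $\Spec R'$ with its divisorial log structure, so ``tropicalizing the lift'' is not yet meaningful. The correct formulation is this. The map $\Spec R\times_{\mathcal{A}_{\mathcal{X}}}\mathcal{A}_{\mathcal{Y}}\to\Spec R$ is proper and is an isomorphism over the generic point, so it has a point over the closed point. The characteristic monoid at that point is nonzero, so the ray through $n$ lies in $\mathcal{Y}\times_{\mathcal{X}}\mathcal{A}^1$.

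Your fallback of reducing to toric charts is closer, but it too needs to transfer properness along the smooth surjection $S\to\mathcal{A}_S$, which is precisely the paper's proof.
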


\begin{proof}
	From Proposition \ref{prop:mono_base_change} (2) we know that $\mathcal{Y}\to \mathcal{X}$ is a tropical compactification if and only
	if $\mathcal{A}_{\mathcal{Y}}\to \mathcal{A}_{\mathcal{X}}$ is proper (non-emptyness of the base-change over any cone is clear by restricting to the origin, corresponding to the dense open subset of $S$ with trivial log structure). Note that since $S$ is log smooth, the morphism
	$S\to \mathcal{A}_S$ is a strict smooth cover. By \cite[Tag 02L1]{stacks-project} it follows that $\mathcal{A}_{\mathcal{Y}}\to \mathcal{A}_{\mathcal{X}}$
	is proper if and only if $S\times_{\mathcal{A}_S}\mathcal{A}_{\mathcal{Y}}\to [X/G_X]=S\times_{\mathcal{A}_S}\mathcal{A}_{\mathcal{X}}$
	is proper.
\end{proof}

\begin{proof}[Proof of Proposition \ref{prop:combinat_cpt_is_cpt}]
	Let $Y:=\mathcal{A}_{\mathcal{Y}}\times_{\mathcal{A}_{\mathcal{X}}}[X/G_X]\to [X/G_X]$
	be a combinatorial partial log compactification for some tropical
	partial compactification $\mathcal{Y}\to \mathcal{X}$.
	By Proposition \ref{prop:mono_base_change}, we know that
	$Y\to [X/G_X]$ is a log étale monomorphism representable by
	log DM stacks of finite presentation. The section
	$\Sigma_{\mathcal{A}_S}\to \mathcal{Y}$ over the unit section
	and the unit section $S\to [X/G_X]$ induce a section
	$S\to Y$ over the unit section $S\to [X/G_X]$. We must show that
	$Y$ is a stack for the strict fppf topology. Let $Z$ be an fs
	log scheme, $U\to Z$ a strict fppf cover of $Z$ and $\xi$
	a descent datum for a morphism $Z\to Y$ relative to
	$U\to Z$. Since $[X/G_X]$ is a stack for the strict fppf topology,
	the descent datum induced by $\xi$ for a morphism $Z\to [X/G_X]$
	relative to $U\to Z$ is effective, so we may assume that
	there exists a morphism $Z\to [X/G_X]$ and $\xi$ is a descent
	datum for a lift of this morphism. Hence it suffices to
	show that any descent datum for a lift
	$Z\to Z\times_{[X/G_X]}Y$ over the identity $Z\to Z$ is effective.
	This holds, since $Z\times_{[X/G_X]}Y$ is a log DM stack
	by Proposition \ref{prop:mono_base_change} and hence in
	particular satisfies strict fppf descent.

	By Lemma \ref{lem:proper_iff_surj}, $Y$ is a log compactification
	of $[X/G_X]$ if and only if $\mathcal{Y}$ is a tropical compactification.

	Pulling back along $X\to [X/G_X]$, the same results hold
	for (partial) log compactifications of $G_X$.
\end{proof}

\subsection{The correspondence theorem}

The goal of this section is to prove the following theorem:

\begin{theorem}
	\label{thm:trop_corresp}
	Let $X\to S$ be a log semiabelian variety over $S$. There exists an Artin fan $S\to \mathcal{A}_S$ for $S$
	and a split family of tropical semiabelian varieties $\mathcal{X}\to \Sigma_{\mathcal{A}_S}$ such that
	$[X/G_X]\cong S\times_{\mathcal{A}_S}\mathcal{A}_{\mathcal{X}}$. Moreover, the following statements hold:
	\begin{enumerate}
		\item Every partial log compactification of $[X/G_X]$ is isomorphic to an open subset of a combinatorial partial log compactification of $[X/G_X]$.
		\item The functor $\mathcal{Y}\mapsto S\times_{\mathcal{A}_S}\mathcal{A}_{\mathcal{Y}}$ is an equivalence of categories between
			the category of tropical compactifications of $\mathcal{X}\to \Sigma_{\mathcal{A}_S}$
			and the category of log compactifications of $[X/G_X]$.
		\item If $S$ is a point with trivial log structure then $\mathcal{Y}\mapsto S\times_{\mathcal{A}_S}\mathcal{A}_{\mathcal{Y}}$ is an
			equivalence of categories between the category of partial tropical compactifications of $\mathcal{X}\to \Sigma_{\mathcal{A}_S}$
			and the category of partial log compactifications of $[X/G_X]$.
	\end{enumerate}
\end{theorem}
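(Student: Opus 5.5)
We split the argument into three parts: constructing the tropical model $\mathcal{X}$, an essential-surjectivity statement (1), and full faithfulness together with the refinements (2) and (3).

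\textbf{Constructing $\mathcal{X}$.} Since $S$ is log smooth, it admits an Artin fan $S\to\mathcal{A}_S$, which is a strict smooth cover. We first work strict étale locally on $S$, where $X$ is the image of an extension $X'\in\Ext(A,\mathbb{G}_m^r)$. Then $[X/G_X]\cong[A/G_A]\times[\mathbb{G}_{m,\log}^r/\mathbb{G}_m^r]$, because an extension coming from $\mathbb{G}_m^r$ becomes trivial after quotienting by $G_X$.
\begin{itemize}
\item The torus factor is $\mathcal{A}_{T^{\trop}_{\mathbb{Z}^r}}$. This follows from the definition of $\mathcal{A}_{\mathcal{C}}$: a map $Z\to[\mathbb{G}_{m,\log}/\mathbb{G}_m]$ is a section of $\overline{M}^{\gp}_Z$.
\item For $[A/G_A]$, we use the Kajiwara--Kato--Nakayama description of the tropical abelian variety attached to $A$: the pairing $Q\colon M\times M\to \overline{M}^{\gp}_S$, with values in $M_{\sigma_0}$ chart by chart, yields $\Hom(\underline{M},\mathbb{G}_{m,\trop})^{(M)}/Q_{\hom}(\underline{M})$ over $\Sigma_{\mathcal{A}_S}$. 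We then identify $[A/G_A]\cong S\times_{\mathcal{A}_S}\mathcal{A}_{\mathcal{X}_A}$ by comparing functors of points, using Proposition~\ref{prop:alternative_descr_trop_torus} and the description of $[A/G_A]$ as $\Hom(M,\mathbb{G}_{m,\log})^{(M)}/M$ modulo $\mathcal{O}^\times$.
\end{itemize}
The local models glue, since the tropical data depend only on $\overline{M}_S$ and the monodromy pairing, both of which are defined globally on $S$.

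\textbf{Statement (1).} Let $Y\to[X/G_X]$ be a partial log compactification. We define $\mathcal{Y}:=\Sigma_{\mathcal{A}_Y}$ over $\mathcal{X}$ by taking cones $\sigma$ with maps $\mathcal{A}_\sigma\to Y$ over $\mathcal{A}_{\mathcal{X}}$. Concretely, strict étale locally on $Y$ we use Lemma~\ref{lem:loc_factorization} to factor $Y\to\mathcal{A}_{\mathcal{X}}$ through an Artin cone $\mathcal{A}_\tau$. Because $Y$ is log étale over $[X/G_X]$, with finite presentation and monomorphic, the local charts of $Y$ give cones $\tau\to\mathcal{X}$ that are monomorphic on cones, as in Lemma~\ref{lem:mono_impl_fan}. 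The resulting $\mathcal{Y}\to\mathcal{X}$ is a monomorphism representable by finite cone stacks; finiteness comes from finite presentation. It contains the zero section because $Y$ has a section over the unit. We then compare $Y$ with $S\times_{\mathcal{A}_S}\mathcal{A}_{\mathcal{Y}}$:
\begin{itemize}
\item there is a natural map $Y\to S\times_{\mathcal{A}_S}\mathcal{A}_{\mathcal{Y}}$, since strict étale locally $Y$ maps to its own Artin fan;
\item this map is a log étale monomorphism, and it is strict because it is compatible with the characteristic monoids;
\item hence it is an open immersion. Here we use that a strict étale monomorphism is an open immersion, which is legitimate because both sides are log DM stacks by Proposition~\ref{prop:mono_base_change} and Lemma~\ref{lem:part_cptifications_are_nice}.
\end{itemize}
This is the step I expect to be the main obstacle. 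It requires showing that each stratum of $Y$ lies over a unique cone, so that the fan data are genuinely determined by $Y$, and that no two strata of $Y$ map to the same stratum of $\mathcal{A}_{\mathcal{Y}}$.

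\textbf{Statements (2) and (3).} Full faithfulness follows from Theorem~\ref{thm:left_inv_geom} together with Corollary~\ref{cor:fib_prods_preserved}. Morphisms over $[X/G_X]$ between realizations pull back along the strict smooth cover $S\to\mathcal{A}_S$, and they descend by Lemma~\ref{lem:smooth_descent} to morphisms $\mathcal{A}_{\mathcal{Y}_1}\to\mathcal{A}_{\mathcal{Y}_2}$ over $\mathcal{A}_{\mathcal{X}}$. These are unique if they exist, because everything is a monomorphism, and they correspond to maps $\mathcal{Y}_1\to\mathcal{Y}_2$. Both proofs then reduce to the open subset in (1) being everything.
\begin{itemize}
\item For (2), $Y$ is proper over $[X/G_X]$ and therefore closed in the separated-over-base stack $S\times_{\mathcal{A}_S}\mathcal{A}_{\mathcal{Y}}$. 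It is also dense, by Lemma~\ref{lem:bir_is_bir}, so $Y$ is the whole stack. Properness of $Y$ then forces $\mathcal{Y}$ to be surjective by Lemma~\ref{lem:proper_iff_surj}.
\item For (3), the base is a point, so $S\to\mathcal{A}_S$ is an isomorphism. An open substack of $\mathcal{A}_{\mathcal{Y}}$ is then $\mathcal{A}_{\mathcal{Y}'}$ for the subcomplex $\mathcal{Y}'$ of cones whose strata it meets. This is because open substacks of an Artin fan correspond to face-closed subcollections of cones. In the relative case this fails, since an open subset of $S\times_{\mathcal{A}_S}\mathcal{A}_{\mathcal{Y}}$ need not be pulled back from $\mathcal{A}_{\mathcal{Y}}$, which is why (1) is stated only up to open subsets.
\end{itemize}
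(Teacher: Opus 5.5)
Your treatment of (2) and (3) is essentially the paper's. Your full-faithfulness argument is also sound, and cleaner than the paper's appeal to the initiality of $\mathcal{Y}$: you descend the factorization through the monomorphism $\mathcal{A}_{\mathcal{Y}_2}\to\mathcal{A}_{\mathcal{X}}$ along the strict smooth cover $S\times_{\mathcal{A}_S}\mathcal{A}_{\mathcal{Y}_1}\to\mathcal{A}_{\mathcal{Y}_1}$ via Lemma~\ref{lem:smooth_descent}, then apply Theorem~\ref{thm:left_inv_geom}.

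\textbf{The gap is in (1).} Everything rests on (1), and there you leave open exactly the step that carries the content: that the cones coming from local charts of $Y$ assemble into something \emph{monomorphic} over $\mathcal{X}$. Injectivity of each chart cone into the target cone (Kato's chart criterion) is not enough. Lemma~\ref{lem:mono_impl_fan} does not help either, since it presupposes a monomorphism rather than producing one.

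After base change to a cone $\tau\to\mathcal{X}$, $Y$ becomes a log étale monomorphism into the log smooth log DM stack $W:=S\times_{\mathcal{A}_S}\mathcal{A}_\tau$, which you may assume has a global chart. What must be shown is this: for atomic Zariski opens $U_i,U_j\subseteq Y$ with charts $\tau_i,\tau_j\hookrightarrow\tau$, the chart $\tau_{ij}$ of $U_i\cap U_j$ is all of $\tau_i\cap\tau_j$. The paper proves it as follows.
\begin{itemize}
\item Take a lattice point $x\in\tau_i\cap\tau_j$ and base-change along the ray $\tau'=(\mathbb{R}_{\ge 0},\mathbb{Z})\to\tau$ through $x$.
\item The preimages $V_i,V_j$ of $U_i,U_j$ in $W\times_{\mathcal{A}_\tau}\mathcal{A}_{\tau'}$ are open, and each meets the closed stratum by log smoothness.
\item The closed stratum is irreducible, so $V_i\cap V_j$ meets it.
\item Because $Y\to W$ is a monomorphism, the two lifts $V_i\cap V_j\to U_i$ and $V_i\cap V_j\to U_j$ coincide. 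Hence they factor through $U_i\cap U_j$, which forces $x\in\tau_{ij}$.
\end{itemize}
Without this argument or a substitute, you have no monomorphism $\mathcal{Y}\to\mathcal{X}$. So (1) is unproved, and (2) and (3), which use $\mathcal{Y}$, inherit the gap.

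\textbf{Two related points.} First, as written, $\mathcal{Y}:=\Sigma_{\mathcal{A}_Y}$ is not yet an object over $\mathcal{X}$. Maps $\mathcal{A}_\sigma\to Y$ do not make sense, since $Y$ lives over $S$. A global Artin fan of $Y$ comes with no map to $\mathcal{A}_{\mathcal{X}}$ that you have constructed. The paper instead defines $\mathcal{Y}$ cone by cone: $\mathcal{Y}|_\tau$ is the smallest subsheaf $F\subseteq\tau$ such that $Y\times_{\mathcal{A}_{\mathcal{X}}}\mathcal{A}_\tau\to\mathcal{A}_\tau$ factors through $\mathcal{A}_F$. This builds in the monomorphism, and it turns compatibility with restriction along $\tau'\to\tau$ into a checkable statement.

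Second, in constructing $\mathcal{X}$, gluing the local models over $S$ is not the same as descending them to $\Sigma_{\mathcal{A}_S}$. The character lattice of $T$ and the lattices attached to $A$ are only locally constant. They can have monodromy along a stratum that an arbitrary Artin fan does not see. For example, a form of $\mathbb{G}_m$ twisted by a double cover that is nontrivial on a boundary divisor does not descend to the standard Artin fan $[\mathbb{A}^1/\mathbb{G}_m]$. This is why the statement only asserts the existence of a suitable $\mathcal{A}_S$. It is also why Proposition~\ref{prop:A_S_exists} builds $\mathcal{A}_S$ as a colimit of compatible Artin fans on an étale cover $S'$ that trivializes these data.
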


\begin{corollary}
	\label{cor:trop_corresp_log}
	In the above situation there is an equivalence of categories between the category of tropical compactifications
	of $\mathcal{X}\to \Sigma_{\mathcal{A}_S}$ and the category of log compactifications
	of $G_X\to S$ which sends $\mathcal{Y}\to \mathcal{X}$ to $X\times_{\mathcal{A}_{\mathcal{X}}}\mathcal{A}_{\mathcal{Y}}$.
\end{corollary}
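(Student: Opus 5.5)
This follows by composing two equivalences already available: Theorem \ref{thm:trop_corresp}(2) and Lemma \ref{lem:equiv_cat}. The first gives an equivalence $\mathcal{Y}\mapsto S\times_{\mathcal{A}_S}\mathcal{A}_{\mathcal{Y}}$ from tropical compactifications of $\mathcal{X}\to\Sigma_{\mathcal{A}_S}$ to log compactifications of $[X/G_X]$. The second says that $F\mapsto F\times_{[X/G_X]}X$ is an equivalence from partial log compactifications of $[X/G_X]$ to partial log compactifications of $G_X\to S$, with inverse $Y\mapsto[Y/G_X]$. I first check that Lemma \ref{lem:equiv_cat} restricts to the full subcategories of log compactifications on each side. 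By definition, $[Y/G_X]$ is a log compactification of $[X/G_X]$ exactly when $Y$ is a log compactification of $G_X\to S$. The two properness conditions are also directly equivalent: properness of $[Y/G_X]\to[X/G_X]$ is fppf local on the target, and its base change along the $G_X$-torsor $X\to[X/G_X]$ is $Y\to X$. The composite functor is therefore an equivalence.

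Next I identify the composite on objects. It sends
\[
\mathcal{Y}\mapsto (S\times_{\mathcal{A}_S}\mathcal{A}_{\mathcal{Y}})\times_{[X/G_X]}X .
\]
I rewrite $[X/G_X]\cong S\times_{\mathcal{A}_S}\mathcal{A}_{\mathcal{X}}$. The structure map of $S\times_{\mathcal{A}_S}\mathcal{A}_{\mathcal{Y}}$ to this is the base change of $\mathcal{A}_{\mathcal{Y}}\to\mathcal{A}_{\mathcal{X}}$, so
\[
S\times_{\mathcal{A}_S}\mathcal{A}_{\mathcal{Y}}\cong (S\times_{\mathcal{A}_S}\mathcal{A}_{\mathcal{X}})\times_{\mathcal{A}_{\mathcal{X}}}\mathcal{A}_{\mathcal{Y}}=[X/G_X]\times_{\mathcal{A}_{\mathcal{X}}}\mathcal{A}_{\mathcal{Y}}.
\]
Pasting fiber squares then gives $(\,[X/G_X]\times_{\mathcal{A}_{\mathcal{X}}}\mathcal{A}_{\mathcal{Y}})\times_{[X/G_X]}X\cong X\times_{\mathcal{A}_{\mathcal{X}}}\mathcal{A}_{\mathcal{Y}}$. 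Here $X\to\mathcal{A}_{\mathcal{X}}$ is the composite $X\to[X/G_X]\to\mathcal{A}_{\mathcal{X}}$. These isomorphisms are canonical and natural in $\mathcal{Y}$, so the equivalence is naturally isomorphic to the functor $\mathcal{Y}\mapsto X\times_{\mathcal{A}_{\mathcal{X}}}\mathcal{A}_{\mathcal{Y}}$ in the statement. That functor is therefore an equivalence as well.

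I expect the only real point of care to be the compatibility of morphisms. Morphisms of log compactifications of $G_X$ are $G_X$-equivariant maps over $X$. Morphisms of log compactifications of $[X/G_X]$ are maps over $[X/G_X]$ that also preserve the lifted zero section. I would check that Lemma \ref{lem:equiv_cat} respects this extra condition: Lemma \ref{lem:morphs_commute_sec} shows that morphisms upstairs automatically preserve sections, and the sections downstairs are induced from those upstairs. Finally, Proposition \ref{prop:combinat_cpt_is_cpt} confirms that $X\times_{\mathcal{A}_{\mathcal{X}}}\mathcal{A}_{\mathcal{Y}}$ is genuinely a log compactification of $G_X\to S$, consistent with the identification above.
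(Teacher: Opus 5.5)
Your proposal is correct and matches the paper's proof, which simply combines Theorem \ref{thm:trop_corresp}, point (2), with Lemma \ref{lem:equiv_cat}. You additionally spell out three things the paper leaves implicit: the restriction of Lemma \ref{lem:equiv_cat} to log compactifications, the fiber-product identification $(S\times_{\mathcal{A}_S}\mathcal{A}_{\mathcal{Y}})\times_{[X/G_X]}X\cong X\times_{\mathcal{A}_{\mathcal{X}}}\mathcal{A}_{\mathcal{Y}}$, and the compatibility of morphisms with the lifted sections.
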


\begin{proof}
	Combine Theorem \ref{thm:trop_corresp}, point (2) and Lemma \ref{lem:equiv_cat}.
\end{proof}

The upshot of Theorem \ref{thm:trop_corresp} and Corollary \ref{cor:trop_corresp_log} is that log compactifications
of $G_X\to S$ can be described in a completely combinatorial manner. This will allow us to translate the birational
classification problem for log compactifications into a combinatorial statement.
Theorem \ref{thm:trop_corresp} will be deduced from the following more general statement:

\begin{theorem}
	\label{thm:trop_cpt_open_subsets}
	Let $S$ be a log smooth log scheme with Artin fan $\mathcal{A}_{S}$ and $\mathcal{X}$ be a stack over $\RPC/\Sigma_{\mathcal{A}_S}$.
	Write $X:=S\times_{\mathcal{A}_S}\mathcal{A}_{\mathcal{X}}$ and let
	$Y\to X$ be a log étale monomorphism representable by log DM stacks of finite presentation. Then there
	exists a unique initial monomorphism $\mathcal{Y}\to \mathcal{X}$ representable by finite cone stacks such that the map
	$Y\to X$ lifts to a strict open immersion $Y\to X\times_{\mathcal{A}_{\mathcal{X}}}\mathcal{A}_{\mathcal{Y}}$.
	Moreover, the following statements hold:
\begin{enumerate}
	\item If $S$ is a point with trivial log structure, then $Y\to X\times_{\mathcal{A}_{\mathcal{X}}}\mathcal{A}_{\mathcal{Y}}$ is an isomorphism.
	\item If $Y=X\times_{\mathcal{A}_{\mathcal{X}}}\mathcal{A}_{\mathcal{Y}'}$ for some log étale monomorphism $\mathcal{Y}'\to \mathcal{X}$ representable by finite cone stacks, then $\mathcal{Y}=\mathcal{Y}'$.
	\item If $\mathcal{X}\to \Sigma_{\mathcal{A}_S}$ admits a section $s:\Sigma_{\mathcal{A}_S}\to \mathcal{X}$ such
		that the induced section $S\to X$ lifts to a section $S\to Y$, then $s$ lifts to a (unique) section $\Sigma_{\mathcal{A}_S}\to \mathcal{Y}$.
\end{enumerate}
\end{theorem}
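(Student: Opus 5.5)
We will build $\mathcal{Y}$ by tropicalizing $Y$ relative to $\mathcal{X}$, in the style of the construction of Artin fans. Concretely, we let $\mathcal{Y}$ be the stack over $\RPC/\mathcal{X}$ whose fiber over a morphism $\sigma\to\mathcal{X}$ is the set of (strict étale local, hence by Lemma \ref{lem:smooth_descent} global) lifts $\mathcal{A}_\sigma\times_{\mathcal{A}_S}S\dashrightarrow Y$ that are compatible with $Y\to X$, modulo the natural equivalence. Better, we take the smallest such thing: for each point $y$ of $Y$, the image in $\Sigma_{\mathcal{A}_X}$-language of $y$ is a cone $\sigma_y\to\mathcal{X}$ (the stalk $\overline{M}_{Y,y}^\vee$ with its map to the cone of the image point in $X$). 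Since $Y\to X$ is a log étale monomorphism, $\overline{M}_{Y,y}^{\gp}\otimes\mathbb{Q}\cong\overline{M}_{X,x}^{\gp}\otimes\mathbb{Q}$, so $\sigma_y$ is a cone (with possibly finer lattice) mapping injectively into the cone $\sigma_x$ of $x$. We glue the $\sigma_y$ along the face maps coming from generizations in $Y$ (and along $2$-isomorphisms coming from the stack structure / monodromy), obtaining a cone stack $\mathcal{Y}$ with a map to $\mathcal{X}$. Finite presentation of $Y$ gives finitely many strata up to isomorphism, hence $\mathcal{Y}$ is finite; the monomorphism property of $\mathcal{Y}\to\mathcal{X}$ is checked after base change to a cone $\sigma\to\mathcal{X}$, where it reduces via Lemma \ref{lem:mono_impl_fan} to showing the images of the $\sigma_y$ form a stacky fan. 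This is where we use that $Y\to X$ is a monomorphism: two cones of $\mathcal{Y}$ overlapping in the interior would give two distinct $\mathcal{A}_\tau$-points of $Y$ (via Theorem \ref{thm:left_inv_geom}) over the same point of $X$, after pulling back to a log smooth chart.

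Next we produce the lift $Y\to X\times_{\mathcal{A}_\mathcal{X}}\mathcal{A}_\mathcal{Y}$: strict étale locally $Y$ is atomic, and by the construction the chart $Y\to\mathcal{A}_{\sigma_y}$ composed with $\sigma_y\to\mathcal{Y}$ gives a morphism to $\mathcal{A}_\mathcal{Y}$; these glue by construction of the identifications in $\mathcal{Y}$. The resulting map is a monomorphism (since $Y\to X$ is) and log étale, and it is strict because at each point the monoid of $\mathcal{A}_\mathcal{Y}$ pulled back is exactly $\overline{M}_{Y,y}$. A strict log étale monomorphism is an étale monomorphism of underlying stacks, i.e. an open immersion. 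Initiality: given any $\mathcal{Y}'\to\mathcal{X}$ with $Y$ open in $X\times\mathcal{A}_{\mathcal{Y}'}$, each point $y$ gives by strictness a cone of $\mathcal{Y}'$ isomorphic to $\sigma_y$, yielding $\mathcal{Y}\to\mathcal{Y}'$, unique because $\mathcal{Y}'\to\mathcal{X}$ is a monomorphism.

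For (2), initiality gives $\mathcal{Y}\to\mathcal{Y}'$, and surjectivity of $X\times\mathcal{A}_{\mathcal{Y}'}\to\mathcal{A}_{\mathcal{Y}'}$ on cones (every cone of $\mathcal{Y}'$ is realized by a point, since $S\to\mathcal{A}_S$ is a smooth cover and the fibers over $X$ are nonempty torus-type strata) shows every cone of $\mathcal{Y}'$ is some $\sigma_y$, so it is an isomorphism. For (1), with $S$ a point and $X=\mathcal{A}_\mathcal{X}$ (a quotient by $G_X$ of something with all strata), the open image must contain every stratum realized by a cone of $\mathcal{Y}$, and by $G_X$-homogeneity-free reasoning each stratum of $\mathcal{A}_\mathcal{Y}$ is a single point, so the open immersion is surjective. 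For (3), the section $S\to Y$ composed with $Y\to\mathcal{A}_\mathcal{Y}$ gives, applying $\Sigma$ and Theorem \ref{thm:left_inv_geom}, a lift $\Sigma_{\mathcal{A}_S}\to\mathcal{Y}$; uniqueness from monomorphy.

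The main obstacle is the monomorphism / stacky fan step together with correct handling of the stack structure (automorphisms of cones in $\mathcal{Y}$ arising from monodromy of $Y$ over $X$); I would first reduce to $\mathcal{X}=\sigma$ by base change along Lemma \ref{lem:loc_factorization} and Corollary \ref{cor:fib_prods_preserved}, where everything is a fan-type statement.
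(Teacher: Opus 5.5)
Your overall route is the paper's. You reduce to a cone $\tau\to\mathcal{X}$, identify $\mathcal{Y}|_\tau$ with the Artin fan of $Y_\tau:=Y\times_{\mathcal{A}_{\mathcal{X}}}\mathcal{A}_\tau$, and get the open immersion from strictness. You get (2) from surjectivity of $S\times_{\mathcal{A}_S}\mathcal{A}_{\mathcal{Y}'}\to\mathcal{A}_{\mathcal{Y}'}$, and (1) from every cone of $\mathcal{Y}$ being realized by a point of $Y$.

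\textbf{The gap.} It is the step you flag as the main obstacle, and it is the real content of the theorem: showing that the glued point-cones form a stacky fan in $\tau$. Your argument via ``two distinct $\mathcal{A}_\tau$-points of $Y$ over the same point of $X$'' does not work.

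\emph{No useful $\mathcal{A}_\tau$-points.} A morphism from an Artin cone to anything over $S$ lies over a point of $S$ with trivial log structure, so $Y$ has no useful $\mathcal{A}_\tau$-points. Also, Theorem~\ref{thm:left_inv_geom} describes maps into $\mathcal{A}_{\mathcal{Y}}$, not into $Y$.

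\emph{Different points of $X$.} More seriously, overlapping cones $\sigma_{y_1},\sigma_{y_2}$ come from points that in general lie over \emph{different} points of $X$. Each gives a lift to $Y$ only on an open part of a test object. Monomorphy of $Y\to X$ gives nothing until these opens are shown to meet.

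\emph{The missing argument.} This is what the technical lemma just before the paper's proof supplies. Take a lattice point $x$ of the overlap and pull back along $\tau'=(\mathbb{R}_{\ge0},\mathbb{Z})\to\tau$, $1\mapsto x$. Let $V_1,V_2\subseteq X\times_{\mathcal{A}_\tau}\mathcal{A}_{\tau'}$ be the preimages of atomic neighbourhoods $U_1,U_2$ of $y_1,y_2$.
\begin{enumerate}
\item Each $V_i$ meets the closed stratum, by log smoothness of $X$.
\item Hence $V_1$ and $V_2$ meet each other there, by irreducibility of that stratum.
\item Only now does monomorphy force the two lifts to agree on $V_1\cap V_2$. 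So they factor through $U_1\cap U_2$, and $x$ lies in a common face.
\end{enumerate}

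\emph{Why local reasoning cannot suffice.} Some such global input is unavoidable. Give $S=\mathbb{P}^1$ log structure at $0$ and $\infty$ and an Artin fan sending both points to the closed point of $\mathcal{A}_{\mathbb{R}_{\ge0}}$, and take $\mathcal{X}=\Sigma_{\mathcal{A}_S}$, so $X=S$. Then the square-root stack at $0$ alone is a log \'etale monomorphism to $X$. Its point-cones $(\mathbb{R}_{\ge0},2\mathbb{Z})$ and $(\mathbb{R}_{\ge0},\mathbb{Z})$ overlap in their interiors without being glued. A purely local argument like yours would therefore prove something false. The paper also uses the Hu--Xi result that $Y$ has a Zariski log structure once $X$ has a global chart; that is what disposes of the monodromy you worry about.

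\textbf{Smaller points.}
\begin{itemize}
\item The claim $\overline{M}^{\gp}_{Y,y}\otimes\mathbb{Q}\cong\overline{M}^{\gp}_{X,x}\otimes\mathbb{Q}$ is false. Log blowups are log \'etale monomorphisms, and on the exceptional divisor of $\mathrm{Bl}_0\mathbb{A}^2$ the rank drops from $2$ to $1$. Kato's criterion only gives surjectivity of $\overline{M}^{\gp}_{X,x}\otimes\mathbb{Q}\to\overline{M}^{\gp}_{Y,y}\otimes\mathbb{Q}$, which is all that injectivity of $\sigma_y\to\sigma_x$ needs.
\item $X$ is not algebraic in the cases of interest (e.g.\ $[A/G_A]$), so points of $Y$ only make sense after base change to cones. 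You must therefore also show that the cone-wise tropicalizations are compatible with pullback along $\tau''\to\tau$. The paper gets this, and initiality, by defining $\mathcal{Y}|_\tau$ intrinsically as the smallest cone-stack subsheaf $F\subseteq\tau$ through which $Y_\tau\to\mathcal{A}_\tau$ factors.
\item In (3), applying $\Sigma$ to $S\to\mathcal{A}_{\mathcal{Y}}$ yields nothing, for the reason above. You need the factorization through $\mathcal{A}_S$, which the paper takes from initiality of $S\to\mathcal{A}_S$. Alternatively, the monomorphism $\mathcal{A}_{\Sigma_{\mathcal{A}_S}\times_{\mathcal{X}}\mathcal{Y}}\to\mathcal{A}_S$ acquires a section after pullback along the strict smooth surjection $S\to\mathcal{A}_S$, hence is an isomorphism.
\item Your sketch of (1) is right in spirit. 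The precise version base changes to $X_\tau\to\mathcal{A}_\tau$. There $X_\tau\times_X Y$ is a torus-invariant open of the toric DM stack $X_\tau\times_{\mathcal{A}_{\mathcal{X}}}\mathcal{A}_{\mathcal{Y}}$ that meets every closed stratum, so it is everything.
\end{itemize}
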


The remainder of this section is devoted primarily to the proofs of Theorem
\ref{thm:trop_corresp} and Theorem \ref{thm:trop_cpt_open_subsets}. We start
by showing that $\mathcal{A}_S$ as in the statement of Theorem
\ref{thm:trop_corresp} exists.

\begin{proposition}
	\label{prop:A_S_exists}
	Let $X\to S$ be a log semiabelian variety. Then there exists an Artin fan $S\to \mathcal{A}_S$ and a
	unique split family of tropical semiabelian varieties $\mathcal{X}\to \Sigma_{\mathcal{A}_S}$ such that $[X/G_X]\cong \mathcal{A}_{\mathcal{X}}\times_{\mathcal{A}_S}S$.
\end{proposition}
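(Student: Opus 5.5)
We will prove the proposition by splitting $X$ into its toric and abelian parts and treating each separately. The ingredients are the known tropicalization of log abelian varieties from \cite{LogarithmicAbeKajiwa2008} and a direct computation for $\mathbb{G}_{m,\log}$. First we fix the Artin fan: since $S$ is log smooth, it admits an Artin fan $S\to \mathcal{A}_S$ in the sense of \cite{LogarithmicTauPandha2024} (for instance, the one built from the cone complex of $S$). We may refine it if needed, so that the tropical data of $X$ described below are constant on each cone of $\Sigma_{\mathcal{A}_S}$.

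\textbf{Local description.} Strict étale locally on $S$, $X$ is the image of an extension $X'\in\Ext(A,\mathbb{G}_m^r)$. As already seen in the proof of the lemma on $G_X$, we then have
\[
[X/G_X]\cong [A/G_A]\times_S [\mathbb{G}_{m,\log}^r/\mathbb{G}_m^r].
\]
The reason is that the $\mathbb{G}_m^r$-extension class dies in the quotient. More precisely, $[X/G_X]\to[A/G_A]$ is a torsor under $\mathbb{G}_{m,\trop}^r$, and it is trivialized by the image of $X'$. For the torus factor, $\mathbb{G}_{m,\trop}=[\mathbb{G}_{m,\log}/\mathbb{G}_m]$ is the geometric realization of $T^{\trop}_{\mathbb{Z}}$, because its $Z$-points are $\overline{M}_Z^{\gp}(Z)=\Hom(\mathbb{Z},\overline{M}^{\gp}_Z(Z))$. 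Hence $[\mathbb{G}_{m,\log}^r/\mathbb{G}_m^r]\cong S\times_{\mathcal{A}_S}\mathcal{A}_{\Sigma_{\mathcal{A}_S}\times T^{\trop}_{\mathbb{Z}^r}}$. For the abelian factor, locally $A$ is given by degeneration data: a lattice $M$ and a pairing $Q:M\times M\to \overline{M}_S^{\gp}$, which is positive definite in the sense above by \cite{LogarithmicAbeKajiwa2008}. By Kajiwara–Kato–Nakayama, $[A/G_A]$ is then $\Hom(\underline{M},\mathbb{G}_{m,\trop})^{(M)}/Q_{\hom}(\underline{M})$. Over a chart cone $\sigma$ of $S$, $Q$ takes values in $M_\sigma$, so this is exactly the realization of the family of tropical abelian varieties over $\sigma$. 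Proposition \ref{prop:alternative_descr_trop_torus} lets us phrase this via $\Hom(\tau,(N_\sigma\times N)^{(M)})/M$, which we can compare with $\Hom(\overline{M}_Z(Z),\overline{\mathcal{C}})$ pointwise.

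\textbf{Globalization.} The data $(M,Q)$ and $r$ are a priori only strict étale local on $S$ and may have monodromy. We glue the local tropical objects over $\Sigma_{\mathcal{A}_S}$: on each cone $\sigma$ of $\Sigma_{\mathcal{A}_S}$ we take the tropical family built from the stalk data at a point of the corresponding stratum. Face maps are sent to the induced cospecialization maps of $(M,Q)$. Monodromy is absorbed by the 2-categorical structure, since $\Sigma_{\mathcal{A}_S}$ is a cone stack and the lattices may be twisted by automorphisms. We then check $[X/G_X]\cong S\times_{\mathcal{A}_S}\mathcal{A}_{\mathcal{X}}$ strict étale locally, using Corollary \ref{cor:fib_prods_preserved} and Lemma \ref{lem:loc_factorization} to reduce to $S$ atomic, where the identification is the one above. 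Since both sides are strict étale stacks, the local isomorphisms glue because they are canonical.

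\textbf{Uniqueness.} Suppose $\mathcal{X}$ and $\mathcal{X}'$ both work. Then for every cone $\sigma\to\Sigma_{\mathcal{A}_S}$ we get $\mathcal{A}_{\mathcal{X}}\times\mathcal{A}_\sigma\cong\mathcal{A}_{\mathcal{X}'}\times\mathcal{A}_\sigma$. To deduce this from the isomorphism over $S$, we use that $S\to\mathcal{A}_S$ is a strict smooth cover, hence surjective, together with Lemma \ref{lem:smooth_descent}, which lets us descend maps along it. Theorem \ref{thm:left_inv_geom} then recovers $\mathcal{X}=\Sigma$ of the realization.

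The main obstacle is the globalization step. One must make sure the monodromy of $(M,Q)$ along $S$ factors through $\Sigma_{\mathcal{A}_S}$, which may force a choice of a finer Artin fan rather than the canonical one. Compatibility of the KKN description with cospecialization must also be checked. The rest is formal.
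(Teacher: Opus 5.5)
You have the local part right, and it matches Steps 3–5 of the paper's proof. You trivialize the $\mathbb{G}_{m,\trop}^r$-torsor $[X/G_X]\to[A/G_A]$ using the image of $X'$, identify $\mathbb{G}_{m,\trop}$ with $\mathcal{A}_{T^{\trop}_{\mathbb{Z}}}$, and read the abelian factor off the Kajiwara--Kato--Nakayama pairing, made symmetric and positive-definite by the principal polarization.

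\textbf{The gap: globalization.} You call this ``the main obstacle'' but leave it open, and it is exactly the existence clause of the proposition. Reducing to atomic neighbourhoods of $S$ via Lemma \ref{lem:loc_factorization} gives tropical data over charts of $S$, not over $\Sigma_{\mathcal{A}_S}$. To obtain $\mathcal{X}$, those charts must index a strict étale cover of $\mathcal{A}_S$ that carries descent data.

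Your suggested remedies do not provide this.
\begin{itemize}
\item If ``refine'' means subdividing, then $S$ no longer maps strictly to the result. What is needed is a non-initial Artin fan.
\item Building $\mathcal{X}$ cone by cone from stalk data at one point of each stratum fails when $(M,Q)$ has monodromy along a positive-dimensional stratum.
\end{itemize}
For example, take $S=\mathbb{A}^1\times C$ with divisor $\{0\}\times C$, and let $X$ be a degenerating $E\times E$ twisted by the swap along a double cover of $C$. The canonical Artin fan $[\mathbb{A}^1/\mathbb{G}_m]$ sends the whole stratum to one cone with no automorphisms. Every family pulled back from it therefore has constant lattice along the stratum, whereas this $[X/G_X]$ does not. ``Absorbing monodromy in the 2-categorical structure'' only works once $\mathcal{A}_S$ has been built to carry it.

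\textbf{How the paper does it (Step 1).} It chooses a strict étale cover $S'\to S$ on whose components:
\begin{itemize}
\item $T$ is split;
\item $\overline{M}$ and $\overline{N}$ are generated by global sections;
\item there is a global chart;
\item the extension comes from $\Ext(A,\mathbb{G}_m^r)$.
\end{itemize}
It then picks compatible Artin fans $\mathcal{A}_{S'\times_S S'}\rightrightarrows\mathcal{A}_{S'}$ via \cite[Theorem 5.7]{SkeletonsAndFAbramo2015}. These maps are strict étale, and $\mathcal{A}_S$ is defined as their colimit. As a result, existence and uniqueness become strict étale local on $\mathcal{A}_S$ itself, not merely on $S$. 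Over each chart cone $\sigma$ there is a single constant pair $(M,Q)$, and faces are handled automatically by $Q_\tau$, so no cospecialization bookkeeping is needed. Canonicity of $(M,Q)$, and of the trivialization of $[X/G_X]\to[A/G_A]$ over the zero section, supplies the descent data.

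\textbf{A related hole in your uniqueness argument.} Lemma \ref{lem:smooth_descent} descends a morphism only together with a descent datum. You would have to show that a given isomorphism $S\times_{\mathcal{A}_S}\mathcal{A}_{\mathcal{X}}\cong S\times_{\mathcal{A}_S}\mathcal{A}_{\mathcal{X}'}$ has equal pullbacks along the two projections $S\times_{\mathcal{A}_S}S\rightrightarrows S$. This is not formal; it depends on properties of $S\to\mathcal{A}_S$, such as connectedness of its fibres. The paper instead obtains uniqueness from the canonicity of the local construction.
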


\begin{proof}
	Step 1 (Construct the Artin fan $\mathcal{A}_S$) Write $G_X$ as an extension of a semiabelian
	variety which is abelian over the generic point of $S$ by an algebraic torus $T$. Pick an étale cover $S'\to S$ on which $T$
	is trivialized, i.e., $T\times_{S}S'\cong S'\times \mathbb{G}_m^{r}$.
	Refining $S'$ if necessary, we can write $X$ as an extension of a log abelian variety $A$ by $T_{\log}$.
	Let $\overline{M},\overline{N}$ be the constructible sheaves
	such that
	$$[A/G_A]\cong \Hom(\underline{M},\mathbb{G}_{m,\trop})^{(N)}/\underline{N}$$
	(see \cite[4.4]{LogarithmicAbeKajiwa2008}). By \cite[4.1.2]{LogarithmicAbeKajiwa2008}
	we may refine $S'$ such that the restrictions $\overline{M}'$
	and $\overline{N}'$ of $\overline{M}$ and $\overline{N}$ to $S'$
	are generated by their global sections. After further refining
	$S'$, we can assume that each connected component of $S'$ has a
	global chart by a sharp fs monoid and that on each connected
	component $X\in \Ext(A,\mathbb{G}_{m,\log}^{r})$ lies in the image of
	$\Ext(A,\mathbb{G}_{m}^{r})$. We obtain a presentation
	$S'\times_S S'\rightrightarrows S'$ of $S$. Pick an Artin
	fan $S'\to \mathcal{A}_{S'}$ for $S'$. By
	\cite[Theorem 5.7]{SkeletonsAndFAbramo2015} we may choose
	an Artin fan $\mathcal{A}_{S'\times_S S'}$ such that there
	is a commutative diagram:
	$$\begin{tikzcd}
		S'\times_S S'\arrow[d]\arrow[r,shift left]\arrow[r,shift right]&S'\arrow[d]\\
		\mathcal{A}_{S'\times_S S'}\arrow[r,shift left]\arrow[r,shift right]&\mathcal{A}_{S'}.
	\end{tikzcd}$$
	Since the maps $S'\times_S S'\to S'$ are strict étale and being strict
	étale descends along strict smooth covers, the maps
	$\mathcal{A}_{S'\times_S S'}\to \mathcal{A}_{S'}$ are strict
	étale. Let $\mathcal{A}_{S}$ be the colimit of the diagram
	$\mathcal{A}_{S'\times_S S'}\to \mathcal{A}_{S'}$. Since the
	two maps $S'\times_S S'\rightrightarrows \mathcal{A}_S$ agree,
	the morphism $S'\to \mathcal{A}_S$ descends to a morphism
	$S\to \mathcal{A}_{S}$. This morphism is strict, since this property
	is strict étale local and holds for $S'\to \mathcal{A}_{S'}$.

	Step 2 (Reduce to the case in which $S$ has a global chart): For existence and uniqueness of $\mathcal{X}$ it suffices to show
	that strict étale locally on $\mathcal{A}_S$ the stack $\mathcal{X}$ exists
	and is unique up to unique isomorphism.
	Hence it suffices to consider the case
	when $S$ is an irreducible component of $S'$. In particular, we
	may assume that $S$ has the following additional properties:
	\begin{enumerate}
		\item $T\cong \mathbb{G}_{m}^{r}$ and $T_{\log}\cong \mathbb{G}_{m,\log}^{r}$.
		\item $\overline{M}$ and $\overline{N}$ are generated by their global sections which we denote by $M$ and $N$.
		\item $S$ has a global chart by an fs monoid, i.e., we have a strict smooth surjective morphism $S\to \mathcal{A}_{\sigma}$ for some RPC $\sigma$.
	\end{enumerate}

	Step 3 (Show that $\mathcal{A}_{\mathbb{G}_{m,\trop}}=\mathbb{G}_{m,\trop}$): Recall the sheaf $\mathbb{G}_{m,\trop}:=T_{\mathbb{Z}}^{\trop}$
	on $\RPC$ from Definition \ref{def:trop_torus}. We claim that $\mathcal{A}_{\mathbb{G}_{m,\trop}}=\mathbb{G}_{m,\trop}$. Indeed,
	the datum of a morphism $\sigma\to \mathbb{G}_{m,\trop}$ in $\RPC$ is equivalent to the datum of a
	section $m\in M_{\sigma}$. Hence for every $M\in \ShpMon^{\op}$, we have:
	$$\Hom(M,\overline{\mathbb{G}}_{m,\trop})\cong\colim_{Q\subseteq M}\Hom(Q,\overline{\mathbb{G}}_{m,\trop})\cong\colim_{Q\subseteq M}Q^{\gp}$$
	where the colimits extend over the set of toric monoids contained in $M$.
	Since $M\mapsto M^{\gp}$ is left adjoint to the inclusion functor
	from the category of groups into the category of monoids it
	preserves colimits:
	$$\colim_{Q\subseteq M}Q^{\gp}\cong (\colim_{Q\subseteq M}Q)^{\gp}\cong M^{\gp}.$$
	It follows that $\mathcal{A}_{\mathbb{G}_{m,\trop}}'$ is the
	functor which assigns to a log scheme $Z$ the group $\overline{M}_Z^{\gp}(Z)$.
	Since this is already a sheaf and is equal to $\mathbb{G}_{m,\trop}$
	we conclude that $\mathcal{A}_{\mathbb{G}_{m,\trop}}\cong \mathbb{G}_{m,\trop}$.

	Step 4 (Show that $[A/G_A]$ is the pullback of a tropical abelian variety):
	Let $\langle\ ,\ \rangle:M\times N\to \overline{M}_S^{\gp}(S)=M_{\sigma}$
	be the pairing from \cite[4.1.2 and 4.4]{LogarithmicAbeKajiwa2008}.
	The principal polarization on $A$ gives an isomorphism $M\cong N$
	such that under this identification
	the pairing $Q:M\times M\to M_{\sigma}$ induced by
	$\langle\ ,\ \rangle$ is symmetric and positive-definite
	(see \cite{LogarithmicAbeKajiwa2008}). The tuple $(M,Q)$ gives a family of tropical abelian varieties
	$\mathcal{A}_{\Hom(\underline{M},\mathbb{G}_{m,\trop})^{(M)}/Q_{\hom}(\underline{M})}$ over
	$\mathcal{A}_{\sigma}$. It follows from the definitions
	that $[A/G_A]$ is the pullback of this family
	along $S\to \mathcal{A}_{\sigma}$.

	Step 5 (Show the extension is trivial and the morphism is unique): Note that since $X\in \Ext(A,\mathbb{G}_{m,\log}^{r})$
	is the image of an element of $\Ext(A,\mathbb{G}_{m}^{r})$,
	the image $[X/G_X]$ in $\Ext([A/G_A],\mathbb{G}_{m,\trop}^{r})$
	is trivial, i.e., $[X/G_X]\to [A/G_A]$ is the trivial
	$\mathbb{G}_{m,\trop}^{r}$-extension of $[A/G_A]$. In fact,
	this trivialization is canonical, since the restriction
	of $[X/G_X]\to [A/G_A]$ over $0\in [A/G_A]$ is canonically trivial.
	The construction of $(M,Q)$ is canonical and independent
	of any choices and it follows that $\Hom(\underline{M},\mathbb{G}_{m,\trop})^{(M)}/Q_{\hom}(\underline{M})\to \sigma$ is determined
	up to canonical isomorphism. Since $[X/G_X]\to [A/G_A]$ is canonically trivial, the
	morphism
	$$[X/G_X]\to \mathcal{A}_{\mathbb{G}_{m,\trop}^{r}\times\Hom(\underline{M},\mathbb{G}_{m,\trop})^{(M)}/Q_{\hom}(\underline{M})}$$
	is also unique up to unique isomorphism.
\end{proof}

The following is a technical lemma which allows us to tropicalize log étale monomorphisms:

\begin{lemma}
	Let $X$ be a log smooth log DM stack with a global chart and $Y\to X$ a log étale monomorphism representable
	by log DM stacks of finite presentation. Write $X\to \mathcal{A}_{\tau}$ for the Artin fan of $X$ corresponding to the global chart.
	Then there exists a monomorphism $\Sigma\to \tau$ such that $\mathcal{A}_{\Sigma}$ is an Artin fan for $Y$
	and $Y\to X\times_{\mathcal{A}_{\tau}}\mathcal{A}_{\Sigma}$ is an open immersion.
\end{lemma}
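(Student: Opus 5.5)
The plan is to build $\Sigma$ as the tropicalization of $Y$ relative to $\tau$, namely the stack $\Sigma_{\mathcal{A}_Y}$ for an Artin fan of $Y$ constructed compatibly with $X\to\mathcal{A}_\tau$, and then to verify the open immersion claim strict étale locally.

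\emph{Step 1: Artin fan of $Y$ over $\mathcal{A}_\tau$.} Since $Y\to X$ is log étale and $X\to\mathcal{A}_\tau$ is strict smooth, the composite $Y\to\mathcal{A}_\tau$ is log smooth. Following \cite[Theorem 5.7]{SkeletonsAndFAbramo2015} and the construction in Step 1 of Proposition~\ref{prop:A_S_exists}, I would choose a strict étale cover $Y'\to Y$ by log schemes with global charts and Artin fans $\mathcal{A}_{Y'}$, $\mathcal{A}_{Y'\times_YY'}$ compatible with the maps to $\mathcal{A}_\tau$. The colimit of this groupoid gives an Artin fan $\mathcal{A}_Y$ with a strict map $Y\to\mathcal{A}_Y$ and a morphism $\mathcal{A}_Y\to\mathcal{A}_\tau$. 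Finite presentation of $Y$ means finitely many strata, so $\Sigma:=\Sigma_{\mathcal{A}_Y}$ is a finite cone stack. Theorem~\ref{thm:left_inv_geom} then gives a morphism $\Sigma\to\tau$ with $\mathcal{A}_\Sigma=\mathcal{A}_Y$.

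\emph{Step 2: $\Sigma\to\tau$ is a monomorphism.} Locally on $Y$, near a point $y$ over $x\in X$, log étaleness of $Y\to X$ gives a map of characteristic monoids $\overline{M}_{X,x}\to\overline{M}_{Y,y}$ that becomes an isomorphism after $\mathbb{Q}$-tensoring on groups, i.e.\ a finite-index sublattice of a cone inside the corresponding face of $\tau$. So each cone of $\Sigma$ maps injectively onto a subcone of $\tau$. To show that two cones of $\Sigma$ mapping to the same subcone of $\tau$ (with compatible lattices) are identified in $\Sigma$, I would use that $Y\to X$ is a monomorphism: two such cones give two points of $Y\times_X\mathcal{A}_{\sigma}$ over the same object, hence they must agree. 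Equivalently, $\Sigma\times_\tau\Sigma\to\Sigma$ is an isomorphism, which can be checked on $\sigma$-points by realizing them through the Artin cone $\mathcal{A}_\sigma$. By Lemma~\ref{lem:mono_impl_fan}, $\Sigma$ is then a stacky fan in $\tau$.

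\emph{Step 3: $Y\to X\times_{\mathcal{A}_\tau}\mathcal{A}_\Sigma$ is an open immersion.} By Proposition~\ref{prop:mono_base_change}, the target $W$ is log étale over $X$ and the map $Y\to W$ is a monomorphism. It is also strict, because $Y\to\mathcal{A}_\Sigma$ is strict and $W\to\mathcal{A}_\Sigma$ is strict. A strict log étale morphism is étale on underlying stacks (log étaleness follows from Lemma~\ref{lem:let_when_both}), and a representable étale monomorphism is an open immersion.

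The main obstacle is Step 2. The Artin fan of $Y$ may a priori contain extra identifications or duplicate cones, as with the non-injectivity of Artin fans, and excluding this requires carefully translating the monomorphism property of $Y\to X$ into $\Sigma\to\tau$. If this fails for the naive Artin fan, I would replace $\Sigma$ by its image stacky fan: for each cone take the image cone with the image lattice. I would then check that $Y\to\mathcal{A}_\tau$ factors through $\mathcal{A}_{\text{image}}$ strictly, again using the monomorphism property.
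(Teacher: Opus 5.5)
Your Step~2 has a genuine gap, and it is the actual content of the lemma. The justification ``two such cones give two points of $Y\times_X\mathcal{A}_\sigma$ over the same object'' does not type-check, since $\mathcal{A}_\sigma$ maps to $\mathcal{A}_\tau$ and not to $X$. More seriously, it treats cones of $\Sigma$, which are points of the Artin fan, as if they were log points of $Y$. Two cones of $\Sigma$ through a common lattice point $x\in\tau$ give two maps $\mathcal{A}_{\tau'}\to\mathcal{A}_Y$ over $\mathcal{A}_\tau$. The monomorphism property of $Y\to X$ does not formally transfer to $\mathcal{A}_Y\to\mathcal{A}_\tau$. To use it you must produce an actual log scheme with two maps to $Y$ over a single map to $X$, and that scheme must meet the stratum indexed by $x$; otherwise you only learn that the cones agree on a face.

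This is what the paper does. It uses the Zariski log structure on $Y$ to cover $Y$ by atomic Zariski opens $U_i$ with cones $\tau_i\subseteq\tau$; these are injective by Kato's criterion, as in the first half of your Step~2. Injectivity of $\Sigma\to\tau$ then reduces to surjectivity of $\tau_{ij}\to\tau_i\cap\tau_j$. This also disposes of the monodromy that your strict étale construction in Step~1 would have to exclude. For a lattice point $x\in\tau_i\cap\tau_j$, set $\tau'=(\mathbb{R}_{\ge 0},\mathbb{Z})\to\tau$ with $1\mapsto x$. The paper pulls $U_i,U_j$ back along the open immersions $U_i\hookrightarrow X\times_{\mathcal{A}_\tau}\mathcal{A}_{\tau_i}$ to opens $V_i,V_j\subseteq X\times_{\mathcal{A}_\tau}\mathcal{A}_{\tau'}$. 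Log smoothness of $X$ shows that both meet the closed stratum, and irreducibility of that stratum shows that $V_i\cap V_j$ meets it. Only then does the monomorphism property force $V_i\cap V_j\to U_i$ and $V_i\cap V_j\to U_j$ to agree, so they factor through $U_i\cap U_j$ and $x$ lies in $\tau_{ij}$.

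This geometric input cannot be replaced by a formal argument, as the following example shows. Take $X=\mathbb{A}^1$ with the log structure of the divisor $\{0,1\}$ and global chart $1\mapsto t(t-1)$, so $\tau=(\mathbb{R}_{\ge 0},\mathbb{Z})$. Let $Y$ be glued from $X\setminus\{1\}$ and the square root stack of $X\setminus\{0\}$ along $\{1\}$. Then $Y\to X$ is a log étale monomorphism. However, the cones of $Y$ at its two boundary points are $(\mathbb{R}_{\ge 0},\mathbb{Z})$ and $(\mathbb{R}_{\ge 0},2\mathbb{Z})$, lying over the same cone of $\tau$, so your $\Sigma\to\tau$ is not a monomorphism. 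Moreover, a stacky fan in $\tau$ carries only one lattice on $\tau$, so no choice of $\Sigma$ yields a strict open immersion $Y\to X\times_{\mathcal{A}_\tau}\mathcal{A}_\Sigma$; in particular your ``image stacky fan'' fallback does not either. The closed stratum $\{0,1\}$ is reducible here, which is exactly the irreducibility the paper's argument invokes (it holds, e.g., for atomic $X$). Your Step~3 is correct and matches the paper's final line, but it depends on Step~2 through Proposition~\ref{prop:mono_base_change}.
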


\begin{proof}
	By \cite[Proposition 2.2]{Grothendieck_to_Hu_Xi_2025} we know
	that $Y$ has a Zariski log structure. Cover $Y$ by Zariski open subsets $\{U_i\}_i$ such
	that each $U_i$ is atomic with global chart corresponding to a morphism
	$U_i\to \mathcal{A}_{\tau_i}$. Note that in this case $U_i\cap U_j$ also has a global chart
	and we write $U_i\cap U_j\to \mathcal{A}_{\tau_{i,j}}$ for the corresponding Artin fan.
	We may then construct an Artin fan $\mathcal{A}_Y$ for $Y$ as the colimit:
	$$\bigsqcup_{i,j}\mathcal{A}_{\tau_{i,j}}\rightrightarrows \bigsqcup_{i}\mathcal{A}_{\tau_i}.$$
	Let $\Sigma$ be the colimit of the corresponding diagram:
	$$\bigsqcup_{i,j}\tau_{i,j}\rightrightarrows \bigsqcup_{i}\tau_i,$$
	so that $\mathcal{A}_Y=\mathcal{A}_{\Sigma}$.
	The map $U_i\to X$ (resp. $U_i\cap U_j\to X$) is log étale and hence the induced morphism $\tau_i\to \tau$
	(resp. $\tau_{i,j}\to \tau$) is injective by Kato's chart
	criterion. We identify $\tau_i$ (resp. $\tau_{i,j}$) with its image in $\tau$.
	To show that $\Sigma\to \tau$ is injective it suffices to show that for all $i$ and $j$,
	the morphism $\tau_{i,j}\to \tau_i\cap \tau_j$ is surjective. Note that $U_i\to X\times_{\mathcal{A}_{\tau}}\mathcal{A}_{\tau_i}$
	and $U_j\to X\times_{\mathcal{A}_{\tau}}\mathcal{A}_{\tau_j}$ are strict log étale monomorphisms, hence open immersions,
	by \cite[Tag 025G]{stacks-project}.
	Let $x\in \tau_i\cap \tau_j$ be a point in the lattice and consider the morphism $\tau':=(\mathbb{R}_{\ge 0},\mathbb{Z})\to \tau_i\cap \tau_j$
	given by sending $1\in \mathbb{R}_{\ge 0}$ to $x$. Then we get lifts
	$X\times_{\mathcal{A}_{\tau}}\mathcal{A}_{\tau'}\to X\times_{\mathcal{A}_{\tau}}\mathcal{A}_{\tau_i}$ and
	$X\times_{\mathcal{A}_{\tau}}\mathcal{A}_{\tau'}\to X\times_{\mathcal{A}_{\tau}}\mathcal{A}_{\tau_j}$ of the projection
	$X\times_{\mathcal{A}_{\tau}}\mathcal{A}_{\tau'}\to X$. Let
	$$V_i,V_j\subseteq X\times_{\mathcal{A}_{\tau}}\mathcal{A}_{\tau'}$$
	be the preimages of $U_i$ resp. $U_j$ under these lifts. Since $X$ is log smooth, the fiber of $X\times_{\mathcal{A}_{\tau}}\mathcal{A}_{\tau_i}\to \mathcal{A}_{\tau_i}$
	over the unique closed stratum is non-empty and since $U_i$ intersects the closed stratum of $X\times_{\mathcal{A}_{\tau}}\mathcal{A}_{\tau_i}$, the preimage of the closed stratum in
	$V_i$ is non-empty (similarly for $V_j$). Since the closed stratum is irreducible and $V_i,V_j$ are open, it follows that
	$V_i\cap V_j$ intersects the closed stratum. Note that $V_i\cap V_j\to U_i$ and $V_i\cap V_j\to U_j$ are two lifts of the projection
	$V_i\cap V_j\to X$. Since $Y\to X$ is a monomorphism, these two maps must be equal, i.e., the maps factor through a map
	$V_i\cap V_j\to U_i\cap U_j$. Since $V_i\cap V_j$ intersects the closed stratum of $X\times_{\mathcal{A}_{\tau}}\mathcal{A}_{\tau'}$
	it follows that $x$ lies in the image of $\mathcal{A}_{\tau_{i,j}}\to \mathcal{A}_{\tau}$ as required. Since
	$Y\to X\times_{\mathcal{A}_{\tau}}\mathcal{A}_{\Sigma}$ is a
	strict étale monomorphism, it is an open immersion.
\end{proof}

\begin{proof}[Proof of Theorem \ref{thm:trop_cpt_open_subsets}]
	Step 1 (Define the stack $\mathcal{Y}$ and list properties to show): We first assume that $S$ is a general log smooth log scheme.

	Let $Y\to X$ be a log étale monomorphism representable by log DM stacks of finite presentation.
	We define a stack $\mathcal{Y}$ over $\mathcal{X}$ as follows:
	Let $\tau\to \mathcal{X}$ be a morphism from an RPC $\tau$. We let the restriction of $\mathcal{Y}$ over $\tau$
	be the intersection of all subsheaves $F\subseteq \tau$ satisfying:
	\begin{itemize}
		\item $Y\times_{\mathcal{A}_{\mathcal{X}}}\mathcal{A}_{\tau}\to \mathcal{A}_{\tau}$ lifts to a morphism $Y\times_{\mathcal{A}_{\mathcal{X}}}\mathcal{A}_{\tau}\to \mathcal{A}_{F}$.
		\item $F$ is a cone stack.
	\end{itemize}
	Note that the condition that $Y\times_{\mathcal{A}_{\mathcal{X}}}\mathcal{A}_{\tau}\to \mathcal{A}_{\tau}$ lifts to a morphism $Y\times_{\mathcal{A}_{\mathcal{X}}}\mathcal{A}_{\tau}\to \mathcal{A}_{F}$
	is equivalent to the statement that $Y\times_{\mathcal{A}_{\mathcal{X}}}\mathcal{A}_F\to Y\times_{\mathcal{A}_{\mathcal{X}}}\mathcal{A}_{\tau}$
	is an isomorphism. In particular, the construction is invariant under replacing $Y$ with a strict étale cover.
	To prove the main statement of the theorem and point (2), we must show:
	\begin{enumerate}
		\item $\mathcal{Y}$ is a functor, i.e., if $\tau'\to \tau$ is a morphism of rational polyhedral cones, then
			every section of the subsheaf of $\tau$ defined above restricts to a section of the corresponding subsheaf over $\tau'$.
		\item $\mathcal{Y}\to \mathcal{X}$ is representable by finite cone stacks.
		\item Any section $s:\Sigma_{\mathcal{A}_S}\to \mathcal{X}$ as in the statement of the theorem lifts to $\mathcal{Y}$.
		\item $Y\to X\times_{\mathcal{A}_{\mathcal{X}}}\mathcal{A}_{\mathcal{Y}}$ is an open immersion.
		\item If $Y=X\times_{\mathcal{A}_{\mathcal{X}}}\mathcal{A}_{\mathcal{Y}'}$ for some log étale monomorphism $\mathcal{Y}'\to \mathcal{X}$,
			then $\mathcal{Y}=\mathcal{Y}'$.
	\end{enumerate}

	Step 2 (Show all properties can be checked after base-change along a map $\tau\to \mathcal{X}$ from an RPC $\tau$):
	For (1) and (2), this is clear by definition. For (3), note that
	this statement is equivalent to the statement that the map
	$\mathcal{Y}\times_{\mathcal{X}}\Sigma_{\mathcal{A}_S}\to \Sigma_{\mathcal{A}_S}$ is an
	isomorphism, where
	$\Sigma_{\mathcal{A}_S}\to \mathcal{X}$ is the given section. This can be checked over
	each cone in $\Sigma_{\mathcal{A}_S}$. For (4),
	we must show that for any morphism $Z\to X\times_{\mathcal{A}_{\mathcal{X}}}\mathcal{A}_{\mathcal{Y}}$,
	the base-change of $Y$ over $Z$ is an open immersion. This is strict étale local on the target by
	\cite[Tag 02L3]{stacks-project}, so by Lemma \ref{lem:loc_factorization} we may assume that $Z\to \mathcal{A}_{\mathcal{X}}$
	factors through $Z\to \mathcal{A}_{\tau}\to \mathcal{A}_{\mathcal{X}}$ for some RPC $\tau$. Hence by
	Corollary \ref{cor:fib_prods_preserved} it suffices to show (4) after base-change over any $\tau$.
	Finally, for (5), note that $\mathcal{Y}$ is a subsheaf of $\mathcal{Y}'$ by minimality of $\mathcal{Y}$. We get
	a morphism $\mathcal{Y}\to \mathcal{Y}'$ and we must show that it is an isomorphism, which can be checked
	after restricting to an arbitrary cone.

	Step 3 (Compute the restriction $\mathcal{Y}|_{\tau}$): Now note that:
	$$Y\times_{\mathcal{A}_{\mathcal{X}}}\mathcal{A}_{\tau}\cong Y\times_{X}(S\times_{\mathcal{A}_S}\mathcal{A}_{\tau}).$$
	$S\times_{\mathcal{A}_S}\mathcal{A}_{\tau}$ is a
	log smooth log DM stack
	and hence by representability of $Y\to X$, so is $Y\times_{\mathcal{A}_{\mathcal{X}}}\mathcal{A}_{\tau}$.
	Replacing $\mathcal{X}$ with $\tau$ and $Y$ with $Y\times_{\mathcal{A}_{\mathcal{X}}}\mathcal{A}_{\tau}$,
	we reduce to the case when $Y$ is a log smooth log DM stack and $\mathcal{X}=\tau$ is an RPC. In particular,
	$X=S\times_{\mathcal{A}_S}\mathcal{A}_{\tau}$ is a log smooth log DM stack. Replacing $X$
	with a strict étale cover, we may assume that $X$ has a global chart. Let $\mathcal{A}_{Y}$
	be the Artin fan for $Y$ as in the preceding lemma, i.e., $\Sigma_{\mathcal{A}_Y}\to \tau$ is a monomorphism.
	By Lemma \ref{lem:mono_impl_fan} we know that $\Sigma_{\mathcal{A}_Y}$ is a stacky fan in $\tau$.
	In particular, $\Sigma_{\mathcal{A}_Y}$ is a subsheaf of $\tau$. Since $Y\to \mathcal{A}_Y$ is strict and its image contains all
	points corresponding to maximal cones in $\mathcal{A}_Y$, we get that $\Sigma_{\mathcal{A}_Y}$ is minimal,
	i.e., $\mathcal{Y}|_{\tau}=\Sigma_{\mathcal{A}_Y}$.

	Step 4 (Prove properties (1), (2), and (4)): If $\tau'\to \tau$ is a morphism, then since $Y\to \mathcal{A}_Y$ is a strict smooth cover,
	it is clear that
	$$\Sigma_{\mathcal{A}_Y}\times_{\tau}\tau'=\Sigma_{\mathcal{A}_{Y\times_{\mathcal{A}_{\tau}}\mathcal{A}_{\tau'}}}$$
	is the minimal subsheaf $F$ such that $Y\times_{\mathcal{A}_{\tau}}\mathcal{A}_{\tau'}\to \mathcal{A}_{\tau'}$ factors through $F$,
	showing that $\mathcal{Y}$ is indeed a functor. Representability of $\mathcal{Y}$ follows from the fact that
	$\Sigma_{\mathcal{A}_Y}$ is a finite cone complex. By construction
	of $\mathcal{A}_Y$, the map $Y\to \mathcal{A}_Y$ is strict.
	Hence by base-change, so is $Y\to \mathcal{A}_Y\times_{\mathcal{A}_{\tau}}X$.
	It follows that this map is a strict étale monomorphism, i.e.,
	an open immersion by \cite[Tag 025G]{stacks-project}. This finishes the proof of (1), (2) and (4) of the above list.

	Step 5 (Prove property (5) - uniqueness): Assume now that $Y=X\times_{\mathcal{A}_{\mathcal{X}}}\mathcal{A}_{\mathcal{Y}'}$ for some monomorphism
	$\mathcal{Y}'\to \mathcal{X}=\tau$. By Lemma \ref{lem:mono_impl_fan} we know that $\mathcal{Y}'$ is isomorphic to a stacky fan in $\tau$.
	Since $S$ is log smooth, the map $S\times_{\mathcal{A}_{S}}\mathcal{A}_{\tau}\to \mathcal{A}_{\tau}$ is surjective. To see this,
	note that its image is open
	and it contains the unique closed point in $\mathcal{A}_{\tau}$, which lies in the closure of all other points. By base-change we get that
	$Y\to \mathcal{A}_{\mathcal{Y}'}$ is surjective. In particular, the map $\mathcal{A}_{\mathcal{Y}}\to \mathcal{A}_{\mathcal{Y}'}$
	must be surjective and hence an isomorphism, since both $\mathcal{Y}$ and $\mathcal{Y}'$ are stacky fans in $\tau$.

	Step 6 (Prove property (3) - existence of lifts): We replace $S$ with $S\times_{\mathcal{A}_S}\mathcal{A}_{\tau}$.
	Note that by assumption, the induced section $S\to X$ lifts to a section of $Y$, i.e.,
	$Y\times_{X}S\cong S$. In particular, since $S\to \mathcal{A}_S=\mathcal{A}_{\tau}$ is initial
	with respect to all morphisms from $S$ to an Artin fan, any
	substack of $\mathcal{X}$ with the property that $Y\times_{\mathcal{A}_{\mathcal{X}}}\mathcal{A}_{\tau}\to \mathcal{A}_{\mathcal{X}}$
	factors through it, must contain the image of $\tau\to \Sigma_{\mathcal{A}_S}\to \mathcal{X}$.

	Step 7 (Prove point (1) from the theorem statement): It remains to prove point (1) from the statement of the theorem, i.e.,
	that the map
	$$Y\to X\times_{\mathcal{A}_{\mathcal{X}}}\mathcal{A}_{\mathcal{Y}}=S\times_{\mathcal{A}_{S}}\mathcal{A}_{\mathcal{Y}}$$
	is an isomorphism in the case when $S$ is a point with trivial log structure.
	This can be checked after base-change to any
	log scheme $Z\to X\times_{\mathcal{A}_{\mathcal{X}}}\mathcal{A}_{\mathcal{Y}}$.
	By Lemma \ref{lem:loc_factorization} we may strict étale locally factor $Z\to \mathcal{A}_{\mathcal{X}}$ as
	$Z\to \mathcal{A}_{\tau}\to \mathcal{A}_{\mathcal{X}}$ for some RPC $\tau$.
	Note that the map $Z\to \mathcal{A}_{\tau}$ can strict étale locally be lifted to a factorization $Z\to X_{\tau}\to \mathcal{A}_{\tau}$
	where $X_{\tau}$ is the affine toric variety with cone $\tau$.
	Write $X_{\mathcal{Y}}:=X_{\tau}\times_{\mathcal{A}_{\mathcal{X}}}\mathcal{A}_{\mathcal{Y}}$. By the above, $\mathcal{Y}$
	is a stacky fan in $\tau$, so $X_{\mathcal{Y}}$ is a toric DM stack in the sense of \cite{ATheoryOfStaGillam2015}.
	$X_{\tau}\times_{X}Y$ is a torus-invariant open subset of $X_{\mathcal{Y}}$. Hence $X_{\tau}\times_{X}Y$
	is isomorphic to $X_{\Delta}$ for some stacky fan $\Delta\subseteq \mathcal{Y}$. But the image
	of $X_{\tau}\times_{X}Y$ in $X_{\mathcal{Y}}$ intersects every closed stratum by minimality of $\mathcal{Y}$, so $\Delta=\mathcal{Y}$.
\end{proof}

\begin{proof}[Proof of Theorem \ref{thm:trop_corresp}]
	Point (1) follows from Theorem \ref{thm:trop_cpt_open_subsets} and point (3) follows from point (1)
	of Theorem \ref{thm:trop_cpt_open_subsets} (take the space $X$ in Theorem \ref{thm:trop_cpt_open_subsets} to be $[X/G_X]$). From Lemma \ref{lem:proper_iff_surj} we know that a combinatorial
	partial log compactification $[X/G_X]\times_{\mathcal{A}_{\mathcal{X}}}\mathcal{A}_{\mathcal{Y}}$ is a log compactification of $[X/G_X]$
	if and only if $\mathcal{Y}\to \mathcal{X}$ is a tropical compactification. Let $Y$ now be an arbitrary log compactification of $[X/G_X]$.
	From Theorem \ref{thm:trop_cpt_open_subsets} we know that $Y$ is an open subset of a stack of the
	form $[X/G_X]\times_{\mathcal{A}_{\mathcal{X}}}\mathcal{A}_{\mathcal{Y}}$ for some partial tropical compactification
	$\mathcal{Y}\to \mathcal{X}$. We must show that $Y\to [X/G_X]\times_{\mathcal{A}_{\mathcal{X}}}\mathcal{A}_{\mathcal{Y}}$
	is an isomorphism. Equivalently, we show that
	$$X\times_{[X/G_X]}Y\to X\times_{\mathcal{A}_{\mathcal{X}}}\mathcal{A}_{\mathcal{Y}}$$
	is an isomorphism. Since this is a morphism over $X$ and $X$ has a log étale cover by log schemes by
	Lemma \ref{lem:part_cptifications_are_nice}, it suffices to check that this morphism is an isomorphism
	after base-change along some log étale morphism $Z\to X$. Recall that $G_X\subseteq X\times_{[X/G_X]}Y$ and
	$G_X\subseteq X\times_{\mathcal{A}_{\mathcal{X}}}\mathcal{A}_{\mathcal{Y}}$ is dense by Lemma
	\ref{lem:bir_is_bir}, so $Z\times_{[X/G_X]}Y\to Z\times_{\mathcal{A}_{\mathcal{X}}}\mathcal{A}_{\mathcal{Y}}$
	is an isomorphism on a dense open subset. So
	$$Z\times_{[X/G_X]}Y\to Z\times_{\mathcal{A}_{\mathcal{X}}}\mathcal{A}_{\mathcal{Y}}$$
	is a proper, birational, open immersion. In particular, it is surjective, hence an isomorphism. It follows
	from point (2) in Theorem \ref{thm:trop_cpt_open_subsets} that $\mathcal{Y}$ is uniquely determined by $Y$,
	and we conclude.
\end{proof}

We end this section with the following two technical results which will be used later:

\begin{lemma}
	\label{lem:repr_when_cone_cx}
	Let $\mathcal{X}\to \Sigma_{\mathcal{A}_S}$ be as in the statement of Theorem \ref{thm:trop_corresp}.
	If $\mathcal{Y}\to \mathcal{X}$ is a partial tropical compactification such that $\mathcal{Y}$ is representable
	by a finite cone complex, then $\mathcal{A}_{\mathcal{Y}}\times_{\mathcal{A}_{\mathcal{X}}}X$ is representable by a log DM stack.
\end{lemma}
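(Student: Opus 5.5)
The plan is to reduce to a local statement over Artin cones and then glue. By Proposition \ref{prop:mono_base_change}, $\mathcal{A}_{\mathcal{Y}}\to\mathcal{A}_{\mathcal{X}}$ is already representable by log DM stacks of finite presentation. So $Y:=\mathcal{A}_{\mathcal{Y}}\times_{\mathcal{A}_{\mathcal{X}}}X$ is a stack that is relatively a log DM stack over $X$. What must be shown is that $Y$ itself is a log DM stack, i.e.\ an honest DM stack with a log structure rather than merely a DM log stack in the sense of the introduction.

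The point is that $X$ itself is not a log DM stack: $\mathbb{G}_{m,\log}$ is not algebraic, and neither is $\mathcal{A}_{\mathcal{X}}$. The finite cone complex hypothesis on $\mathcal{Y}$ is exactly what should remove this non-algebraicity. I would write $\mathcal{Y}$ as the colimit of its finitely many cones $\tau$ glued along face inclusions. Every morphism $\tau\to\mathcal{Y}\to\mathcal{X}$ lands in a single cone-chart. As in Step 3 of the proof of Proposition \ref{prop:mono_base_change}, the maps $\mathcal{A}_{\tau}\to\mathcal{A}_{\mathcal{Y}}$ then form a strict Zariski open cover. Because $\mathcal{Y}$ is a cone complex rather than a cone stack, the overlaps are again Artin cones of common faces, with no automorphisms to quotient by. Hence $Y$ is Zariski-locally of the form $Y_\tau:=\mathcal{A}_{\tau}\times_{\mathcal{A}_{\mathcal{X}}}X$, glued along open immersions $Y_{\tau'}\subseteq Y_\tau$ for faces $\tau'\le\tau$. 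Since log DM stacks glue along open immersions, it suffices to show that each $Y_\tau$ is a log DM stack.

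For a single cone $\tau\to\mathcal{X}$, I would work strict étale locally on $S$, as in Proposition \ref{prop:A_S_exists}. There $\tau\to\mathcal{X}$ lifts, via Proposition \ref{prop:alternative_descr_trop_torus}, to a homomorphism $N_\tau\to N_{\sigma}\times N\times\mathbb{Z}^r$ that lands in the bounded locus $(N_\sigma\times N)^{(M)}$. Write $X'$ for the $\mathbb{G}_m^r$-extension of the log abelian variety $A$ and $G_A\subseteq A$ for its semiabelian part. Pulling back along the $M$-cover that uniformizes $[A/G_A]$, $Y_\tau$ becomes the quotient of a toric-type bundle over the extension of $G_A$ by $T$. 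Concretely, the fiber is a relative affine toric DM stack $\mathcal{A}_\tau\times_{\mathcal{A}_{\sigma\times T^{\trop}}}X_{\sigma\times\dots}$, which is algebraic by the computations of Steps 4.2--4.3 of Proposition \ref{prop:mono_base_change} (a toric variety modulo a finite group $\Hom(M_\tau/M_{\tau'},\mathbb{G}_m)$). The free $M$-translation action by $T_m$ then identifies this with an open chart on $Y_\tau$. Equivalently, and more cleanly, I would argue directly: the image of $\tau$ is a single cone meeting only finitely many $M$-translates, so $\mathcal{A}_\tau\to[A/G_A]$ factors through the quotient map $\Hom(\underline M,\mathbb{G}_{m,\trop})^{(M)}\to[A/G_A]$. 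This factorization is a local isomorphism onto its image, and $Y_\tau$ is the base change of the algebraic Mumford-type chart $\mathcal{A}_\tau\times_{\mathcal{A}_{\Hom(\underline{M},\mathbb{G}_{m,\trop})}}\widetilde{X}$, where $\widetilde X$ is the $\Hom(\underline M,\mathbb{G}_{m})$-torsor over $G_X$.

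I expect the main obstacle to be this last step. One has to check that the chart over a single cone really is algebraic. Positive-definiteness of $Q$, via Lemma \ref{lem:pos_definite_vanishing} and the boundedness in Proposition \ref{prop:alternative_descr_trop_torus}, should guarantee that a cone inside the bounded locus yields a relatively affine toric stack over $G_X$ that is of finite type over $S$. The remaining checks, namely descent of the DM property along strict étale covers of $S$ (cf.\ \cite[Theorem 1.1 (3)]{fortman2025descentalgebraicstacks}) and the Zariski gluing, are routine.
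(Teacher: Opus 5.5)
Your reduction to a single cone is exactly the paper's first step: the $\mathcal{A}_\tau$ form a strict Zariski open cover of $\mathcal{A}_{\mathcal{Y}}$, and since $\mathcal{Y}$ is a cone complex the pieces glue along open immersions. The gap is the single-cone step, which is not a routine check.

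You want $Y_\tau=\mathcal{A}_\tau\times_{\mathcal{A}_{\mathcal{X}}}X$ to be a Mumford chart. That is, you want it to be the contracted product of a Raynaud-type extension with torus $\Hom(\underline{M},\mathbb{G}_m)$ and the toric stack of $\tau$, defined over a strict \'etale neighbourhood in $S$. No such algebraic uniformization exists when the degeneration is not constant, for two reasons:
\begin{itemize}
\item $M$ is the character lattice of the torus part of $G_X$ only over the deepest stratum.
\item On faces of $\sigma$ where $Q$ degenerates, $M$ does not even act freely on $\Hom(\underline M,\mathbb{G}_{m,\trop})^{(M)}$.
\end{itemize}
Suppose instead you set $\widetilde X:=X\times_{\mathcal{A}_{\mathcal{X}}}\mathcal{A}_{\widetilde{\mathcal X}}$ with $\widetilde{\mathcal X}=\Hom(\underline M,\mathbb{G}_{m,\trop})^{(M)}$. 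Then $Y_\tau\cong\mathcal{A}_\tau\times_{\mathcal{A}_{\widetilde{\mathcal X}}}\widetilde X$ holds tautologically for any lift of $\tau\to\mathcal{X}$. But this $\widetilde X$ is not of Mumford form, so nothing is gained.

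Here is a concrete test case. Take a log elliptic curve $A$ over a smooth curve $S$ with log structure at one point, degenerating to a nodal curve there. Then $\sigma=\mathbb{R}_{\ge0}$ and $M=\mathbb{Z}$. Take $\tau=\{(t,x):0\le x\le t/2\}$, one cone of the $2$-gon stacky fan, so the hypotheses of the lemma hold.
\begin{itemize}
\item Since $\tau$ meets the fibre over $0\in\sigma$ only in its vertex, $Y_\tau$ restricted to the generic point $\eta$ is all of $A_\eta=E_\eta$, a proper genus-one curve.
\item Your chart is the affine toric surface of $\tau$ pulled back to $S$, whose generic fibre is $\mathbb{G}_{m,\eta}$.
\end{itemize}
There is no nonconstant morphism between these two generic fibres in either direction. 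The Mumford chart is related to $Y_\tau$ only after completing along the special fibre. Passing from that formal picture to algebraicity over $S$ requires Mumford's construction plus an algebraization argument. Positive-definiteness of $Q$ (Lemma~\ref{lem:pos_definite_vanishing}, Proposition~\ref{prop:alternative_descr_trop_torus}) controls the combinatorics of the $M$-translates, but it does not give algebraicity.

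This algebraicity is exactly the input the paper takes from the literature rather than proving. The paper proceeds as follows.
\begin{itemize}
\item It splits off the torus factor. The map $Y_\tau\to\mathbb{G}_{m,\log}^r$ factors through the affine toric variety $Y_{\tau'}$ of the image $\tau'$ of $\tau$ in $\mathbb{R}^r$. After replacing $S$ by $S\times Y_{\tau'}$, one may take $X=A$ to be a log abelian variety.
\item It regards $\tau$ as a subcone $\tau''$ of the cone $C$ of \cite[2.2]{LogarithmicAbeKajiwa2015}, with the induced saturated integral structure.
\item It applies \cite[Theorem 8.1]{LogarithmicAbeKajiwa2015} to get that $A\times_{\mathcal{A}_{\mathcal{X}}}\mathcal{A}_{\tau''}$ is a log algebraic space.
\item The passage to the finite-index sublattice $N_\tau$ is the root-stack computation of Steps 4.2--4.3 in the proof of Proposition~\ref{prop:mono_base_change}.
\end{itemize}
You need this theorem, or an equivalent algebraization result, to finish.

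If you want to avoid it, note that algebraicity as an Artin stack is cheap. Since $[X/G_X]\cong S\times_{\mathcal{A}_S}\mathcal{A}_{\mathcal{X}}$, $Y_\tau$ is a $G_X$-torsor over the log algebraic stack $S\times_{\mathcal{A}_S}\mathcal{A}_\tau$. What then remains is finiteness of stabilizers. This can be tested on fibres over fs log points of $S$, where the degeneration is constant and a uniformization of the kind you describe is available. That argument would still have to be written out.
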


\begin{proof}
	Let $\tau\in \mathcal{Y}$ be a cone. We claim that $\mathcal{A}_{\tau}\to \mathcal{A}_{\mathcal{Y}}$ is representable by
	open immersions. For this, let $T\to \mathcal{A}_{\mathcal{Y}}$ be a morphism from a log scheme $T$. Note that
	being an open immersion is strict étale local on the target by \cite[Tag 02L3]{stacks-project}. So by Lemma \ref{lem:loc_factorization}
	we may assume that $T\to \mathcal{A}_{\mathcal{Y}}$ factors as
	$T\to \mathcal{A}_{\sigma}\to \mathcal{A}_{\mathcal{Y}}$ for some RPC $\sigma$ and strict map $T\to \mathcal{A}_{\sigma}$.
	Note that $\tau\to \mathcal{Y}$ is representable by face morphisms, so by Corollary \ref{cor:fib_prods_preserved}
	we may assume that $\mathcal{Y}=\sigma$ is an RPC and $\tau$ is a face of $\sigma$. In this case, by Lemma \ref{lem:rpc_repr},
	we have $\mathcal{A}_{\sigma}\cong [X_{\sigma}/T_{\sigma}]$ where $X_{\sigma}$ is the toric variety with cone $\sigma$
	and $T_{\sigma} \subseteq X_{\sigma}$ is the dense open torus. The base-change of $\mathcal{A}_{\tau}$ over $X_{\sigma}$
	is the open subset $U_{\tau}\subseteq X_{\sigma}$ corresponding to the face $\tau$ (by the standard theory of toric
	varieties) and hence by \cite[Tag 02L3]{stacks-project}, $\mathcal{A}_{\tau}\to \mathcal{A}_{\sigma}$ is representable
	by open immersions.

	Since representability is Zariski local, we reduce to the case $\mathcal{Y}=\tau$ by covering $\mathcal{Y}$ by disjoint
	unions of cones. By
	\cite[Theorem 1.1 (3)]{fortman2025descentalgebraicstacks},
	representability is strict étale local on $S$, so we may
	assume that $X\cong A\times \mathbb{G}_{m,\log}^{r}$
	for some $r\ge 0$. Let $\tau'\subseteq \mathbb{R}^{r}$ be the image of $\tau$
	under the projection $\mathcal{X}\to T_{\mathbb{Z}^{r}}^{\trop}$
	induced by $A\times \mathbb{G}_{m,\log}^{r}\to \mathbb{G}_{m,\log}^{r}$.
	Equip $\tau'$ with the integral structure coming from
	$\mathbb{Z}^{r}$. The composition
	$$X\times_{\mathcal{A}_{\mathcal{X}}}\mathcal{A}_{\tau}\to X\to \mathbb{G}_{m,\log}^{r}$$
	factors through $\mathbb{G}_{m,\log}^{r}\times_{\mathbb{G}_{m,\trop}^{r}}\mathcal{A}_{\tau'}\cong Y_{\tau'}$,
	where $Y_{\tau'}$ is the affine toric variety with cone $\tau'$
	and lattice $\mathbb{Z}^{r}$.
	Hence replacing $S$ with $S\times Y_{\tau'}$ we may assume that
	$X$ is a log abelian variety. In this case, the morphism
	$\tau\to \mathcal{X}$ defines a subcone of $C$ in the
	notation of \cite[2.2]{LogarithmicAbeKajiwa2015}.
	Let $\tau''$ be the subcone of $C$ equipped with the integral
	structure pulled back from $C$. Then by \cite[Theorem 8.1]{LogarithmicAbeKajiwa2015},
	the fiber product $X\times_{\mathcal{A}_{\mathcal{X}}}\mathcal{A}_{\tau''}$ is
	representable by a log algebraic space. It remains to show
	that $\mathcal{A}_{\tau}\to \mathcal{A}_{\tau''}$ is
	representable by log DM stacks. This follows as in the proof
	of Proposition \ref{prop:mono_base_change}.
\end{proof}

\begin{proposition}
	\label{prop:bir_iff_subdiv}
	Let $\mathcal{X}\to \mathcal{Z}$ be a split family of tropical semiabelian varieties and
	let $\mathcal{Y}_1\to \mathcal{Y}_2$ be a morphism of partial tropical compactifications of $\mathcal{X}\to \mathcal{Z}$.
	Then $\mathcal{A}_{\mathcal{Y}_1}\to \mathcal{A}_{\mathcal{Y}_2}$ is proper and representable by log algebraic spaces
	if and only if for every morphism $\sigma\to \mathcal{Y}_2$ from an RPC $\sigma$, the map
	$\mathcal{Y}_1\times_{\mathcal{Y}_2}\sigma\to \sigma$ is a subdivision of $\sigma$.
\end{proposition}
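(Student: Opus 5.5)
We reduce to the local situation already handled in the proof of Proposition \ref{prop:mono_base_change}. Both properness and representability by log algebraic spaces are strict étale local on the target. By Lemma \ref{lem:loc_factorization}, any morphism from a log scheme $Z$ to $\mathcal{A}_{\mathcal{Y}_2}$ factors strict étale locally through an Artin cone $\mathcal{A}_{\sigma}$ for some $\sigma\to\mathcal{Y}_2$. Corollary \ref{cor:fib_prods_preserved} then gives $\mathcal{A}_{\mathcal{Y}_1}\times_{\mathcal{A}_{\mathcal{Y}_2}}\mathcal{A}_{\sigma}\cong\mathcal{A}_{\mathcal{Y}_1\times_{\mathcal{Y}_2}\sigma}$. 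Conversely, Theorem \ref{thm:left_inv_geom} says that morphisms $\mathcal{A}_\sigma\to\mathcal{A}_{\mathcal{Y}_2}$ are exactly morphisms $\sigma\to\mathcal{Y}_2$. Hence the condition that $\mathcal{A}_{\mathcal{Y}_1}\to\mathcal{A}_{\mathcal{Y}_2}$ be proper and representable is equivalent to asking that, for every $\sigma\to\mathcal{Y}_2$, the map $\mathcal{A}_{\Delta}\to\mathcal{A}_{\sigma}$ be proper and representable by log algebraic spaces, where $\Delta:=\mathcal{Y}_1\times_{\mathcal{Y}_2}\sigma$.

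Next we identify $\Delta$. By Lemma \ref{lem:morph_part_comp_repr}, $\mathcal{Y}_1\to\mathcal{Y}_2$ is a monomorphism representable by finite cone stacks. So $\Delta\to\sigma$ is a monomorphism from a finite cone stack, and Lemma \ref{lem:mono_impl_fan} shows that $\Delta$ is a stacky fan in $\sigma$, with lattices $N_\tau\subseteq N_\sigma\cap\Span(\tau)$. We then apply Proposition \ref{prop:mono_base_change} to $\Delta\to\sigma$. Part (1) says representability by log algebraic spaces holds exactly when $N_\tau=N_\sigma\cap\Span(\tau)$ for all $\tau\in\Delta$. Part (2) says properness holds exactly when $\Delta\to\sigma$ is surjective. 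Together these say that $\Delta$ is an ordinary fan, carrying the induced integral structure, whose support is all of $\sigma$. This is precisely a subdivision of $\sigma$, which gives both directions.

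The main obstacle is the non-emptiness hypothesis in part (2) of Proposition \ref{prop:mono_base_change}. We need $\mathcal{Y}_1\times_{\mathcal{Y}_2}\sigma\neq\emptyset$ for every $\sigma\to\mathcal{Y}_2$.
\begin{itemize}
\item In the ``if'' direction this is automatic, since a subdivision of $\sigma$ is non-empty.
\item In the ``only if'' direction, properness alone does not exclude an empty fibre. To handle this, I would use the zero sections: both $\mathcal{Y}_i$ contain the section $\mathcal{Z}\to\mathcal{X}$ over the zero section, and morphisms preserve it. Given $\sigma\to\mathcal{Y}_2$, the origin of $\sigma$ maps to a point of $\mathcal{Y}_2$ lying over the zero cone of the fibre of $\mathcal{X}\to\mathcal{Z}$. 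Such a point is contained in the image of the zero section, hence lifts to $\mathcal{Y}_1$.
\end{itemize}
If this argument turns out to be delicate, the fallback is to note that the vertex $0\in\sigma$ is in the image of $\Delta$ whenever $\Delta\neq\emptyset$, and to argue non-emptiness geometrically from Lemma \ref{lem:bir_is_bir}, as in the proof of Lemma \ref{lem:proper_iff_surj}. A secondary point is the reduction to Artin cones itself: properness must be checked on all log schemes mapping to $\mathcal{A}_{\sigma}$, which, as in Step 5 of Proposition \ref{prop:mono_base_change}, reduces to the toric chart $X_\sigma$.
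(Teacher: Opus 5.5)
Your proposal is correct and follows essentially the paper's route. You reduce via Lemma~\ref{lem:loc_factorization} and Corollary~\ref{cor:fib_prods_preserved} to $\mathcal{Y}_1\times_{\mathcal{Y}_2}\sigma\to\sigma$, invoke Lemma~\ref{lem:morph_part_comp_repr}, and read off the lattice and support conditions from parts (1) and (2) of Proposition~\ref{prop:mono_base_change}. The paper differs only in proving the ``if'' direction directly, by identifying the pullback to $X_\sigma$ with a proper toric morphism $X_{\mathcal{Y}_1\times_{\mathcal{Y}_2}\sigma}\to X_\sigma$, and in not addressing the non-emptiness hypothesis of part (2) at all. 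Your zero-section argument correctly supplies that hypothesis: the zero cone of $\sigma$ must map to the zero section, which lifts to $\mathcal{Y}_1$, and $\mathcal{Y}_2\to\mathcal{X}$ is a monomorphism.
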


\begin{proof}
	Assume first that the second condition is verified and consider a morphism $W\to \mathcal{A}_{\mathcal{Y}_2}$ from
	a log scheme $W$. By Lemma \ref{lem:loc_factorization} and the definition of a chart of a log scheme, strict étale locally on $W$ we have a factorization $W\to X_{\sigma}\to \mathcal{A}_{\sigma}\to \mathcal{A}_{\mathcal{Y}_2}$
	for some RPC $\sigma$. By Corollary \ref{cor:fib_prods_preserved},
	it holds that $\mathcal{A}_{\sigma}\times_{\mathcal{A}_{\mathcal{Y}_2}}\mathcal{A}_{\mathcal{Y}_1}\cong \mathcal{A}_{\sigma\times_{\mathcal{Y}_2}\mathcal{Y}_1}$ and
	by assumption, $\sigma\times_{\mathcal{Y}_2}\mathcal{Y}_1\to \sigma$ is a subdivision. So
	$$X_{\sigma}\times_{\mathcal{A}_{\mathcal{Y}_2}}\mathcal{A}_{\mathcal{Y}_1}\cong X_{\sigma\times_{\mathcal{Y}_2}\mathcal{Y}_1}\to X_{\sigma}$$
	is a proper morphism of toric varieties. In particular, the pullback to $W$ is a proper morphism of log schemes.

	Assume now that $\mathcal{A}_{\mathcal{Y}_1}\to \mathcal{A}_{\mathcal{Y}_2}$ is a proper morphism representable
	by log algebraic spaces. By Lemma \ref{lem:morph_part_comp_repr} we know that $\mathcal{Y}_1\to \mathcal{Y}_2$
	is a monomorphism representable by finite cone stacks. It then follows from points 1 and 2
	of Proposition \ref{prop:mono_base_change} that $\mathcal{Y}_1\times_{\mathcal{Y}_2}\sigma$ is a subdivision of $\sigma$.
\end{proof}

\section{Solution of the combinatorial problem}
\label{sec:combinat_problem}

\begin{definition}
	Let $X\to Y$ be a morphism of stacks over $\RPC$.
	\begin{enumerate}
		\item We say that $X\to Y$ is a subdivision if
			for every morphism $\sigma\to Y$ from an RPC $\sigma$, the base-change $X\times_{Y}\sigma\to \sigma$ is
			a subdivision.
		\item We say that $X\to Y$ is a root construction if for every morphism $\sigma\to Y$ from an RPC $\sigma$,
			the base-change $X\times_Y\sigma\to(\sigma,N_{\sigma})$ is isomorphic (over $\sigma$) to $(\sigma,N')$ for some finite-index
			sublattice $N'\subseteq N_{\sigma}$.
	\end{enumerate}
\end{definition}

The results of the previous sections reduce the problem of classifying log compactifications
of semiabelian varieties to the problem of classifying tropical compactifications of families of split tropical
semiabelian varieties. We do so by enlarging the category of tropical compactifications and then showing
that in this category every equivalence class contains an explicit unique minimal object.
In the present section we fix a split family $X\to Z$ of tropical semiabelian varieties.

\begin{definition}
	A generalized partial tropical compactification of $X\to Z$ is a monomorphism $Y\to X$ such that there exists a
	subdivision $Y'\to Y$ such that $Y'\to X$ is a partial tropical compactification of $X\to Z$.
	We call a generalized partial tropical compactification a generalized tropical compactification if
	we may choose $Y'$ to be a tropical compactification.
\end{definition}

\begin{definition}
	Let $Y_1,Y_2\to X$ be generalized partial tropical compactifications of $X\to Z$. We say that $Y_1$ and $Y_2$ are birationally
	equivalent if there exists a generalized partial tropical compactification $Y_3$ of $X\to Z$ and a diagram
	$$\begin{tikzcd}
		&Y_3\arrow[dr]\arrow[dl]&\\
		Y_1&&Y_2
	\end{tikzcd}$$
	of morphisms of generalized partial tropical compactifications in which both morphisms are subdivisions. We call the class of all generalized partial tropical compactifications
	of $X\to Z$ which are birationally equivalent to $Y_1$ the birational equivalence class of $Y_1$.
\end{definition}

\begin{remark}
	It is clear that birational equivalence is an equivalence relation since subdivisions
	are stable under base-change and composition.
\end{remark}

\begin{lemma}
	\label{lem:cpt_bir_inv}
	Let $Y_1$ and $Y_2$ be birationally equivalent generalized partial tropical compactifications. Then $Y_1$ is a generalized tropical compactification
	if and only if $Y_2$ is a generalized tropical compactification. In other words, being a tropical compactification is a birational invariant.
\end{lemma}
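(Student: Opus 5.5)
The plan is to reduce everything to a statement about surjectivity over cones. Subdivisions preserve and reflect surjectivity, so one should be able to propagate it across a correspondence.

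\textbf{Step 1 (intrinsic criterion).} I would first show that a generalized partial tropical compactification $Y\to X$ is a generalized tropical compactification if and only if $Y\to X$ is surjective. That is, for every morphism $\sigma\to X$ from an RPC, the map $Y\times_X\sigma\to\sigma$ should be surjective on underlying sets (equivalently, on lattice points after rational scaling).

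For this, choose a subdivision $Y'\to Y$ with $Y'\to X$ a partial tropical compactification. For any $\sigma\to X$ we have
$$Y'\times_X\sigma=Y'\times_Y(Y\times_X\sigma).$$
Base-changing the subdivision $Y'\to Y$ along each cone of $Y\times_X\sigma$ gives a subdivision of that cone, and a subdivision of a cone is surjective onto it. Hence the images of $Y'\times_X\sigma$ and $Y\times_X\sigma$ in $\sigma$ coincide. So $Y'$ is a tropical compactification exactly when $Y$ is surjective over $X$.

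This criterion does not depend on the choice of $Y'$. It therefore also shows that if $Y$ is surjective, then the given $Y'$ already witnesses that $Y$ is a generalized tropical compactification. Conversely, if some $Y'$ is a tropical compactification, then $Y$ is surjective.

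\textbf{Step 2 (transport across the correspondence).} Let $Y_1\leftarrow Y_3\rightarrow Y_2$ be a birational equivalence, with both maps subdivisions over $X$. For $\sigma\to X$, the same argument as in Step 1 shows that the image of $Y_3\times_X\sigma$ in $\sigma$ equals the image of $Y_i\times_X\sigma$ for $i=1,2$. Concretely, $Y_3\times_X\sigma\to Y_i\times_X\sigma$ is a base change of a subdivision, hence surjective on each cone, and $Y_i\times_X\sigma\to\sigma$ is the monomorphism. Thus $Y_1$ is surjective over $X$ if and only if $Y_3$ is, if and only if $Y_2$ is. Combining this with Step 1 gives the lemma.

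\textbf{Main obstacle.} The delicate point is making ``surjective'' precise for stacks over $\RPC$ and checking that it behaves well. I would use the definition implicit in Definition \ref{def:partial_compact}: the base change over every cone is surjective. This means every point of $\sigma$, or every lattice point of $\sigma$ up to rational scaling, lies in the image of some cone of the fiber product.

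What must be verified is that a subdivision, in the sense of the definition at the start of this section, is surjective in this sense cone by cone, and that base changes $Y\times_X\sigma$ are covered by cones (morphisms $\tau\to Y\times_X\sigma$). The second point can be arranged by first pulling back along each $\tau\to Y\times_X\sigma$ and applying the subdivision property to $\tau\to Y$.

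Since $Y$ is only a generalized object, $Y\times_X\sigma$ need not be a finite cone stack. I would therefore phrase surjectivity purely in terms of points $(\mathbb{R}_{\ge0},\mathbb{Z})\to\sigma$ lifting after passing to a finite-index sublattice. This avoids any finiteness hypothesis.
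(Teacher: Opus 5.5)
Your proposal is correct and follows essentially the paper's argument: compare the images in $\sigma$ of the base changes $Y_i\times_X\sigma$, using that base changes of subdivisions are subdivisions and hence surjective. The difference is bookkeeping. The paper first replaces $Y_1,Y_2$ by partial tropical compactifications and uses symmetry to reduce to a single subdivision $Y_2\to Y_1$. You instead make its ``checkable on any subdivision'' step explicit as your Step 1, work directly with the span, and phrase surjectivity via rational rays so that non-representable generalized objects are covered.
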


\begin{proof}
	Since being a generalized tropical compactification can be checked on any subdivision, it suffices to assume that $Y_1$
	and $Y_2$ are partial tropical compactifications.
	By symmetry, it suffices to assume that there exists a subdivision $Y_2\to Y_1$. Let
	$\sigma\to X$ be an arbitrary morphism from an RPC $\sigma$. We claim that $\sigma\times_X Y_2\to \sigma\times_X Y_1$
	is a subdivision. Indeed, if $\tau\to \sigma\times_X Y_1$ is an arbitrary morphism from an RPC, then
	$$\tau\times_{\sigma\times_X Y_1}\sigma\times_X Y_2=\tau\times_{Y_1}Y_2$$
	since $Y_1\to X$ is a monomorphism, so the claim follows from the fact that $Y_2\to Y_1$ is a subdivision.

	Since subdivisions are surjective, the images of $\sigma\times_X Y_2$ and $\sigma\times_X Y_1$ in $\sigma$ are equal,
	showing that $Y_1\to X$ is surjective if and only if $Y_2\to X$ is surjective.
\end{proof}

\begin{definition}
	A generalized partial tropical compactification $Y\to X$ of $X\to Z$ is called minimal if $Y$ satisfies descent
	with respect to subdivisions. More precisely, the following statement holds: Let $Y_1\to X$ be a morphism
	with $Y_1$ an arbitrary stack over $\RPC$ and let $Y_2\to Y_1$ be a subdivision. Assume we are given a morphism $f:Y_2\to Y$ over $X$.
	Then the morphism $f$ descends uniquely to a morphism $Y_1\to Y$ (over $X$), i.e., $f$ is the precomposition of this morphism
	with $Y_2\to Y_1$.
\end{definition}

The name ``minimal'' is justified by the following lemma:

\begin{lemma}
	Let $Y\to X$ be a minimal generalized partial tropical compactification of $X\to Z$ and $Y_1\to X$ a generalized partial tropical compactification
	birationally equivalent to $Y$. Then there exists a unique morphism of generalized partial tropical compactifications $Y_1\to Y$.
\end{lemma}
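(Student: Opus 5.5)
Uniqueness is immediate: $Y\to X$ is a monomorphism, so any two morphisms $Y_1\to Y$ over $X$ coincide. Since morphisms of generalized partial tropical compactifications are by definition morphisms over $X$, it suffices to construct one morphism $Y_1\to Y$ over $X$.

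For existence, use the definition of birational equivalence to choose a generalized partial tropical compactification $Y_3$ together with subdivisions $p:Y_3\to Y_1$ and $q:Y_3\to Y$, both of which are morphisms over $X$. Then apply minimality of $Y$ with $Y_2:=Y_3$ and $f:=q$. Here $Y_1\to X$ is a morphism from a stack over $\RPC$ and $p:Y_3\to Y_1$ is a subdivision, so the definition of minimality produces a unique morphism $g:Y_1\to Y$ over $X$ with $g\circ p=q$. This $g$ is the desired morphism.

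It remains to check that $g$ is a morphism of generalized partial tropical compactifications, that is, a morphism over $X$. Minimality gives this directly, since it asserts that the descended morphism lies over $X$. Even without that, compatibility with the maps to $X$ is automatic: two maps $Y_1\to X$ that agree after precomposing with the surjective subdivision $p$ agree, because $p$ is an epimorphism of stacks. If sections over $Z$ are part of the structure, they are preserved by the same uniqueness argument as in Lemma \ref{lem:morphs_commute_sec}.

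There is no serious obstacle. The only point needing care is that the hypothesis in the definition of minimality, that $Y_1$ is an arbitrary stack over $X$ with a subdivision $Y_3\to Y_1$, is met here exactly as stated, and it is.
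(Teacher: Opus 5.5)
Your argument is correct and matches the paper's proof: uniqueness holds because $Y\to X$ is a monomorphism, and existence comes from applying minimality of $Y$ to the subdivision $Y_3\to Y_1$ together with the morphism $Y_3\to Y$ given by the birational equivalence. You should drop the aside that a subdivision is an epimorphism of stacks: it is unnecessary, since the definition of minimality already yields a morphism over $X$, and it does not hold for stacks over $\mathbf{RPC}$ in that generality.
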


\begin{proof}
	Uniqueness is clear, since $Y_1,Y\to X$ are monomorphisms.
	Let $Y_2\to X$ be a generalized partial tropical compactification such that there exist subdivisions
	$Y_2\to Y_1$ and $Y_2\to Y$ as in the definition of birational equivalence.
	Since $Y$ is minimal, the morphism $Y_2\to Y$ descends uniquely to a morphism $Y_1\to Y$.
\end{proof}

\begin{definition}
	Let $Y\to X$ be a generalized partial tropical compactification of $X\to Z$. We write $M_Y$ for the stack over $\RPC$
	constructed as follows: A morphism $\sigma\to M_Y$ is the datum of a morphism $\sigma\to X$ such that
	there exists a subdivision $\Sigma\to \sigma$ and a map $\Sigma\to Y$ lifting $\Sigma\to \sigma\to X$.
\end{definition}

\begin{lemma}
	The stack $M_Y$ is a generalized partial tropical compactification of $X\to Z$. More precisely, $Y\to M_Y$ is represented
	by subdivisions.
\end{lemma}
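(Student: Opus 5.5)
The plan is to check the two defining conditions directly: first that $M_Y$ is a stack and $M_Y\to X$ is a monomorphism, then that $Y\to M_Y$ is represented by subdivisions. Combined with the existing subdivision $Y'\to Y$ from the definition of a generalized partial tropical compactification, the composite $Y'\to Y\to M_Y$ is then a subdivision onto a partial tropical compactification. Subdivisions are stable under composition, as noted in the remark after the definition of birational equivalence. This exhibits $M_Y$ as a generalized partial tropical compactification.

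\textbf{Step 1: $M_Y$ is a subsheaf of $X$.} By definition a $\sigma$-point of $M_Y$ is a $\sigma$-point of $X$ satisfying a condition, so $M_Y\to X$ is a monomorphism once the condition is stable under pullback. Given $\tau\to\sigma$ and a subdivision $\Sigma\to\sigma$ with a lift $\Sigma\to Y$, the base change $\Sigma\times_\sigma\tau\to\tau$ is again a subdivision, and it carries the induced lift to $Y$. Gluing along face maps is handled the same way, using that a subdivision of each face of a cone stack is compatible with restriction. I take this as routine.

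\textbf{Step 2: $Y\to M_Y$ exists.} Since $Y\to X$ is a monomorphism, a $\sigma$-point of $Y$ gives a $\sigma$-point of $X$ that lifts to $Y$ with the trivial subdivision $\Sigma=\sigma$. This defines the map $Y\to M_Y$ over $X$.

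\textbf{Step 3: $Y\times_{M_Y}\sigma\to\sigma$ is a subdivision for every $\sigma\to M_Y$.} Because $M_Y\to X$ is a monomorphism, $Y\times_{M_Y}\sigma=Y\times_X\sigma$. Choose a subdivision $\Sigma\to\sigma$ with a lift $\Sigma\to Y$; this gives $\Sigma\to Y\times_X\sigma$ over $\sigma$. Then:
\begin{itemize}
\item $Y\times_X\sigma\to\sigma$ is a monomorphism, and it is surjective because $\Sigma$ surjects onto $\sigma$ through it.
\item It is a subdivision, which I would show by reducing to the partial-compactification case. Pull back along $Y'\to Y$: the stack $Y'\times_X\sigma=Y'\times_Y(Y\times_X\sigma)$ is a subdivision of $Y\times_X\sigma$, and by Lemma \ref{lem:mono_impl_fan} it is a stacky fan in $\sigma$. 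It is surjective onto $\sigma$, so it is a stacky-fan subdivision of $\sigma$. Hence $Y\times_X\sigma$ is a monomorphic stack over $\sigma$ that is refined by a subdivision of $\sigma$.
\item To conclude it is itself a subdivision, I need each of its cones to map isomorphically onto a subcone of $\sigma$ with the correct lattice. Here I compare with the common refinement of $\Sigma$ and $Y'\times_X\sigma$, which lies over both and is a subdivision of $\sigma$. Any lattice discrepancy would then contradict the fact that both $Y'\to Y$ and $\Sigma\to\sigma$ preserve integral structures.
\end{itemize}

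The main obstacle is this last point: showing that a monomorphism $F\to\sigma$ which is refined by a subdivision of $\sigma$ is itself a subdivision, rather than a root construction. A priori $Y$ need not be representable by finite cone stacks, so Lemma \ref{lem:mono_impl_fan} does not apply to $F$ directly. I would attack this cone by cone. A cone $\tau\to F$ of $F$ is covered by cones of the refinement, whose lattices equal $N_\sigma\cap\operatorname{Span}$. The map from those cones factors through $\tau$, which forces $N_\tau\supseteq N_\sigma\cap\operatorname{Span}(\tau)$; the reverse inclusion holds because $F\to\sigma$ is a morphism. So the lattices agree and $F\to\sigma$ is a subdivision.
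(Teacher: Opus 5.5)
\textbf{The gap is in your last bullet, and it cannot be closed as planned.} The claim that a monomorphism $F\to\sigma$ refined by a subdivision of $\sigma$ is itself a subdivision is false.

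Here is a counterexample. Take $\sigma=(\mathbb{R}_{\ge 0}^2,\mathbb{Z}^2)$. Let $F'$ be its subdivision along the diagonal into $\tau_1=\{y\le x\}$ and $\tau_2=\{x\le y\}$. Let $F\subseteq\sigma$ be the subfunctor of those $f\colon\rho\to\sigma$ with $f(\rho)\subseteq\tau_1$, or $f(\rho)\subseteq\tau_2$, or $f(N_\rho)\subseteq 2\mathbb{Z}^2$. Since $F'\times_F\rho=F'\times_\sigma\rho$ for every $\rho\to F$, the map $F'\to F$ is a subdivision. But $F$ contains $(\sigma,2\mathbb{Z}^2)\to\sigma$ and not $\mathrm{id}_\sigma$, so $F$ is not given by any fan subdividing $\sigma$.

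This happens inside the lemma's own setting. Extend $F'$ to a complete fan $Y'$ in $\mathbb{R}^2$. Let $Y\subseteq T^{\trop}_{\mathbb{Z}^2}$ consist of $Y'$ together with all maps into the first quadrant whose lattice image lies in $2\mathbb{Z}^2$. Then $Y'\to Y$ is a subdivision, so $Y$ is a generalized partial tropical compactification. Moreover $M_Y=T^{\trop}_{\mathbb{Z}^2}$ and $Y\times_{M_Y}\sigma=F$. So no argument can show that $Y\to M_Y$ is a subdivision for an arbitrary generalized $Y$.

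Your cone-by-cone argument fails for the corresponding reasons:
\begin{itemize}
\item $F$ has no distinguished cones, since it is not known to be a cone complex.
\item An arbitrary $\tau\to F$ can have a non-saturated lattice. This already happens for $F=\sigma$: take $(\sigma,2\mathbb{Z}^2)\to\sigma$. So ``$N_\tau\supseteq N_\sigma\cap\Span(\tau)$'' cannot be derived for all $\tau\to F$.
\item The refinement of $\tau$ itself, $Y'\times_Y\tau$, carries the lattices $N_\tau\cap\Span(-)$, not $N_\sigma\cap\Span(-)$.
\item The cones of $\Sigma\times_Y Y'$ do carry $N_\sigma\cap\Span(-)$, but they need not factor through $\tau\to F$. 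In the example, $(\tau_1,\mathbb{Z}^2)$ does not factor through $(\sigma,2\mathbb{Z}^2)$.
\end{itemize}

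\textbf{The repair is to drop the intermediate claim.} As your opening paragraph says, you only need $Y'\to M_Y$ to be a subdivision, and this is how the paper argues.

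First note that $M_Y=M_{Y'}$: if $\Sigma\to\sigma$ is a subdivision lifting to $Y$, then $\Sigma\times_Y Y'\to\sigma$ is a subdivision lifting to $Y'$. Then run your second and third bullets with $\Delta:=Y'\times_X\sigma$ in place of $Y\times_X\sigma$. By Lemma \ref{lem:mono_impl_fan}, $\Delta$ is a finite stacky fan in $\sigma$, so it has genuine cones. The map into $\Delta$ from the subdivision $\Sigma\times_Y Y'$ of $\sigma$ then forces $N_\tau=N_\sigma\cap\Span(\tau)$ for every $\tau\in\Delta$, and forces $\Delta$ to cover $\sigma$. This is your ``common refinement'' bullet made precise.

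That step is genuinely needed. Your phrase ``surjective, so a stacky-fan subdivision'' is not justified on its own, because a surjective stacky fan can still involve root constructions.

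This route proves that $M_Y$ is a generalized partial tropical compactification. It also proves the ``more precisely'' clause whenever $Y$ is itself a partial tropical compactification, which is the case used in Theorem \ref{thm:pre_part_comp_descr}. By the example above, the clause cannot be pushed to arbitrary generalized $Y$.
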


\begin{proof}
	By construction, the canonical morphism $M_Y\to X$ (given by sending $\sigma\to M_Y$ to the underlying
	morphism $\sigma\to X$) is a monomorphism. It hence remains to show that $Y\to M_Y$ is represented by
	subdivisions. First, observe that the construction of $M_Y$ is invariant under taking subdivisions of
	$Y$, so we may assume that $Y$ is a partial tropical compactification.
	Let $\sigma\to M_Y$ be a morphism and
	$$Y_{\sigma}:=Y\times_{X}\sigma\cong Y\times_{M_Y}\sigma$$
	(the second isomorphism is due to the fact that $M_Y\to X$ is a monomorphism). By assumption on $Y\to X$, the stack $Y_{\sigma}$
	is a finite cone stack. Consider a subdivision $\Sigma\to \sigma$ such that $\Sigma\to X$ lifts to $\Sigma\to Y$.
	We get a lift $\Sigma\to Y_{\sigma}$ of $\Sigma\to \sigma$. Let $\Delta$ be the cone complex corresponding to
	$Y_{\sigma}$ by Lemma \ref{lem:mono_impl_fan}. Note that since $\Sigma$ is a subdivision of $\sigma$ and $\Sigma\to \sigma$ lifts
	to a morphism $\Sigma\to \Delta$, we must have that for every $\tau\in \Delta$, we have $N_{\tau}=N_{\sigma}\cap \Span(\tau)$
	and that for every $x\in \sigma$ there must exist a $\tau\in \Delta$ containing $x$. In other words, $\Delta\to \sigma$ is
	a subdivision, showing the claim.
\end{proof}

\begin{lemma}
	Let $Y$ be a generalized partial tropical compactification of $X\to Z$. Then $M_Y$ is a minimal generalized partial tropical compactification.
\end{lemma}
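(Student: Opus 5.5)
By the preceding lemma, $M_Y$ is already a generalized partial tropical compactification. So it remains to verify the descent property in the definition of minimality. Fix a morphism $Y_1\to X$ with $Y_1$ an arbitrary stack over $\RPC$, a subdivision $Y_2\to Y_1$, and a morphism $f:Y_2\to M_Y$ over $X$. We must produce a unique morphism $Y_1\to M_Y$ over $X$ whose precomposition with $Y_2\to Y_1$ is $f$.

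\emph{Uniqueness.} Since $M_Y\to X$ is a monomorphism, a morphism $Y_1\to M_Y$ over $X$ is unique if it exists. It is also a property rather than extra data. Hence existence amounts to checking that every $\sigma\to Y_1$ from an RPC has the property that the composite $\sigma\to Y_1\to X$ factors through $M_Y$. Compatibility with $f$ is then automatic, because $M_Y\to X$ is a monomorphism.

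\emph{Existence.} Fix $\sigma\to Y_1$ and set $\Sigma_2:=Y_2\times_{Y_1}\sigma$. This is a subdivision of $\sigma$, and it comes with a map $\Sigma_2\to M_Y$ over $X$. I would pick finitely many cones $\tau_i\to\Sigma_2$ covering it; they should be the cones of a fan subdividing $\sigma$, and here I will use that subdivisions in this setting are finite cone complexes. Each composite $\tau_i\to M_Y$ is, by the definition of $M_Y$, witnessed by a subdivision $\Sigma_i\to\tau_i$ together with a lift $\Sigma_i\to Y$. The task is to glue these into one subdivision $\Sigma\to\sigma$ admitting a lift to $Y$.

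\emph{Gluing (the main obstacle).} The local subdivisions $\Sigma_i$ need not agree on overlaps $\tau_i\cap\tau_j$, so I would refine them all to a common subdivision. Working inside $\sigma$, I take $\Sigma$ to be a common refinement of $\Sigma_2$ and of the (finitely many) cones of all $\Sigma_i$, pushed into $\sigma$ via $\tau_i\subseteq\sigma$. This is possible because finite collections of rational polyhedral subdivisions of a cone admit a common refinement by intersecting cones and then subdividing to a fan. Every cone $\rho$ of $\Sigma$ lies in some cone of some $\Sigma_i$, and so it acquires a map $\rho\to Y$.

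These maps must agree as $\rho$ varies over different $i$. They do agree: the maps to $X$ coincide, since all of them are the restriction of $\sigma\to X$, and $Y\to X$ is a monomorphism. The same monomorphism argument makes them compatible with face maps, so they assemble into a morphism $\Sigma\to Y$ lifting $\Sigma\to\sigma\to X$. Finally, composites of subdivisions are subdivisions, so $\Sigma\to\sigma$ is a subdivision, and therefore $\sigma\to X$ factors through $M_Y$.

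If $Y$ is only a generalized partial tropical compactification, I would first replace it by a partial tropical compactification $Y'\to Y$. This is harmless, since $M_{Y'}=M_Y$ (as noted in the preceding proof) and since $Y'\to Y$ is a subdivision. The one point to double-check is that the lattices in the common refinement remain the restricted lattices $N_\sigma\cap\Span(\rho)$. This holds because every subdivision involved carries the lattice induced from $\sigma$.
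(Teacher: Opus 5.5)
Your proof is correct. In the key existence step, however, you take a different route from the paper.

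The paper does not choose witnesses cone by cone. It uses the second half of the preceding lemma, namely that $Y\to M_Y$ is represented by subdivisions. Setting $\Sigma:=\sigma\times_{Y_1}Y_2$, the base change $\Sigma\times_{M_Y}Y\to\Sigma$ is then a subdivision, and the composite $\Sigma\times_{M_Y}Y\to\Sigma\to\sigma$ is a single subdivision of $\sigma$ with a map to $Y$. There are no choices, nothing to glue, and functoriality is immediate.

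You instead use only the definition of $M_Y$ and the fact that $Y\to X$ is a monomorphism. You pick non-canonical witnesses $\Sigma_i\to\tau_i$ on the cones of $\Sigma_2$ and reconcile them on overlaps by a common refinement. The monomorphism then guarantees that the local lifts to $Y$ agree and glue.

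Your route gains independence from the finiteness input in the preceding lemma, namely that $Y\times_X\sigma$ is a finite cone stack. As a consequence, your closing reduction to a partial tropical compactification is not actually needed in your argument.

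Your route costs the common-refinement fact, which you assert rather than prove. It is standard: cut $\sigma$ by the hyperplanes of all rational linear forms defining $\sigma$ and the finitely many cones of the $\Sigma_i$. The relative interior of each resulting cone is a sign class, so it lies in a single cone of some $\Sigma_i$, and by closedness the whole cone does too. But this is exactly the step the paper sidesteps, because $\tau\times_{M_Y}Y$ is already a canonical witness over each $\tau\to M_Y$, and these witnesses are automatically compatible with restriction to faces.
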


\begin{proof}
	Let $Y_1\to X$ be a morphism and let $Y_2\to Y_1$ be a subdivision
	with a morphism $Y_2\to M_Y$ over $X$.
	We must construct a functorial morphism $\Hom(\sigma,Y_1)\to \Hom(\sigma,M_Y)$
	for any $\sigma\in \RPC$. For this, let $\sigma\to Y_1$ be an arbitrary morphism. Then $\Sigma:=\sigma\times_{Y_1}Y_2$
	is a subdivision of $\sigma$ with a morphism $\Sigma\to Y_2$. By the preceding lemma, $\Sigma\times_{M_Y}Y$
	is a subdivision of $\Sigma$ with a map to $Y$ lifting $\Sigma\to X$. We thus obtain a well-defined morphism sending
	$\sigma\to Y_1$ to the composition $\sigma\to Y_1\to X$, viewed
	as a morphism $\sigma\to M_Y$. This construction is functorial and we conclude.
\end{proof}

\begin{theorem}
	\label{thm:pre_part_comp_descr}
	Every birational equivalence class of generalized partial tropical compactifications of $X\to Z$
	contains a unique minimal element. For any partial tropical compactification $Y\to X$,
	the minimal element in the birational equivalence class of $Y$ is $M_Y$.
\end{theorem}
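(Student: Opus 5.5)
The proof combines the three lemmas just established: the lemma on $M_Y$ being a generalized partial tropical compactification with $Y\to M_Y$ represented by subdivisions, the lemma that $M_Y$ is minimal, and the lemma that a minimal element receives a unique morphism from every birationally equivalent generalized partial tropical compactification. The plan is to establish existence first, then uniqueness, and finally the identification of the minimal element as $M_Y$.

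\textbf{Existence.} Let $Y_1$ be an arbitrary generalized partial tropical compactification. By definition there is a subdivision $Y\to Y_1$ with $Y\to X$ a partial tropical compactification, so $Y_1$ is birationally equivalent to $Y$. Here we use the diagram $Y\to Y_1$, $Y\to Y$, noting that the identity is a subdivision. We then show that $M_Y$ lies in the class of $Y$. Since $Y\to M_Y$ is represented by subdivisions, the diagram $Y\leftarrow Y\to M_Y$ is a birational equivalence. By transitivity of birational equivalence (the remark preceding Lemma \ref{lem:cpt_bir_inv}), $M_Y$ is then birationally equivalent to $Y_1$. Since $M_Y$ is minimal, every class contains a minimal element.

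\textbf{Uniqueness.} Let $Y$ and $Y'$ be minimal and birationally equivalent. By the preceding lemma there are unique morphisms $Y\to Y'$ and $Y'\to Y$ over $X$. Both composites are endomorphisms over $X$ of monomorphisms into $X$, hence identities. Thus $Y\cong Y'$ over $X$, and this isomorphism is unique.

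\textbf{Main obstacle.} The delicate point is the claim that $M_Y$, built from a partial tropical compactification $Y$, is exactly the minimal element rather than merely something mapping to it. This is handled by the uniqueness argument combined with the minimality lemma: $M_Y$ is minimal and lies in the class of $Y$. One must also check that the construction $M_{Y'}=M_Y$ is unchanged when $Y$ is replaced by a subdivision $Y'$, which the proof of the earlier lemma already noted. No further combinatorics beyond Lemma \ref{lem:mono_impl_fan} should be needed.
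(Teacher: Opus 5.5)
Your proof is correct and follows essentially the same route as the paper: existence comes from $M_Y$ being minimal with $Y\to M_Y$ a subdivision, and uniqueness comes from the two mutual morphisms between birationally equivalent minimal objects composing to identities (you get these morphisms from the lemma on maps into a minimal object, while the paper applies subdivision descent directly, which is the same argument). The appeal to transitivity is unnecessary, since $Y_1\leftarrow Y\to M_Y$ is already a birational equivalence with both legs subdivisions.
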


\begin{proof}
	Combining the two preceding lemmas, we know that $M_Y$ is minimal and that $Y\to M_Y$ is represented
	by subdivisions. For uniqueness, assume that $Y_1$ and $Y_2$ are minimal and that there exists a birational
	equivalence $Y_1\leftarrow Y_3\rightarrow Y_2$. In particular, $Y_3\to Y_i$ is a subdivision for $i\in \{1,2\}$.
	By minimality, the maps $Y_3\to Y_i$ descend to unique maps $Y_1\to Y_2$ and $Y_2\to Y_1$. By uniqueness, the compositions
	of these morphisms are the identity maps, so $Y_1\cong Y_2$.
\end{proof}

From the theorem we conclude that to obtain a complete birational classification of partial tropical compactifications
it suffices to classify minimal generalized partial tropical compactifications of the form $M_Y$. For this,
we consider the combinatorial objects below. To lighten the notation, we introduce the following conventions:

\begin{definition}
	Given an RPC $\sigma$ and a lattice $N$, we write $\sigma\subseteq N$ if $\sigma\subseteq N_{\mathbb{R}}$.
	If $A:=\Hom(\underline{M},\mathbb{G}_{m,\trop})^{(M)}/Q_{\hom}(\underline{M})$ is a tropical abelian variety over an RPC $\sigma$,
	we write $\tau\subseteq (N_{\sigma}\times N)^{(M)}$ if $\tau$ is contained in the convex hull of
	$(N_{\sigma}\times N)^{(M)}\subseteq N_{\sigma,\mathbb{R}}\times N_{\mathbb{R}}$.
\end{definition}

\begin{definition}
	\label{def:stacky_fan}
	Let $A:=\Hom(\underline{M},\mathbb{G}_{m,\trop})^{(M)}/Q_{\hom}(\underline{M})$ be a tropical abelian variety
	over an RPC $\sigma$. Assume moreover we are given a second lattice $M'$.

	A stacky fan is an (in general infinite) set $\Delta$ of rational polyhedral cones $\tau\subseteq (N_{\sigma}\times N)^{(M)}\times N'$,
	equipped with finite-index sublattices $\tilde{N}_{\tau}\subseteq ((N_{\sigma}\times N)\times N')\cap \Span(\tau)$
	satisfying the following conditions:
		\begin{enumerate}
			\item Any two elements of $\Delta$ intersect along a common face.
			\item For any $\tau,\tau'\in \Delta$ we have $\tilde{N}_{\tau}\cap \Span(\tau\cap \tau')=\tilde{N}_{\tau'}\cap \Span(\tau\cap \tau')$.
			\item For any $\tau\in \Delta$ and any face $\tau'$ of $\tau$ we have $\tau'\in \Delta$.
			\item For any $\tau\in \Delta$ and $m\in M$, we have $T_m(\tau)\in \Delta$ equipped with the lattices $T_m(\tilde{N}_{\tau})$ (where $T_m$ acts trivially on the $N'$ factor).
			\item For any $\tau\in \Delta$ and $m\in M$, we have that $\tau\cap T_m(\tau)$ is (pointwise) fixed by $T_m$.
			\item There are only finitely many equivalence classes of cones in $\Delta$, where $\tau_1,\tau_2\in \Delta$ are equivalent if and only if $\tau_2=T_m(\tau_1)$ for some $m\in M$.
			\item The cone $\sigma\times\{0\}\times\{0\}\subseteq N_{\sigma}\times N\times N'$ with lattice
				$\tilde{N}_{\sigma\times \{0\}\times\{0\}}:=(N_{\sigma}\times N\times N')\cap \Span(\sigma\times \{0\}\times\{0\})$ is an element of $\Delta$.
		\end{enumerate}
		We say that $\Delta$ is complete if $(N_{\sigma}\times N)^{(M)}\times N'\subseteq \bigcup_{\tau\in \Delta}^{}\tau$.
\end{definition}

\begin{definition}
	Given a stacky fan $\Delta$, we define a sheaf $\Sigma_{\Delta}$ on $\RPC/\sigma$ by letting $\Hom(\tau,\Sigma_{\Delta})$
	be the set of morphisms $\tau\to A\times T_{N'}^{\trop}$
	represented by $[\phi]\in \Hom(\tau,(N_{\sigma}\times N)^{(M)}\times N')/M$ (see Proposition \ref{prop:alternative_descr_trop_torus}, the action on $N'$ is trivial)
	such that any lift $\phi\in \Hom(\tau,(N_{\sigma}\times N)^{(M)}\times N')$ factors through some $(\tau',\tilde{N}_{\tau'})\in \Delta$.
\end{definition}

\begin{lemma}
	\label{lem:complete_iff_cpt}
	$\Delta$ is complete if and only if $\Sigma_{\Delta}\to A\times T_{N'}^{\trop}$ is surjective.
\end{lemma}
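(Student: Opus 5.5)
The plan is to unwind what surjectivity of $\Sigma_{\Delta}\to A\times T_{N'}^{\trop}$ means cone by cone, using the alternative description of points from Proposition \ref{prop:alternative_descr_trop_torus}. Surjectivity means that for every RPC $\tau$ and every $\tau$-point $[\phi]$ of $A\times T_{N'}^{\trop}$, the base-change $\Sigma_{\Delta}\times_{A\times T_{N'}^{\trop}}\tau\to\tau$ is surjective. Since $\Sigma_\Delta$ is a subsheaf, this base-change is the stacky-fan-like object consisting of those subcones of $\tau$ (with lattices pulled back from the $\tilde N_{\tau'}$) on which some, equivalently any, lift $\phi$ lands in a cone of $\Delta$. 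The invariance of this condition under changing the lift is condition (4) of Definition \ref{def:stacky_fan}. Surjectivity onto $\tau$ therefore amounts to: every point $x\in\tau$ has its image $\phi(x)$ lying in some $\tau'\in\Delta$. This uses that $\phi$ is linear on the segment from $0$ to $x$, and that a point lying in a real cone is captured by a rational subcone $\mathbb{R}_{\ge0}x$ after passing to a finite-index sublattice. Rational points suffice, since surjectivity of the base-change is checked on rational rays, which form the points of $\RPC$-valued maps from $\mathcal{A}^1=(\mathbb{R}_{\ge0},\mathbb{Z})$ with finite-index rescaled lattices.

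For ``complete $\Rightarrow$ surjective'', take a $\tau$-point with lift $\phi\colon N_\tau\to N_\sigma\times N\times N'$. By Proposition \ref{prop:alternative_descr_trop_torus}, $\phi$ maps $\tau\cap N_\tau$ into $(N_\sigma\times N)^{(M)}\times N'$, hence $\tau$ into its convex hull. Completeness covers each image point by some cone of $\Delta$. For a rational ray $\rho\subseteq\tau$, $\phi(\rho)$ then lies in some $\tau'$. Rescaling the lattice of $\rho$ to a finite-index sublattice mapping into $\tilde N_{\tau'}$ (possible because $\tilde N_{\tau'}$ has finite index in $\Span(\tau')$) gives a cone $\rho'\to\rho$ whose composite factors through $(\tau',\tilde N_{\tau'})$. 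This is a lift to $\Sigma_\Delta$, so the fiber over $\rho$ is nonempty. Surjectivity in the sense of the base-change being surjective onto every point of $\tau$ follows.

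For ``surjective $\Rightarrow$ complete'', take $v\in(N_\sigma\times N)^{(M)}\times N'$, so $v=(n,n',n'')$ with $n\in\sigma$. Define $\rho=(\mathbb{R}_{\ge0},\mathbb{Z})$ mapping $1\mapsto v$. This is a valid morphism $\rho\to A\times T_{N'}^{\trop}$ over $\sigma$ by Proposition \ref{prop:alternative_descr_trop_torus}, provided the image of the ray stays in $(N_\sigma\times N)^{(M)}$. That holds since the defining condition (membership of $n'$ in the $\mathbb{Q}$-span of $Q_{\hom,N}(M)(n)$) is homogeneous in $k\ge0$. Surjectivity gives a point of the fiber over the interior of $\rho$. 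That is, some cone $\rho'\to\rho$ hitting $v$ up to positive scaling lifts to $\Sigma_\Delta$, so some translate $T_m(v)$, and by condition (4) then $v$ itself, lies in a cone of $\Delta$.

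The main obstacle is the bookkeeping in the first paragraph: the claim that surjectivity of a morphism of stacks over $\RPC$ reduces to rational-ray (lattice point) coverage. Points of $(N_\sigma\times N)^{(M)}$ are only lattice points, whereas completeness is stated as a set-theoretic cover. I would handle this by noting that the condition (pointwise, after scaling) is tested exactly on lattice points and their positive multiples. Cones of $\Delta$ are closed under positive scaling, and a lattice multiple $kv$ covered by a cone gives $v$ covered. So both sides reduce to the same lattice-point statement.
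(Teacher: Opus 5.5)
Your proposal is correct and follows the paper's proof. For one direction you map $\mathcal{A}^1$ to $A\times T_{N'}^{\trop}$ by sending $1$ to a given lattice point of $(N_\sigma\times N)^{(M)}\times N'$; the paper argues contrapositively, noting that the fiber over an uncovered point is just the zero cone. For the other direction you, like the paper, use Proposition \ref{prop:alternative_descr_trop_torus} to place images of lattice points of $\tau$ in $(N_\sigma\times N)^{(M)}\times N'$, where completeness covers them. Your passage from rational rays to all of $\tau$ tacitly uses that the base change is a finite cone complex (Lemma \ref{lem:diag_repr} with Proposition \ref{prop:cone_cx_iff_fan}), which the paper's proof also assumes without comment.
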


\begin{proof}
	Assume first that $\Delta$ is not complete. Then there exists some $x\in (N_{\sigma}\times N)^{(M)}\times N'$
	which does not lie in any cone of $\Delta$. Consider the map $\mathcal{A}^{1}:=(\mathbb{R}_{\ge 0},\mathbb{Z})\to A\times T_{N'}^{\trop}$
	induced by the morphism which sends $1\in \mathbb{R}_{\ge 0}$ to $x$ via the isomorphism in Proposition \ref{prop:alternative_descr_trop_torus}.
	Then $\Sigma_{\Delta}\times_{A\times T_{N'}^{\trop}}\mathcal{A}^{1}$ is the inclusion of the zero-cone and hence not surjective.
	Conversely, if $\Delta$ is complete, then observe that by
	Proposition \ref{prop:alternative_descr_trop_torus} any morphism $\tau\to A\times T_{N'}^{\trop}$ whose composition with the projection onto the first factor has a lift
	$\tau\to (N_{\tau}\times N)\times N'$ has image in $(N_{\tau}\times N)^{(M)}\times N'$. Hence its image is covered by
	cones pulled back from $\Delta$, showing that $\tau\times_{A\times T_{N'}^{\trop}}\Sigma_{\Delta}\to \tau$ is surjective.
\end{proof}

\begin{proposition}
	\label{prop:cone_cx_iff_fan}
	In the above situation, the following statements hold:
\begin{enumerate}
	\item $\Sigma_{\Delta}$ is representable by a finite cone complex.
	\item If $\tilde{N}_{\tau'}=\Span(\tau')\cap (N_{\sigma}\times N \times N')$ for all $\tau'\in \Delta$ and $\Delta$ is complete, then $\Sigma_{\Delta}\to A\times T_{N'}^{\trop}$ is a subdivision.
	\item The morphism $\Sigma_{\Delta}\to A\times T_{N'}^{\trop}$ is a partial tropical compactification of $A\times T_{N'}^{\trop}\to \sigma$.
	\item Every partial tropical compactification $\Sigma\to A\times T_{N'}^{\trop}$ with $\Sigma$ a finite cone complex is isomorphic to $\Sigma_{\Delta}$ for a unique stacky fan $\Delta$.
\end{enumerate}
\end{proposition}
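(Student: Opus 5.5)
The plan is to work throughout with the "universal cover" description of Proposition \ref{prop:alternative_descr_trop_torus}. A morphism $\tau\to A\times T_{N'}^{\trop}$ is an $M$-orbit of lattice maps $N_\tau\to N_\sigma\times N\times N'$ sending $\tau$ into $(N_\sigma\times N)^{(M)}\times N'$. The translations $T_m$ act freely on lifts whenever $n\in\sigma$ is such that $Q_{\hom,N}(\cdot)(n)$ is injective. They act with stabilizers exactly where the form degenerates, which is controlled by Lemma \ref{lem:pos_definite_vanishing}. So $\Sigma_\Delta$ should be the quotient $\Delta/M$ of the (infinite) cone complex $\Delta$, glued along face inclusions, by the $M$-action of condition (4).

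For (1), I first show that each $\tau\in\Delta$ with lattice $\tilde N_\tau$ maps to $\Sigma_\Delta$, and that the projection $\tau\to\Sigma_\Delta$ is injective on the set of $\tau$-points modulo the $M$-action. Condition (5) (that $\tau\cap T_m\tau$ is fixed pointwise by $T_m$) is exactly what guarantees that no two distinct points of one cone are identified by a translation. Then $\Sigma_\Delta$ is the colimit of the cones of a set of representatives of $\Delta/M$, glued along faces and along the identifications $T_m$. Conditions (1)–(3) ensure that this is a cone complex, with no self-gluings and with compatible lattices. Finiteness follows from condition (6).

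For (3), the morphism $\Sigma_\Delta\to A\times T^{\trop}_{N'}$ is a monomorphism by construction, since $\Sigma_\Delta$ is defined as a subfunctor. The zero-section lifts by condition (7). For representability by finite cone stacks, I would base change along $\rho\to A\times T^{\trop}_{N'}$, choose a lift $\phi$, and observe that $\rho\times\Sigma_\Delta$ is the stacky fan $\{\phi^{-1}(\tau)\}$. Only finitely many $\tau$ meet $\phi(\rho)$, by local finiteness of the $M$-periodic arrangement (conditions (4) and (6)). The structure of a stacky fan in $\rho$ is then checked via Lemma \ref{lem:mono_impl_fan}.

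For (2), surjectivity is Lemma \ref{lem:complete_iff_cpt}. Saturation of the lattices means each base-changed fan carries the induced lattice, so the map is a subdivision in the sense of Proposition \ref{prop:bir_iff_subdiv}.

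For (4), given $\Sigma$, I define $\Delta$ as the set of all images $\phi(\tau)$, with lattices $\phi(N_\tau)$, over all cones $\tau\to\Sigma$ and all lifts $\phi$ of the composite to $N_\sigma\times N\times N'$. This set is $M$-stable, finite modulo $M$ because $\Sigma$ is finite, and contains the zero-section cone. Conditions (1)–(3) and (5) I would derive, as in Lemma \ref{lem:mono_impl_fan}, from the monomorphism property: two cones whose images meet give two maps agreeing after projection to $A\times T^{\trop}_{N'}$, hence equal. Uniqueness holds because $\Delta$ is recovered from $\Sigma_\Delta$ as the set of lifts of its cones.

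The main obstacle I expect is the local finiteness and the stabilizer analysis. First, near directions $n$ where $Q(\cdot,\cdot)(n)$ degenerates, the translates $T_m\tau$ accumulate in a way that must still be finite over each cone $\rho$. Second, condition (5) must exactly prevent $\Sigma_\Delta$ from acquiring stacky points. Here I would use Lemma \ref{lem:pos_definite_vanishing} to split $M$ into the part acting trivially on the relevant span and a part acting properly discontinuously.
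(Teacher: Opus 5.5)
Your overall structure is the paper's: you build $\Sigma_\Delta$ as $\Delta/M$ from representatives, use condition (5) to rule out self-gluings, and recover $\Delta$ from all lifts in (4). The genuine gap is the finiteness step in (3), on which (2) also depends, since a subdivision of $\rho$ is in particular finite.

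You claim that only finitely many $\tau\in\Delta$ meet $\phi(\rho)$, ``by local finiteness'' coming from conditions (4) and (6). This is false, because $\Delta$ is in general not locally finite. Take $\sigma=\mathbb{R}_{\ge 0}^2$ with $N_\sigma=\mathbb{Z}^2$, $M=N=\mathbb{Z}$, $N'=0$ and $Q(m,m')=mm'e_1^*$, so that $T_m(x_1,x_2,y)=(x_1,x_2,y+mx_1)$. For $k\in\mathbb{Z}$ let $E_k$ be the cone generated by $(2,0,k)$, $(2,0,k+1)$ and $(0,1,0)$. These cones, with their faces and saturated lattices, form a complete stacky fan. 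Condition (5) holds because $T_mE_k=E_{k+2m}$, and for $m\neq 0$ the intersection $E_k\cap E_{k+2m}$ is the ray through $(0,1,0)$, on which $T_m$ is trivial. Every $E_k$ contains $(0,1,0)$. So for the zero section $\rho=\sigma\to A$, infinitely many cones of $\Delta$ meet $\phi(\rho)=\sigma\times\{0\}$. What does hold is only that the preimages $\phi^{-1}(\tau)$ take finitely many values: here $\phi^{-1}(E_k)$ is the ray $x_1=0$ for every $k\notin\{0,-1\}$.

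Proving that this set of preimages is finite is exactly the content of Lemma \ref{lem:diag_repr} (finiteness of $M/{\sim}$). It needs a real boundedness argument near the faces where $Q$ degenerates, not just periodicity.

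Your proposed remedy, splitting $M$ into a trivially acting part and a properly discontinuously acting part, does not supply this bound. The sublattice $\{m : Q(m,m)|_F=0\}$ that acts trivially over a face $F$ changes from face to face; in the example it is all of $M$ over $x_1=0$ and zero over the interior. Even over the relative interior of a fixed face, you need a bound on the $M_{\mathbb{R}}$-coordinate of points of $\phi(\rho)$ that is uniform up to the boundary of that face. That uniform bound is what the compactification argument in the proof of Lemma \ref{lem:diag_repr} provides.

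The repair is to use that lemma directly, as the paper does. Once (1) is known, $\rho\times_{A\times T_{N'}^{\trop}}\Sigma_\Delta$ is glued from the finitely many pieces $\rho\times_{A\times T_{N'}^{\trop}}\tau$ with $[\tau]\in\Delta/M$. Each piece equals $(\rho\times_\sigma\tau)\times_{(A\times T_{N'}^{\trop})\times_\sigma(A\times T_{N'}^{\trop})}(A\times T_{N'}^{\trop})$, which is a finite cone complex by Lemma \ref{lem:diag_repr}.

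There is also a smaller omission in (4). The monomorphism argument of Lemma \ref{lem:mono_impl_fan} only reduces condition (1) to self-intersections $\tau_0\cap T_m\tau_0$ and shows that $T_m$ fixes them pointwise. To see that such an intersection is a face, you need that it equals $\{(x,n)\in\tau_0 : Q(m,m)(x)=0\}$. This uses Lemma \ref{lem:pos_definite_vanishing} together with $Q(m,m)\in\sigma^\vee$.
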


For the proof, we need the following lemma:

\begin{lemma}
	\label{lem:diag_repr}
	The diagonals $T_{N'}^{\trop}\to T_{N'}^{\trop}\times T_{N'}^{\trop}$ and
	$A\to A\times_{\sigma}A$ are representable by finite cone complexes.
\end{lemma}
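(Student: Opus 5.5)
We prove the claim by computing the base-change of each diagonal along an arbitrary morphism from an RPC and identifying it with a finite union of cones. Concretely, let $\tau$ be an RPC with a morphism $\tau\to T_{N'}^{\trop}\times T_{N'}^{\trop}$ (resp. $\tau\to A\times_{\sigma}A$ over some $\tau\to\sigma$). We then describe the fiber product $\tau\times_{T_{N'}^{\trop}\times T_{N'}^{\trop}}T_{N'}^{\trop}$ (resp. $\tau\times_{A\times_\sigma A}A$) as a functor on $\RPC/\tau$ and show that it is a finite cone complex.

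\textbf{The torus.} The morphism $\tau\to T_{N'}^{\trop}\times T_{N'}^{\trop}$ is a pair of homomorphisms $\phi_1,\phi_2:N_\tau\to N'$. A morphism $\rho\to\tau$ factors through the diagonal if and only if $\phi_1\circ f=\phi_2\circ f$ on $N_\rho$, that is, $f(N_\rho)\subseteq\ker(\phi_1-\phi_2)$.
\begin{itemize}
\item The fiber product is therefore the subfunctor of $\tau$ represented by the RPC $(\tau\cap\ker(\phi_1-\phi_2)_{\mathbb{R}},\,N_\tau\cap\ker(\phi_1-\phi_2))$.
\item This intersection of a cone with a rational subspace is again a rational polyhedral cone, though generally not a face.
\end{itemize}
So the fiber product is a single cone, hence a finite cone complex.

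\textbf{The abelian variety: setup.} Here we use Proposition \ref{prop:alternative_descr_trop_torus}. A morphism $\tau\to A\times_\sigma A$ is given by a pair of classes $[\phi_1],[\phi_2]$ with $\phi_i\in\Hom(\tau,(N_\sigma\times N)^{(M)})$, both lying over the same map $g:N_\tau\to N_\sigma$. A morphism $f:\rho\to\tau$ factors through the diagonal if and only if there is $m\in M$ with $\phi_1\circ f=T_m\circ\phi_2\circ f$. Writing $\psi_i$ for the $N$-components, this condition reads
\[
(\psi_1-\psi_2)(f(n))=Q_{\hom,N}(m)(g(f(n)))\quad\text{for all } n\in N_\rho .
\]
Define for each $m$ the homomorphism $L_m:=\psi_1-\psi_2-Q_{\hom,N}(m)\circ g:N_\tau\to N$. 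The fiber product is then the union over $m\in M$ of the RPCs $\kappa_m:=(\tau\cap\ker(L_m)_{\mathbb{R}},\,N_\tau\cap\ker L_m)$.

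\textbf{The abelian variety: steps to verify.}
\begin{enumerate}
\item \emph{Overlaps are faces.} Two cones $\kappa_m,\kappa_{m'}$ meet in a common face, and the family is closed under faces. Using that $A$ is a sheaf of sets together with the monomorphism argument of Lemma \ref{lem:mono_impl_fan}, the union glues to a cone space (a subsheaf of $\tau$).
\item \emph{The gluing is well defined.} On $\kappa_m\cap\kappa_{m'}$ we have $Q_{\hom,N}(m-m')\circ g=0$. So the two descriptions of the factorization agree there, and the gluing is consistent.
\item \emph{Finiteness.} Only finitely many distinct nonzero $\kappa_m$ occur.
\end{enumerate}

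\textbf{The main obstacle: finiteness.} This is step 3. The plan is to evaluate on primitive ray generators $v$ of $\tau$.
\begin{itemize}
\item Let $w=\psi_1-\psi_2$. The condition $L_m(v)=0$ forces $Q(m,\cdot)(g(v))=w(v)$.
\item The vector $w(v)$ lies in the $\mathbb{Q}$-span of $Q_{\hom,N}(M)(g(v))$, since both $\phi_i(v)$ do.
\item Positive-definiteness, via Lemma \ref{lem:pos_definite_vanishing}, shows that the kernel of $m\mapsto Q_{\hom,N}(m)(g(v))$ is exactly the set of $m$ with $g(v)(Q(m,m))=0$.
\item Hence the $m$ solving the equation form a coset of this kernel. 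Different representatives of the coset give the same linear condition at $v$.
\end{itemize}
Because $\ker(L_m)$ is determined by the values of $L_m$ on a spanning set, each $\kappa_m$ is determined by the finite data of which rays and lattice points lie in it. With finitely many rays, this leaves only finitely many possibilities, and the same Lemma \ref{lem:pos_definite_vanishing} argument extends to points of $\tau$ in the interior of faces. If direct finiteness is delicate, a fallback is to note that each $\kappa_m$ is cut out of $\tau$ by an integral linear subspace determined by $m$ modulo the kernel lattice attached to the smallest face containing $\kappa_m$. That yields finitely many choices per face of $\tau$.
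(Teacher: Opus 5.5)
Your torus computation, and your description of the abelian fiber product as the union of the cones $\kappa_m$ glued along the faces cut out by $Q(m-m',m-m')\circ g$ via Lemma~\ref{lem:pos_definite_vanishing}, match Steps 1 and 2.1 of the paper. The gap is the finiteness of the set of distinct $\kappa_m$. You correctly identify this as the crux, but you do not establish it. Your ray argument needs $\kappa_m$ to be determined by which rays of $\tau$ it contains. This fails: $\kappa_m=\tau\cap\ker(L_m)_{\mathbb{R}}$ is in general not a face, and it can pass through the interior of $\tau$ without containing any ray of $\tau$. The set of lattice points it contains is not finite data either. Concretely, take $M=\mathbb{Z}$, $\tau=\sigma=\mathbb{R}_{\ge 0}^2$, $g=\mathrm{id}$, $Q(1,1)=e_1^*+e_2^*$, and $w(x)=ax_1+bx_2$ with integers $a<b$. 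Then $\kappa_m=\{x\in\tau \mid (m-a)x_1=(b-m)x_2\}$. For $a<m<b$ these are pairwise distinct interior rays, none of which contains a ray of $\tau$; for $m\notin[a,b]$ one gets $\kappa_m=\{0\}$. Finiteness holds here only because $m$ is forced into the bounded interval $[a,b]$. That is a metric statement, not a combinatorial one about rays. Your observation that the admissible $m$ at each ray form a coset of the kernel is correct, but it yields no such bound.

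Your fallback correctly reduces to a fixed face $\tau_0$ (the smallest face containing $\kappa_m$) and to classes of $m$ in $M/K_{\tau_0}$, where $K_{\tau_0}=\{m\in M\mid Q_{\hom,N}(m)\circ g|_{\tau_0}=0\}$; this is the paper's Step 2.2. However, $M/K_{\tau_0}$ is an infinite lattice as soon as $Q\circ g$ is not identically zero on $\tau_0$. So ``finitely many choices per face'' is exactly the statement to be proved, not a consequence.

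What is missing is a boundedness argument. For $x\in\tau_0^{\circ}$, the positive semi-definite form $Q(\cdot,\cdot)(g(x))$ on $M_{\mathbb{R}}$ has kernel exactly $K_{\tau_0}\otimes\mathbb{R}$. Hence $x\in\kappa_m$ determines the class of $m$ modulo $K_{\tau_0}\otimes\mathbb{R}$. You must show that these classes stay in a bounded subset of $M_{\mathbb{R}}/(K_{\tau_0}\otimes\mathbb{R})$ as $x$ ranges over $\tau_0^{\circ}$; only then can just finitely many lattice classes occur. This is not formal. The form degenerates as $x$ approaches the boundary of $\tau_0$, so the solution is a quotient whose denominators vanish there. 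Boundedness uses that $w$ vanishes compatibly in the degenerate directions, which is where the condition defining $(N_\sigma\times N)^{(M)}$ enters. The paper proves this in Steps 2.3--2.7. It reparametrizes by $(0,1]^{n-1}$ and shows that the solution map extends continuously to the compact cube $[0,1]^{n-1}$, using Lemma~\ref{lem:pos_definite_vanishing} to cancel the vanishing terms. Alternatively, the bound can be extracted from \cite[Lemma 7.12 (2)]{LogarithmicAbeKajiwa2008}. Some argument of this kind is needed to complete your proof.
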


\begin{proof}
	Step 1 (Show representability of the diagonal of $T_{N'}^{\trop}$): Given two morphisms
	$\phi_1,\phi_2:\tau\to T_{N'}^{\trop}$, the fiber product $T_{N'}^{\trop}\times_{T_{N'}^{\trop}\times T_{N'}^{\trop}}\tau$
	is represented by the locus $\{x\in \tau|\phi_1(x)=\phi_2(x)\}$. This
	is given as the intersection of the images of $\tau$ under the maps
	$\tau\to \tau\times N_{\mathbb{R}}'$, where the map on the first factor is the identity
	and the map on the second factor is $\phi_i$. Since an intersection of rational
	polyhedral cones is a rational polyhedral cone, we conclude.

	Step 2 (Show representability of the diagonal of $A$): Let $0:\sigma\to A$ be the zero-section and consider the morphism
	$A\times_{\sigma}A\to A$ which sends $(x,y)$ to $x-y$. The diagonal
	morphism is the base-change of $0:\sigma\to A$ along this morphism, so
	it suffices to show that $\sigma\to A$ is representable by finite
	cone complexes. Let $\tau\to A$ be a morphism from an RPC $\tau$
	over $\sigma$. Fix a lift $\psi:\tau\to (N_{\sigma}\times N)^{(M)}$
	of $\tau\to A$ (see Proposition \ref{prop:alternative_descr_trop_torus}).

	Step 2.1 (Describe the fiber product $\tau\times_{A}\sigma$):
	A morphism $\tau'\to \tau\times_A \sigma$ is equivalent to the
	datum of a morphism $\tau'\to \tau$ such that the composition $\tau'\to \tau\xrightarrow{\psi} (N_{\sigma}\times N)^{(M)}$
	is defined by $\tau'\to N_{\sigma,\mathbb{R}}\times N_{\mathbb{R}},\ x\mapsto (f(x),Q_{\tau',\hom,N}(m)(x))$
	for some $m\in M$,
	where $f:N_{\tau',\mathbb{R}}\to N_{\sigma,\mathbb{R}}$ is the
	structure morphism corresponding to $\tau'\to \sigma$.
	Hence $\tau\times_A \sigma$ is a union of sets
	of the form:
	$$V(m):=\{x\in \tau|\psi_2(x)=Q_{\tau,\hom,N}(m)(x)\},$$
	with $V(m)$ and $V(m')$ glued along the common face given by the set of $x\in \tau$
	with $\psi_2(x)=Q_{\tau,\hom,N}(m)(x)=Q_{\tau,\hom,N}(m')(x)$ (this is common face by positive semi-definiteness
	of $Q_{\tau}$, as it is the locus where $Q_{\tau,\hom,N}(m-m')$ vanishes).
	Here, $\psi_2$ denotes the second component of $\psi$. Note that
	$$V(m)=(id\times\psi_2)^{-1}((id\times\psi_2)(\tau)\cap (id\times Q_{\tau,\hom,N}(m))(\tau))$$
	where $id\times \psi_2:{\tau}\to N_{\tau,\mathbb{R}}\times N_{\mathbb{R}}$
	is the identity on the first component and $\psi_2$ on the second
	(and similarly for $Q_{\tau,\hom,N}(m)$). Hence it is the preimage
	of an intersection of rational polyhedral cones and therefore
	again a rational polyhedral cone. Define an equivalence relation $\sim$ on $M$ by
	$m_1\sim m_2$ if $V(m_1)=V(m_2)$. The quotient $M/\sim$ is partially
	ordered by $m_1\le m_2$ if and only if $V(m_1)\subseteq V(m_2)$.
	We let $R$ be the set of maximal elements of $M/\sim$. Then:
	$$\tau\times_A \sigma\cong \bigcup_{[m]\in R}V(m)$$
	glued along common faces as above.

	It remains to show that $R$ is finite. This follows from
	\cite[Lemma 7.12 (2)]{LogarithmicAbeKajiwa2008} and \cite[5.2.7]{LogarithmicAbeKajiwa2015},
	see also \cite[9.4]{LogarithmicAbeKajiwa2008}. For the convenience
	of the reader, we spell out the argument below. We show the
	stronger claim that $M/{\sim}$ is finite.

	Step 2.2 (Reduce to the case where $Q_{\tau,\hom,N}$ is injective): Given
	$m\in M$, write $\tau_m$ for the smallest face of $\tau$ containing
	$V(m)$. Since $\tau$ has only finitely many faces, it suffices
	to prove finiteness of $\{[m]\in M/\sim|\tau_m=\tau'\}$ for some fixed
	face $\tau'$ of $\tau$. Replacing $\tau$ with $\tau'$, we
	may assume that $\tau=\tau'$. Note that if $m'\in \ker(Q_{\tau,\hom,N})$,
	then $V(m+m')=V(m)$ for all $m\in M$. Moreover, if $m\in \ker(Q_{\tau,\hom,N})$,
	and $m'\in M$, then
	$$Q_{\tau}(m,m')(n)=m'(Q_{\tau,\hom,N}(m)(n))=0$$
	for all
	$n\in N_{\tau}$, so $Q_{\tau}$ factors through $M/\ker(Q_{\tau,\hom,N})\times M/\ker(Q_{\tau,\hom,N})$.
	Therefore, we may replace
	$M$ with $M/\ker(Q_{\tau,\hom,N})$ and assume that $Q_{\tau,\hom,N}$
	is injective. We must show that there are only finitely many
	$m$ such that $V(m)$ intersects the relative interior of
	$\tau$.

	Step 2.3 (Reformulating the problem): Let $\tau_1$ be the image of $\tau$
	under $id\times \psi_2$ and $\tau_2$ be the image of
	$\tau$ under $id\times 0$. We must show that
	for all but finitely many $m$, the set $\tau_1\cap T_m\tau_2$
	does not intersect the preimage of $\tau^{\circ}$ under the
	projection $N_{\tau,\mathbb{R}}\times N_{\mathbb{R}}\to N_{\tau,\mathbb{R}}$ onto the first factor.
	Note that by definition of $(N_{\tau}\times N)^{(M)}$, both
	$\tau_1$ and $\tau_2$ lie in the image of
	$F:N_{\tau,\mathbb{R}}\times M_{\mathbb{R}}\to N_{\tau,\mathbb{R}}\times N_{\mathbb{R}}$
	defined by
	$$F(n,m):=(n,Q_{\tau,\hom,N}(m)(n))$$
	where we identify $Q_{\tau}$ with its $\mathbb{R}$-linear extension. If
	$n\in \tau^{\circ}$, then $m(Q_{\tau,\hom,N}(m)(n))=Q_{\tau}(m,m)(n)>0$
	by positive-definiteness, so $F|_{\tau^{\circ}\times M_{\mathbb{R}}}$
	is injective. We show that the intersection of the preimages of $\tau_1$ and $T_m\tau_2$ under
	$F$ does not intersect $\tau^{\circ}\times M_{\mathbb{R}}$ for all but finitely many $m$.
	Let $M$ act on $N_{\tau,\mathbb{R}}\times M_{\mathbb{R}}$
	by $m\cdot (x,y)=(x,y+m)$. Then
	$$T_m(F(x,y))=F(m\cdot (x,y)).$$
	The preimage of $\tau_2$ under $F$ is given as $\{(n,0)|n\in \tau\}$ up to further components which do not intersect the fibers over $\tau^{\circ}$.
	Similarly, the preimage of $T_m\tau_2$ is $\{(n,m)|n\in \tau\}$ up to components not intersecting the fibers over $\tau^{\circ}$.
	In particular, the projections of $F^{-1}(T_m \tau_2)\cap (\tau^{\circ}\times M_{\mathbb{R}})$ onto $M_{\mathbb{R}}$
	are bounded. It therefore suffices to show that the image of $F^{-1}(\tau_1)\cap (\tau^{\circ}\times M_{\mathbb{R}})$
	under the projection onto $M_{\mathbb{R}}$ is bounded.

	Step 2.4 (Summary of the strategy): We embed $\tau_1\cap (\tau^{\circ}\times N_{\mathbb{R}})$ into a compact topological space $C$,
	which itself is contained in a topological space $T$ which is homeomorphic to $[0,1]^{n-1}\times M_{\mathbb{R}}$
	in a way such that the restriction of this homeomorphism to the preimage of $\tau_1\cap (\tau^{\circ}\times N_{\mathbb{R}})$ under $F$ is
	equal to the restriction of $F$. The map $\pi_2\circ F^{-1}$ on $\tau_1\cap (\tau^{\circ}\times N_{\mathbb{R}})$ can thus be extended
	to the compact space and hence its image must be bounded. This relationship is visualized in the following diagram with the explicit
	spaces $C$ and $T$, as well as the maps between them explained later in the proof:
	$$\begin{tikzcd}[row sep=4.6em, column sep=3.8em, ampersand replacement=\&]
		F^{-1}\!\bigl(\tau_1 \cap (\tau^\circ \times N_{\mathbb{R}})\bigr)
		\arrow[rr, hook,"(f^{-1}\circ F)\times \pi_2"]
		\arrow[d, "F"']
		\&
		\& {[}0,1{]}^{n-1} \times M_{\mathbb{R}}
		\arrow[d, "G", "\cong"']
		\\
		\tau_1 \cap (\tau^\circ \times N_{\mathbb{R}})
		\arrow[r, hook, "f^{-1}"]
		\& C=[0,1]^{n-1}
		\arrow[r, hook,"\phi"]
		\& T=\im(G).
	\end{tikzcd}$$

	Step 2.5 (Reparametrize $\tau_1$ and construct the extension $\phi_{m'}$): We reparametrize $\tau_1\cap (\tau^{\circ}\times N_{\mathbb{R}})$
	as follows: Let $\{(x_i,n_i)\}_{i=1}^{n}\subseteq N_{\tau,\mathbb{R}}\times N_{\mathbb{R}}$
	be the set of ray generators of $\tau_1$. In particular, $x_i$ lies over a ray of $\tau$. Each element of $\tau_1$ is of the form:
	$$\left(\sum_{i=1}^{n}a_ix_i,\sum_{i=1}^{n}a_in_i\right)$$
	with $a_i\ge 0$.
	It suffices to prove boundedness after restricting to finitely many subcones of $\tau_1$ that cover $\tau_1$. After reordering the $x_i$,
	we may therefore assume that $a_1\ge a_2\ge\dots\ge a_n$. Moreover, after further subdivision, we may assume that $\tau$ is simplicial. Since the second
	factor of $F^{-1}$ is invariant under simultaneously scaling $n$ and $m$, we may restrict to the locus where
	$a_1=1$. Over $\tau^{\circ}$, we have $a_i\neq 0$ for all $i$.
	Therefore, we may reparametrize by $f:(0,1]^{n-1}\to \tau_1\cap (\tau^{\circ}\times N_{\mathbb{R}})$, defined by:
	$$f(a):=\left(\sum_{i=1}^{n}\left(\prod_{j=1}^{i-1}a_j\right)x_i,\sum_{i=1}^{n}\left(\prod_{j=1}^{i-1}a_j\right)n_i\right)=:(x_a,n_a).$$

	Fix $m'\in M_{\mathbb{R}}\setminus \{0\}$. We construct a morphism $\phi_{m'}':\tau_1\cap (\tau^{\circ}\times N_{\mathbb{R}})\to \mathbb{R}$
	which we will show extends to $C:=[0,1]^{n-1}$ under the embedding $f^{-1}$. These functions will be the coordinates of our reembedding
	into $T$. For each face $u$ of $\tau^{\vee}$, fix once and for all an element $s_{u}$ in the relative interior of $u$.
	Let $u_{m'}\subseteq \tau^{\vee}$ be the smallest face of $\tau^{\vee}$ containing $Q_{\tau}(m',m')$. Define
	$$\phi_{m'}'(x,n):=\frac{m'(n)}{s_{u_{m'}}(x)}.$$
	This is well-defined since $x$ is in the relative interior of $\tau$, so $s_{u_{m'}}(x)\neq 0$. We claim that under the
	embedding $f^{-1}$ of $\tau_1\cap (\tau^{\circ}\times N_{\mathbb{R}})$ this extends to a continuous map $\phi_{m'}:[0,1]^{n-1}\to \mathbb{R}$.
	For this, we let $a\in (0,1]^{n-1}$. Let $k$ be minimal such that $s_{u_{m'}}(x_k)\neq 0$. We claim that for all $i<k$ we
	have $m'(n_i)=0$. Indeed, note that since $\tau_1\subseteq (N_{\tau,\mathbb{R}}\times N_{\mathbb{R}})^{(M)}$, we can write
	$n_i=Q_{\tau,\hom,N}(m)(x_i)$ for some $m\in M\otimes \mathbb{R}$. Therefore, $m'(n_i)=Q_{\tau}(m,m')(x_i)$. Since $Q_{\tau}(m',m')(x_i)=0$ by assumption,
	Lemma \ref{lem:pos_definite_vanishing} shows that $m'(n_i)=0$. From this it follows that:
	$$\phi_{m'}'(f(a))=\frac{m'\left(\sum_{i=k}^{n}\left(\prod_{j=k}^{i-1}a_j\right)n_i\right)}{s_{u_{m'}}\left(\sum_{i=k}^{n}\left(\prod_{j=k}^{i-1}a_j\right)x_i\right)}.$$
	Note that:
	$$s_{u_{m'}}\left(\sum_{i=k}^{n}\left(\prod_{j=k}^{i-1}a_j\right)x_i\right)=s_{u_{m'}}(x_k)+s_{u_{m'}}\left(\sum_{i=k+1}^{n}\left(\prod_{j=k}^{i-1}a_j\right)x_i\right)\ge s_{u_{m'}}(x_k)>0,$$
	so the denominator is non-zero for all $a\in [0,1]^{n-1}$, giving an extension $\phi_{m'}$ as required.

	Step 2.6 (Construct the homeomorphism $G$): We define
	$\phi:[0,1]^{n-1}\to [0,1]^{n-1}\times\mathbb{R}^{M_{\mathbb{R}}\setminus \{0\}}$ by the identity on the first component and $\phi_{m'}$
	on the $m'$-th factor of the second component. It remains to construct a morphism $G:[0,1]^{n-1}\times M_{\mathbb{R}}\to [0,1]^{n-1}\times \mathbb{R}^{M_{\mathbb{R}}\setminus \{0\}}$
	which is a homeomorphism onto its image $T:=\im(G)$ and such that $G\circ ((f^{-1}\circ F)\times \pi_2)=\phi\circ f^{-1}\circ F$,
	where $\pi_2$ is the projection $N_{\tau,\mathbb{R}}\times M_{\mathbb{R}}\to M_{\mathbb{R}}$ onto the second factor.
	We define $G$ to be the identity on the first component and on the $m'$-th factor of the second component to be given by:
	$$G_{2,m'}(a,m):=\frac{Q_{\tau}(m',m)(x_a)}{s_{u_{m'}}(x_a)}.$$
	The same argument as above shows that $G_{2,m'}$ indeed extends to all of $[0,1]^{n-1}$ even in the cases when $s_{u_{m'}}(x_a)=0$.
	Unraveling the definitions shows that $G\circ ((f^{-1}\circ F)\times \pi_2)=\phi\circ f^{-1}\circ F$ as required. It remains to show that $G$ is a homeomorphism
	onto its image. We start by showing that it is injective. For this, observe that the second factor $G_2$ of $G$ is linear in $m$.
	So it suffices to show that if $m\in M_{\mathbb{R}}\setminus \{0\}$, then $G_2(a,m)\neq 0$. Assume that $m\neq 0$. Let $k$ be minimal such that
	$Q_{\tau}(m,m)(x_k)\neq 0$. Then
	$$G_{2,m}(a,m)=\frac{Q_{\tau}(m,m)\left(\sum_{i=k}^{n}\left(\prod_{j=k}^{i-1}a_j\right)x_i\right)}{s_{u_m}\left(\sum_{i=k}^{n}\left(\prod_{j=k}^{i-1}a_j\right)x_i\right)}.$$
	Note that $Q_{\tau}(m,m)\in \tau^{\vee}$ by assumption, so the numerator is greater or equal to $Q_{\tau}(m,m)(x_k)>0$. It
	follows that $G_{2,m}(a,m)\neq 0$. Finally, note that the image of $G$ is a vector bundle over $[0,1]^{n-1}$. Indeed, by the above, we know that for any basis $B$ of $M_{\mathbb{R}}$, at every point $a$ there
	exists a finite set $S$ of coordinates of $\mathbb{R}^{M_{\mathbb{R}}\setminus \{0\}}$ such that the images of the elements $m\in B$ in these coordinates are linearly independent. Since this is an open condition, the same holds for any point in some neighbourhood of $a$
	and hence we obtain a local chart of the image of $G$. Note that
	$G$ induces a map of vector bundles. Since every bijective map of vector bundles is a homeomorphism, it follows that $G$ is a homeomorphism onto
	its image.

	Step 2.7 (Conclusion): To summarize, we have constructed an extension of $\pi_2\circ F^{-1}|_{\tau_1\cap (\tau^{\circ}\times N_{\mathbb{R}})}\circ f$ to $[0,1]^{n-1}$
	given by $p_2\circ G^{-1}\circ \phi$, where $p_2:[0,1]^{n-1} \times M_{\mathbb{R}}\to M_{\mathbb{R}}$ is the projection
	onto the second factor. This is a continuous function and since $[0,1]^{n-1}$ is compact, the image
	must be compact, hence bounded and it follows that the image of
	$\pi_2\circ F^{-1}|_{\tau_1\cap (\tau^{\circ}\times N_{\mathbb{R}})}$ is bounded as required.
\end{proof}

\begin{proof}[Proof of Proposition \ref{prop:cone_cx_iff_fan}]
	Step 1 (Show that $\Sigma_{\Delta}$ is a finite cone complex): To lighten the notation, we assume that $M'=0$ in this proof. The proof goes through verbatim for the general case.

	Pick a stacky fan $\Delta$. We must show that $\Sigma_{\Delta}$ is representable by a finite cone complex.
	Let $\Sigma$ be the set of equivalence classes of cones in $\Delta$ modulo the equivalence relation in Definition
	\ref{def:stacky_fan}. For each $[\tau]\in \Sigma$ pick a representative $\tau$ and equip $\tau$ with the lattice $\tilde{N}_{\tau}$.
	We define face morphisms $[\tau]\to [\tau']$ to be the datum
	of an RPC $T_m\tau\in [\tau]$ and a morphism
	$$\tau\to T_m\tau\to \tau'$$
	in $\Delta$, where the first
	morphism is the canonical isomorphism and the second is a face inclusion (the element $m$ is not part of the data). This endows
	$\Sigma$ with the structure of a category fibered in setoids over $\RPC$ and it remains to show that $\Sigma$ is a finite cone complex with
	$\Sigma\cong \Sigma_{\Delta}$. The fact that $\Sigma$ has finitely many cones is clear by construction.
	To see that it is a cone complex, first observe that
	if $\tau\in \Sigma$, then for every face $\tau'$ of $\tau$ we have $[\tau']\in \Sigma$ by condition (3) in the definition of a stacky fan. Hence it remains
	to show that for $[\tau_1],[\tau_2]\in \Sigma$, there exists at most one morphism $[\tau_1]\to [\tau_2]$ in $\Sigma$. Assume we are
	given two morphisms $[\tau_1]\to [\tau_2]$ defined by $\tau_1\to T_{m_1}\tau_1\subseteq\tau_2$ and $\tau_1\to T_{m_2}\tau_1\subseteq\tau_2$.
	Then
	$$T_{m_1}\tau_1=T_{m_1-m_2}T_{m_2}\tau_1\subseteq T_{m_1-m_2}\tau_2.$$
	Hence $T_{m_1}\tau_1\subseteq \tau_2\cap T_{m_1-m_2}\tau_2$.
	By condition (5) in the definition of a stacky fan, the element $m_1-m_2\in M$ fixes $\tau_2\cap T_{m_1-m_2}\tau_2$ pointwise,
	so $T_{m_1}\tau_1=T_{m_2}\tau_1$ and it follows that the two morphisms are equal.

	Step 1.1 (Show $\Sigma_{\Delta}\cong \Sigma$): The datum of a morphism $\tau\to \Sigma_{\Delta}$ for $\tau\in \RPC$
	is equivalent to the datum of a morphism $\tau\to N_{\sigma,\mathbb{R}}\times N_{\mathbb{R}}$ such that the image of $\tau$ is contained
	in some $\tau_0\in \Delta$, modulo the action of $M$ on $\Hom(\tau,(N_{\sigma}\times N)^{(M)})$. Translating $\tau\to N_{\sigma}\times N$
	if necessary, we may assume that $\tau_0$ is the chosen representative of $[\tau_0]\in \Sigma$. This defines a morphism $\tau\to \Sigma$,
	independent of the choice of translation of $\tau_0$ by condition (5) in the definition of a stacky fan. Conversely, given $\tau\to \Sigma$ factoring through
	$\tau_0\in \Sigma$, composing with $\tau\to \tau_0\to N_{\sigma,\mathbb{R}}\times N_{\mathbb{R}}$ gives a morphism $\tau\to \Sigma_{\Delta}$. It is clear from
	construction that these two assignments are inverses to one another and we conclude.

	Step 2 (Show $\Sigma_{\Delta}\to A$ is a partial tropical compactification): From Lemma \ref{lem:diag_repr} and the representability of $\Sigma_{\Delta}$, it follows that $\Sigma_{\Delta}\to A$
	is representable by finite cone complexes. We have a section $\sigma\to \Sigma_{\Delta}$ given by the cone $\sigma\times\{0\}\in \Delta$,
	which lies over the zero-section $\sigma\to A$. It is clear from the construction of $\Sigma_{\Delta}$ that $\Sigma_{\Delta}\to A$
	is a monomorphism, hence $\Sigma_{\Delta}\to A$ is a partial tropical compactification of $A\to \sigma$.

	If each $\tilde{N}_{\tau}$ is given by $\Span(\tau)\cap (N_{\sigma}\times N)$ and $\Delta$ is complete, then it follows from the
	above construction that $\Sigma_{\Delta}\to A$ is a subdivision.

	Step 3 (Show that every representable partial tropical compactification is a stacky fan): We show that any partial tropical compactification $\Sigma\to A$ with $\Sigma$ representable by a finite cone complex is isomorphic
	to $\Sigma_{\Delta}$ for a unique stacky fan. For uniqueness, note that if $\Sigma\cong \Sigma_{\Delta}$, then $\Delta$
	can be recovered as the set of cones $\{T_m\tau|\tau\in \Sigma\}$ together with the finite-index sublattices
	$T_m N_{\tau}$. For existence, we let $\Sigma$ be arbitrary. Let $\tau\in \Sigma$. Then $\tau\to \Sigma\to A$ is a monomorphism,
	so choosing a lift, we may view $\tau\subseteq(N_{\sigma}\times N)^{(M)}$. Let $\Delta:=\{T_m\tau|\tau\in \Sigma\}$
	and for each $T_m\tau$ equip it with the sublattice $\tilde{N}_{T_m \tau}=T_m N_{\tau}$. This is well-defined by assumption, since any
	$m$ fixing $\tau$ acts trivially on the span of $\tau$ by assumption. We show that $\Delta$ is a stacky
	fan. First, it is clear that $T_m \tau\subseteq (N_{\sigma}\times N)^{(M)}$ since $\tau$ is. Moreover, if $\tilde{\tau}$ is a
	face of $T_m \tau$, then $\tilde{\tau}=T_m \tau'$ for some face $\tau'$ of $\tau$, so $\Delta$ is closed under taking faces. Next,
	since $\Sigma$ is finite, $\Delta$ has only finitely many cones up to equivalence. Let $\sigma\to \Sigma$ be a morphism lifting
	the zero section, then the composition $\sigma\to \Sigma\to A$ lies in the equivalence class of the map $\sigma\to \sigma\times \{0\}\subseteq (N_{\sigma}\times N)^{(M)}$,
	so $\sigma\times\{0\}\in \Delta$. Next, we show that $T_{m}\tau\cap \tau$ is pointwise fixed by $m$ for any $\tau\in \Delta$.
	For this, let
	$$x\in T_m\tau\cap\tau\cap \tilde{N}_{\tau}\cap \tilde{N}_{T_m\tau}$$
	and consider the morphism $\mathcal{A}^{1}:=(\mathbb{R}_{\ge 0},\mathbb{Z})\to \tau$
	which sends 1 to $x$. The morphisms $\mathcal{A}^{1}\to T_m\tau$ and $\mathcal{A}^{1}\to \tau$ define lifts of this morphism
	to $\Sigma$. Since $\Sigma\to A$ is a monomorphism, it follows that the two lifts agree, so $x$ must be fixed by the action of $T_m$.

	Step 3.1 (Show that cones of $\Delta$ intersect along common faces): We show that $\tau:=T_{m_1}\tau_1\cap T_{m_2}\tau_2$ is a common face of $T_{m_1}\tau_1$
	and $T_{m_2}\tau_2$ and that the sublattices agree on this common face. We prove this by induction on the maximum $d$ of $\dim(\tau_1)$
	and $\dim(\tau_2)$, the case $d=0$ being clear. Equip $\tau$ with the lattice $T_{m_1}N_{\tau_1}\cap T_{m_2}N_{\tau_2}$.
	Then the compositions $\tau\to T_{m_{i}}\tau_{i}\cong \tau_i$ define two lifts of $\tau\to A$ to $\tau\to \Sigma$. Since $\Sigma\to A$
	is a monomorphism, these two lifts must agree. It follows that the image of $\tau$ in $\tau_1$ is glued to the image of $\tau$ in $\tau_2$
	in $\Sigma$. Since the gluing morphisms in $\Sigma$ are face morphisms, the image of $\tau$ is contained in a common face
	$\tau_0\subseteq \tau_i$ of the cones $\tau_i$. It follows that $\tau=T_{m_1}\tau_0\cap T_{m_2}\tau_0$. If $\dim(\tau_0)<d$, we conclude
	by the induction hypothesis. Otherwise, the image of $\tau$ must intersect the interior of the cone of larger dimension which forces
	$\tau_0=\tau_1=\tau_2$. So we must show that $T_{m_1}\tau_0\cap T_{m_2}\tau_0$ is a face of $\tau_0$. Acting by $T_{-m_2}$,
	we reduce to showing this for $T_{m}\tau_0\cap \tau_0$. In this case, by the previous property, we know that $\tau$ is pointwise
	fixed by $m$. Let $(x,n)\in N_{\sigma}\times N$ be a point lying in $T_m\tau_0\cap \tau_0$.
	So $(x,n+Q_{\hom,N}(m)(x))=(x,n)$, i.e., $Q_{\hom,N}(m)(x)=0$. By Lemma \ref{lem:pos_definite_vanishing}, this is equivalent
	to $Q(m,m)(x)=0$. Since $Q(m,m)\in \sigma^{\vee}\subseteq \tau_0^{\vee}$, it follows that the locus $\{(x,n)|Q(m,m)(x)=0\}$
	is a face of $\tau_0$ (resp. $T_{m}\tau_0$). Hence
	$$T_m\tau_0\cap \tau_0=\{(x,n)\in \tau_0|Q(m,m)(x)=0\}$$
	is a common face
	of $\tau_0$ and $T_m\tau_0$. Moreover, we have $Q(m,m)(x)=0$ for all $x$ in the span of $\tau$, so it follows that
	$\Span(\tau)\cap \tilde{N}_{\tau_0}=\Span(\tau)\cap \tilde{N}_{T_m \tau_0}$ finishing the proof that $\Delta$ is a stacky fan.

	Step 3.2 (Show $\Sigma\cong \Sigma_{\Delta}$): By construction, we have a monomorphism $\Sigma\to \Sigma_{\Delta}$
	which sends $\tau\to \Sigma$ to $\tau\to \tau'\subseteq N_{\sigma,\mathbb{R}}\times N_{\mathbb{R}}$ for any cone $\tau'\in \Sigma$ containing the
	image of $\tau$. Conversely, given a morphism $\tau\to \Sigma_{\Delta}$, let $T_m\tau'\in \Delta$ be a cone such
	that $\tau\to N_{\sigma,\mathbb{R}}\times N_{\mathbb{R}}$ factors through $T_m\tau'$, with $\tau'\in \Sigma$. We obtain a morphism $\tau\to \tau'$
	by composing with the canonical isomorphism $T_m\tau'\cong \tau'$. The morphism $\tau\to \tau'\to \Sigma$ is independent of the
	choice of $m$ and $\tau'$ and defines an inverse $\Sigma_{\Delta}\to \Sigma$ to the above morphism.
\end{proof}

The above result is only useful if stacky fans exist:

\begin{proposition}
	\label{prop:stacky_fan_exists}
	Let $X\to \Sigma$ be a split family of tropical semiabelian varieties over a finite cone stack $\Sigma$.
	Then there exists a tropical compactification $Y\to X$ such that $Y\to X$ is representable
	by subdivisions.
\end{proposition}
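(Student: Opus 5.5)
The plan is to construct $Y$ cone by cone over $\Sigma$ and reduce everything to a single cone $\sigma$. On each cone, a complete stacky fan with saturated lattices gives what we need by Proposition \ref{prop:cone_cx_iff_fan} (2) and Lemma \ref{lem:complete_iff_cpt}. The fans on different cones must be glued compatibly along face maps and automorphisms of $\Sigma$.

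\textbf{Single cone with $A$ trivial.} Here $X\times_\Sigma\sigma\cong\sigma\times T_{N'}^{\trop}$. Take the fan in $N_\sigma\times N'$ whose cones are $\tau\times\{0\}$ for faces $\tau$ of $\sigma$, together with $\tau\times C$ for $C$ running over the cones of a fixed complete unimodular fan in $N'_{\mathbb{R}}$. Use saturated lattices. This is complete and contains $\sigma\times\{0\}\times\{0\}$.

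\textbf{Single cone with abelian part.} We need an $M$-periodic polyhedral subdivision of $(N_\sigma\times N)^{(M)}$. The fan built from cones over the faces of $\sigma$ times a periodic decomposition in the fibre direction will not work, because the degeneration changes across faces. Instead I would use the Mumford/Alexeev construction of a Delaunay-type decomposition.
\begin{enumerate}
\item Pick a rational point $n_0$ in the interior of $\sigma$.
\item Let $\mathcal{D}$ be the Delaunay decomposition of $N_\mathbb{R}$ for the quadratic form $Q(\cdot,\cdot)(n_0)$. It is $M$-periodic by construction.
\item Take the cones over $\mathcal{D}$, defined via the graph of $x\mapsto Q_{\hom,N}(\cdot)(x)$, and intersect them with the preimages of the faces of $\sigma$.
\end{enumerate}
The harder alternative is to take the common refinement over all $x\in\sigma$ of the Delaunay decompositions, i.e.\ the second Voronoi fan restricted to $\sigma$, which is finite. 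I expect to have to use this refinement, so that condition (5) of Definition \ref{def:stacky_fan} holds across the boundary of $\sigma$, where $Q(m,m)(x)=0$.

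\textbf{Checking the stacky fan conditions.} Condition (5), that $\tau\cap T_m\tau$ is fixed pointwise, should follow exactly as in Step 3.1 of the proof of Proposition \ref{prop:cone_cx_iff_fan}. There, $T_m\tau\cap\tau$ lies in the face cut out by $Q(m,m)=0$, and Lemma \ref{lem:pos_definite_vanishing} applies. Condition (6), finiteness modulo $M$, follows from finiteness of the second Voronoi decomposition restricted to $\sigma$, using Lemma \ref{lem:diag_repr}. Completeness comes from the decompositions covering each fibre.

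\textbf{Gluing over $\Sigma$.} Construct these fans canonically, depending only on $(Q,\sigma)$ and on fixed choices for the torus factor that are invariant under the finite automorphism groups. Then the restriction to a face $\sigma'\subseteq\sigma$ is the fan built for $\sigma'$, so the construction descends to $\Sigma$. Representability by subdivisions can be checked over each cone, which is Proposition \ref{prop:cone_cx_iff_fan} (2). The resulting $Y$ is a tropical compactification.

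\textbf{Main obstacle.} The hardest step is proving that the Voronoi-type refinement is finite and compatible with restriction to faces. For finiteness, I would reuse the compactness argument in Step 2 of Lemma \ref{lem:diag_repr}.
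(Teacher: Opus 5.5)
\textbf{The torus part is fine.} It is simpler than the paper's induction over the skeleta $\Sigma_k$ with barycentric stars. The product fans $\{\tau\times C\}$ glue once $C$ runs over a complete fan in $N'_{\mathbb{R}}$ invariant under the monodromy. That monodromy is finite, because it factors through the automorphism group of the zero cone; the paper's hyperplane arrangement orthogonal to the $G$-orbit of a basis gives such a fan. Drop ``unimodular'': it is not needed with saturated lattices, and an invariant unimodular fan would require an equivariant resolution theorem.

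\textbf{The gap is in the abelian part.} The paper does not construct this part; it imports it from \cite[Theorem 1.11]{kajiwara2018logarithmic}. Once $\dim\sigma\ge 2$, the Delaunay decomposition of $Q(\cdot,\cdot)(x)$ in general varies with $x\in\sigma^{\circ}$. Cones over the single decomposition $\mathcal{D}(n_0)$ then do not form a fan. For example, take $M=\mathbb{Z}^2$, $\sigma=\mathbb{R}^2_{\ge0}$ and $Q(x)=x_1Q_1+x_2Q_2$ with $Q_1=\begin{pmatrix}2&1\\1&2\end{pmatrix}$ and $Q_2=\begin{pmatrix}2&-1\\-1&2\end{pmatrix}$; identify $N=\mathbb{Z}^2$, so $Q_{\hom,N}(y)(x)=Q(x)y$. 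For $x_1>x_2$ the Delaunay cells are the translates of $D=\Conv(0,e_1,e_2)$ and $D'=\Conv(e_1,e_2,e_1+e_2)$; for $x_1<x_2$ the other diagonal is used.
\begin{itemize}
\item The union of graphs $\{(x,Q(x)y):x\in\sigma,\ y\in D\}$ is not convex. It contains $((1,0),0)$ and $((0,1),(-1,2))$, but their midpoint would need $y=(-\tfrac14,\tfrac12)\notin D$.
\item If you instead take the convex hull of the graphs of $Q_{\hom,N}(v)$ over the vertices $v$ of $D$, its fibre over $x$ is the Minkowski sum $x_1Q_1(D)+x_2Q_2(D)$. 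The two cells per period then have total area $3x_1^2+18x_1x_2+3x_2^2$, which exceeds the covolume $\det Q(x)=3x_1^2+10x_1x_2+3x_2^2$ of $Q(x)M$. So the cones overlap. Cutting each square by both diagonals gives the same total.
\item The same example breaks your gluing claim. Over the face $x_1=0$ you get $Q_2$ applied to the anti-diagonal triangulation, not the Delaunay decomposition of $Q_2$.
\end{itemize}

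\textbf{The second Voronoi fallback does not repair this.} Pulling back the second Voronoi fan subdivides the base cone $\sigma$ itself, here along $x_1=x_2$. Then $\sigma\times\{0\}\times\{0\}$ is no longer a cone, and condition (7) of Definition \ref{def:stacky_fan} fails. The zero section $\Sigma\to X$ no longer lifts to $Y$, so $Y$ is not a partial tropical compactification at all.

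What you actually need is an $M$-periodic fan on $(N_\sigma\times N)^{(M)}$ over the unsubdivided $\sigma$, whose fibres are mixed cells rather than images of one decomposition of $M_{\mathbb{R}}$. One candidate is the regular subdivision of the $M$-stable point set $(e,Q_{\hom,N}(v)(e))$, where $e$ is a ray generator of $\sigma$ and $v\in M$, with heights $\tfrac12Q(v,v)(e)$. The translation $T_m$ changes these heights by the linear function $(x,z)\mapsto m(z)+\tfrac12Q(m,m)(x)$, so the subdivision is $M$-stable, and $\sigma\times\{0\}$ is exactly where the lower envelope vanishes. You would still have to prove that its cells are rational polyhedral, finitely many modulo $M$, cover the whole space, and are compatible with faces and automorphisms.

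\textbf{Condition (5) also fails, already over a ray.} For $M=\mathbb{Z}$ and $Q(m,m')=mm'$, the cones over $[0,1]$ and $[1,2]$ meet in the ray through $(1,1)$, which $T_1$ moves to the ray through $(1,2)$. The proof of Proposition \ref{prop:cone_cx_iff_fan} cannot supply (5). There, (5) is deduced from the hypothesis that one starts with a cone complex mapping monomorphically to $A$. Only afterwards is it used to identify $\tau\cap T_m\tau$ with the face $\{Q(m,m)=0\}$, so your appeal is circular. You need an additional $M$-equivariant refinement, e.g.\ of barycentric type away from $\sigma\times\{0\}$ and its translates.
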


\begin{proof}
	Write $X=A\times T$ for a tropical abelian variety $A$ and a tropical torus $T$.
	We construct a tropical compactification as a product of a tropical compactification of $A$
	and a tropical compactification of $T$.
	For $A$, the existence of a tropical compactification $Y_A\to A$ representable by subdivisions
	follows from \cite[Theorem 1.11]{kajiwara2018logarithmic}.

	For $T$, note that on each cone in $\Sigma$, we have
	$T\cong T_{\mathbb{Z}^{r}}^{\trop}$ for some $r\ge 0$ fixed. We inductively
	construct a complete fan (with sublattice equal to the ambient lattice) in $T|_{\Sigma_k}$, where $\Sigma_k\subseteq \Sigma$
	is the full substack spanned by the cones in $\Sigma$ of dimension
	at most $k$. Let $\sigma\in \Sigma_k$ be $k$-dimensional. By the induction hypothesis, we may
	assume that we have constructed a complete fan over all proper faces of $\sigma$ and we must
	extend this to an $\Aut_{\Sigma_k}(\sigma)$-invariant subdivision of $T_{\mathbb{Z}^{r}}\times \sigma$.
	If $k>0$, then we define a subdivision of $T_{\mathbb{Z}^{r}}\times \sigma$ by letting the cones in the
	subdivision be either:
	\begin{enumerate}
		\item Cones $\tau$ in the subdivision of $T_{\mathbb{Z}^{r}}\times \sigma'$ for some face $\sigma'$ of $\sigma$, or
		\item cones of the form $\Conv(\rho,\tau)$, where $\rho$ is the ray through the barycenter of $\sigma$ and $\tau$ is as in point (1).
	\end{enumerate}
	This is indeed a subdivision since for any point $x\in T_{\mathbb{Z}^{r}}\times \sigma$, the ray starting at some non-zero
	point in $\rho$ and passing through $x$ intersects the preimage of the boundary of $\sigma$ in a point $y$ and $x$ is hence contained
	in the cone $\Conv(\rho,\tau)$, where $\tau$ is a cone containing $y$. The subdivision is moreover $\Aut(\sigma)$-invariant,
	since $\Aut(\sigma)$ fixes the barycenter of $\sigma$ and the subdivisions on the boundary of $\sigma$ are $\Aut(\sigma)$-invariant
	by the inductive hypothesis (i.e., since they glue to subdivisions of $\Sigma_k$). It remains to consider the case when $\dim(\sigma)=0$,
	i.e., $\sigma$ is a point modulo a finite group $G$. In this case, let $S$ be the set consisting of
	$g\cdot e_i$ for $g\in G$ and $e_i$ the $i$-th standard basis vector in $\mathbb{Z}^{r}$. We define a subdivision of $T_{\mathbb{Z}^{r}}$
	by subdividing along the set of hyperplanes orthogonal to some $x\in S$. This is a $G$-invariant subdivision of $T_{\mathbb{Z}^{r}}$
	and a complete fan, hence we conclude.
\end{proof}

\begin{definition}
	\label{def:minimal_fan}
	Consider a tropical abelian variety $A:=\Hom(\underline{M},\mathbb{G}_{m,\trop})^{(M)}/Q_{\hom}(\underline{M})$
	over an RPC $\sigma$. Assume moreover we are given a second lattice $M'$.

	A minimal fan is a subset $S\subseteq N_{\sigma}\times N\times N'$
	such that there exists a stacky fan $\Delta$ such that $S=\bigcup_{\tau\in \Delta}\tau\cap \tilde{N}_{\tau}$.
	We say that $S$ is complete if for every $n\in ((N_{\sigma}\times N)^{(M)}\times N')\cap (\sigma\times N \times N')$ there exists $a\in \mathbb{N}_{>0}$
	such that $an\in S$.
\end{definition}

\begin{example}
	Let $\Delta$ be a stacky fan. Then $S=\bigcup_{\tau\in \Delta}\tau\cap \tilde{N}_{\tau}$
	is a minimal fan. $\Delta$ is complete if and only if $S$ is complete.
\end{example}

\begin{definition}
	Given a minimal fan, we define a stack $\Sigma_{S}$ over $\RPC$ by letting $\Hom(\tau,\Sigma_{S})$ be the
	set of morphisms $\tau\to A\times T_{N'}^{\trop}$ such that the image of $\tau\cap N_{\tau}$ in $N_{\sigma}\times N\times N'$
	lies in $S$ for every lift $\tau\to N_{\sigma,\mathbb{R}}\times N_{\mathbb{R}}\times N_{\mathbb{R}}'$
	of $\tau\to A\times T_{N'}^{\trop}$.
\end{definition}

\begin{proposition}
	Let $S$ be a minimal fan and $\Delta$ be as in the definition of a minimal fan. Then
	$\Sigma_S\cong M_{\Sigma_{\Delta}}$. In particular, $\Sigma_S$ is minimal.
\end{proposition}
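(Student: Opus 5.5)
We will show that $\Sigma_S$ and $M_{\Sigma_\Delta}$ are the same subfunctor of $X:=A\times T_{N'}^{\trop}$. Both are monomorphisms into $X$, so it suffices to check that for every RPC $\tau$ and every morphism $\phi:\tau\to X$, we have $\phi\in\Sigma_S(\tau)$ if and only if $\phi\in M_{\Sigma_\Delta}(\tau)$. Minimality of $\Sigma_S$ then follows from the lemma stating that $M_Y$ is minimal, applied with $Y=\Sigma_\Delta$, which is a partial tropical compactification by Proposition \ref{prop:cone_cx_iff_fan}.

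\textbf{Step 1 ($M_{\Sigma_\Delta}\subseteq \Sigma_S$).} Let $\phi\in M_{\Sigma_\Delta}(\tau)$. Then there is a subdivision $\Sigma'\to\tau$ together with a lift $\Sigma'\to\Sigma_\Delta$. Fix a lift $\tilde\phi:\tau\to N_{\sigma,\mathbb R}\times N_{\mathbb R}\times N'_{\mathbb R}$. Take $x\in\tau\cap N_\tau$, and choose a cone $\tau'$ of $\Sigma'$ containing $x$. Because $\Sigma'\to\tau$ is a subdivision, the lattice of $\tau'$ is $N_\tau\cap\Span(\tau')$, so $x\in \tau'\cap N_{\tau'}$. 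The lift $\tau'\to\Sigma_\Delta$ means that a suitable translate $T_m\tilde\phi|_{\tau'}$ sends $\tau'$ into some cone $\rho\in\Delta$ and $N_{\tau'}$ into $\tilde N_\rho$. Condition (4) lets us translate the cone back, so $\tilde\phi(x)\in T_{-m}\rho\cap \tilde N_{T_{-m}\rho}\subseteq S$. Since $S$ is $M$-stable, the condition is independent of the chosen lift. Hence $\phi\in\Sigma_S(\tau)$.

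\textbf{Step 2 ($\Sigma_S\subseteq M_{\Sigma_\Delta}$).} Let $\phi\in\Sigma_S(\tau)$ with lift $\tilde\phi$. We pull back $\Delta$ along $\tilde\phi$ and consider the cones $\tilde\phi^{-1}(\rho)$ for $\rho\in\Delta$. Only finitely many of these are relevant: the image $\tilde\phi(\tau)$ meets only finitely many cones of $\Delta$, by the finiteness argument of Lemma \ref{lem:diag_repr} (boundedness of the translates). By condition (1) these preimages form a fan in $\tau$. It remains to see that they cover $\tau$. Every lattice point $x\in\tau\cap N_\tau$ satisfies $\tilde\phi(x)\in S$, so $\tilde\phi(x)$ lies in some cone of $\Delta$. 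Rational points are dense in $\tau$, each is a positive multiple of a lattice point, and the union of finitely many closed cones is closed; therefore the preimages cover $\tau$. Give each cone $\tilde\phi^{-1}(\rho)$ the lattice $N_\tau\cap\Span$, which yields a subdivision $\Sigma'\to\tau$. For $\Sigma'$ to map to $\Sigma_\Delta$, we must check $\tilde\phi(N_\tau\cap\tilde\phi^{-1}(\rho))\subseteq\tilde N_\rho$. Lattice points in the relative interior of $\tilde\phi^{-1}(\rho)$ map into $S$, and they lie in the relative interior of the image inside $\rho$. Using conditions (1), (2) and (5), a point of $S$ lying in the relative interior of a cone $\rho'\subseteq\rho$ of $\Delta$ belongs to $\tilde N_{\rho'}$, and $\tilde N_{\rho'}=\tilde N_\rho\cap\Span(\rho')$ by compatibility. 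Lattice points generate the lattice of the cone, so the whole lattice maps into $\tilde N_\rho$. This gives the lift $\Sigma'\to\Sigma_\Delta$, hence $\phi\in M_{\Sigma_\Delta}(\tau)$.

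\textbf{Main obstacle.} The hard step is the lattice claim in Step 2. The issue is that a point of $S$ might lie in $\rho\cap \tilde N_{\rho''}$ for a \emph{different} cone $\rho''$ whose lattice is finer than that of $\rho$. We must exclude that $x\in\rho$ satisfies $x\in\tilde N_{\rho''}$ but $x\notin\tilde N_\rho$. Condition (2) handles this: $x$ lies in $\rho\cap\rho''$, which is a common face, and on that face the two lattices agree. This condition is exactly what the argument needs, so we will isolate it as a separate lemma. A secondary, routine point is the finiteness of the relevant cones; if the cited lemma is awkward to apply, we will replace it by first applying Proposition \ref{prop:cone_cx_iff_fan}(1) to $\tau\times_X\Sigma_\Delta$.
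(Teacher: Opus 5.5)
Your proposal is correct and follows the paper's proof: your Step 1 is the paper's inverse construction $M_{\Sigma_\Delta}\to\Sigma_S$, and your Step 2 is its argument that $\tau\times_X\Sigma_\Delta\to\tau$ is a subdivision, where you usefully spell out the appeal to condition (2) that the paper leaves implicit when it asserts that lattice points of $\tau$ landing in a cone $\rho$ land in $\tilde N_\rho$. One small correction is that the image $\tilde\phi(\tau)$ meets every cone of $\Delta$, since all of them contain the origin; what you actually need is that there are only finitely many distinct preimages, and this is exactly what your fallback provides, namely finiteness of the cone complex $\tau\times_X\Sigma_\Delta$ via Proposition \ref{prop:cone_cx_iff_fan} together with Lemma \ref{lem:diag_repr}, which is also what the paper uses.
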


\begin{proof}
	First note that for each $\tau\in \Delta$, we have $\tau\cap \tilde{N}_{\tau}\subseteq S$,
	so we get a morphism $\Sigma_{\Delta}\to \Sigma_S$. We first show that this morphism is represented
	by subdivisions: Let $\tau\to \Sigma_S$ be an arbitrary morphism. Since $\Sigma_S\to A\times T_{N'}^{\trop}$
	is a monomorphism, we have
	$$\tau\times_{\Sigma_S}\Sigma_{\Delta}\cong \tau\times_{A\times T_{N'}^{\trop}} \Sigma_{\Delta}.$$
	In particular, the latter is representable by finite cone complexes by our previous results.
	Pick a lift $\tau\to N_{\sigma}\times N\times N'$. By
	construction, this cone complex is the union of the preimages of $(\tau',\tilde{N}_{\tau'})$
	under this lift, for $\tau'\in \Delta$. Since $\tau\to A\times T_{N'}^{\trop}$ factors through
	$\Sigma_S$, the preimage of $\tau'\cap \tilde{N}_{\tau'}$ is equal to the restriction
	of $\tau\cap N_{\tau}$ to the preimage of $\tau'$. Since $\tau'$ spans $\tilde{N}_{\tau'}$, it follows that
	the preimage of $\tilde{N}_{\tau'}$ is the intersection of $N_{\tau}$ with the span of the preimage of
	$\tau'$. It remains to show that the preimages cover all of $\tau$. Indeed,
	if not, we find some $x\in \tau\cap N_{\tau}$ not contained in the preimage (since the preimage
	is a finite union of rational polyhedral cones). But $x$ must be mapped to some element of $S$ which is contained
	in some $\tau'\in \Delta$, a contradiction. It follows that $\tau\times_{\Sigma_S}\Sigma_{\Delta}$
	is a subdivision, which hence defines a map $\tau\to M_{\Sigma_{\Delta}}$. This construction is
	functorial, so we get a morphism $\Sigma_{S}\to M_{\Sigma_{\Delta}}$. To construct an inverse,
	assume we are given a morphism $\tau\to M_{\Sigma_{\Delta}}$ corresponding to a subdivision
	$\Sigma\to \tau$ and a morphism $\Sigma\to \Sigma_{\Delta}$. For each cone $\tau'\in\Sigma_{\Delta}$
	and any lift
	$$\tau'\to N_{\sigma,\mathbb{R}}\times N_{\mathbb{R}}\times N_{\mathbb{R}}',$$
	we know that $\tau'\cap N_{\tau'}\subseteq S$.
	Since the $\tau'$ cover all of $\tau$ and the lattices $N_{\tau'}$ agree with the restriction of $N_{\tau}$,
	it follows that $\tau\cap N_{\tau}$ maps to $S$ for any lift of $\tau\to A\times T_{N'}^{\trop}$.
	Hence we obtain a map $\tau\to \Sigma_S$. This construction is clearly inverse to the previous
	one and hence the claim follows.
\end{proof}

\begin{corollary}
	\label{cor:combinat_descr_min}
	In the above situation the following statements hold:
\begin{enumerate}
	\item Any minimal generalized partial tropical compactification is isomorphic to $\Sigma_{S}$ for some minimal fan $S$.
	\item If $\Sigma_{S}\cong \Sigma_{S'}$, then $S=S'$.
\end{enumerate}
\end{corollary}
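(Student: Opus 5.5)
For (1), let $Y$ be a minimal generalized partial tropical compactification of $A\times T_{N'}^{\trop}\to\sigma$. By definition there is a subdivision $Y'\to Y$ with $Y'$ a partial tropical compactification. Since subdivisions are birational equivalences, Theorem \ref{thm:pre_part_comp_descr} gives $Y\cong M_{Y'}$, the unique minimal element of the class. Next I want $Y'$ to be a finite cone complex, not merely a finite cone stack. Further subdivisions of $Y'$ do not change $M_{Y'}$, because the construction of $M_Y$ is invariant under subdivision. So I would refine $Y'$ to a representable one: a barycentric-type subdivision of each cone kills automorphisms and multiple self-gluings, as in the proof of Proposition \ref{prop:stacky_fan_exists}. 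With $Y'$ representable, Proposition \ref{prop:cone_cx_iff_fan}(4) gives $Y'\cong\Sigma_\Delta$ for a stacky fan $\Delta$. Put $S:=\bigcup_{\tau\in\Delta}\tau\cap\tilde N_\tau$, which is a minimal fan. The preceding proposition then gives $\Sigma_S\cong M_{\Sigma_\Delta}\cong M_{Y'}\cong Y$.

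\emph{Main obstacle in (1).} The step I expect to be delicate is producing a subdivision of a finite cone stack that is a finite cone complex and still a monomorphism over $A\times T_{N'}^{\trop}$. Monomorphicity is automatic, since a subdivision of $Y'$ composed with $Y'\to X$ is still a monomorphism. For representability I would first try the standard argument that the second barycentric subdivision of a cone stack is a cone complex.

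For (2), I would recover $S$ from the functor $\Sigma_S$. The claim is that a lattice point $n\in N_\sigma\times N\times N'$ lying in $\sigma\times N\times N'$ over the closed cone belongs to $S$ if and only if the morphism $\mathcal A^1=(\mathbb R_{\ge0},\mathbb Z)\to A\times T_{N'}^{\trop}$ sending $1\mapsto n$ (via Proposition \ref{prop:alternative_descr_trop_torus}) factors through $\Sigma_S$. Both directions use the definition of $\Sigma_S$.

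If $n\in S$, then the points $kn$ lie in the same cone $\tau$ of $\Delta$ with lattice $\tilde N_\tau$, so they lie in $S$. The other lifts of the morphism are translates $T_m$ of this one. They land in $S$ because $S$ is $M$-invariant, by condition (4) of Definition \ref{def:stacky_fan}. Hence the morphism factors through $\Sigma_S$. Conversely, if the morphism factors through $\Sigma_S$, then the image of $1$, namely $n$, lies in $S$.

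Now suppose $\Sigma_S\cong\Sigma_{S'}$. I read this as an isomorphism over $A\times T_{N'}^{\trop}$, which is forced anyway since both are monomorphisms to it. The characterization above is intrinsic to the subfunctor, so it gives $S=S'$. The only care needed is the case $n=0$ and points with trivial $N_\sigma$-component. These are handled the same way: we use the same morphism from $\mathcal A^1$, or from the zero cone.
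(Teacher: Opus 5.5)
Your plan for (1) follows the paper's architecture. You reduce to $Y\cong M_{Y'}$ with $Y'$ a partial tropical compactification, refine $Y'$ to a finite cone complex, apply Proposition \ref{prop:cone_cx_iff_fan}(4), and finish with $\Sigma_S\cong M_{\Sigma_\Delta}$. The gap is in the refinement step.

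A barycentric subdivision of all cones of $Y'$ inserts a new ray in the relative interior of every cone of dimension at least two. In particular it subdivides the cone $\sigma\times\{0\}\times\{0\}$ carrying the section over the zero-section, whenever $\dim\sigma\ge 2$ and this is a cone of $Y'$ (as it is for every $\Sigma_\Delta$).

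The section $\sigma\to Y'$ then no longer lifts to your refinement $Y''$. So $Y''$ is not a partial tropical compactification in the sense of Definition \ref{def:partial_compact}, and Proposition \ref{prop:cone_cx_iff_fan}(4) does not apply. The cones you would read off also violate condition (7) of Definition \ref{def:stacky_fan}, so your $S$ is not shown to be a minimal fan. Separately, the cone-complex property of the refinement, which you correctly flag as the crux, is only announced (``I would first try''), not proved.

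The paper avoids both problems with one move. It takes $\Delta_0$ from Proposition \ref{prop:stacky_fan_exists}, so that $\Sigma_{\Delta_0}\to A\times T_{N'}^{\trop}$ is a subdivision by a finite cone complex. Being a stacky fan, $\Delta_0$ contains the zero-section cone by condition (7). The paper then replaces $Y'$ by $Y''=\Sigma_{\Delta_0}\times_{A\times T_{N'}^{\trop}}Y'$, which has the following properties.
\begin{enumerate}
\item It is a subdivision of $Y'$, by base change.
\item It keeps the section, since both factors have one over the zero-section.
\item It is fibered in setoids, because it maps monomorphically to a sheaf. So there were never automorphisms to kill; this already holds for your $Y'$.
\item Two parallel face maps $\tau_1\rightrightarrows\tau_2$ in $Y''$ coincide, because the composite $\tau_2\to Y''\to\Sigma_{\Delta_0}$ is a monomorphism into a cone complex.
\end{enumerate}
If you want to keep your route, you must at least leave the faces of the zero-section cone unsubdivided and then actually verify the cone-complex property.

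Your argument for (2) is correct and differs from the paper's. The paper recovers $S$ as $\bigcup_{\tau\in\Delta}\tau\cap\tilde N_\tau$ for any stacky fan $\Delta$ with $\Sigma_\Delta\to\Sigma_S$ a subdivision. You instead recover $S$ pointwise by testing with maps $\mathcal A^1\to A\times T_{N'}^{\trop}$, using condition (4) to control the other lifts. This is more direct and needs no auxiliary subdivision.

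One correction to (2): the isomorphism must be taken over $A\times T_{N'}^{\trop}$, and this is not forced by both maps being monomorphisms. Inversion $\iota(x,y,z)=(x,-y,-z)$ satisfies $\iota T_m=T_{-m}\iota$, so $\iota(S)$ is again a minimal fan, and $\iota$ induces an abstract isomorphism $\Sigma_S\cong\Sigma_{\iota(S)}$. For $\sigma=0$, $M=0$, $N'=\mathbb{Z}$ and $S=2\mathbb{N}\cup(-\mathbb{N})$, one gets $\iota(S)\neq S$.
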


\begin{proof}
	We start with point (1).
	By Theorem \ref{thm:pre_part_comp_descr}, we know that any minimal generalized partial tropical compactification is isomorphic
	to some $M_Y$ for $Y$ a generalized partial tropical compactification. Since the formation of $M_Y$ is invariant under subdivisions,
	we may assume that $Y$ is a partial tropical compactification. Let $\Delta$ be as in Proposition \ref{prop:stacky_fan_exists},
	so $\Sigma_{\Delta}\to A\times T_{N'}^{\trop}$ is a subdivision. Then $Y':=\Sigma_{\Delta}\times_A Y\to Y$
	is a subdivision. We claim that $Y'$ is a finite cone complex. Indeed, the projection $Y'\to \Sigma_{\Delta}$
	is representable by finite cone stacks, hence so is $Y'$. Since $Y'\to A$ is a monomorphism and $A$ is fibered in setoids, it follows
	that $Y'$ is in fact representable by a finite cone space. Let $\tau_1,\tau_2\in Y'$ be cones. We must show that there is
	at most one morphism $\tau_1\to \tau_2$. Assume for a contradiction that there are two morphisms $\phi_i:\tau_1\to \tau_2$. The
	compositions of $\phi_i$ with the composition $\tau_2\to Y'\to \Sigma_{\Delta}$ must be equal since $\Sigma_{\Delta}$ is a finite
	cone complex. But $\tau_2\to Y'\to \Sigma_{\Delta}$ is a monomorphism, so $\phi_1=\phi_2$ and hence $Y'$ is a cone complex.
	By Proposition \ref{prop:cone_cx_iff_fan}, we have $Y'\cong \Sigma_{\Delta}$ for some stacky fan $\Delta$ and we
	see that $M_Y=M_{Y'}=\Sigma_{S}$ for
	$$S=\bigcup_{\tau\in \Delta}\tau\cap \tilde{N}_{\tau}.$$

	For point (2), note that if $\Sigma_{\Delta}\to \Sigma_S$ is any subdivision, then
	$S=\bigcup_{\tau\in \Delta}\tau\cap \tilde{N}_{\tau}$ and hence $S$ is uniquely determined by $\Sigma_S$.
\end{proof}

The above corollary gives a concrete combinatorial description for the set of birational equivalence classes of generalized
partial tropical compactifications over a rational polyhedral cone $\sigma$. We complete the classification in general
by means of the following result:

\begin{proposition}
	\label{prop:minimal_iff_minimal_on_cones}
	Let $X\to Z$ be a split family of tropical semiabelian varieties and let $Y\to X$ be a generalized partial tropical compactification.
	Then $Y$ is minimal if and only if for every morphism $\sigma\to Z$ from an RPC $\sigma$, the base-change
	$Y\times_{Z}\sigma\to X\times_Z \sigma$ is minimal.
\end{proposition}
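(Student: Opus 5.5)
The proof is a direct check of the descent property in the definition of minimality. The strategy is to reduce descent over $X$ to descent after base change to cones mapping to $Z$. Two facts make this work. First, morphisms into $Y$ over $X$ are sheaf-theoretic: $Y\to X$ is a monomorphism, so a morphism $Y_1\to Y$ over $X$ is just the property that $Y_1\to X$ factors through $Y$. Second, this property can be checked on $\tau$-points for every $\tau\in\RPC$.

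For the direction ``minimal $\Rightarrow$ minimal on cones'', suppose $Y$ is minimal and $\sigma\to Z$ is given. Let $Y_1\to X\times_Z\sigma$ be a morphism, $Y_2\to Y_1$ a subdivision, and $Y_2\to Y\times_Z\sigma$ a morphism over $X\times_Z\sigma$.
\begin{itemize}
\item Composing $Y_2\to Y\times_Z\sigma\to Y$ gives a map over $X$.
\item Minimality of $Y$ then descends it uniquely to $Y_1\to Y$ over $X$.
\item Pairing this map with $Y_1\to\sigma$ yields a map $Y_1\to Y\times_Z\sigma$ over $X\times_Z\sigma$.
\end{itemize}
Uniqueness is automatic because $Y\times_Z\sigma\to X\times_Z\sigma$ is a monomorphism. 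One should also note that $Y\times_Z\sigma$ is again a generalized partial tropical compactification of $X\times_Z\sigma\to\sigma$. This holds because subdivisions, monomorphisms, representability by finite cone stacks and the zero section are all stable under base change.

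For the converse, suppose each $Y\times_Z\sigma$ is minimal. Let $Y_1\to X$ be arbitrary, $Y_2\to Y_1$ a subdivision, and $f:Y_2\to Y$ a map over $X$. We must show that $Y_1\to X$ factors through $Y$. Since $Y\to X$ is a monomorphism, it suffices to show that every $\tau$-point $\tau\to Y_1$ lifts to $Y$, compatibly with morphisms in $\RPC$; compatibility is automatic from the monomorphism property. Given $\tau\to Y_1$, the composite $\tau\to Y_1\to X\to Z$ yields a cone $\sigma:=\tau$ over $Z$, so we take $\sigma\to Z$ to be this map.
\begin{itemize}
\item $\tau\times_{Y_1}Y_2\to\tau$ is a subdivision, because subdivisions are stable under base change.
\item It carries a map to $Y\times_Z\sigma$ over $X\times_Z\sigma$, obtained from $f$ and the projection to $\tau=\sigma$.
\item Apply minimality of $Y\times_Z\sigma$ to $Y_1':=\tau\to X\times_Z\sigma$ (the graph map) and the subdivision $Y_2':=\tau\times_{Y_1}Y_2$. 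This gives $\tau\to Y\times_Z\sigma$, hence $\tau\to Y$ lifting $\tau\to X$.
\end{itemize}
These lifts assemble into $Y_1\to Y$, and uniqueness again follows from $Y\to X$ being a monomorphism. The factorization $f=(Y_1\to Y)\circ(Y_2\to Y_1)$ also follows from the monomorphism property, since both sides agree after composing to $X$.

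The main obstacle is bookkeeping rather than substance. One must verify that the fibre product $\tau\times_{Y_1}Y_2$ is really a subdivision of $\tau$ in the sense required by the definition of minimality, which is phrased for an arbitrary stack $Y_1$ over $\RPC$. One must also check that the map $\tau\to X\times_Z\sigma$ makes sense; this needs the structure map $\sigma\to Z$ to be taken as the composite through $X$. If the definition of minimality had to be applied only to $Y_1$ lying over a fixed cone, the converse direction would use exactly that version, so nothing beyond base-change stability of subdivisions and the monomorphism property of $Y\to X$ should be needed.
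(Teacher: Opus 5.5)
Your proof is correct and is essentially the paper's argument: in the forward direction you compose to $Y$, descend by minimality of $Y$ and pair with $Y_1\to\sigma$; in the converse you pull the subdivision back along each $\tau$-point of $Y_1$, descend using minimality of $Y\times_Z\tau$, and project back to $Y$. The only differences are cosmetic. In the forward direction you verify the descent definition directly, whereas the paper phrases it via a $Y_1$ birationally equivalent to $Y\times_Z\sigma$. You also obtain uniqueness from $Y\to X$ being a monomorphism, rather than from the factorization through $Y\times_Z\sigma$.
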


\begin{proof}
	Assume first that $Y$ is minimal and let $Y_1$ be birationally equivalent to $Y\times_Z \sigma$. Let $Y_2$ be
	a subdivision of $Y_1$ such that there exists a morphism $Y_2\to Y\times_Z \sigma$. Composing with the projection
	onto the first factor gives a morphism $Y_2\to Y$ which descends to a morphism $Y_1\to Y$ by minimality of $Y$.
	Together with the structure morphism $Y_1\to \sigma$, this defines a morphism $Y_1\to Y\times_Z \sigma$ showing
	that $Y\times_Z \sigma$ is minimal.

	Conversely, assume that
	$$Y\times_Z\sigma\to X\times_Z \sigma$$
	is minimal for each $\sigma\in \RPC$. Let
	$Y_1\to X$ be a morphism and $Y_2\to Y_1$ a subdivision such that there exists a morphism
	$Y_2\to Y$. We must construct functorial maps $\Hom(\sigma,Y_1)\to \Hom(\sigma,Y)$. Let $\sigma\to Y_1$
	be a morphism and write $\Sigma:=\sigma\times_{Y_1}Y_2$. This is a subdivision of $\sigma$ by assumption.
	The projection onto the first factor and $\Sigma\to Y_2\to Y$ define a morphism $\Sigma\to Y\times_{Z}\sigma$
	over $\sigma\to X\times_Z \sigma$. By minimality of $Y\times_Z\sigma$ we obtain a morphism $\sigma\to Y\times_Z \sigma$.
	Projecting onto the first factor we obtain a morphism $\sigma\to Y$ as required. Functoriality is clear,
	so we obtain a morphism $Y_1\to Y$. To see that this morphism is unique, note that any morphism $\sigma\to Y$
	can be uniquely factored as $\sigma\to Y\times_Z \sigma\to Y$ where the second morphism is the projection onto
	the first factor. The first morphism is unique by the minimality of $Y\times_Z \sigma$ and hence we conclude.
\end{proof}

This result means that for a split family of tropical semiabelian varieties over a cone complex, the datum of a minimal
generalized partial tropical compactification is equivalent to the datum of a minimal fan over each cone compatible with face inclusions.
We end this section with the following result for complete minimal fans.

\begin{proposition}
	\label{prop:complete_iff_bir_to_cpt}
	A minimal fan is complete if and only $\Sigma_S$ is birationally equivalent to a tropical compactification of $X\to Z$.
\end{proposition}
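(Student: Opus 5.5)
The plan is to reduce both directions to the stacky fan $\Delta$ underlying $S$, and then use Lemma \ref{lem:complete_iff_cpt} together with the birational invariance of compactness, Lemma \ref{lem:cpt_bir_inv}. By Definition \ref{def:minimal_fan} we can choose a stacky fan $\Delta$ with $S=\bigcup_{\tau\in\Delta}\tau\cap\tilde N_\tau$. By the preceding proposition, $\Sigma_S\cong M_{\Sigma_\Delta}$, and the map $\Sigma_\Delta\to \Sigma_S$ is represented by subdivisions. Hence $\Sigma_S$ is birationally equivalent to $\Sigma_\Delta$, which is a partial tropical compactification by Proposition \ref{prop:cone_cx_iff_fan}(3). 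We work over a single cone $\sigma$, and if needed we pass to the general base $Z$ conewise, in the spirit of Proposition \ref{prop:minimal_iff_minimal_on_cones}.

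\textbf{Key step.} We need to check that $S$ is complete if and only if $\Delta$ is complete. If $\Delta$ is complete, take a lattice point $n$ of $(N_\sigma\times N)^{(M)}\times N'$ lying over $\sigma$. It lies in some $\tau\in\Delta$. Since $\tilde N_\tau$ has finite index in $\Span(\tau)\cap(N_\sigma\times N\times N')$, some positive multiple $an$ lies in $\tilde N_\tau$, so $an\in S$.

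Conversely, suppose $S$ is complete. Every rational point of the convex hull of $(N_\sigma\times N)^{(M)}\times N'$ is then a rational multiple of some $n\in S$, and hence lies in some cone of $\Delta$. The union of the cones of $\Delta$ is closed. This holds because $\Delta$ has finitely many $M$-orbits, and the translates are locally finite over $\sigma$ by the boundedness argument in Lemma \ref{lem:diag_repr}. Since rational points are dense, the union covers everything.

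I expect this local finiteness and closedness to be the main technical point. If it proves troublesome, we can work instead directly with lattice points via the description in Lemma \ref{lem:complete_iff_cpt}. That lemma only tests lattice points through maps $\mathcal A^1\to A\times T^{\trop}_{N'}$, and completeness in the sense of Definition \ref{def:stacky_fan} is likewise phrased on the set $(N_\sigma\times N)^{(M)}\times N'$. The equivalence then only needs that $an\in\tau$ implies $n\in\tau$, and this is immediate since cones are closed under positive scaling.

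\textbf{Conclusion.} If $S$ is complete, then $\Delta$ is complete. Lemma \ref{lem:complete_iff_cpt} then shows that $\Sigma_\Delta$ is a tropical compactification, and it is birational to $\Sigma_S$.

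Conversely, suppose $\Sigma_S$ is birational to some tropical compactification $Y$. Then $\Sigma_\Delta$ is birational to $Y$, because birational equivalence is transitive: subdivisions are stable under base change and composition. Lemma \ref{lem:cpt_bir_inv} then shows that $\Sigma_\Delta$ is a generalized tropical compactification. It has a subdivision $Y'\to\Sigma_\Delta$ that is surjective over $X$. Since subdivisions are surjective, $\Sigma_\Delta\to X$ itself is surjective. Lemma \ref{lem:complete_iff_cpt} gives that $\Delta$ is complete, and by the key step $S$ is complete.
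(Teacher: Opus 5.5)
Your proof is correct and is essentially the paper's. You pick $\Delta$ with $S=\bigcup_{\tau\in\Delta}\tau\cap\tilde N_\tau$, use that $\Sigma_\Delta\to\Sigma_S$ is a subdivision, and combine Lemma \ref{lem:cpt_bir_inv} with Lemma \ref{lem:complete_iff_cpt}; your only addition is an explicit check that $S$ is complete iff $\Delta$ is, which the paper takes for granted.

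For that check, keep only your lattice-point argument ($an\in\tau$ with $a>0$ forces $n\in\tau$), which suffices because both completeness notions are stated on lattice points. Drop the closedness/local-finiteness route, which fails over the boundary of $\sigma$: already for the Tate curve, infinitely many translates meet at the origin and the union of the cones is $\{x>0\}\cup\{0\}\subseteq\mathbb{R}^2$, which is not closed.
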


\begin{proof}
	Let $S$ be a minimal fan and write $S=\bigcup_{\tau\in \Delta}\tau\cap \tilde{N}_{\tau}$ for some stacky
	fan $\Delta$. If $\Sigma_S$ is birationally equivalent to a tropical compactification of $X\to Z$ then
	by Lemma \ref{lem:cpt_bir_inv} we have that $\Sigma_{\Delta}$ is a tropical compactification and hence $\Delta$
	is complete by Lemma \ref{lem:complete_iff_cpt}. In particular, $S$ is complete. Conversely, if $S$
	is complete, then $\Delta$ must be complete, so by Lemma \ref{lem:complete_iff_cpt} we have that
	$\Sigma_{\Delta}$ is a tropical compactification. Since
	$\Sigma_{\Delta}\to \Sigma_{S}$ is a subdivision, it follows
	that $\Sigma_S$ is birationally equivalent to the tropical
	compactification $\Sigma_{\Delta}$.
\end{proof}

\section{Translation to the geometric setting}
\label{sec:translate_geom}

In the following sections we will apply the results of the previous section
to solve the birational classification problem for specific classes
of log semiabelian varieties. For this, we must translate the combinatorial results to
the geometric setting. We fix a log semiabelian variety $X\to S$ over an irreducible log smooth
log scheme $S$ and write
$[X/G_X]=\mathcal{A}_{\mathcal{X}}\times_{\mathcal{A}_{S}}S$ as in Proposition \ref{prop:A_S_exists}.

\begin{lemma}
	Let $\mathcal{Y}\to \mathcal{X}$ be a generalized partial tropical compactification. Then $\mathcal{A}_{\mathcal{Y}}\times_{\mathcal{A}_{\mathcal{X}}}X\to X$
	is a generalized partial log compactification of $G_X\to S$.
\end{lemma}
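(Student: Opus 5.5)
We will verify the conditions of Definition \ref{def:generalized_log_cpt} directly. Write $Y:=\mathcal{A}_{\mathcal{Y}}\times_{\mathcal{A}_{\mathcal{X}}}X$. By the definition of a generalized partial tropical compactification, there is a subdivision $\mathcal{Y}'\to\mathcal{Y}$ such that $\mathcal{Y}'\to\mathcal{X}$ is a partial tropical compactification. Set $Y':=\mathcal{A}_{\mathcal{Y}'}\times_{\mathcal{A}_{\mathcal{X}}}X$; this is a combinatorial partial log compactification. By Proposition \ref{prop:combinat_cpt_is_cpt} it is therefore a partial log compactification of $G_X\to S$.

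\textbf{Monomorphism and equivariance.} That $Y\to X$ is a monomorphism follows as in Step 1 of the proof of Proposition \ref{prop:mono_base_change}. That step only used Corollary \ref{cor:fib_prods_preserved} together with $\mathcal{Y}\times_{\mathcal{X}}\mathcal{Y}\cong\mathcal{Y}$, so it needs no representability by finite cone stacks. Pulling back along $X\to\mathcal{A}_{\mathcal{X}}$ preserves monomorphisms. For $G_X$-equivariance, observe that $X\to\mathcal{A}_{\mathcal{X}}$ factors through $[X/G_X]$, hence is $G_X$-invariant. Consequently $G_X$ acts on the fiber product $Y$ through the second factor, and $Y\to X$ is equivariant.

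\textbf{The map $Y'\to Y$.} It remains to show that $Y'\to Y$ is universally surjective, proper, log étale and representable by log algebraic spaces of finite type. Since $Y'=\mathcal{A}_{\mathcal{Y}'}\times_{\mathcal{A}_{\mathcal{Y}}}Y$, each of these properties is stable under base change, so it suffices to prove them for $\mathcal{A}_{\mathcal{Y}'}\to\mathcal{A}_{\mathcal{Y}}$. Take a log scheme $W\to\mathcal{A}_{\mathcal{Y}}$. Strict étale locally, Lemma \ref{lem:loc_factorization} together with a chart gives a factorization $W\to X_\sigma\to\mathcal{A}_\sigma\to\mathcal{A}_{\mathcal{Y}}$. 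By Corollary \ref{cor:fib_prods_preserved} the base change of $\mathcal{A}_{\mathcal{Y}'}$ over $\mathcal{A}_\sigma$ is $\mathcal{A}_{\sigma\times_{\mathcal{Y}}\mathcal{Y}'}$, and $\sigma\times_{\mathcal{Y}}\mathcal{Y}'$ is a subdivision $\Delta$ of $\sigma$. Hence the pullback to $X_\sigma$ is the toric morphism $X_\Delta\to X_\sigma$ induced by a subdivision with unchanged lattices. This morphism is proper, log étale, surjective and of finite type, and it is a log modification. This is exactly the argument in the first half of the proof of Proposition \ref{prop:bir_iff_subdiv}. Universal surjectivity in the sense of \cite[Proposition 2.2.4.2]{TheLogarithmicMolcho2022} holds for log modifications and is strict étale local on the target. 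Each of the remaining properties descends along strict étale covers (representability by \cite[Theorem 1.1 (3)]{fortman2025descentalgebraicstacks}, properness by \cite[Tag 02L1]{stacks-project}).

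\textbf{Main obstacle.} I expect the delicate point to be that $Y$ need not itself be representable over $X$ by log DM stacks, since $\mathcal{Y}\to\mathcal{X}$ need not be representable by finite cone stacks. One must therefore check that every fiber product used above is a fiber product of stacks on $\LogSch/S$, without appealing to Proposition \ref{prop:mono_base_change} for $\mathcal{Y}$. To handle this I would work only with base changes along maps from log schemes $W\to Y$. Such a map induces $W\to\mathcal{A}_{\mathcal{Y}}$, and the reduction to a single cone $\sigma$ above applies to it. There the relevant object is the honest subdivision $\Delta$, so the toric computation goes through without any finiteness assumption on $\mathcal{Y}$ itself.
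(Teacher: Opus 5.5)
Your proof is correct and follows the same route as the paper: reduce to the morphism $\mathcal{A}_{\mathcal{Y}'}\to\mathcal{A}_{\mathcal{Y}}$, factor a test log scheme through $X_\sigma\to\mathcal{A}_\sigma$ using Lemma \ref{lem:loc_factorization}, and identify the base change, via Corollary \ref{cor:fib_prods_preserved}, with the proper toric morphism $X_{\sigma\times_{\mathcal{Y}}\mathcal{Y}'}\to X_\sigma$. The paper checks only properness and representability this way; you also verify the monomorphism, equivariance, log étaleness, universal surjectivity and finite-type conditions, which the paper leaves implicit.
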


\begin{proof}
	Pick a subdivision $\mathcal{Y}'\to \mathcal{Y}$ such that $\mathcal{Y}'$ is a partial tropical compactification.
	We need only show that $\mathcal{A}_{\mathcal{Y}'}\to \mathcal{A}_{\mathcal{Y}}$ is proper and representable by
	log algebraic spaces. Consider a morphism $W\to \mathcal{A}_{\mathcal{Y}}$ from
	a log scheme $W$. By Lemma \ref{lem:loc_factorization}, strict étale locally on $W$ we have a
	factorization $W\to X_{\sigma}\to \mathcal{A}_{\sigma}\to \mathcal{A}_{\mathcal{Y}}$
	for some RPC $\sigma$. By Corollary \ref{cor:fib_prods_preserved},
	it holds that $\mathcal{A}_{\sigma}\times_{\mathcal{A}_{\mathcal{Y}}}\mathcal{A}_{\mathcal{Y}'}\cong \mathcal{A}_{\sigma\times_{\mathcal{Y}}\mathcal{Y}'}$.
	By assumption, $\sigma\times_{\mathcal{Y}}\mathcal{Y}'\to \sigma$ is a subdivision. So $X_{\sigma}\times_{\mathcal{A}_{\mathcal{Y}}}\mathcal{A}_{\mathcal{Y}'}\cong X_{\sigma\times_{\mathcal{Y}}\mathcal{Y}'}\to X_{\sigma}$
	is a proper morphism of toric varieties. In particular, the pullback to $W$ is a proper morphism of log schemes.
\end{proof}

Our next goal is to prove that the inclusion of generalized partial log compactifications does not change the notion
of birational equivalence for partial log compactifications. For this, we require the following lemma:

\begin{lemma}
	\label{lem:always_bir_to_DM}
	Let $Y\to X$ be a
	partial log compactification of $G_X\to S$. Then there exists
	a proper morphism $Y'\to Y$ of partial log
	compactifications representable by log algebraic spaces
	such that $Y'$ is representable by a log DM stack.
\end{lemma}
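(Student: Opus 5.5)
Translate the statement into combinatorics via the tropical correspondence, produce a subdivision by a finite cone complex there, and then realize it geometrically.

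\textbf{Step 1: pass to the tropical side.} By Theorem~\ref{thm:trop_cpt_open_subsets} together with Lemma~\ref{lem:equiv_cat}, the partial log compactification $Y$ is an open substack of a combinatorial partial log compactification $X\times_{\mathcal{A}_{\mathcal{X}}}\mathcal{A}_{\mathcal{Y}}$. Here $\mathcal{Y}\to\mathcal{X}$ is a partial tropical compactification, the initial one through which $Y$ factors.

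\textbf{Step 2: subdivide to a finite cone complex.} Choose, via Proposition~\ref{prop:stacky_fan_exists}, a tropical compactification $\Sigma\to\mathcal{X}$ that is representable by subdivisions. Over each cone, $\Sigma$ is a stacky fan with full lattices (Proposition~\ref{prop:cone_cx_iff_fan}), hence a finite cone complex. Set
\[
\mathcal{Y}':=\Sigma\times_{\mathcal{X}}\mathcal{Y}.
\]
The projection $\mathcal{Y}'\to\mathcal{Y}$ is a base change of a subdivision, so it is itself a subdivision. The composite $\mathcal{Y}'\to\mathcal{X}$ is a monomorphism representable by finite cone stacks, and it carries a zero section because both factors do. So $\mathcal{Y}'$ is a partial tropical compactification. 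As in the proof of Corollary~\ref{cor:combinat_descr_min}, $\mathcal{Y}'$ is a finite cone complex: it is representable over the cone complex $\Sigma$, it is fibered in setoids because it is a monomorphism into $\mathcal{X}$, and any two parallel morphisms between its cones coincide because they agree after composing to $\Sigma$.

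\textbf{Step 3: realize geometrically and restrict to $Y$.} Lemma~\ref{lem:repr_when_cone_cx} gives that $X\times_{\mathcal{A}_{\mathcal{X}}}\mathcal{A}_{\mathcal{Y}'}$ is representable by a log DM stack. Proposition~\ref{prop:bir_iff_subdiv} gives that $\mathcal{A}_{\mathcal{Y}'}\to\mathcal{A}_{\mathcal{Y}}$ is proper and representable by log algebraic spaces. Now define
\[
Y':=Y\times_{X\times_{\mathcal{A}_{\mathcal{X}}}\mathcal{A}_{\mathcal{Y}}}\bigl(X\times_{\mathcal{A}_{\mathcal{X}}}\mathcal{A}_{\mathcal{Y}'}\bigr).
\]
It is open in a log DM stack, hence a log DM stack. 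The map $Y'\to Y$ is a base change of a proper map representable by log algebraic spaces, so it has both properties.

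\textbf{Step 4: check $Y'$ is a partial log compactification.} $Y'$ is an open substack of a combinatorial partial log compactification, which is a partial log compactification by Proposition~\ref{prop:combinat_cpt_is_cpt}. Restricting to an open substack preserves $G_X$-equivariance, because $Y$ is $G_X$-stable and the preimage of a stable open is stable. It also preserves log étaleness, the monomorphism property, and finite presentation. The zero section lifts because the section $S\to Y$ and the section $S\to X\times_{\mathcal{A}_{\mathcal{X}}}\mathcal{A}_{\mathcal{Y}'}$ coming from $\mathcal{Y}'$ lie over the same point. Finally, $Y'\to Y$ is $G_X$-equivariant over $X$, so it is a morphism of partial log compactifications.

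\textbf{Main obstacle.} I expect Step 2 to be the most delicate: verifying that the fiber product $\Sigma\times_{\mathcal{X}}\mathcal{Y}$ is genuinely a cone complex, not just a cone stack, when the base $\Sigma_{\mathcal{A}_S}$ is a stack. I would handle it cone by cone using Proposition~\ref{prop:minimal_iff_minimal_on_cones}-style reductions. If global representability by a cone complex fails, I would instead apply Lemma~\ref{lem:repr_when_cone_cx} strict étale locally on $S$, since representability is local by \cite[Theorem 1.1 (3)]{fortman2025descentalgebraicstacks}.
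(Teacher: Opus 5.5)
Your proposal is correct and follows the paper's proof essentially step for step. Both realize $Y$ as open in $X\times_{\mathcal{A}_{\mathcal{X}}}\mathcal{A}_{\mathcal{Y}}$, intersect $\mathcal{Y}$ with the tropical compactification from Proposition~\ref{prop:stacky_fan_exists}, argue the result is a cone complex, and conclude with Lemma~\ref{lem:repr_when_cone_cx} and Proposition~\ref{prop:bir_iff_subdiv}. The paper replaces $Y$ by $X\times_{\mathcal{A}_{\mathcal{X}}}\mathcal{A}_{\mathcal{Y}}$ at the start instead of restricting at the end, and it gets the setoid property from the monomorphism $\mathcal{Y}'\to\mathcal{Y}_0$ rather than from $\mathcal{Y}'\to\mathcal{X}$; the stackiness of $\Sigma_{\mathcal{A}_S}$ you flag bears on both phrasings alike, and your fallback of working over cones, strict étale locally on $S$, is the reduction already built into the proof of Lemma~\ref{lem:repr_when_cone_cx}.
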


\begin{proof}
	Pick a partial tropical compactification $\mathcal{Y}$, such that the corresponding combinatorial partial log compactification is as in Theorem \ref{thm:trop_corresp}, point (1).
	Since open immersions are representable by log DM stacks and properness is stable under base-change, it suffices to replace
	$Y$ with $X\times_{\mathcal{A}_{\mathcal{X}}}\mathcal{A}_{\mathcal{Y}}$.
	Let $\mathcal{Y}_0\to \mathcal{X}$ be a tropical compactification as in Proposition \ref{prop:stacky_fan_exists}.
	Then by Proposition \ref{prop:bir_iff_subdiv}, Corollary
	\ref{cor:fib_prods_preserved} and base-change, the morphism
	$$X\times_{\mathcal{A}_{\mathcal{X}}}\mathcal{A}_{\mathcal{Y}_0\times_{\mathcal{X}}\mathcal{Y}}\to X\times_{\mathcal{A}_{\mathcal{X}}}\mathcal{A}_{\mathcal{Y}}$$
	is proper and representable by log algebraic spaces.
	Let $\mathcal{Y}':=\mathcal{Y}_0\times_{\mathcal{X}}\mathcal{Y}$.
	Since $\mathcal{Y}\to \mathcal{X}$ is representable by finite cone stacks, so
	is $\mathcal{Y}'\to \mathcal{Y}_0$. In particular, $\mathcal{Y}'$ is a finite cone stack.
	The map $\mathcal{Y}'\to \mathcal{Y}_0$ is a monomorphism by base-change, hence since $\mathcal{Y}_0$
	is fibered in setoids, so is $\mathcal{Y}'$, i.e., $\mathcal{Y}'$ is a cone space. Finally, since
	$\mathcal{Y}'\to \mathcal{Y}_0$ is a monomorphism and $\mathcal{Y}_0$ is a cone complex, we see that
	there can exist at most one morphism between any pair of cones in $\mathcal{Y}'$, showing that $\mathcal{Y}'$
	is a cone complex.
	By Lemma \ref{lem:repr_when_cone_cx}, we know that
	$X\times_{\mathcal{A}_{\mathcal{X}}}\mathcal{A}_{\mathcal{Y}'}$ is representable
	by a log DM stack. Hence $Y':=X\times_{\mathcal{A}_{\mathcal{X}}}\mathcal{A}_{\mathcal{Y}'}$
	satisfies the conditions of the lemma.
\end{proof}

\begin{corollary}
	\label{cor:bir_can_assume_repr}
	Let $X\to S$ be a log semiabelian variety and $Y_1,Y_2\to X$ two
	generalized partial log compactifications of $G_X\to S$. Then $Y_1$ and $Y_2$ are birationally
	equivalent if and only if there exists a diagram:
	$$\begin{tikzcd}
		&Y_3\arrow[dr]\arrow[dl]&\\
		Y_1&&Y_2
	\end{tikzcd}$$
	of proper morphisms of partial log compactifications of $G_X\to S$
	representable by log algebraic spaces, such that $Y_3$ is representable
	by a log DM stack.
\end{corollary}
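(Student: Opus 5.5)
The ``if'' direction is immediate: such a diagram is in particular a birational equivalence in the sense of the definition, because a partial log compactification is also a generalized one. So the real content is the ``only if'' direction, and the plan is to replace an arbitrary roof by one whose apex is an honest partial log compactification representable by a log DM stack.

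Suppose we are given a birational equivalence $Y_1\leftarrow Y_3'\rightarrow Y_2$ of generalized partial log compactifications. Both legs are proper and representable by log algebraic spaces.

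\textbf{Step 1: pass to a genuine partial log compactification.} By Definition \ref{def:generalized_log_cpt} there is a partial log compactification $Y_3''$ and a morphism $Y_3''\to Y_3'$ over $X$ which is proper, universally surjective, log étale and representable by log algebraic spaces of finite type. Since $Y_3'\to X$ is a monomorphism, this map is automatically $G_X$-equivariant: it is a morphism over $X$ between objects that are monomorphisms into $X$. It is therefore a morphism of generalized partial log compactifications.

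Composing with the two legs gives maps $Y_3''\to Y_i$. These are proper, and representable by log algebraic spaces, because both properties are stable under composition. The compositions of representable morphisms should be justified via the diagonal, or simply by base-changing to a log scheme $Z\to Y_i$ and using that each factor is representable.

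\textbf{Step 2: apply Lemma \ref{lem:always_bir_to_DM}.} Applying this lemma to $Y_3''$ produces a proper morphism $Y_3\to Y_3''$ of partial log compactifications, representable by log algebraic spaces, with $Y_3$ representable by a log DM stack. Composing once more gives proper morphisms $Y_3\to Y_i$ that are representable by log algebraic spaces. This is the desired diagram.

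\textbf{Main obstacle.} The only delicate point is that the legs $Y_3\to Y_i$ must be \emph{morphisms of partial log compactifications}, yet the targets $Y_i$ are only generalized ones. I would read the statement as requiring the apex $Y_3$ to be a partial log compactification and the maps to be morphisms over $X$. Under that reading, $G_X$-equivariance and compatibility with the sections are automatic from the monomorphism property, by Lemma \ref{lem:morphs_commute_sec}. The remaining check is the stability of ``proper and representable by log algebraic spaces'' under composition. I would verify this by base change to an arbitrary fs log scheme and invoking the corresponding statement for algebraic spaces.
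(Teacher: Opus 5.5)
Your proposal is correct and follows essentially the same argument as the paper. Both replace the apex by a genuine partial log compactification using Definition \ref{def:generalized_log_cpt}, then apply Lemma \ref{lem:always_bir_to_DM} and compose the legs. Your remarks on automatic $G_X$-equivariance and on composing proper morphisms that are representable by log algebraic spaces simply spell out details the paper leaves implicit.
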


\begin{proof}
	``If'' is clear. For ``only if'', assume that $Y_1$ and $Y_2$ are birationally equivalent
	and let
	$$\begin{tikzcd}
		&Y_3\arrow[dr]\arrow[dl]&\\
		Y_1&&Y_2
	\end{tikzcd}$$
	be a diagram of proper morphisms of generalized partial log compactifications of $G_X\to S$
	representable by log algebraic spaces. By definition of a generalized partial log compactification, we may find a proper morphism
	$Y_3'\to Y_3$ of generalized partial log compactifications representable by log algebraic
	spaces with $Y_3'$ a partial log compactification. By Lemma \ref{lem:always_bir_to_DM}
	we can find a proper morphism $Y_3''\to Y_3'$ representable by log algebraic spaces
	such that $Y_3''$ is representable by log DM stacks. Hence replacing $Y_3$ with $Y_3''$, we conclude.
\end{proof}

\begin{proposition}
	Let $\mathcal{Y}\to \mathcal{X}$ be a minimal generalized partial tropical compactification. Then $\mathcal{A}_{\mathcal{Y}}\times_{\mathcal{A}_{\mathcal{X}}}X$
	is a minimal generalized partial log compactification.
\end{proposition}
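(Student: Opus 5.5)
Write $Y:=\mathcal{A}_{\mathcal{Y}}\times_{\mathcal{A}_{\mathcal{X}}}X$. By the first lemma of this section, $Y$ is already a generalized partial log compactification, so only minimality in the sense of Definition \ref{def:minimal_cpt} remains. The plan is to transport descent along log modifications on the geometric side to descent along subdivisions on the tropical side, where minimality of $\mathcal{Y}$ is available.

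\textbf{Step 1 (reduce to the Artin fan).} Let $T'\to T$ be a log modification of log algebraic spaces and $g':T'\to Y$ a morphism. Composing with $Y\to X$ gives $T'\to X$. Since $X$ is a sheaf for the relevant topology, I expect this to descend to $T\to X$; if that is not automatic, I would simply work with morphisms over $X$, as in the setting of Proposition \ref{prop:exists_unique_minimal_log_cpt}. It therefore suffices to show that $T\times_{X}Y\to T$ admits a section whenever its pullback to $T'$ does. Because $Y\to X$ is the base change of $\mathcal{A}_{\mathcal{Y}}\to\mathcal{A}_{\mathcal{X}}$, this reduces to the following claim: for $T\to\mathcal{A}_{\mathcal{X}}$, a lift $T'\to\mathcal{A}_{\mathcal{Y}}$ descends uniquely to $T\to\mathcal{A}_{\mathcal{Y}}$. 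Uniqueness is automatic because $\mathcal{A}_{\mathcal{Y}}\to\mathcal{A}_{\mathcal{X}}$ is a monomorphism (Step 1 of Proposition \ref{prop:mono_base_change}, via Corollary \ref{cor:fib_prods_preserved}). Existence is strict étale local on $T$, since $\mathcal{A}_{\mathcal{Y}}$ is a stack and the local lifts glue by uniqueness.

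\textbf{Step 2 (localize to a cone).} By Lemma \ref{lem:loc_factorization} we may assume that $T\to\mathcal{A}_{\mathcal{X}}$ factors as $T\to\mathcal{A}_\sigma\to\mathcal{A}_{\mathcal{X}}$ with $T\to\mathcal{A}_\sigma$ strict, and, shrinking further, that $T$ is atomic. I then use the standard fact that log modifications are pulled back from subdivisions of the Artin fan. Concretely, strict étale locally $T'\cong T\times_{\mathcal{A}_\sigma}\mathcal{A}_\Sigma$ for a subdivision $\Sigma\to\sigma$; this follows from \cite{SkeletonsAndFAbramo2015}, or from Theorem \ref{thm:trop_cpt_open_subsets} applied with $\mathcal{X}=\sigma$. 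Since $T\to\mathcal{A}_\sigma$ is strict and its image meets the closed point, a lift $T'\to\mathcal{A}_{\mathcal{Y}}$ over $\mathcal{A}_{\mathcal{X}}$ should come from a lift $\Sigma\to\mathcal{Y}$ over $\mathcal{X}$. To see this, note that the lift factors through $T'\times_{\mathcal{A}_{\mathcal{X}}}\mathcal{A}_{\mathcal{Y}}=T'\times_{\mathcal{A}_\Sigma}\mathcal{A}_{\Sigma\times_{\mathcal{X}}\mathcal{Y}}$. The section $T'\to T'\times_{\mathcal{A}_\Sigma}\mathcal{A}_{\Sigma\times_{\mathcal{X}}\mathcal{Y}}$ hits every stratum. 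Because $T'\to\mathcal{A}_\Sigma$ is surjective and strict, this forces the stacky fan $\Sigma\times_{\mathcal{X}}\mathcal{Y}\to\Sigma$ (Lemma \ref{lem:mono_impl_fan}) to contain every cone of $\Sigma$ with its full lattice. It is therefore an isomorphism, which gives the lift $\Sigma\to\mathcal{Y}$.

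\textbf{Step 3 (apply tropical minimality).} Minimality of $\mathcal{Y}$, applied to the subdivision $\Sigma\to\sigma$ over $\mathcal{X}$, descends $\Sigma\to\mathcal{Y}$ to $\sigma\to\mathcal{Y}$. Applying Theorem \ref{thm:left_inv_geom} yields $\mathcal{A}_\sigma\to\mathcal{A}_{\mathcal{Y}}$, and composing with $T\to\mathcal{A}_\sigma$ gives the desired $T\to\mathcal{A}_{\mathcal{Y}}$. Its pullback to $T'$ agrees with the given lift by the monomorphism property.

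\textbf{Main obstacle.} The hard step is Step 2, specifically the local identification of an arbitrary log modification $T'\to T$ with the pullback of a subdivision of $\sigma$, together with the surjectivity argument that upgrades a map on log spaces to a map of cone complexes. I would try first to run it through Theorem \ref{thm:trop_cpt_open_subsets}(2), taking $S=T$.
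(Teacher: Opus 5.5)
Your overall architecture is the paper's. You reduce to lifting along the monomorphism $\mathcal{A}_{\mathcal{Y}}\to\mathcal{A}_{\mathcal{X}}$, localize to a strict chart $T\to\mathcal{A}_\sigma$ meeting the closed point, write $T'=T\times_{\mathcal{A}_\sigma}\mathcal{A}_\Sigma$, produce $\Sigma\to\mathcal{Y}$, and descend by tropical minimality. Steps 1 and 3 are fine, but Step 2 rests on two justifications that do not work as written.

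The first concerns the identification of an arbitrary log modification $T'\to T$ with the pullback of a subdivision. Theorem \ref{thm:trop_cpt_open_subsets} does not supply it. That theorem and the lemma before it assume a log smooth base and use that assumption, whereas in Definition \ref{def:minimal_cpt} $T$ is an arbitrary log algebraic space. Part (2) of the theorem moreover presupposes that $T'$ is already of combinatorial form, so your suggested route is circular. You do not need the identification for arbitrary log modifications. The paper invokes \cite[Remark 2.7 (2)]{Grothendieck_to_Hu_Xi_2025}: strict étale locally on $T$ there is a log blowup $T''\to T'$ such that $T''\to T$ is a log blowup. Since $\mathcal{A}_{\mathcal{Y}}\to\mathcal{A}_{\mathcal{X}}$ is a monomorphism, descent along $T''\to T$ gives descent along $T'\to T$: pull the lift back to $T''$, descend, and compare on $T'$ by uniqueness. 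For a log blowup of an atomic $T$ with strict chart, one does have $T''\cong T\times_{\mathcal{A}_\sigma}\mathcal{A}_\Sigma$ for a subdivision $\Sigma$ of $\sigma$.

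The second concerns lifting $\Sigma\to\sigma\to\mathcal{X}$ to $\mathcal{Y}$. You apply Lemma \ref{lem:mono_impl_fan} to $\Sigma\times_{\mathcal{X}}\mathcal{Y}\to\Sigma$, but that lemma requires a finite cone stack. Here $\mathcal{Y}$ is only a generalized partial tropical compactification, and minimal ones such as $\Sigma_S$ for a minimal fan $S$ are typically not representable by cone stacks; the middle object in the introduction's picture is an example. So there is no stacky fan whose cones your section could ``hit''. Also, $T'\to\mathcal{A}_\Sigma$ need not be surjective, because $T\to\mathcal{A}_\sigma$ need not be (take $T$ a log point).

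What works is the paper's stalk argument, cone by cone:
\begin{enumerate}
\item Every maximal cone $\sigma'$ of $\Sigma$ has interior inside the interior of $\sigma$. Hence there is $t'\in T'$ over a closed-stratum point of $T$ with $\overline{M}_{T',t'}\cong\sigma'^{\vee}\cap M_{\sigma'}$.
\item Take a strict étale neighbourhood $U$ on which the lift is represented by an element of $\Hom(\overline{M}_U(U),\overline{\mathcal{Y}})$. Restricting to the stalk at a point $u$ over $t'$ gives $\sigma'\to\mathcal{Y}$ lying over $\sigma'\to\sigma\to\mathcal{X}$.
\item These lifts on the maximal cones glue to $\Sigma\to\mathcal{Y}$, because $\mathcal{Y}\to\mathcal{X}$ is a monomorphism.
\end{enumerate}
With both repairs in place, your Step 3 concludes exactly as in the paper.
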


\begin{proof}
	We must show that $\mathcal{A}_{\mathcal{Y}}\to \mathcal{A}_{\mathcal{X}}$ satisfies descent with respect to universally surjective,
	proper log étale monomorphisms.
	Let $T'\to T$ be such a morphism. Since $\mathcal{A}_{\mathcal{Y}}$
	satisfies strict étale descent, this is strict étale local on $T$.
	By \cite[Remark 2.7 (2)]{Grothendieck_to_Hu_Xi_2025}, we may strict
	étale locally find a log blowup $T''\to T'$ such that the composition
	$T''\to T'\to T$ is a log blowup. Hence it suffices to show that $\mathcal{A}_{\mathcal{Y}}\to \mathcal{A}_{\mathcal{X}}$
	satisfies descent with respect to log blowups. Since $\mathcal{A}_{\mathcal{Y}}\to \mathcal{A}_{\mathcal{X}}$ is
	a monomorphism, for any morphism $T\to \mathcal{A}_{\mathcal{X}}$, there exists at most one lift
	$T\to \mathcal{A}_{\mathcal{Y}}$. Hence we reduce to the following claim:

	Let $T'\to T$ be a log blowup and $f:T\to \mathcal{A}_{\mathcal{X}}$ a morphism. Assume that the composition
	$T'\to T\to \mathcal{A}_{\mathcal{X}}$ lifts to a morphism $T'\to \mathcal{A}_{\mathcal{Y}}$. Then
	$f$ lifts to a morphism $T\to \mathcal{A}_{\mathcal{Y}}$.

	As before, this statement is strict étale local on $T$, so we may assume that there exists a factorization
	$T\to \mathcal{A}_{\sigma}\to \mathcal{A}_{\mathcal{X}}$ for some strict morphism $T\to \mathcal{A}_{\sigma}$
	with $\sigma\in \RPC$. In this case, $T'$ is given as $T\times_{\mathcal{A}_{\sigma}}\mathcal{A}_{\Sigma}$ for
	some subdivision $\Sigma$ of $\sigma$. Replacing $\sigma$ with a face if necessary, we may assume that the image
	of $T\to \mathcal{A}_{\sigma}$ contains the unique closed point (corresponding to the maximal cone), i.e., that
	there exists $t\in T$ with $\overline{M}_{T,t}\cong \sigma^{\vee}\cap M_{\sigma}$. We claim that in this case
	the lift $T'\to \mathcal{A}_{\mathcal{Y}}$ factors as $T'\to \mathcal{A}_{\Sigma}\to \mathcal{A}_{\mathcal{Y}}$.
	Note that any such factorization is unique up to unique isomorphism, since the composition
	$\mathcal{A}_{\Sigma}\to \mathcal{A}_{\mathcal{Y}}\to \mathcal{A}_{\mathcal{X}}$ agrees with
	$\mathcal{A}_{\Sigma}\to \mathcal{A}_{\sigma}\to \mathcal{A}_{\mathcal{X}}$ and $\mathcal{A}_{\mathcal{Y}}\to \mathcal{A}_{\mathcal{X}}$
	is a monomorphism. Hence it suffices to show that the morphism $T'':=T'\times_{\mathcal{A}_{\Sigma}}\mathcal{A}_{\sigma'}\to \mathcal{A}_{\mathcal{Y}}$
	factors through a morphism $\mathcal{A}_{\sigma'}\to \mathcal{A}_{\mathcal{Y}}$ for any maximal cone $\sigma'$ in $\Sigma$.
	Note that the unique maximal point in $\mathcal{A}_{\sigma'}$ maps to the maximal point in $\mathcal{A}_{\sigma}$ and since
	the fiber of $T\to \mathcal{A}_{\sigma}$ over this point is non-empty, it follows that there exists $t'\in T'$ over $t$ with
	$\overline{M}_{T',t'}\cong \sigma'^{\vee}\cap M_{\sigma'}$. Let $U\to T'$ be a strict étale cover such that the morphism
	$T'\to \mathcal{A}_{\mathcal{Y}}$ is represented by a morphism $U\to \mathcal{A}_{\mathcal{Y}}'$. Let $u\in U$ be a point
	over $t'$. The composition of the morphism $\overline{M}_{U,u}\to \overline{M}_U(U)$ in $\ShpMon^{\op}$ induced by the restriction
	morphism with the map $\overline{M}_U(U)\to \mathcal{Y}$ gives a morphism $\alpha:\sigma'\to \mathcal{Y}$. The fact that $T'\to \mathcal{A}_{\mathcal{Y}}$
	is a lift of $T\to \mathcal{A}_{\mathcal{X}}$ implies that the composition of $\alpha$ with $\mathcal{Y}\to \mathcal{X}$ is equal
	to the composition of the morphism $\overline{M}_{U,u}\to \overline{M}_{T,t}$ in $\ShpMon^{\op}$ induced by pullback and the
	morphism $\overline{M}_{T,t}\to \mathcal{X}$. Hence $\alpha$ induces a lift $\mathcal{A}_{\sigma'}\to \mathcal{A}_{\mathcal{Y}}$
	as required. Gluing these maps for all maximal cones gives the factorization $\mathcal{A}_{\Sigma}\to \mathcal{A}_{\mathcal{Y}}$

	By Theorem \ref{thm:left_inv_geom}, the morphisms $\mathcal{A}_{\sigma}\to \mathcal{A}_{\mathcal{X}}$
	and $\mathcal{A}_{\Sigma}\to \mathcal{A}_{\mathcal{Y}}$ induce morphisms $\Sigma\to \mathcal{Y}$ and $\sigma\to \mathcal{X}$.
	Since $\mathcal{Y}$ is minimal, the morphism $\Sigma\to \mathcal{Y}$ descends to a lift $\sigma\to \mathcal{Y}$. Hence we get
	a lift $T\to \mathcal{A}_{\sigma}\to \mathcal{A}_{\mathcal{Y}}$ as required.
\end{proof}

Combined with the uniqueness of the minimal object in a birational equivalence class (see Proposition \ref{prop:exists_unique_minimal_log_cpt}), this shows:

\begin{corollary}
	\label{cor:unique_minimal_combinat}
	If a birational equivalence class of generalized log compactifications contains a combinatorial partial log compactification
	$X\times_{\mathcal{A}_{\mathcal{X}}}\mathcal{A}_{Y}$, then its unique minimal element
	is $X\times_{\mathcal{A}_{\mathcal{X}}}\mathcal{A}_{M_{Y}}$.
\end{corollary}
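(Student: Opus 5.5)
The plan is to combine three earlier results: the geometric minimality result just proved (the Proposition immediately preceding the statement), the combinatorial result Theorem \ref{thm:pre_part_comp_descr}, and the uniqueness part of Proposition \ref{prop:exists_unique_minimal_log_cpt}. Concretely, I will show two things. First, $X\times_{\mathcal{A}_{\mathcal{X}}}\mathcal{A}_{M_Y}$ is a minimal generalized partial log compactification. Second, it lies in the same birational equivalence class as $X\times_{\mathcal{A}_{\mathcal{X}}}\mathcal{A}_{Y}$. Uniqueness of the minimal element in a class then forces it to be the minimal element.

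For the first point, Theorem \ref{thm:pre_part_comp_descr} tells us that $M_Y$ is a minimal generalized partial tropical compactification of $\mathcal{X}$. The preceding Proposition then gives that $\mathcal{A}_{M_Y}\times_{\mathcal{A}_{\mathcal{X}}}X$ is a minimal generalized partial log compactification. If $Y$ is only a generalized partial tropical compactification, I first replace it by a subdivision that is a partial tropical compactification; this is harmless because $M_Y$ is invariant under subdivisions.

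For the second point, the lemma preceding Theorem \ref{thm:pre_part_comp_descr} shows that $Y\to M_Y$ is represented by subdivisions. I will then argue exactly as in the proof of the lemma showing that $\mathcal{A}_{\mathcal{Y}}\times_{\mathcal{A}_{\mathcal{X}}}X$ is a generalized partial log compactification. Work strict étale locally on a test log scheme $W\to\mathcal{A}_{M_Y}$, factor it through $X_\sigma\to\mathcal{A}_\sigma$ by Lemma \ref{lem:loc_factorization}, and apply Corollary \ref{cor:fib_prods_preserved}. This identifies $\mathcal{A}_{Y}\times_{\mathcal{A}_{M_Y}}X_\sigma$ with the toric variety of the subdivision $\sigma\times_{M_Y}Y$, which is proper and representable over $X_\sigma$. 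Hence $\mathcal{A}_Y\to\mathcal{A}_{M_Y}$ is proper and representable by log algebraic spaces. Base-changing along $X\to\mathcal{A}_{\mathcal{X}}$, the map $X\times_{\mathcal{A}_{\mathcal{X}}}\mathcal{A}_Y\to X\times_{\mathcal{A}_{\mathcal{X}}}\mathcal{A}_{M_Y}$ is a birational equivalence, using $Y_3=X\times_{\mathcal{A}_{\mathcal{X}}}\mathcal{A}_Y$ and the identity on one side.

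Two minor points need checking: that this map is a morphism over $X$, which is automatic because both objects are monomorphisms to $X$, and that it is $G_X$-equivariant. With these in hand, the minimal element of the class equals $X\times_{\mathcal{A}_{\mathcal{X}}}\mathcal{A}_{M_Y}$ by the uniqueness statement in Proposition \ref{prop:exists_unique_minimal_log_cpt}. I expect no step to be a real obstacle. The only care needed is that properness and representability descend along the strict étale covers used in the local factorization, which is handled as in the earlier lemma.
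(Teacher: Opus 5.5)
Your proposal is correct and follows essentially the same route as the paper, which deduces the corollary by combining the preceding Proposition (with $M_Y$ minimal by Theorem \ref{thm:pre_part_comp_descr}) with the uniqueness in Proposition \ref{prop:exists_unique_minimal_log_cpt}. Your explicit check that $X\times_{\mathcal{A}_{\mathcal{X}}}\mathcal{A}_{Y}\to X\times_{\mathcal{A}_{\mathcal{X}}}\mathcal{A}_{M_Y}$ is a birational equivalence spells out a step the paper leaves implicit. You do this via the subdivision $Y\to M_Y$ and the argument of the first lemma of Section \ref{sec:translate_geom}.
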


\section{Birational classification of compactified Jacobians}
\label{sec:bir_class_jac}

We use the results of the previous section to classify all compactified Jacobians over log smooth bases
up to birational equivalence. In the following, we fix an irreducible log smooth log scheme $S$
and a proper vertical log smooth family $C\to S$ of log curves over $S$ with a section $S\to C$. Denote by $U\subseteq S$ the dense open subset
on which $S$ has trivial log structure. In particular, $C_U$ is a family of smooth curves over $U$.

\begin{definition}
	A partially compactified Jacobian is a separated log smooth morphism $\overline{J}_C\to S$ of
	log DM stacks of finite presentation, together with an action $J_C\times_S \overline{J}_C\to \overline{J}_C$
	of the Jacobian $J_C$ of $C$, satisfying the following conditions:
	\begin{enumerate}
		\item $\overline{J}_C|_U\cong J_C|_U$.
		\item The unit section $U\to J_C|_U$ extends to a section $S\to \overline{J}_C$.
	\end{enumerate}
	A partially compactified Jacobian is called a compactified Jacobian if it is proper over $S$.
\end{definition}

\begin{definition}
	A morphism of partially compactified Jacobians is a
	$J_C$-equivariant morphism over $S$ preserving the extension of the unit section.
\end{definition}

\begin{remark}
	A semi-stable degeneration of $J_C|_U$ is an example of a proper
	vertical log smooth morphism.
\end{remark}

To be able to apply our results so far, we must construct a morphism from $\overline{J}_C$
to a log abelian variety. This is based on the following result by Holmes et al.:

\begin{theorem}[Theorem 1.7 in \cite{ModelsOfJacobHolmes2020} and Theorem 4.15.7 in \cite{TheLogarithmicMolcho2022}]
	\label{thm:neron_mapping}
	The logarithmic Jacobian $\LogJac(C/S)$ is a log abelian variety and
	satisfies the Néron mapping property for log smooth morphisms. That is, for
	every log smooth morphism $X\to S$, the restriction morphism:
	$$\Hom_S(X,\LogJac(C/S))\to \Hom_U(X|_U,J_C|_U)$$
	is an isomorphism.
\end{theorem}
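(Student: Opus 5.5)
The plan has two halves: establish that $\LogJac(C/S)$ is a log abelian variety, then prove the two halves (injectivity, surjectivity) of the Néron mapping property.

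\textbf{Structure of $\LogJac(C/S)$.} I would realize $\LogJac(C/S)$ as the subsheaf of the logarithmic Picard group of $C/S$ cut out by two conditions. The first is fiberwise degree $0$. The second is the bounded monodromy condition: the tropical class in $H^1$ of the tropical curve pairs with $H_1$ inside the image of the polarization. This gives an extension
\[
0\to \operatorname{Pic}^{[0]}(C/S)\to \LogJac(C/S)\to \TroJac(C/S)\to 0,
\]
where $\operatorname{Pic}^{[0]}$ is the semiabelian scheme $G$ and $\TroJac$ is the family of tropical Jacobians attached to the intersection pairing on $H_1$ of the dual graphs. The principal polarization comes from the intersection pairing, which is symmetric and positive definite in the sense used above. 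Comparing this data with the Kajiwara–Kato–Nakayama description of log abelian varieties via $(M,Q)$ then identifies $\LogJac(C/S)$ as a log abelian variety with $G_{\LogJac}=G$.

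\textbf{Injectivity.} Let $X\to S$ be log smooth. Then $X|_U$ is dense in $X$, by the same argument as in Lemma \ref{lem:bir_is_bir}. The diagonal of $\LogJac$ is representable by log algebraic spaces, and it is separated after pulling back along any log modification. So two maps $X\to\LogJac$ agreeing on $X|_U$ agree on a dense open subset and hence everywhere. If needed I would first pass to a log modification of $X$; this is harmless because $\LogJac$ is a sheaf for the log étale topology.

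\textbf{Surjectivity.} A map $X|_U\to J_C|_U$ is, using the section $S\to C$ for rigidification, a degree-$0$ line bundle $L_U$ on $C_U\times_U X_U$. The plan is to extend it in four steps.
\begin{enumerate}
\item The space $C\times_S X$ is log smooth, hence normal, and the complement of $C_U\times_U X_U$ is contained in the log boundary. I would take the closure of a divisor representing $L_U$ to obtain a Weil divisor $D$.
\item After a log modification $Y\to C\times_S X$, which I expect to be achievable by a toroidal resolution of the boundary components of $D$, the divisor becomes Cartier. It gives a line bundle $L$ extending $L_U$, well defined up to divisors supported on the boundary.
\item Boundary divisors are $\overline{M}^{\gp}$-sections, so the ambiguity disappears once $L$ is viewed as an $M^{\gp}$-torsor, that is, as a log line bundle. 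Log line bundles satisfy descent along log modifications, so this log line bundle descends to $C\times_S X$.
\item One then checks fiberwise degree $0$ and the bounded monodromy condition, and concludes that the log line bundle defines a morphism $X\to\LogJac(C/S)$ restricting to the given one on $U$.
\end{enumerate}

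\textbf{Main obstacle.} I expect the last verification, bounded monodromy, to be the main difficulty. The tropical class of the extension comes from the multiplicities of $D$ along the boundary, that is, from a piecewise linear function on the tropicalization of $C\times_S X$. One must show that along each monodromy loop it changes only by elements of the image of the intersection pairing. My approach would be to reduce to $X$ a standard log point over a one-parameter base by pulling back along log points (using that the condition is pointwise). There, bounded monodromy should follow from the finiteness of a degree-$0$ tropical divisor's twisting on a tropical curve, which is the Abel–Jacobi description of $\TroJac$.
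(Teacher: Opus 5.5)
\paragraph{Overall.} The paper does not prove this theorem; it quotes it from \cite{ModelsOfJacobHolmes2020} and \cite{TheLogarithmicMolcho2022}. Your outline is reasonable: injectivity from density and the finite diagonal, then extension from the interior of the log regular space $C\times_S X$ via a regular log modification, with the boundary ambiguity absorbed into $M^{\mathrm{gp}}$. The genuine gap is Step~4, which carries the content.

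\paragraph{Why Step 4 fails.} Bounded monodromy is a condition at each geometric point $x$ of $X$, in the full monoid $\overline{M}_{X,x}$. For every loop $\gamma$ one needs $-n\ell(\gamma)\le\mu(\gamma)\le n\ell(\gamma)$ for some $n$. It has content exactly when $\ell(\gamma)$ lies on a proper face of $\overline{M}_{X,x}$. This already happens over a one-parameter $S$ if $X$ has extra log structure, e.g.\ $X=S\times\mathbb{A}^1$ with its toric boundary.
\begin{itemize}
\item Pulling back to a standard log point means composing with a local homomorphism $\overline{M}_{X,x}\to\mathbb{N}$. This sends $\ell(\gamma)$ to a positive integer, and every integer is bounded by a multiple of it. The same happens over any one-parameter log base. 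So after your reduction the condition holds vacuously and proves nothing about $X$.
\item More fundamentally, over a log point of rank $\ge 2$ there are in general log line bundles with unbounded monodromy. Hence no argument that only sees the restriction of the torsor to a point can work. The bound must come from the fact that the torsor is defined on all of $C\times_S X$.
\item Your paraphrase of the condition is also off. Boundedness asks that $\mu(\gamma)$ lie \emph{between} two values $Q(m_1,\gamma)$ and $Q(m_2,\gamma)$ of the pairing, not in its image. Lying in the image of $Q_{\hom}$ is what it means for the class to vanish in $\TroJac$. That would force every extension into $J_C$, which is false already over a DVR with nontrivial component group.
\end{itemize}

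\paragraph{The missing idea.} Read a \emph{uniform} bound off the honest line bundle $L$ from Step~2.
\begin{enumerate}
\item After a log \'etale cover $X'\to X$ (a log modification, plus a root if needed), refine $Y\times_X X'$ to a log modification $Y'\to C\times_S X'$ that is a family of curves over $X'$. This is weak semistable reduction for the map of cone complexes, and gives a quasi-stable model subdividing the tropical curves. Pull $L$ back to $Y'$.
\item Over each point $x'$, the tropical part of your torsor is then given edge by edge by piecewise linear functions on the subdivided edges. Their slope changes at the new vertices are, up to sign, the degrees of $L$ on the exceptional components. These degrees take finitely many values, so all slope differences are bounded by some $n$ independent of $x'$.
\item If an edge $e$ is cut into pieces of lengths $\ell_{e,i}$ with $\sum_i\ell_{e,i}=\ell_e$, its contribution to $\mu(\gamma)$, modulo multiples of $\ell_e$, is $\sum_i c_{e,i}\ell_{e,i}$ with $|c_{e,i}|\le n$. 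Since $0\le\ell_{e,i}\le\ell_e$ in the monoid, this lies between $-n\ell_e$ and $n\ell_e$. Summing over the edges of $\gamma$ gives $-n\ell(\gamma)\le\mu(\gamma)\le n\ell(\gamma)$.
\item The bound passes back to $x$ for two reasons. The cones of the points of $X'$ over $x$ cover the cone of $x$, and $n$ is uniform. A root does not affect the condition because $\overline{M}_{X,x}$ is saturated.
\end{enumerate}
Alternatively, you can use the Molcho--Wise result that log line bundles which are, log \'etale locally on the base, represented by line bundles on log modifications of the curve have bounded monodromy.

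\paragraph{Smaller point on Step 3.} Descent of $M^{\mathrm{gp}}$-torsors along a log modification of the \emph{total space} $C\times_S X$ is automatic in the log \'etale topology but not in the strict \'etale one. You should say which notion of log line bundle you use. The strict \'etale version can be justified here using the log regularity of $C\times_S X$.
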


\begin{corollary}
	\label{cor:let_over_logjac}
	Let $\overline{J}_C$ be a partially compactified Jacobian. Then $\overline{J}_C$
	admits a unique $J_C$-equivariant log étale monomorphism $\overline{J}_C\to \LogJac(C/S)$
	preserving the unit section.
\end{corollary}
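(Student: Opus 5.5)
The plan is to build the map from the Néron mapping property, then deduce equivariance, the unit condition and uniqueness from density, and finally check the monomorphism and log étale properties separately.

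\textbf{Existence.} Since $\overline{J}_C\to S$ is log smooth, Theorem \ref{thm:neron_mapping} applies with $X=\overline{J}_C$. The identification $\overline{J}_C|_U\cong J_C|_U=\LogJac(C/S)|_U$ is an element of $\Hom_U(\overline{J}_C|_U,J_C|_U)$. It therefore extends uniquely to a morphism $\phi:\overline{J}_C\to\LogJac(C/S)$ over $S$.

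One point needs care: the theorem is stated for log schemes $X$, while $\overline{J}_C$ is a log DM stack. We will apply it to a strict étale atlas $V\to\overline{J}_C$ and to $V\times_{\overline{J}_C}V$, both of which are log smooth over $S$. Uniqueness in the Néron property shows that the resulting maps agree on the double overlap, so they descend to $\overline{J}_C$.

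\textbf{Equivariance, unit section and uniqueness.} Equivariance compares two maps $J_C\times_S\overline{J}_C\to\LogJac(C/S)$: namely $\phi\circ\mathrm{act}$ and $\mathrm{mult}\circ(\iota\times\phi)$, where $\iota:J_C\to\LogJac(C/S)$ is the inclusion. The source is log smooth over $S$ (with the log structure pulled back from $\overline{J}_C$), and the two maps agree over $U$. By the uniqueness half of Theorem \ref{thm:neron_mapping} they are equal.

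The same argument handles the unit section. The two sections $S\to\LogJac(C/S)$, namely $\phi\circ e$ and the unit, agree over $U$, and $S$ is log smooth over itself, so they coincide.

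Uniqueness of $\phi$ itself also comes from the uniqueness half of Theorem \ref{thm:neron_mapping}. Any equivariant map preserving the unit section restricts on $U$ to a $J_C|_U$-equivariant self-map of the torsor $J_C|_U$ fixing $0$, hence to the identity. Two such maps therefore agree on $U$ and so agree everywhere.

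\textbf{Log étale.} Both $\overline{J}_C$ and $\LogJac(C/S)$ are log smooth over $S$, by hypothesis and by the log abelian variety structure respectively. They have the same relative dimension, since they agree over the dense open $U$. I would therefore invoke the relative criterion for log étaleness (the analogue of Lemma \ref{lem:let_when_both} referenced earlier): a morphism between log smooth $S$-objects that induces an isomorphism on log cotangent complexes is log étale. The log tangent bundles of both spaces are trivialized by the Lie algebra of $J_C$ (by $G$-equivariance, via the infinitesimal action), and $\phi$ is $J_C$-equivariant, so $d\phi$ is an isomorphism. This is the step I expect to need the most care, in particular making "infinitesimal action trivializes $T^{\log}$" precise for $\overline{J}_C$.

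\textbf{Monomorphism.} This is the main obstacle. The approach is to show that the diagonal $\overline{J}_C\to\overline{J}_C\times_{\LogJac}\overline{J}_C$ is an isomorphism. The diagonal is log étale and is an isomorphism over $U$. Separatedness of $\overline{J}_C$ makes it a closed immersion after base change to log schemes, with appropriate care since it is representable by log algebraic spaces. A log étale closed immersion is open; since it contains the dense locus over $U$, where density comes from Lemma \ref{lem:bir_is_bir}-type arguments using log smoothness of the fibre product over $S$, it is surjective and hence an isomorphism.

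The stacky subtlety is that $\overline{J}_C$ may have stabilizers not seen by $\LogJac$. I would handle this by noting that stabilizers of $\overline{J}_C$ lie in $J_C$, through which the action is free on the dense open, and would try first to use equivariance to reduce to the fibre over the unit section.
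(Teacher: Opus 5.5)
Your use of Theorem~\ref{thm:neron_mapping} for existence, the unit section, equivariance and uniqueness matches the paper's argument. Descending from an atlas is harmless, since $\LogJac(C/S)$ is a sheaf.

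For log \'etaleness you take a different route. The paper passes to $[\overline{J}_C/J_C]\to\TroJac(C/S)=[\LogJac(C/S)/J_C]$, shows both are log \'etale over $S$, applies Lemma~\ref{lem:let_when_both}, and pulls back. As written, your key claim is unjustified. $J_C$-equivariance alone does not make the infinitesimal action $\mathrm{Lie}(J_C)\otimes\mathcal{O}\to T^{\log}_{\overline{J}_C/S}$ an isomorphism: for $\mathbb{G}_m$ acting on $\mathbb{A}^1$ with trivial log structure it vanishes at the origin. You only need the trivialization on the target:
\begin{itemize}
\item Since $\TroJac(C/S)\to S$ is log \'etale (via \cite[Corollary 4.11.4]{TheLogarithmicMolcho2022}, as in the paper), the action trivializes the log tangent sheaf of $\LogJac(C/S)$.
\item Differentiating the equivariance identity along the unit section of $J_C$ shows that $d\phi$ composed with the action map of $\overline{J}_C$ equals the pullback of the action map of $\LogJac(C/S)$. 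Hence $d\phi$ is surjective.
\item Both sides are locally free of rank equal to the relative dimension of $J_C$; for $\overline{J}_C$ this holds because the locus over $U$ is dense. So $d\phi$ is an isomorphism, and the differential criterion for morphisms between log smooth objects gives log \'etaleness.
\end{itemize}
Completed this way, your route makes explicit that the infinitesimal action on $\overline{J}_C$ is an isomorphism. That is exactly what the paper's step from ``relative dimension $0$'' to ``log \'etale'' for the Artin stack $[\overline{J}_C/J_C]$ tacitly uses.

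The monomorphism step follows the paper's strategy: show that $\delta\colon\overline{J}_C\to P:=\overline{J}_C\times_{\LogJac(C/S)}\overline{J}_C$ is an isomorphism. Three points are missing or wrong.

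First, properness of $\delta$ does not follow from separatedness of $\overline{J}_C$ alone. You also need $P\to\overline{J}_C\times_S\overline{J}_C$ to be separated. This holds because it is a base change of the finite diagonal of $\LogJac(C/S)$ \cite[Theorem 4.12.1]{TheLogarithmicMolcho2022}; then apply \cite[Tag 01W6]{stacks-project}.

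Second, a proper $\delta$ is a closed immersion on underlying spaces only if it is a monomorphism there. This is automatic when $\overline{J}_C$ is fibered in setoids, since then $\delta$ is a section of the representable projection $P\to\overline{J}_C$. For a genuine stack it amounts to $\overline{J}_C$ having trivial inertia relative to the sheaf $\LogJac(C/S)$, which the paper takes for granted. Your proposed justification does not work: automorphism groups of points of the DM stack $\overline{J}_C$ need not embed in $J_C$ (think of root stacks). You can instead run the argument one level up. The inertia $I\to\overline{J}_C$ is representable, and it is proper and log \'etale as a base change of $\delta$. Its unit section is therefore, on underlying spaces, a closed immersion; it is log \'etale, an isomorphism over $U$, and has log smooth target. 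The reasoning in the third point then shows it is an isomorphism.

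Third, closedness plus density of the locus over $U$ in the reduced target already gives surjectivity, so your openness claim is not needed. But this only yields an isomorphism of underlying spaces. You still have to prove that $\delta$ is strict. The paper does this by noting that both sides are log smooth, so their log structures are the divisorial ones of their boundary divisors. A log \'etale map whose underlying map is the identity then forces these divisors to coincide.
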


For the proof we need the following lemma:

\begin{lemma}
	\label{lem:let_when_both}
	Let $A\to C$ and $B\to C$ be log étale morphisms.
	Assume that $A\to B$ is a morphism over $C$ locally of finite presentation. Then $A\to B$ is log étale.
\end{lemma}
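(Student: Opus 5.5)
The plan is to reduce to the standard two-out-of-three property for log smoothness via the log cotangent complex, or more concretely via Kato's chart criterion, in the form of the infinitesimal lifting criterion. Recall that a morphism locally of finite presentation is log étale if and only if it is formally log étale. That is, for every strict square-zero thickening $T_0\hookrightarrow T$ of fs log schemes (affine), every commutative square with $T_0\to A$ and $T\to B$ admits a unique diagonal lift $T\to A$. Since $A\to B$ is assumed locally of finite presentation, it suffices to verify this lifting property. If $A,B,C$ are log DM stacks rather than schemes, we first pass to strict étale covers; log étaleness and the lifting property are strict étale local on source and target.

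Given such a square, compose $T\to B\to C$. We obtain a lifting problem for $A\to C$ with data $T_0\to A$ and $T\to C$. Since $A\to C$ is formally log étale, there is a unique lift $a\colon T\to A$ over $C$ extending $T_0\to A$.

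It remains to check that $a$ is a lift over $B$, i.e.\ that the composite $T\to A\to B$ equals the given $b\colon T\to B$. Both $T\to A\to B$ and $b$ restrict to the same map $T_0\to B$, and both lie over the same map $T\to C$. Since $B\to C$ is formally log étale, the uniqueness part of its lifting property forces them to coincide. Uniqueness of the diagonal for $A\to B$ is immediate, because any lift over $B$ is in particular a lift over $C$, and those are unique.

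For stacks (where lifts are only unique up to unique 2-isomorphism), the same argument works with 2-commutative squares. The one point needing care, which I expect to be the only real obstacle, is justifying that the infinitesimal criterion characterizes log étaleness for morphisms of log DM stacks locally of finite presentation. I would handle it by choosing strict étale atlases, reducing to the log scheme case (Kato's criterion, or the exact triangle of log cotangent complexes $L_{B/C}|_A\to L_{A/C}\to L_{A/B}$ with the outer terms vanishing), and using that log étaleness is strict étale local.
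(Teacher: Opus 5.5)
Your proposal is correct and follows the paper's proof essentially verbatim: since $A\to B$ is locally of finite presentation it suffices to check formal log étaleness, the lift $T\to A$ comes from $A\to C$ being log étale, and the uniqueness of lifts for $B\to C$ shows that this lift lies over the given map $T\to B$. Your explicit uniqueness check over $B$ and your remarks on reducing the stack case to atlases go slightly beyond what the paper writes (it treats the lifting criterion without discussing stacks), but the route is the same.
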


\begin{proof}
	Since $A\to B$ is assumed to be locally of finite presentation, we need only show that
	$A\to B$ is formally log étale. For this,
	let $I\to I'$ be a strict square-zero extension and consider
	a commutative diagram of solid arrows:
	$$\begin{tikzcd}
		I\arrow[d]\arrow[r]&A\arrow[d]\\
		I'\arrow[r]\arrow[rd]\arrow[ru,dashed]&B\arrow[d]\\
									&C.
	\end{tikzcd}$$
	Since $A\to C$ is log étale there exists
	a unique dashed arrow $I'\to A$
	such that the diagram:
	$$\begin{tikzcd}
		I\arrow[d]\arrow[r]&A\arrow[d]\\
		I'\arrow[r]\arrow[ru,dashed]&C
	\end{tikzcd}$$
	commutes. It remains to show that the composition of the
	dashed arrow with $A\to B$
	agrees with $I'\to B$. For this, note that both
	of these define dashed morphisms making the following diagram
	commute:
	$$\begin{tikzcd}
		I\arrow[r]\arrow[d]&B\arrow[d]\\
		I'\arrow[r]\arrow[ru,dashed]&C.
	\end{tikzcd}$$
	Since $B\to C$ is log étale it follows that
	the two morphisms must be equal.
\end{proof}

\begin{proof}[Proof of Corollary \ref{cor:let_over_logjac}]
	Note that over $U$, the morphism is uniquely determined
	since $\LogJac(C/S)|_U\cong \overline{J}_C|_U\cong J_C|_U$ and the $J_C$-equivariant isomorphism is uniquely
	determined by requiring that it preserves the lifts of the unit section. By the Néron mapping property,
	we have
	$$\Hom(\overline{J}_C,\LogJac(C/S))\to \Hom(\overline{J}_C|_U,\LogJac(C/S)|_U)$$
	is an isomorphism, so $\overline{J}_C\to \LogJac(C/S)$ must be the preimage of $\overline{J}_C|_U\to \LogJac(C/S)|_U$
	under this isomorphism. It remains to show that this morphism has the properties given in the statement of the corollary.

	For the preservation of the unit section, note that the restriction of the composition $S\to \overline{J}_C\to \LogJac(C/S)$
	to $U$ is the unit section by assumption. Hence by the Néron mapping property, the composition is equal to the unit section.

	For equivariance, consider the
	two morphisms
	$$f_1,f_2:J_C\times \overline{J}_C\to \LogJac(C/S)$$
	defined as follows:
	$f_1$ is the composition of the group action morphism $J_C\times \overline{J}_C\to \overline{J}_C$
	with the morphism $\overline{J}_C\to \LogJac(C/S)$ constructed above and $f_2$ is the
	composition of the morphism $J_C\times \overline{J}_C\to J_C\times\LogJac(C/S)$ which is
	the identity on the first component together with the group action morphism $J_C\times \LogJac(C/S)\to \LogJac(C/S)$.
	Equivariance is equivalent to the statement that these two morphisms are equal. By the Néron mapping property,
	it suffices to check this after restricting to $U$. The isomorphism $\overline{J}_C|_U\to \LogJac(C/S)|_U$
	is $J_C|_U$-equivariant by construction, so $f_1|_U=f_2|_U$.

	It remains to show that
	$\overline{J}_C\to \LogJac(C/S)$ is a log étale monomorphism. For this, define
	$\overline{J}_C^{\trop}:=[\overline{J}_C/J_C]$. Since $\overline{J}_C$ and $J_C$ are
	log smooth over $S$ we have that $\overline{J}_C^{\trop}\to S$ is log smooth. Moreover,
	the restriction $\overline{J}_C^{\trop}|_U$ is isomorphic to $U$, so $\overline{J}_C^{\trop}\to S$
	is of relative dimension 0, hence log étale. Similarly, using \cite[Corollary 4.11.4]{TheLogarithmicMolcho2022},
	one shows that $\TroJac(C/S)\to S$ is log étale.
	We must show that
	$$\overline{J}_C^{\trop}\to \TroJac(C/S):=[\LogJac(C/S)/J_C]$$
	is a
	log étale monomorphism. The morphism is locally of finite
	presentation since $\overline{J}_C^{\trop}$
	and $\TroJac(C/S)$ are (see \cite[Corollary 4.2.3]{TheLogarithmicMolcho2022}) locally of finite presentation.
	Hence by Lemma \ref{lem:let_when_both}, the map $\overline{J}_C^{\trop}\to \TroJac(C/S)$
	is log étale.

	To see that $\overline{J}_C\to \LogJac(C/S)$
	is a monomorphism, observe that this is equivalent to the statement
	that the diagonal morphism
	$\overline{J}_C\to \overline{J}_C\times_{\LogJac(C/S)}\overline{J}_C$
	is an isomorphism. Observe that since $\overline{J}_C\to \LogJac(C/S)$ is log étale
	and $\overline{J}_C\to S$ is log smooth, the space $\overline{J}_C\times_{\LogJac(C/S)}\overline{J}_C$
	is log smooth over $S$ and the map $\overline{J}_C\times_{\LogJac(C/S)}\overline{J}_C\to \overline{J}_C$ given by projecting
	onto the first factor is log étale. $\overline{J}_C\times_{\LogJac(C/S)}\overline{J}_C\to \overline{J}_C$
	is a log scheme by \cite[Theorem 4.12.1]{TheLogarithmicMolcho2022}.
	Since $\overline{J}_C$ and $\overline{J}_C\times_{\LogJac(C/S)}\overline{J}_C\to \overline{J}_C$ are log étale over $\overline{J}_C$,
	by Lemma \ref{lem:let_when_both}, the morphism $\overline{J}_C\to \overline{J}_C\times_{\LogJac(C/S)}\overline{J}_C$
	is log étale. We show that it is also proper. Indeed, consider the composition:
	$$\overline{J}_C\to \overline{J}_C\times_{\LogJac(C/S)}\overline{J}_C\to \overline{J}_C\times_{S}\overline{J}_C.$$
	Since $\overline{J}_C$ is separated, the composition is a proper morphism. The second morphism
	$\overline{J}_C\times_{\LogJac(C/S)}\overline{J}_C\to \overline{J}_C\times_{S}\overline{J}_C$ fits into a fiber diagram:
	$$\begin{tikzcd}
		\overline{J}_C\times_{\LogJac(C/S)}\overline{J}_C\arrow[d]\arrow[r]&\LogJac(C/S)\arrow[d]\\
		\overline{J}_C\times_S \overline{J}_C\arrow[r]&\LogJac(C/S)\times_S \LogJac(C/S).
	\end{tikzcd}$$
	By \cite[Theorem 4.12.1]{TheLogarithmicMolcho2022}, the right-hand morphism is representable by finite (hence separated)
	morphisms of schemes, so by base-change
	$$\overline{J}_C\times_{\LogJac(C/S)}\overline{J}_C\to \overline{J}_C\times_{S}\overline{J}_C$$
	is separated. By \cite[Tag 01W6]{stacks-project}, it follows that $\overline{J}_C\to \overline{J}_C\times_{\LogJac(C/S)}\overline{J}_C$
	is proper. Since it is an isomorphism over $U$, it follows that it is a proper birational monomorphism and hence an isomorphism.
	To see that the log structures agree, note that since both spaces are log smooth, their log structures
	are induced by reduced (Weil) divisors. Hence we reduce to the following situation:

	Let $X$ be a scheme and $D_1,D_2\subseteq X$ be two reduced divisors such that there is a log étale morphism of log schemes
	$X_1\to X_2$ whose underlying map of schemes is the identity,
	where $X_i$ is the log scheme $X$ equipped with the log structure induced by $D_i$. We must show that $D_1=D_2$.
	For this, first observe that we must have $D_2\subseteq D_1$ in order for a morphism $X_1\to X_2$ to exist. Moreover,
	on $X\setminus D_2$, the log scheme $X_2$ has trivial log structure, so $X_1\setminus D_2\to X_2\setminus D_2$ is strict étale.
	In particular, $X_1\setminus D_2$ has trivial log structure, so $D_1\subseteq D_2$, finishing the proof.
\end{proof}

\begin{corollary}
	\label{cor:cpt_iff_cpt_jac}
	In the notation of Corollary \ref{cor:let_over_logjac}, a partially compactified Jacobian
	is a compactified Jacobian if and only if $\overline{J}_C\to \LogJac(C/S)$ is a log compactification
	of $J_C\to S$.
\end{corollary}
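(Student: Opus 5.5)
Given the log étale monomorphism $f:\overline{J}_C\to \LogJac(C/S)$ from Corollary \ref{cor:let_over_logjac}, the claim is that $\overline{J}_C\to S$ is proper if and only if $f$ is proper. The other requirements for $f$ to be a log compactification of $J_C\to S$ are already in hand. Equivariance, log étaleness, the monomorphism property and the section over the unit section all come from Corollary \ref{cor:let_over_logjac}. Representability by log DM stacks of finite presentation holds because $\overline{J}_C$ is a log DM stack of finite presentation over $S$, and the diagonal of $\LogJac(C/S)$ is representable by \cite[Theorem 4.12.1]{TheLogarithmicMolcho2022}. So everything reduces to comparing the two notions of properness. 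The structure map $\LogJac(C/S)\to S$ is itself proper in the log sense, since it is a log abelian variety (Theorem \ref{thm:neron_mapping}), and the plan is to pass properness back and forth through it.

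\emph{If $f$ is proper.} The composition $\overline{J}_C\to \LogJac(C/S)\to S$ is proper, being a composite of proper morphisms. One has to be careful that the second map is not representable, so properness here means that $\overline{J}_C$ is a log DM stack proper over $S$. I would verify this via the valuative criterion or the properness of the log abelian variety in the sense of \cite{LogarithmicAbeKajiwa2015}. Alternatively, one can pass to a log modification $P\to \LogJac(C/S)$ by a proper log scheme, which exists by \cite[Theorem 1.11]{kajiwara2018logarithmic} combined with Corollary \ref{cor:trop_corresp_log}. Then $\overline{J}_C\times_{\LogJac}P$ is proper over $P$, hence proper over $S$, and it surjects properly onto $\overline{J}_C$. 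Properness then descends to $\overline{J}_C$, using that $\overline{J}_C$ is separated and of finite type over $S$.

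\emph{If $\overline{J}_C$ is proper over $S$.} I would base change along the same $P\to\LogJac(C/S)$. The map $\overline{J}_C\times_{\LogJac}P\to P$ is a representable morphism of log DM stacks, and its source is proper over $S$ because $P\to\LogJac$ is proper and representable. A morphism from a proper stack to a separated one is proper, so this base change is proper. Since $P\to \LogJac(C/S)$ is a surjective log modification, I would then descend properness of $f$ using log-flat descent as in the proof of Proposition \ref{prop:exists_unique_minimal_log_cpt}, via \cite[Proposition 5.7 and Proposition 2.6]{Grothendieck_to_Hu_Xi_2025}.

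The main obstacle is this descent step. The cleanest route, which I would try first, is to tropicalize instead. By Theorem \ref{thm:trop_corresp}(1), $\overline{J}_C$ is an open substack of a combinatorial partial compactification $\mathcal{A}_{\mathcal{Y}}\times_{\mathcal{A}_{\mathcal{X}}}\LogJac$. Properness of $\overline{J}_C$ over $S$ forces this open immersion to be surjective: over any log étale chart, the inclusion is a proper birational open immersion with dense image, by Lemma \ref{lem:bir_is_bir}. This identifies $\overline{J}_C$ with the combinatorial object. Properness then holds because $\mathcal{Y}\to\mathcal{X}$ is surjective, which I would check conewise using Proposition \ref{prop:mono_base_change}(2) and Lemma \ref{lem:proper_iff_surj}. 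Surjectivity in turn follows because a missing cone would give a valuation on $S$ with no limit in $\overline{J}_C$, contradicting properness.
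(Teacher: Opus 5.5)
Your proposal is correct, but for the converse direction it takes a longer route than the paper. Your first direction ($f$ proper implies $\overline{J}_C$ proper) is the paper's argument: $\LogJac(C/S)\to S$ satisfies the valuative criterion (\cite[Theorem 4.10.1]{TheLogarithmicMolcho2022}), this is stable under composition, and $\overline{J}_C$ is separated of finite presentation. Your alternative via a proper model $P$ also works, though $P$ is in general only a log algebraic space, not a log scheme.

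For the converse, the paper argues in one line. The diagonal of $\LogJac(C/S)$ is representable by finite morphisms (\cite[Theorem 4.12.1]{TheLogarithmicMolcho2022}, which you already invoke for representability). Hence $f$, having proper source and separated target, is proper. Concretely, the graph $\overline{J}_C\to\overline{J}_C\times_S\LogJac(C/S)$ is a base change of the finite diagonal, and the projection to $\LogJac(C/S)$ is a base change of $\overline{J}_C\to S$. This is exactly the fact you apply to $\overline{J}_C\times_{\LogJac}P\to P$, so the base change to $P$ is redundant.

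The descent step you flag as the main obstacle is also harmless. For $Z\to\LogJac(C/S)$, the map $Z\times_{\LogJac}P\to Z$ is proper and universally surjective, so universal closedness of $Z\times_{\LogJac}\overline{J}_C\to Z$ descends. Separatedness and finite presentation are automatic, since $f$ is a monomorphism representable by log DM stacks of finite presentation.

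The tropical route you say you would try first needs the same separatedness input, to know that the open immersion of $\overline{J}_C$ into the combinatorial compactification is proper. It then adds a valuative argument for surjectivity of $\mathcal{Y}\to\mathcal{X}$, which would require constructing traits with prescribed tropicalization. That route does recover the explicit description later provided by Theorem \ref{thm:trop_corresp}(2) and Corollary \ref{cor:equiv_cat_cptfd_jacs}. For this corollary, though, it is considerably more work than the direct separatedness argument.
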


\begin{proof}
	Assume first that $\overline{J}_C\to \LogJac(C/S)$ is a log compactification of $J_C\to S$, i.e., that the
	map is proper. Since $\LogJac(C/S)\to S$ satisfies the valuative
	criterion for properness by \cite[Theorem 4.10.1]{TheLogarithmicMolcho2022}
	and this property is stable under compositions, we have that
	$\overline{J}_C\to S$ satisfies the valuative criterion for
	properness. Since $\overline{J}_C$ is of finite presentation,
	it follows that $\overline{J}_C$ is proper.

	Conversely, assume that $\overline{J}_C$ is proper. Since
	the diagonal of $\LogJac(C/S)$ is representable by
	finite (hence in particular proper) morphisms by \cite[Theorem 4.12.1]{TheLogarithmicMolcho2022},
	it follows that $\overline{J}_C\to \LogJac(C/S)$ is proper,
	since any morphism with proper source and separated target
	is proper.
\end{proof}

\begin{corollary}
	\label{cor:equiv_cat_cptfd_jacs}
	The map which sends $\overline{J}_C$ to the morphism
	$\overline{J}_C\to \LogJac(C/S)$ is an equivalence of categories
	between the category of compactified Jacobians
	and the category of log compactifications of $J_C\to S$
	representable by log DM stacks.
\end{corollary}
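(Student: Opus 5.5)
We need a functor from compactified Jacobians to log compactifications of $J_C\to S$ that are representable by log DM stacks, and we need to show it is fully faithful and essentially surjective.

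\emph{The functor.} On objects it sends $\overline{J}_C$ to the morphism $\overline{J}_C\to \LogJac(C/S)$ of Corollary \ref{cor:let_over_logjac}. This morphism is a $J_C$-equivariant log étale monomorphism preserving the unit section. It is representable by log DM stacks of finite presentation because $\overline{J}_C$ is a log DM stack of finite presentation and the diagonal of $\LogJac(C/S)$ is representable by finite morphisms \cite[Theorem 4.12.1]{TheLogarithmicMolcho2022}. It is proper by Corollary \ref{cor:cpt_iff_cpt_jac}. Its source is a log DM stack by hypothesis.

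On morphisms, a $J_C$-equivariant map $f:\overline{J}_1\to \overline{J}_2$ over $S$ preserving unit sections gives two maps $\overline{J}_1\to\LogJac(C/S)$: the structure map of $\overline{J}_1$, and $f$ followed by the structure map of $\overline{J}_2$. Both restrict over $U$ to the same unit-preserving equivariant isomorphism. The Néron mapping property (Theorem \ref{thm:neron_mapping}) then forces them to agree, so $f$ is a morphism over $\LogJac(C/S)$.

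\emph{Full faithfulness.} Faithfulness is immediate, since morphisms over $\LogJac(C/S)$ are unique because the target maps are monomorphisms. For fullness, a $J_C$-equivariant morphism over $\LogJac(C/S)$ is automatically over $S$, and by Lemma \ref{lem:morphs_commute_sec} it preserves the sections. It is therefore a morphism of compactified Jacobians.

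\emph{Essential surjectivity.} Let $Y\to \LogJac(C/S)$ be a log compactification of $J_C\to S$ with $Y$ a log DM stack. I must check that $Y$ is a compactified Jacobian.
\begin{itemize}
\item Log smoothness: $Y\to\LogJac(C/S)$ is log étale and $\LogJac(C/S)\to S$ is log smooth.
\item Finite presentation: by composition.
\item Separatedness and properness over $S$: $Y\to\LogJac(C/S)$ is proper, and $\LogJac(C/S)\to S$ satisfies the valuative criterion \cite[Theorem 4.10.1]{TheLogarithmicMolcho2022}. Combined with finite presentation, this gives properness, arguing exactly as in the proof of Corollary \ref{cor:cpt_iff_cpt_jac}.
\item The $J_C$-action: this is part of the data of a log compactification.
\item The unit section: $S\to Y$ is part of the definition.
\item $Y|_U\cong J_C|_U$: over $U$ the log structure is trivial and $\LogJac|_U=J_C|_U$. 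A proper log étale monomorphism containing $G_X=J_C$ densely (Lemma \ref{lem:bir_is_bir}) is then a proper birational monomorphism onto $J_C|_U$, hence an isomorphism. Alternatively, this follows from the tropical description of Theorem \ref{thm:trop_corresp}, since over the zero cone the tropical compactification is trivial.
\end{itemize}
Finally, the structure map that Corollary \ref{cor:let_over_logjac} attaches to $Y$ coincides with the given map $Y\to \LogJac(C/S)$, by the uniqueness in that corollary.

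\emph{Main obstacle.} The most delicate step is $Y|_U\cong J_C|_U$, specifically surjectivity of $Y|_U\to J_C|_U$. My plan is to use Lemma \ref{lem:bir_is_bir}: $Y|_U\to J_C|_U$ is a proper monomorphism restricting to the identity on $J_C$, and its image is closed and contains $J_C|_U$. Since the target equals $J_C|_U$, the map is surjective, and a surjective proper étale monomorphism is an isomorphism.
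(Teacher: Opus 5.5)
Your proposal is correct and follows essentially the same route as the paper. You verify the defining conditions of a compactified Jacobian for a log compactification $Y\to \LogJac(C/S)$, and you use the uniqueness of unit-preserving equivariant maps to $\LogJac(C/S)$ (i.e.\ the Néron mapping property, as in Corollary \ref{cor:let_over_logjac}) to show that morphisms of compactified Jacobians automatically lie over $\LogJac(C/S)$.

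The differences are minor:
\begin{itemize}
\item \emph{Separatedness.} The paper proves it directly, from the monomorphism and the finite diagonal of $\LogJac(C/S)$. You instead get it as part of properness.
\item \emph{Quasi-compactness.} The paper justifies it separately via proper log étale covers of $\LogJac(C/S)$. Your ``by composition'' silently assumes $\LogJac(C/S)\to S$ is quasi-compact.
\item \emph{Isomorphism over $U$.} Your claim that $Y|_U\to J_C|_U$ is \emph{étale} needs a word of justification. It holds because a log DM stack log étale over a base with trivial log structure must itself have trivial log structure. It is simpler still to note that the equivariant unit-preserving map $J_C|_U\to Y|_U$ is a section of the monomorphism $Y|_U\to J_C|_U$, which is therefore an isomorphism.
\end{itemize}
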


\begin{proof}
	Assume first that $\overline{J}_C$ is a compactified Jacobian.
	By Corollary \ref{cor:cpt_iff_cpt_jac}, the morphism $\overline{J}_C\to \LogJac(C/S)$
	is a log compactification of $J_C\to S$. Conversely,
	given a log compactification $\overline{J}_C\to \LogJac(C/S)$,
	note that since $\LogJac(C/S)\to S$ is log smooth by \cite[Corollary 4.11.4]{TheLogarithmicMolcho2022}, the same
	is true for $\overline{J}_C\to S$. Since $\overline{J}_C\to \LogJac(C/S)$
	is a monomorphism, it is separated and since the same is true
	for $\LogJac(C/S)\to S$ by \cite[Theorem 4.12.1]{TheLogarithmicMolcho2022},
	it follows that $\overline{J}_C\to S$ is separated. Over
	$U$, the map
	$$\overline{J}_C |_U\to \LogJac(C/S)|_U=J_C|_U$$
	is a strict log étale monomorphism hence a strict open immersion. Since $J_C|_U$
	acts transitively on itself, it follows that $\overline{J}_C\to J_C|_U$ is surjective
	by equivariance. Hence $\overline{J}_C\to J_C|_U$ is an isomorphism.
	The existence of a lift of the unit section and equivariance
	is clear. Finally, since $\overline{J}_C\to \LogJac(C/S)$ is locally
	of finite presentation and $\LogJac(C/S)\to S$ is locally of
	finite presentation by \cite[Corollary 4.2.3]{TheLogarithmicMolcho2022},
	it follows that $\overline{J}_C\to S$ is locally of
	finite presentation. The stack $\overline{J}_C$ is also
	quasi-compact, since this statement is log étale local and
	$\LogJac(C/S)$ has log étale covers that are proper over $S$
	\cite{LogarithmicAbeKajiwa2015}. Hence $\overline{J}_C$
	is of finite presentation over $S$.

	This shows that the map is a bijection on objects.
	To show that the same is true for morphisms, it remains to
	show that any morphism $\overline{J}_C\to \overline{J}_C'$ of
	compactified Jacobians lies over the
	morphisms to $\LogJac(C/S)$. I.e., we must show that the morphism
	$$\overline{J}_C\to \overline{J}_C'\to \LogJac(C/S)$$
	agrees with $\overline{J}_C\to \LogJac(C/S)$. Note that both of these
	morphisms are $J_C$-equivariant morphisms $\overline{J}_C\to \LogJac(C/S)$
	which preserve the unit section. Hence by the uniqueness part
	of Corollary \ref{cor:let_over_logjac}, we conclude that the two morphisms agree.
\end{proof}

\begin{definition}
	Two partially compactified Jacobians $\overline{J}_C$ and $\overline{J}_C'$ are said
	to be birationally equivalent if there exists a third partially
	compactified Jacobian $\overline{J}_C''$ and proper morphisms $\overline{J}_C''\to \overline{J}_C$ and $\overline{J}_C''\to \overline{J}_C'$
	of partially compactified Jacobians representable by log algebraic spaces.
\end{definition}

We now fix an Artin fan $S\to \mathcal{A}_S$ such
that $C\to S$ descends to a tropical curve $\mathcal{C}\to \mathcal{A}_S$. By
\cite[Theorem 4.9.4]{TheLogarithmicMolcho2022}, we have
$$[\LogJac(C/S)/J_C]\cong \TroJac(C/S),$$
where the tropical
Jacobian is defined in
\cite[Definition 3.6.1]{TheLogarithmicMolcho2022} for the case
in which $S$ has a global chart. From the construction on
page 1510 of \cite{TheLogarithmicMolcho2022} of $\TroJac(C/S)$
for general $S$, it is clear that $\TroJac(C/S)$ is the
pull-back of $\TroJac(\mathcal{C}/\mathcal{A}_S)$. The morphism
$\TroJac(\mathcal{C}/\mathcal{A}_S)\to \mathcal{A}_S$ is a family of tropical
abelian varieties.

\begin{figure}
	\resizebox{\textwidth}{!}{
}
	\caption{An example of a tropicalization of a compactified Jacobian (left), the minimal generalized tropical compactification it is birational to (center), and the tropical Jacobian (right). This is an equivariant analogue of the figure in the introduction.}
\end{figure}

\begin{corollary}
	\label{cor:main_thm_jac}
	There is a bijection between the set of birational equivalence classes of compactified Jacobians
	and the set of minimal generalized tropical compactifications of $\TroJac(\Sigma_{\mathcal{C}}/\Sigma_{\mathcal{A}_{S}})$.
	This bijection is obtained by sending an equivalence class,
	viewed as a birational equivalence class of generalized log compactifications of $J_C\to S$,
	to the tropicalization of its unique minimal object.
\end{corollary}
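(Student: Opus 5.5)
The proof will chain the equivalences already established. First, Corollary \ref{cor:equiv_cat_cptfd_jacs} identifies compactified Jacobians with log compactifications of $J_C\to S$ over $\LogJac(C/S)$ that are representable by log DM stacks. This identification is compatible with morphisms, because every morphism of compactified Jacobians automatically lies over $\LogJac(C/S)$. Under it, proper morphisms representable by log algebraic spaces correspond to birational equivalences of log compactifications. So birational equivalence of compactified Jacobians becomes birational equivalence of log compactifications whose roofs $Y_3$ are log DM stacks. By Corollary \ref{cor:bir_can_assume_repr}, this agrees with birational equivalence in the larger category of generalized partial log compactifications, since any roof can be refined to one with $Y_3$ representable by a log DM stack. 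Hence the set of birational classes of compactified Jacobians injects into the set of birational classes of generalized log compactifications of $J_C\to S$.

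Next we apply Proposition \ref{prop:exists_unique_minimal_log_cpt}: each such class has a unique minimal object. We then compute it combinatorially. By Theorem \ref{thm:trop_corresp}(2) and Corollary \ref{cor:trop_corresp_log}, taking $X=\LogJac(C/S)$ and $\mathcal{X}=\TroJac(\mathcal{C}/\mathcal{A}_S)$ (which is legitimate by $[\LogJac/J_C]\cong \TroJac$ and the pullback description recalled above), every log compactification is combinatorial, of the form $X\times_{\mathcal{A}_{\mathcal{X}}}\mathcal{A}_{\mathcal{Y}}$ for a tropical compactification $\mathcal{Y}$. Corollary \ref{cor:unique_minimal_combinat} then identifies the minimal object as $X\times_{\mathcal{A}_{\mathcal{X}}}\mathcal{A}_{M_{\mathcal{Y}}}$. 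By Lemma \ref{lem:cpt_bir_inv}, $M_{\mathcal{Y}}$ is a minimal generalized tropical compactification, and by Theorem \ref{thm:left_inv_geom} its tropicalization is $M_{\mathcal{Y}}$ itself. This defines the map in the statement.

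For \textbf{injectivity}, suppose two classes have the same $M$. Then the minimal objects coincide, and by Proposition \ref{prop:exists_unique_minimal_log_cpt} both classes contain this common terminal object, so the classes are equal. Alternatively, $\mathcal{Y}_1$ and $\mathcal{Y}_2$ have a common refinement $\mathcal{Y}_1\times_{M}\mathcal{Y}_2$, which is a subdivision of each. Proposition \ref{prop:bir_iff_subdiv} realizes it geometrically as proper representable maps, and Lemma \ref{lem:always_bir_to_DM} refines it to a DM stack, which is a compactified Jacobian by Corollary \ref{cor:equiv_cat_cptfd_jacs}.

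For \textbf{surjectivity}, given a minimal generalized tropical compactification $\mathcal{M}$, pick by definition a subdivision $\mathcal{Y}'\to\mathcal{M}$ with $\mathcal{Y}'$ a tropical compactification. Refine it by the fan of Proposition \ref{prop:stacky_fan_exists} to obtain a cone complex $\mathcal{Y}''$, as in the proof of Lemma \ref{lem:always_bir_to_DM}. By Lemma \ref{lem:repr_when_cone_cx} and Corollary \ref{cor:trop_corresp_log}, its realization is a log compactification representable by a log DM stack, hence a compactified Jacobian. Since $\mathcal{M}$ is minimal and $\mathcal{Y}''\to\mathcal{M}$ is a subdivision, Theorem \ref{thm:pre_part_comp_descr} gives $M_{\mathcal{Y}''}=\mathcal{M}$.

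The main obstacle I expect is the compatibility bookkeeping between the geometric and combinatorial notions of ``minimal''. In particular, one must check that the tropicalization of the geometric minimal object (a generalized object, not necessarily algebraic) really is $M_{\mathcal{Y}}$ and is independent of the chosen representative $\mathcal{Y}$. I would handle this by working throughout via the realization functor on the combinatorial side, and use the uniqueness statements (Theorem \ref{thm:trop_cpt_open_subsets}(2) and Corollary \ref{cor:combinat_descr_min}(2)) to avoid needing a fully faithful tropicalization on generalized objects.
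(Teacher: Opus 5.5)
\paragraph{Gap: well-definedness of the map.} Your chain of citations is essentially the paper's: Corollaries \ref{cor:equiv_cat_cptfd_jacs} and \ref{cor:bir_can_assume_repr}, then Theorem \ref{thm:trop_corresp}, then Theorem \ref{thm:pre_part_comp_descr} and Corollary \ref{cor:unique_minimal_combinat}. Your injectivity and surjectivity arguments are fine, and they spell out more than the paper does. The gap is the point you flag yourself at the end. You never show that the map is well defined, i.e.\ that $M_{\mathcal{Y}}$ is independent of which genuine log compactification $X\times_{\mathcal{A}_{\mathcal{X}}}\mathcal{A}_{\mathcal{Y}}$ you pick in the class, and the tools you propose do not give this:
\begin{itemize}
\item Theorem \ref{thm:left_inv_geom} computes $\Sigma_{\mathcal{A}_{M_{\mathcal{Y}}}}$. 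It says nothing about the fibre product $X\times_{\mathcal{A}_{\mathcal{X}}}\mathcal{A}_{M_{\mathcal{Y}}}$.
\item Point (2) of Theorem \ref{thm:trop_cpt_open_subsets} recovers only tropical objects representable by finite cone stacks. $M_{\mathcal{Y}}$ is in general not of this kind, which is exactly why it is only a \emph{generalized} compactification. For instance, over a cone meeting two regions of the minimal fan that carry different sublattices, its fibre is not a stacky fan.
\item Corollary \ref{cor:combinat_descr_min}(2) is purely combinatorial and says nothing about when two realizations agree.
\end{itemize}
So an isomorphism $X\times_{\mathcal{A}_{\mathcal{X}}}\mathcal{A}_{M_{\mathcal{Y}_1}}\cong X\times_{\mathcal{A}_{\mathcal{X}}}\mathcal{A}_{M_{\mathcal{Y}_2}}$ does not by itself let you conclude $M_{\mathcal{Y}_1}=M_{\mathcal{Y}_2}$.

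\paragraph{The missing ingredient.} You need the ``only if'' direction of Proposition \ref{prop:bir_iff_subdiv}; you use only the ``if'' direction. This is the paper's third step.
\begin{itemize}
\item Given birationally equivalent $Y_1,Y_2$ in the class, Corollary \ref{cor:bir_can_assume_repr} gives a roof $Y_1\leftarrow Y_3\to Y_2$ with $Y_3$ a partial log compactification.
\item $Y_3$ is proper over $Y_1$, hence over $X$, so it is a log compactification. Theorem \ref{thm:trop_corresp}(2), via Corollary \ref{cor:trop_corresp_log}, then tropicalizes the roof to morphisms $\mathcal{Y}_3\to\mathcal{Y}_i$.
\item Since $S$ is log smooth, $S\to\mathcal{A}_S$ is a strict smooth cover. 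Hence properness and representability of $\mathcal{A}_{\mathcal{Y}_3}\to\mathcal{A}_{\mathcal{Y}_i}$ can be checked after pulling back along $X\to\mathcal{A}_{\mathcal{X}}$, where they become those of $Y_3\to Y_i$.
\item Proposition \ref{prop:bir_iff_subdiv} therefore makes both tropical maps subdivisions. So $\mathcal{Y}_1$ and $\mathcal{Y}_2$ are tropically birational, and $M_{\mathcal{Y}_1}=M_{\mathcal{Y}_2}$ by the uniqueness in Theorem \ref{thm:pre_part_comp_descr}.
\end{itemize}
With this in hand, define the map as class $\mapsto M_{\mathcal{Y}}$ for any genuine representative. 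Read ``tropicalization of the minimal object'' through Corollary \ref{cor:unique_minimal_combinat}, rather than through Theorem \ref{thm:left_inv_geom}. The rest of your argument then goes through as written.
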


\begin{proof}
	Everything in the statement of the corollary has been proven in the previous sections for
	general $G_X\to S$. We explain how to combine the results to obtain the statement of the corollary:
\begin{itemize}
	\item By Corollary \ref{cor:bir_can_assume_repr} and Corollary \ref{cor:equiv_cat_cptfd_jacs}
		two compactified Jacobians are birationally equivalent if and only if they are birationally equivalent as
		generalized log compactifications of $J_C\to S$. Moreover, by Lemma \ref{lem:always_bir_to_DM}, every birational equivalence
		class of generalized log compactifications of $J_C\to S$ contains a compactified Jacobian.
	\item By Theorem \ref{thm:trop_corresp}, any log compactification is of the form $\LogJac(C/S)\times_{\mathcal{A}_S}\mathcal{A}_{\mathcal{Y}}$
		for a uniquely determined tropical compactification $\mathcal{Y}$ of $\TroJac(\Sigma_{\mathcal{C}}/\Sigma_{\mathcal{A}_{S}})\to \mathcal{A}_S$.
	\item Since $S$ is log smooth, the morphism $S\to \mathcal{A}_S$ is a log smooth cover, so by Proposition \ref{prop:bir_iff_subdiv}, a morphism of log compactifications is proper and representable by log algebraic spaces if and only if the corresponding tropical morphism is a subdivision. In particular, the birational equivalence class of the log compactification uniquely determines the birational equivalence class of its tropicalization.
	\item By Corollary \ref{cor:combinat_descr_min}, each birational equivalence class of generalized log compactifications contains a unique minimal element of the form $\LogJac(C/S)\times_{\mathcal{A}_{\TroJac(\Sigma_{\mathcal{C}})}}\mathcal{A}_{\mathcal{M}}$ for some minimal generalized tropical compactification $\mathcal{M}\to \TroJac(\Sigma_{\mathcal{C}})$.
	\item By point (3), $\mathcal{M}$ is uniquely determined by the birational equivalence class.
\end{itemize}
\end{proof}

By Proposition \ref{prop:minimal_iff_minimal_on_cones} and Corollary \ref{cor:combinat_descr_min}, minimal generalized tropical compactifications
of $\TroJac(\Sigma_{\mathcal{C}}/\Sigma_{\mathcal{A}_{S}})$ have a concrete combinatorial description. We spell this out in detail
in the case when $\mathcal{A}_S=\mathcal{A}_{\sigma}$ is a cone.

\begin{corollary}
	Assume in the above situation that $\mathcal{A}_{S}=\mathcal{A}_{\sigma}$ for some RPC $\sigma$ (for example,
	if $S$ is a DVR with its divisorial log structure) and that $C$ has a tropicalization $\mathcal{C}\to \sigma$
	over $\sigma$. Then there is a bijection between the set of birational equivalence
	classes of compactified Jacobians and the set of subsets $Y\subseteq \TroJac(\mathcal{C}/\sigma)$
	of the form:
	$$Y=\bigcup_{\tau\in \Delta}\tilde{N}_{\tau}\cap \tau$$
	for some complete stacky fan $\Delta$ in $\TroJac(\mathcal{C}/\sigma)$.
\end{corollary}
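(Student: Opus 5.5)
The plan is to derive this corollary by specializing Corollary \ref{cor:main_thm_jac} to the case $\mathcal{A}_S=\mathcal{A}_{\sigma}$, and then using the explicit combinatorial description of minimal objects over a single cone. By Corollary \ref{cor:main_thm_jac}, birational classes of compactified Jacobians are in bijection with minimal generalized tropical compactifications of $\TroJac(\mathcal{C}/\sigma)\to\sigma$. Since $\Sigma_{\mathcal{A}_S}=\sigma$ is itself an RPC, no gluing over a cone complex is needed. Proposition \ref{prop:minimal_iff_minimal_on_cones} becomes a tautology in this case. So we work directly with the tropical abelian variety $A=\TroJac(\mathcal{C}/\sigma)$, with $N'=0$, over $\sigma$.

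First, Corollary \ref{cor:combinat_descr_min} shows that every minimal generalized partial tropical compactification is $\Sigma_S$ for a unique minimal fan $S=\bigcup_{\tau\in\Delta}\tilde N_\tau\cap\tau$, where $\Delta$ is some stacky fan. Conversely, the proposition preceding Corollary \ref{cor:combinat_descr_min} gives $\Sigma_S\cong M_{\Sigma_\Delta}$, and this is minimal. Hence $S\mapsto\Sigma_S$ is a bijection between minimal fans and minimal generalized partial tropical compactifications.

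Second, we must restrict to compactifications, because the bijection in Corollary \ref{cor:main_thm_jac} concerns classes of \emph{compactified} Jacobians. These correspond to minimal objects birational to tropical compactifications. This uses Lemma \ref{lem:cpt_bir_inv} together with the fact that generalized tropical compactifications are exactly those with a subdivision that is a tropical compactification. By Proposition \ref{prop:complete_iff_bir_to_cpt}, this happens exactly when $S$ is complete.

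Third, we identify complete minimal fans with sets $\bigcup_{\tau\in\Delta}\tilde N_\tau\cap\tau$ for complete stacky fans $\Delta$. If $\Delta$ is complete, the example after Definition \ref{def:minimal_fan} shows that $S$ is complete. Conversely, if $S$ is complete, we must show that some stacky fan presenting $S$ is complete. This is the step needing care, since a presenting $\Delta$ need not a priori cover every real point of $(N_\sigma\times N)^{(M)}$. I would argue as in the proof of Proposition \ref{prop:complete_iff_bir_to_cpt}. Every lattice point in $(N_\sigma\times N)^{(M)}\cap(\sigma\times N)$ has a positive multiple in some cone of $\Delta$, so the union of the cones of $\Delta$ contains all rational points. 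That union is closed because $\Delta$ is locally finite modulo $M$-translation (condition (6) and Lemma \ref{lem:diag_repr}). Hence it contains the convex hull region, and $\Delta$ is complete.

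Finally, by point (2) of Corollary \ref{cor:combinat_descr_min}, $S$ is determined by $\Sigma_S$, so the assignment of a birational class to its set $Y=S$ is injective. Composing the bijections gives the statement. I expect the only real obstacle to be the closedness/local-finiteness argument in the third step. If that step fails, I would instead define the bijection directly via the completeness condition of Definition \ref{def:minimal_fan}.
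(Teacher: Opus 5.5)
Your proposal is correct and is essentially the paper's argument, which simply combines Corollary \ref{cor:main_thm_jac}, Corollary \ref{cor:combinat_descr_min} and Lemma \ref{lem:complete_iff_cpt}; your route through Proposition \ref{prop:complete_iff_bir_to_cpt} amounts to the same thing, since that proposition is itself proved via Lemma \ref{lem:complete_iff_cpt}.

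The step you single out as the main obstacle is in fact immediate. Completeness of a stacky fan is defined by $(N_\sigma\times N)^{(M)}\times N'\subseteq\bigcup_{\tau\in\Delta}\tau$, and $(N_\sigma\times N)^{(M)}$ is a set of \emph{lattice} points. So the implication $an\in\tilde N_\tau\cap\tau\Rightarrow n\in\tau$ already shows that every $\Delta$ presenting a complete $S$ is complete. You should therefore drop the closedness and local-finiteness argument. As stated it is also not right: the translates $T_m\tau$ with $Q(m,m)$ vanishing on a face of $\sigma$ all contain the points of $\tau$ lying over that face, so $\Delta$ is not locally finite there.
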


\begin{proof}
	This follows from the previous corollary, Corollary \ref{cor:combinat_descr_min} and Lemma \ref{lem:complete_iff_cpt}.
\end{proof}

\section{Birational classification of toric varieties}

In this section, we assume that $S$ is smooth and equipped with the trivial log structure. We consider
a torus bundle $T\to S$. Write
$$T_{\log}:=T\times^{\mathbb{G}_{m}^{n}}\mathbb{G}_{m,\log}^{n}.$$
This is a log semiabelian variety over $S$, so the theory of the preceding sections applies. We start with
the following definition:

\begin{definition}
	A toric orbifold bundle over $S$ is a family $X\to S$ with an action $T\times X\to X$
	of $T$ on $X$ and a $T$-equivariant open immersion $T\to X$ with dense image, such
	that, étale locally on $S$, we have $T\cong S\times T_0$ for some torus $T_0$
	and $X\cong S\times X_{\Delta}$ via a $T$-equivariant isomorphism restricting to the identity on the dense open tori
	isomorphic to $T$, for some toric DM stack
	$X_{\Delta}$ with dense open torus $T_0$ in the sense of
	\cite[Definition 3.10.1]{ATheoryOfStaGillam2015}. Here, $\Delta$ denotes the stacky fan of $X_{\Delta}$.
	We call $X$ a complete toric orbifold bundle if $X\to S$ is proper.
\end{definition}

\begin{remark}
	In \cite{ATheoryOfStaGillam2015}, stacky fans are called lattice KM fans.
\end{remark}

\begin{definition}
	A morphism of toric orbifold bundles over $S$ is a $T$-equivariant
	morphism which commutes with the open embedding of $T$.
\end{definition}

\begin{remark}
	The entire theory of toric varieties carries over essentially
	verbatim to the category of toric DM stacks. In particular,
	there is a combinatorial classification of toric DM stacks
	in terms of stacky fans, see
	\cite[Theorem 3.10.7]{ATheoryOfStaGillam2015}.
\end{remark}

\begin{lemma}
	Let $M_X\subseteq\mathcal{O}_X$ be the (étale) subsheaf of sections $s\in \mathcal{O}_X$
	such that for every $U$ with $T|_U\cong U\times T_0$ and $X|_U\cong U\times X_{\Delta}$, we have $s\in M_{U\times X_{\Delta}}$,
	where $M_{U\times X_{\Delta}}$ is the logarithmic structure pulled back from the canonical logarithmic structure on $X_{\Delta}$.
	Then $M_X$ is a logarithmic structure on $X$ such that $X\to S$ is log smooth.
\end{lemma}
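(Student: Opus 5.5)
The plan is to reduce everything to the local model $U\times X_{\Delta}$ and then use descent. The key input is that on a toric DM stack $X_\Delta$, the canonical log structure $M_{X_\Delta}\subseteq\mathcal{O}_{X_\Delta}$ is the subsheaf of functions that are invertible on the dense torus $T_0$, i.e.\ the divisorial log structure of the toric boundary. On $U\times X_\Delta$ with $U$ carrying the trivial log structure, the pulled-back log structure is therefore again the subsheaf of functions invertible on $U\times T_0$. This gives an intrinsic description: étale locally on $S$, $M_X$ agrees with $\{s\in\mathcal{O}_X : s|_T\in\mathcal{O}_T^\times\}$, and the right-hand side makes sense globally because $T\subseteq X$ is intrinsic.

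\textbf{Step 1 (independence of trivialization).} Let $U$ carry two trivializations $X|_U\cong U\times X_\Delta$ and $X|_U\cong U\times X_{\Delta'}$. Both isomorphisms restrict to the identity on the dense open torus, so both identify $T|_U$ with $U\times T_0$. Hence both pulled-back log structures equal the sheaf of functions invertible on $T|_U$, and they coincide. The same argument shows these log structures agree on overlaps $U_1\times_S U_2$. Consequently $M_X$ is a well-defined étale subsheaf of $\mathcal{O}_X$, equal to the sheaf of functions invertible on $T$.

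\textbf{Step 2 ($M_X$ is an fs log structure).} Being a log structure means that $M_X\cap\mathcal{O}_X^\times=\mathcal{O}_X^\times$, that $M_X$ is a multiplicative submonoid, and that $M_X$ is fine and saturated with charts étale locally. Each of these properties is étale local. Étale locally $M_X$ is the pullback of $M_{X_\Delta}$ along the projection $U\times X_\Delta\to X_\Delta$, and $M_{X_\Delta}$ is fs. For a stack, coherence is checked on an étale atlas; the canonical charts of $X_\Delta$ from Gillam--Molcho provide this.

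\textbf{Step 3 (log smoothness of $X\to S$).} Log smoothness is étale local on $S$. Over a trivializing $U$, the map is the base change $U\times X_\Delta\to U$ of $X_\Delta\to\Spec K$, with $U$ carrying the trivial log structure. A toric DM stack is log smooth over $K$: it has an étale cover by $[X_\sigma/\mu]$-type charts, equivalently it is log étale over $\mathcal{A}_\Delta$, which is log étale over a point in the relevant sense. Log smoothness is stable under base change, so $U\times X_\Delta\to U$ is log smooth, and hence so is $X\to S$.

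\textbf{Main obstacle.} The delicate point is the identification in Step 1 of $M_{X_\Delta}$, for a stacky (non-representable) toric DM stack, with the sheaf of functions invertible on $T_0$. I would verify this on the canonical smooth cover $X_{\Delta'}\to X_\Delta$ by the underlying toric variety. There, the log structure is divisorial with respect to the toric boundary, and the map is strict and preserves the torus. The identification then descends, since the condition of being invertible on the torus is étale local.
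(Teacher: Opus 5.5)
Your proof is correct. Its skeleton is the paper's: check everything on a trivializing $U$, use that being an fs log structure and log smoothness are étale local, and reduce log smoothness of toric DM stacks to toric varieties via their étale cover by affine toric varieties.

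You differ genuinely only in the independence-of-trivialization step.
\begin{itemize}
\item The paper argues that two trivializations over $V$ give a $T_0$-equivariant isomorphism $X_\Delta\cong X_{\Delta'}$. Hence $\Delta=\Delta'$ by the stacky-fan classification, and it concludes that the pulled-back log structures agree.
\item You instead identify the product log structure intrinsically as the compactifying one, $\mathcal{O}\cap j_*\mathcal{O}^\times_{U\times T_0}$. Independence then becomes tautological, and $M_X$ gets the global description $\{s : s|_T\in\mathcal{O}_T^\times\}$.
\end{itemize}

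Your route needs one extra input that you should make explicit. The pullback of the canonical log structure along $U\times X_\Delta\to X_\Delta$ must again be the compactifying log structure of $U\times T_0$. This follows from Kato's theorem for log regular schemes, applied on the étale cover $U\times X_{(\sigma,N_\sigma)}$. It uses that $S$, hence $U$, is smooth. For non-reduced $U$ the compactifying sheaf is strictly larger: on $\Spec K[\epsilon]/(\epsilon^2)\times\mathbb{A}^1$, the function $t+\epsilon$ is invertible away from $t=0$ but is not of the form $u\,t^n$.

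What your route buys is that it controls the transition isomorphisms themselves. Knowing $\Delta=\Delta'$ does not by itself show that the gluing automorphism of $V\times X_\Delta$ preserves the product log structure. Your description shows that any isomorphism carrying torus to torus does.

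One slip in Step 3: $X_\Delta\to\mathcal{A}_\Delta=[X_\Delta/T_0]$ is a strict smooth $T_0$-torsor, not log étale. Otherwise $X_\Delta$ would be log étale over a point. Your primary argument, the étale cover by toric varieties plus log smoothness of toric varieties, is the one that works, exactly as in the paper.
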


\begin{proof}
	Let $U$ be as in the statement of the lemma. We claim that $M_X|_U\cong M_{U\times X_{\Delta}}$. Indeed,
	let $V\to U$ be étale and non-empty and assume that $X|_V\cong V\times X_{\Delta'}$ for some fan $\Delta'$.
	Since $X|_V\cong X|_U|_V\cong V\times X_{\Delta}$, we get a $T_0$-equivariant isomorphism $X_{\Delta}\to X_{\Delta'}$,
	so $\Delta=\Delta'$. Hence the restriction of $M_{U\times X_{\Delta}}$ to $V$ agrees with $M_{V\times X_{\Delta'}}$
	and the claim follows. Since the property of being a log structure is étale local, it follows that $M_X$ is a
	log structure on $X$. Similarly, log smoothness is étale local and stable under base-change, so log smoothness
	of $X\to S$ follows from the fact that toric varieties are log smooth over a point with trivial log structure.
	To see the latter, note that by \cite[Proposition 2.6.4]{ATheoryOfStaGillam2015},
	we reduce to the case of toric varieties, in which case the
	result follows from \cite[Theorem 3.1.8]{ogus}.
\end{proof}

In the following we will always assume that $X$ is a log scheme equipped with the log structure above.

\begin{lemma}
	\label{lem:exist_unique_map}
	Let $X$ be a toric orbifold bundle over $S$. Then there exists a unique $T$-equivariant log étale monomorphism
	$X\to T_{\log}$ representable by log DM stacks and compatible with the inclusions $T\to X$ and $T\to T_{\log}$.
\end{lemma}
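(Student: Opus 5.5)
The plan is to construct the map étale locally on $S$, show that it is unique, and then glue the local maps using uniqueness. Uniqueness will come from the Néron-type property of $T_{\log}$. Since $X\to S$ is log smooth (previous lemma), any two $T$-equivariant morphisms $X\to T_{\log}$ that restrict to the given inclusion on $T$ agree on $T\subseteq X$, which is dense. We need them to agree everywhere. Locally $T_{\log}=\mathbb{G}_{m,\log}^n$ and a morphism to it is a homomorphism $M\to M_X^{\gp}(X)$. The sheaf $M_X^{\gp}$ injects into $j_*\mathcal{O}_T^{\times}$ for $j\colon T\to X$, because $X$ is log smooth over a base with trivial log structure, so its log structure is the divisorial one attached to the reduced boundary $X\setminus T$. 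Hence two such homomorphisms that agree on $T$ are equal. This proves uniqueness, and it also makes the local constructions glue by strict étale descent for $T_{\log}$, which is a sheaf.

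For existence, I will work étale locally so that $T=S\times T_0$ and $X=S\times X_\Delta$. Here $M$ is the character lattice of $T_0$ and $\Delta$ is a stacky fan in $N$. A character $m\in M$ is a rational function on $X_\Delta$ that is a unit on $T_0$. I claim it defines a section of $M_{X_\Delta}^{\gp}$. I would verify this on the cover $\bigsqcup_{\sigma\in\Delta}X_{(\sigma,N_\sigma)}\to X_\Delta$. On each chart $X_{(\sigma,N_\sigma)}$, the character $\chi^m$ lies in $M_\sigma^{\gp}\supseteq M$. This gives a homomorphism $M\to M_X^{\gp}(X)$, i.e. a map $X\to\mathbb{G}_{m,\log}^n=T_{\log}$. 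By construction this map extends the identity on $T$. It is $T$-equivariant because $\chi^m(t\cdot x)=\chi^m(t)\chi^m(x)$; alternatively, equivariance follows from the uniqueness argument applied to the two maps $T\times X\to T_{\log}$. The same argument shows the construction is independent of the chosen trivialization.

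Next I would establish the properties of the map. Once the map exists, it induces a map of quotients $[X/T]\to[T_{\log}/T]=\mathbb{G}_{m,\trop}^n=\mathcal{A}_{T_N^{\trop}}$. Locally $[X_\Delta/T_0]=\mathcal{A}_\Delta$ by the Artin cone example, and the map is $\mathcal{A}_\Delta\to\mathcal{A}_{T_N^{\trop}}$, which is induced by the monomorphism $\Delta\to T_N^{\trop}$ coming from the inclusion of the stacky fan into $N$. By Lemma \ref{lem:mono_impl_fan} in reverse, this is a monomorphism representable by finite cone stacks. Proposition \ref{prop:mono_base_change} then gives that $\mathcal{A}_\Delta\to\mathcal{A}_{T_N^{\trop}}$ is a log étale monomorphism representable by log DM stacks of finite presentation. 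Pulling back along $T_{\log}\to[T_{\log}/T]$, and using Lemma \ref{lem:equiv_cat}, gives $X\cong T_{\log}\times_{\mathcal{A}_{T_N^{\trop}}}\mathcal{A}_\Delta$, and the properties transfer to $X\to T_{\log}$.

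The main obstacle is this last identification: showing that the map I constructed really coincides with the base change of the Artin fan map. This amounts to checking that $X_\Delta\cong T_{\log}\times_{\mathbb{G}_{m,\trop}^n}\mathcal{A}_\Delta$ as stacks over $\LogSch$. I would check it chart by chart. For a representable cone with $N_\sigma=\Span(\sigma)\cap N$, Step 4.2 of Proposition \ref{prop:mono_base_change} identifies the fiber product with the toric variety. For general stacky cones, Step 4.3 gives the quotient $[X_\sigma/\Hom(M_\sigma/M',\mathbb{G}_m)]$, and this is precisely Gillam's local model for toric DM stacks. These identifications are compatible with face maps, so they glue.
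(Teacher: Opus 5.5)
Your proof is correct. It matches the paper on construction and uniqueness: characters of $T_0$ give sections of $M_X^{\gp}$, uniqueness follows from injectivity of $M_X^{\gp}(X)\to\mathcal{O}_T^{\times}(T)$, and the local maps glue by descent. Where you differ is in how the three properties are established.

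The paper checks them one at a time:
\begin{itemize}
\item log étaleness, by applying Lemma \ref{lem:let_when_both} to $[X/T]\to[T_{\log}/T]$, both being log étale over $S$;
\item the monomorphism property, from the functor-of-points description of $X_\Delta$ as the subfunctor of $\Hom(\mathbb{Z}^n,M_Z^{\gp}(Z))$ cut out by requiring $\overline{M}_Z(Z)^\vee\to\mathbb{Z}^n$ to factor through some $\sigma\cap N_\sigma$;
\item representability by log DM stacks, purely formally, since $X$ is DM and the diagonal of $\mathbb{G}_{m,\log}$ is representable (via $0\to\mathbb{G}_{m,\trop}$).
\end{itemize}
You instead identify $X\cong T_{\log}\times_{\mathcal{A}_{T_N^{\trop}}}\mathcal{A}_\Delta$ and read off all properties at once, including finite presentation, from Proposition \ref{prop:mono_base_change}. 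Your ``main obstacle'' is exactly the functor-of-points description the paper's monomorphism step rests on, which the paper justifies in one line. Your chart-by-chart verification via the computations of Steps 4.2--4.3 is a more complete proof of it.

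Your route gives a uniform argument and exhibits $X$ as the combinatorial partial log compactification attached to $\Delta$, which is what Proposition \ref{prop:equiv_cat_tor_compact} later needs. The paper's route needs no combinatorics for the log étaleness and representability steps.

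A few small repairs are needed.
\begin{itemize}
\item For a cone $\sigma$ with $\dim\sigma<n$, $N_\sigma$ has rank $\dim\sigma$. So $X_{(\sigma,N_\sigma)}$ cannot be a chart of $X_\Delta$, and ``$M\subseteq M_\sigma$'' is false. Use charts $[X_{(\sigma,\tilde N)}/\mu]$ with $\tilde N\subseteq N$ of full rank and $\tilde N\cap\Span(\sigma)=N_\sigma$, or equivalently a torus factor times $X_{(\sigma,N_\sigma)}$.
\item Representability of $\Delta\to T_N^{\trop}$ by finite cone stacks is not a converse of Lemma \ref{lem:mono_impl_fan}; it is a direct check. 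For $\phi\colon N_\rho\to N$, the fiber product is the finite cone complex with cones $\rho\cap\phi_{\mathbb{R}}^{-1}(\tau)$ and lattices $\phi^{-1}(N_\tau)\cap\Span(\rho\cap\phi_{\mathbb{R}}^{-1}(\tau))$.
\item Lemma \ref{lem:equiv_cat} presupposes a partial log compactification. Your direct identification of $X_\Delta$ with the fiber product makes it unnecessary.
\item Once that identification restricts to the identity on $T_0$, its projection to $T_{\log}$ agrees with your constructed map by your own uniqueness argument. No separate comparison is needed.
\end{itemize}
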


\begin{proof}
	Step 1 (Reduce to local case): By uniqueness, \cite[Theorem 1.1 (3)]{fortman2025descentalgebraicstacks} and étale descent,
	it suffices to construct the morphism étale locally on $S$
	and $X$.
	So we may assume that $T\cong S\times \mathbb{G}_{m}^{n}$
	and $X\cong S\times X_{\sigma}$ for some rational polyhedral cone $\sigma$, where
	$X_{\sigma}$ denotes the affine toric variety with
	cone $\sigma$ (see \cite[Proposition 2.6.4]{ATheoryOfStaGillam2015}).

	Step 2 (Construct the morphism locally): A morphism $X\to \mathbb{G}_{m,\log}^{n}$ is equivalent to the
	datum of $n$ sections of $M_X^{\gp}(X)$. Note that we
	have a canonical inclusion $\mathbb{Z}^{n}\to M_X^{\gp}(X)$
	since $\mathbb{Z}^{n}$ is the character lattice of $\mathbb{G}_{m}^{n}$.
	We let $X\to \mathbb{G}_{m,\log}^{n}$ be the morphism defined
	by the images of the $n$ standard basis vectors.

	Step 3 (Verify compatibility, equivariance and log étaleness): Compatibility
	with $T\to X$ is equivalent to the statement that under the canonical
	restriction $M_X^{\gp}(X)\to \mathcal{O}_T^{\times}(T)\cong \mathcal{O}_S^{\times}(S)\langle t_1,\dots,t_n,\rangle$,
	the morphism corresponds to the map
	$$\mathbb{Z}^{n}\to \mathcal{O}_S^{\times}(S)\langle t_1,t_1^{-1},\dots,t_n,t_n^{-1}\rangle$$
	which sends the $i$-th standard basis vector to $t_i$. Hence we see that the above map is the unique extension
	of $\mathbb{G}_{m}^{n}\to \mathbb{G}_{m,\log}^{n}$. It is clear that this map is $\mathbb{G}_{m}^{n}$-equivariant.

	Finally, note that both $[X/T]$ and $[T_{\log}/T]$ are log étale over $S$ (this can be checked strict étale locally on $S$). Since $X$ is locally of finite presentation, by Lemma \ref{lem:let_when_both},
	the morphism $X\to T_{\log}$, obtained by base-change from $[X/T]\to [T_{\log}/T]$, is log étale.

	Step 4 (Show that it is a monomorphism): This is strict étale local on $S$, so we may assume that $X\cong S\times X_{\Delta}$
	for some stacky fan $\Delta$ and that $T$ is as above. From the construction of $X\to T_{\log}$, we see that the
	morphism is pulled back from a morphism $X_{\Delta}\to \mathbb{G}_{m,\log}^{n}$ and hence it suffices to show that
	this morphism is a monomorphism. For this, we observe that $X_{\Delta}$ represents the functor which to
	a log scheme $Z$ associates the set of homomorphisms $\phi\in \Hom(\mathbb{Z}^{n},M_Z^{\gp}(Z))$ such that strict
	étale locally on $Z$, the induced morphism $\overline{M}_Z(Z)^{\vee}\to \mathbb{Z}^{n}$ factors through $N_{\sigma}\cap \sigma$
	for some cone $\sigma\in \Delta$, where $N_{\sigma}$ denotes the
	finite-index sublattice corresponding to $\sigma$.
	(to see this, write $X_{\Delta}$ as a colimit of affine toric DM stacks $X_{\sigma}$ and use the universal property
	of $X_{\sigma}$). The map $X_{\Delta}\to \mathbb{G}_{m,\log}^{n}$ sends a morphism $\phi:\mathbb{Z}^{n}\to M_Z^{\gp}(Z)$
	to itself ($\mathbb{G}_{m,\log}^{n}$ represents the functor which sends $Z$ to $\Hom(\mathbb{Z}^{n},M_Z^{\gp}(Z))$).
	In particular, the map is a monomorphism.

	Step 5 (Show representability by log DM stacks): Since
	$X$ is a log DM stack, it suffices to show that the diagonal
	$$\mathbb{G}_{m,\log}\to \mathbb{G}_{m,\log}\times \mathbb{G}_{m,\log}$$
	is representable by log algebraic spaces of finite presentation. Note
	that the diagonal morphism is the base-change of the inclusion
	$0\to \mathbb{G}_{m,\log}$ of the zero-section by the morphism
	$\mathbb{G}_{m,\log}\times\mathbb{G}_{m,\log}\to \mathbb{G}_{m,\log}$
	sending $(a,b)$ to $ab^{-1}$. Factor $0\to \mathbb{G}_{m,\log}$
	as $0\to \mathbb{G}_m\to \mathbb{G}_{m,\log}$. The first
	map is representable by log schemes of finite presentation
	and the second one is the base-change of $0\to \mathbb{G}_{m,\trop}$
	under the projection $\mathbb{G}_{m,\log}\to \mathbb{G}_{m,\trop}$.
	Hence by \cite[Proposition 2.2.7.5]{TheLogarithmicMolcho2022}, it is
	representable by log schemes of finite presentation\footnote{The proposition
		statement only claims that it is representable by morphisms of finite type.
	However, the method of proof shows that it is in fact of finite presentation}.
\end{proof}

\begin{proposition}
	\label{prop:equiv_cat_tor_compact}
	The map which sends $X$ to $X\to T_{\log}$ defines an equivalence
	of categories between the category of complete toric orbifold bundles
	over $S$ and the category of log compactifications
	of $T\to S$ representable by log DM stacks.
\end{proposition}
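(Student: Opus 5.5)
The plan mirrors the proof of Corollary \ref{cor:equiv_cat_cptfd_jacs}, with Lemma \ref{lem:exist_unique_map} playing the role of the Néron mapping property.

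\textbf{Step 1: the functor is well defined.} Given a complete toric orbifold bundle $X$, Lemma \ref{lem:exist_unique_map} produces a $T$-equivariant log étale monomorphism $X\to T_{\log}$ representable by log DM stacks. It remains to show that this is a log compactification.
\begin{itemize}
\item The unit section $S\to T\subseteq X$ lifts the unit section of $T_{\log}$.
\item Finite presentation holds because $X$ is étale locally $S\times X_\Delta$ with $\Delta$ finite.
\item Properness of $X\to T_{\log}$: $X\to S$ is proper, and the diagonal of $T_{\log}$ is representable by finite-presentation morphisms (Step 5 of the previous lemma). I would argue that $T_{\log}\to S$ is separated in the appropriate sense, so that a proper source over a separated target gives a proper map.
\item Alternatively, and more robustly, check properness étale locally on $S$. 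There $X=S\times X_\Delta$, the map is pulled back from $X_\Delta\to\mathbb{G}_{m,\log}^n$, and this map is the combinatorial log compactification $\mathcal{A}_\Delta\times_{\mathbb{G}_{m,\trop}^n}\mathbb{G}_{m,\log}^n$. By Proposition \ref{prop:mono_base_change}(2), it is proper iff $\Delta$ is complete, which holds since $X_\Delta$ is proper.
\end{itemize}

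\textbf{Step 2: full faithfulness.} Any morphism $X\to X'$ of toric orbifold bundles is $T$-equivariant and compatible with the inclusions of $T$. Composing with $X'\to T_{\log}$ gives a $T$-equivariant map $X\to T_{\log}$ compatible with $T\to T_{\log}$. By the uniqueness in Lemma \ref{lem:exist_unique_map}, this composite equals $X\to T_{\log}$, so the morphism lies over $T_{\log}$.

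Conversely, a morphism of log compactifications over $T_{\log}$ is $T$-equivariant and preserves the unit section. By Lemma \ref{lem:bir_is_bir} it restricts to the identity on $T$, hence it is a morphism of toric orbifold bundles.

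Faithfulness is automatic. The targets are monomorphisms to $T_{\log}$, and on the toric side morphisms are determined on the dense open $T$ because the stacks are separated.

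\textbf{Step 3: essential surjectivity, the main obstacle.} Let $Y\to T_{\log}$ be a log compactification representable by log DM stacks; I must show $Y$ is a toric orbifold bundle.
\begin{itemize}
\item Since $S$ has trivial log structure, $\mathcal{A}_S$ is a point. Étale locally $T\cong S\times\mathbb{G}_m^n$ and $T_{\log}=S\times\mathbb{G}_{m,\log}^n$, with tropicalization the constant tropical torus $T^{\trop}_{\mathbb{Z}^n}$.
\item By Theorem \ref{thm:trop_corresp}(2) and Corollary \ref{cor:trop_corresp_log}, $Y\cong T_{\log}\times_{\mathcal{A}_{\mathcal{X}}}\mathcal{A}_{\mathcal{Y}}$ for a tropical compactification $\mathcal{Y}\to T^{\trop}_{\mathbb{Z}^n}$.
\item I need $\mathcal{Y}$ to be a cone complex, so that Proposition \ref{prop:cone_cx_iff_fan}(4) (with $A=0$, $N'=\mathbb{Z}^n$) applies. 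Representability of $Y$ by a log DM stack forces $\mathcal{A}_{\mathcal{Y}}$ to be DM-like. A nontrivial automorphism of a cone of $\mathcal{Y}$ over $T^{\trop}_{\mathbb{Z}^n}$ is impossible because $\mathcal{Y}\to T^{\trop}$ is a monomorphism and $T^{\trop}$ is a sheaf. So $\mathcal{Y}$ is fibered in setoids, and any two parallel maps agree; I expect this step to need care.
\item Then $\mathcal{Y}=\Sigma_\Delta$ for a complete stacky fan $\Delta$ in $\mathbb{Z}^n$, by Lemma \ref{lem:complete_iff_cpt}.
\item Hence locally $Y\cong S\times(\mathbb{G}_{m,\log}^n\times_{\mathbb{G}_{m,\trop}^n}\mathcal{A}_\Delta)$. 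By Step 4.3 of Proposition \ref{prop:mono_base_change} and Gillam's classification, this is $S\times X_\Delta$, a toric DM stack, with the induced $T$-action and dense torus.
\end{itemize}

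Finally, the local identifications must be shown to glue, so that $Y$ is a toric orbifold bundle in the sense of the definition. Each identification is canonical, being determined by uniqueness over $T_{\log}$ as in Step 2, so they glue.
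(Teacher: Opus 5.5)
\paragraph{The gap: Step 3.} Your Steps 1 and 2 are fine and agree with the paper. Your converse direction in Step 2, via Lemma \ref{lem:bir_is_bir}, even makes explicit a point the paper leaves implicit.

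The gap is the claim that the tropicalization $\mathcal{Y}$ is a finite cone complex. A partial tropical compactification $\mathcal{Y}\to T^{\trop}_{\mathbb{Z}^n}$ is only a monomorphism \emph{representable} by finite cone stacks. Nothing forces $\mathcal{Y}$ itself to be a cone stack. Your arguments presuppose that $\mathcal{Y}$ is already built from finitely many cones: a monomorphism into a sheaf is fibered in setoids, and parallel face maps agree. They also never use that $Y$ is a log DM stack.

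Two examples show this hypothesis is exactly what is at stake:
\begin{enumerate}
\item Take $\mathcal{Y}=T^{\trop}_{\mathbb{Z}^n}$. This is the identity $T_{\log}\to T_{\log}$, which is a log compactification of $T\to S$ in the sense of Definition \ref{def:log_cpt}. It is not a toric orbifold bundle.
\item Take the decomposition of $\mathbb{R}^2$ into two half-planes. It realizes $\mathbb{G}_{m,\log}\times\mathbb{P}^1$.
\end{enumerate}
Both are fibered in setoids and monic over $T^{\trop}_{\mathbb{Z}^n}$, but neither is a cone complex, so Proposition \ref{prop:cone_cx_iff_fan}(4) cannot be applied. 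Your sentence ``representability of $Y$ by a log DM stack forces $\mathcal{A}_{\mathcal{Y}}$ to be DM-like'' is precisely the content of essential surjectivity, and you assert it without proof. Even granting that $\mathcal{Y}$ is a finite cone space, ``parallel maps agree'' also needs each $N_\tau\to\mathbb{Z}^n$ to be injective. This does follow from the monomorphism property, but should be said.

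\paragraph{How the paper closes it.} The paper passes to a geometric fibre $X_s$ and argues as follows.
\begin{enumerate}
\item $X_s$ is a normal DM stack (it is log smooth over a point) with a torus action and a dense torus. By Gillam's definition it is therefore a toric DM stack with a stacky fan $\Delta$.
\item By equivariance, its log structure is torus-invariant. The characters $M\to\mathcal{O}_{T_s}^\times(T_s)$ must extend to $M_{X_s}^{\gp}(X_s)$, which forces the log structure to be the full toric boundary.
\item Strictness of $X_s\to\mathcal{A}_{\mathcal{X}}$ gives a map $\mathcal{A}_\Delta\to\mathcal{A}_{\mathcal{X}}$. This is a strict proper log \'etale monomorphism, hence open and closed. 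Since $\mathcal{A}_{\mathcal{X}}$ is connected, it is an isomorphism.
\end{enumerate}
So algebraicity enters through Gillam's classification applied to the fibre.

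\paragraph{Fixing it within your framework.} You would need a real argument that $\mathcal{Y}$ is covered by finitely many strictly convex cones. These should be the duals of the monoids $\overline{M}_{Y,y}$ at points of the log DM stack $Y$, for example obtained from Artin fans of charts as in the proof of Theorem \ref{thm:trop_cpt_open_subsets}.

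\paragraph{Minor point: no gluing needed.} The definition of a toric orbifold bundle only asks for \'etale-local product structure on the given global $Y$. It also asks that $T\to Y$ (the unit section followed by the action) be an open immersion with dense image, and this follows from your local description.
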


\begin{proof}
	Note that the unit section $S\to T$ composed with the inclusion
	$T\to X$ gives a lift of the unit section $S\to T_{\log}$ to
	$X$. Hence by the preceding lemma, we know that when $X$ is a
	toric orbifold bundle, then $X\to T_{\log}$ is a partial log compactification.
	If $X$ is complete, then $X\to S$ satisfies the valuative criterion for properness.
	Since $\mathbb{G}_{m,\log}$ satisfies the valuative criterion for properness,
	so does $T_{\log}\to S$. It follows that $X\to T_{\log}$ satisfies the valuative
	criterion for properness. Note that $X$ is of finite presentation
	since this is strict étale local on $S$ and toric varieties are of finite presentation. Moreover, the diagonal
	of $T_{\log}$ is representable by morphisms of finite presentation by Lemma
	\ref{lem:part_cptifications_are_nice}, so it follows that $X\to T_{\log}$ is proper,
	i.e., $X$ is a log compactification.

	Conversely, assume that $X\to T_{\log}$ is a log compactification such
	that $X$ is representable by a log DM stack. We obtain
	a morphism $T\to X$ by composing the lift $S\to X$ of the
	unit section with the group action
	$$T\times X\to X.$$
	Note that the composition $T\to X\to T_{\log}$ is the canonical inclusion
	$T\to T_{\log}$ and hence in particular a monomorphism. Moreover,
	$T\to X$ is the base-change of the lift of the zero-section
	$S\to [X/T]$ over $X$. Since both $S$ and $[X/T]$ are log étale
	over $S$, Lemma \ref{lem:let_when_both} shows that
	$S\to [X/T]$ is log étale, so by base-change
	$T\to X$ is log étale. Since $T$ is equipped with the trivial
	log structure, it follows that $T\to X$ is a strict étale
	monomorphism, hence an open immersion \cite[Tag 025G]{stacks-project}.

	Assume we are given a morphism $X_1\to X_2$ of toric orbifold bundles.
	Then the composition
	$$X_1\to X_2\to T_{\log}$$
	is a $T$-equivariant
	log étale monomorphism which preserves the embedding
	of $T$. Hence by the uniqueness in Lemma \ref{lem:exist_unique_map},
	it follows that $X_1\to X_2$ is a morphism over $T_{\log}$ showing
	that the functor is fully faithful.

	It remains to show that strict étale locally on $S$ we have
	$X\cong S\times X_{\Delta}$ for some complete toric DM stack $X_{\Delta}$.
	By Theorem \ref{thm:trop_corresp}, there exists an Artin fan
	$S\to \mathcal{A}_{S}$ such that there exists a tropical
	torus $\mathcal{T}\to \Sigma_{\mathcal{A}_S}$ and a tropical compactification
	$\mathcal{X}\to \mathcal{T}$ such that
	$$X\cong T_{\log}\times_{\mathcal{A}_{\mathcal{T}}}\mathcal{A}_{\mathcal{X}}.$$
	Since $S$ has trivial log structure, the Artin fan $\mathcal{A}_{S}$
	is a point modulo a finite group. Hence after replacing $S$
	with an étale cover, we may assume that $\mathcal{A}_S$ is a point
	with trivial log structure. In this case, we have that $\mathcal{T}\cong T_N^{\trop}$
	for some lattice $N$. Since $X$ is a log compactification, the map $\mathcal{X}\to T_N^{\trop}$
	is surjective by Lemma \ref{lem:proper_iff_surj}. Hence we need only show that $\mathcal{X}$ is
	isomorphic to a stacky fan. Let $s\in S$ be a geometric point. Note that since $X_s\to s$ is log smooth,
	the space $X_s$ is in particular normal. Hence the underlying DM stack of $X_s$ is a toric DM stack by \cite[Definition 3.10.1]{ATheoryOfStaGillam2015}.
	By equivariance, the log structure on $X_s$ must be torus-invariant, i.e., it is induced by a linear combination
	of irreducible torus-invariant divisors. In order for $T_s\to T_{\log,s}$ to extend to a morphism $X_s\to T_{\log,s}$
	we must have that the canonical map $M\to \mathcal{O}_{T_s}^{\times}(T_s)$, extends to a map $M\to M_{X_s}^{\gp}(X_s)$.
	Since the image of $M$
	inside the function field $K(X_s)$ of $X_s$ is the set of standard monomials of the toric orbifold, which cut out the full toric boundary, it follows
	that the log structure on $X_s$ is the log structure associated to the toric boundary, i.e., the canonical log structure on
	$X_s$. The morphism
	$$X_s\to X\to \mathcal{A}_{\mathcal{X}}$$
	is strict, so we have a factorization
	$X_s\to \mathcal{A}_{\Delta}\to \mathcal{A}_{\mathcal{X}}$ where $\Delta$ is the stacky fan (or lattice KM fan) associated
	to $X_s$. Note that $\mathcal{A}_{\Delta}\to \mathcal{A}_{\mathcal{T}}$ is proper by Lemma \ref{lem:proper_iff_surj}.
	Hence $\mathcal{A}_{\Delta}\to \mathcal{A}_{\mathcal{X}}$ is a strict proper log étale monomorphism. Since a strict log étale
	monomorphism is an open immersion by \cite[025G]{stacks-project}, it follows that $\mathcal{A}_{\Delta}\to \mathcal{A}_{\mathcal{X}}$
	is an isomorphism onto a connected component of $\mathcal{A}_{\mathcal{X}}$. But $\mathcal{A}_{\mathcal{X}}$ is connected, since it has
	a cover by the irreducible log DM stack $X$ and hence $\mathcal{A}_{\Delta}\to \mathcal{A}_{\mathcal{X}}$ is an isomorphism.
	It follows that $\mathcal{X}$ is a complete stacky fan, so
	$$X=T_{\log}\times_{\mathcal{A}_{\mathcal{T}}}\mathcal{A}_{\Delta}\cong S\times X_{\Delta}$$
	as required.
\end{proof}

\begin{definition}
	Let $X_1$ and $X_2$ be toric orbifold bundles. We say that $X_1$ and $X_2$ are birationally equivalent if there
	exists a diagram:
	$$\begin{tikzcd}
		&X_3\arrow[dr]\arrow[dl]&\\
		X_1&&X_2
	\end{tikzcd}$$
	of proper morphisms of toric orbifold bundles
	representable by log algebraic spaces.
\end{definition}

\begin{remark}
	It is clear from the definition that two complete orbifold bundles are
	birationally equivalent if and only if they are birationally
	equivalent as log compactifications of $T_{\log}$.
\end{remark}

Fix an Artin fan $\mathcal{A}_{\Sigma}$ for $S$ as in Proposition \ref{prop:A_S_exists} and a
family of tropical tori $\mathcal{T}\to \Sigma$ such that $T\cong S\times_{\mathcal{A}_{\Sigma}}\mathcal{A}_{\mathcal{T}}$.

\begin{corollary}
	\label{cor:main_thm_tor}
	There is a bijection between the set of birational equivalence classes of complete toric orbifold bundles
	and the set of minimal generalized tropical compactifications of $\mathcal{T}$. This bijection is given by sending
	an equivalence class to the tropicalization of its unique minimal object, where we allow generalized log
	compactifications in the birational equivalence class.
\end{corollary}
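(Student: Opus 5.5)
The plan is to follow the proof of Corollary \ref{cor:main_thm_jac} essentially verbatim, with Proposition \ref{prop:equiv_cat_tor_compact} replacing Corollary \ref{cor:equiv_cat_cptfd_jacs}. The log semiabelian variety is $X=T_{\log}$ with $G_X=T$. By Proposition \ref{prop:A_S_exists}, $[T_{\log}/T]\cong S\times_{\mathcal{A}_{\Sigma}}\mathcal{A}_{\mathcal{T}}$ for the fixed family of tropical tori $\mathcal{T}\to\Sigma$.

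\textbf{Step 1: translate to log compactifications.} By Proposition \ref{prop:equiv_cat_tor_compact}, complete toric orbifold bundles are the same as log compactifications of $T\to S$ representable by log DM stacks. The remark after the definition says birational equivalence of toric orbifold bundles coincides with birational equivalence as log compactifications of $T_{\log}$. Here I must check that the middle term $X_3$ of such a correspondence is again a toric orbifold bundle. This holds because $X_3\to X_1$ is representable by log algebraic spaces and $X_1$ is a log DM stack, so Proposition \ref{prop:equiv_cat_tor_compact} applies to $X_3$.

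\textbf{Step 2: pass to generalized compactifications.} Corollary \ref{cor:bir_can_assume_repr} shows that two such objects are birationally equivalent as generalized log compactifications if and only if they are birationally equivalent via a roof whose apex is a log DM stack. Lemma \ref{lem:always_bir_to_DM} shows that every birational class of generalized log compactifications contains a DM representative. This representative is a log compactification rather than merely a partial one, because properness over $T_{\log}$ is preserved under proper maps. Hence the map from classes of complete toric orbifold bundles to classes of generalized log compactifications with proper $Y'$ is a bijection.

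\textbf{Step 3: tropicalize.} By Theorem \ref{thm:trop_corresp}(2), each log compactification is $T_{\log}\times_{\mathcal{A}_{\mathcal{T}}}\mathcal{A}_{\mathcal{Y}}$ for a unique tropical compactification $\mathcal{Y}\to\mathcal{T}$. Since $S\to\mathcal{A}_\Sigma$ is a strict smooth cover, Proposition \ref{prop:bir_iff_subdiv} turns proper morphisms representable by log algebraic spaces into subdivisions. Birational classes therefore match tropical birational classes.

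\textbf{Step 4: identify minimal objects.} Theorem \ref{thm:pre_part_comp_descr} gives a unique minimal element $M_{\mathcal{Y}}$ in each tropical class. Corollary \ref{cor:unique_minimal_combinat} identifies $T_{\log}\times_{\mathcal{A}_{\mathcal{T}}}\mathcal{A}_{M_{\mathcal{Y}}}$ as the unique minimal object of the geometric class, and by Theorem \ref{thm:left_inv_geom} its tropicalization is $M_{\mathcal{Y}}$.

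Lemma \ref{lem:cpt_bir_inv} and Proposition \ref{prop:complete_iff_bir_to_cpt} show that the minimal objects arising this way are exactly the minimal generalized tropical compactifications. Injectivity comes from uniqueness of the minimal element. Surjectivity comes from the definition of a generalized tropical compactification: it has a subdivision that is a tropical compactification, which realizes to a log compactification and then, by Lemma \ref{lem:always_bir_to_DM}, to a DM one, i.e.\ a complete toric orbifold bundle.

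The main obstacle is the bookkeeping in Steps 1–2, namely that the DM replacement and the roof apices stay inside the category of toric orbifold bundles. I would handle this by always routing through Proposition \ref{prop:equiv_cat_tor_compact}, after confirming that its hypotheses are exactly "log compactification representable by a log DM stack."
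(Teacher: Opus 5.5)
Your proposal is correct and follows the paper's proof essentially step for step: the paper likewise uses Corollary \ref{cor:bir_can_assume_repr} and Lemma \ref{lem:cpt_bir_inv} to identify the two notions of birational equivalence, Lemma \ref{lem:always_bir_to_DM} with Proposition \ref{prop:equiv_cat_tor_compact} to place a complete toric orbifold bundle in every class, and Theorem \ref{thm:trop_corresp} with Corollary \ref{cor:unique_minimal_combinat} to match classes with minimal generalized tropical compactifications, deferring the remaining bookkeeping to the proof of Corollary \ref{cor:main_thm_jac}. Your extra check that a roof apex mapping representably to a log DM stack is itself log DM is a valid shortcut; the paper handles the same point through Corollary \ref{cor:bir_can_assume_repr}.
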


\begin{proof}
	By Corollary \ref{cor:bir_can_assume_repr} and Lemma \ref{lem:cpt_bir_inv},
	two complete toric orbifold bundles are birationally equivalent if and
	only if they are birationally equivalent as generalized log compactifications. Moreover, by Lemma \ref{lem:always_bir_to_DM} and Proposition \ref{prop:equiv_cat_tor_compact},
	every birational equivalence class of generalized log compactifications contains a complete toric orbifold bundle.
	Finally, by Theorem \ref{thm:trop_corresp} and Corollary \ref{cor:unique_minimal_combinat}, there is a bijection between the set of birational equivalence
	classes of generalized log compactifications and the set of minimal generalized tropical compactifications of $\mathcal{T}$
	given by sending an equivalence class to the tropicalization of its unique minimal object. See also the proof of Corollary \ref{cor:main_thm_jac}
	for more details.
\end{proof}

The above result can be made more explicit in the case when $\mathcal{A}_{\Sigma}$ is a point, or, equivalently,
when $T\cong S\times \mathbb{G}_{m}^{n}$ for some $n\ge 0$.

\begin{theorem}
	\label{thm:combinat_class_toric_vars}
	Let $S$ be a smooth scheme with trivial log structure. Then there is a natural bijection between the set of birational
	equivalence classes of complete toric orbifold
	bundles compactifying $S\times\mathbb{G}_{m}^{n}$ and the set of subsets $U\subseteq \mathbb{R}^{n}$ of the form
	$$U=\bigcup_{\sigma\in \Delta}^{}\sigma\cap N_{\sigma},$$
	where $\Delta$ is some complete stacky fan in $\mathbb{R}^{n}$ and $N_{\sigma}$ denotes the finite-index sublattice of $\mathbb{Z}^{n}\cap \Span(\sigma)$
	in this stacky fan.
\end{theorem}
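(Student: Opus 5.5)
This follows by specializing Corollary \ref{cor:main_thm_tor} to $T\cong S\times\mathbb{G}_m^n$ and then making the minimal objects explicit with Corollary \ref{cor:combinat_descr_min}. Since $T$ is split and $S$ has trivial log structure, we take the Artin fan $\mathcal{A}_\Sigma$ to be a point with trivial log structure. Then $\Sigma$ is the zero cone $\sigma=0$, and the family of tropical tori is $\mathcal{T}=T^{\trop}_{\mathbb{Z}^n}$, i.e.\ $T_{\log}\cong S\times\mathcal{A}_{T^{\trop}_{\mathbb{Z}^n}}$. Corollary \ref{cor:main_thm_tor} identifies birational classes of complete toric orbifold bundles with minimal generalized tropical compactifications of $T^{\trop}_{\mathbb{Z}^n}$. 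Here a tropical compactification is a generalized one birational to a tropical compactification, via Lemma \ref{lem:cpt_bir_inv}. It remains to identify this set with the subsets $U$ in the statement.

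We apply the formalism of Section \ref{sec:combinat_problem} with the abelian part trivial, $M=0$, and $M'=\mathbb{Z}^n$ the character lattice of the torus part. In this case $(N_\sigma\times N)^{(M)}$ is a point, the $M$-action is trivial, and conditions (4)--(6) of Definition \ref{def:stacky_fan} are vacuous. So a stacky fan in that sense is exactly a finite stacky fan in $\mathbb{R}^n$ in the usual sense: cones $\tau\subseteq\mathbb{R}^n$ with finite-index sublattices $N_\tau\subseteq\mathbb{Z}^n\cap\Span(\tau)$, compatible on faces. Condition (7) just says the zero cone is included, which always holds. Completeness in Definition \ref{def:stacky_fan} means the cones cover $\mathbb{R}^n$ (rationally, hence really, since the cones are rational). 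A minimal fan in Definition \ref{def:minimal_fan} is then a set $U=\bigcup_{\sigma\in\Delta}\sigma\cap N_\sigma$, and it is complete exactly when a positive multiple of every lattice point lies in $U$.

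Corollary \ref{cor:combinat_descr_min} gives the bijection between minimal generalized partial tropical compactifications and minimal fans: part (1) gives surjectivity via $S\mapsto\Sigma_S$, and part (2) gives injectivity. Proposition \ref{prop:complete_iff_bir_to_cpt} shows that $\Sigma_S$ is birational to a tropical compactification iff $S$ is complete. Finally we check that complete minimal fans are exactly the sets $U$ arising from complete stacky fans $\Delta$. If $\Delta$ is complete then $U$ is complete, by the Example following Definition \ref{def:minimal_fan}. Conversely, if $U=\bigcup_{\sigma\in\Delta}\sigma\cap N_\sigma$ is complete, then the cones of $\Delta$ contain a positive multiple of every lattice point, so they cover $\mathbb{Q}^n$. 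Their union is a finite union of closed cones, hence closed, so it covers $\mathbb{R}^n$ and $\Delta$ is complete. Composing these bijections gives the theorem. Naturality is inherited from the functorial construction $Y\mapsto M_Y$.

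The main point needing care is the first reduction: that the Artin fan can be taken to be a point, so that the "minimal generalized tropical compactifications of $\mathcal{T}$" of Corollary \ref{cor:main_thm_tor} are literally minimal fans over $\sigma=0$ rather than compatible families over a stacky base. This uses the triviality of $T$ and of the log structure on $S$, as in Step 2 of the proof of Proposition \ref{prop:A_S_exists}. The remaining steps are unwinding definitions.
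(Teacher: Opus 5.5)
Your proposal is correct and follows the paper's route: the paper's proof is just the one-line combination of Corollary \ref{cor:main_thm_tor} with Corollary \ref{cor:combinat_descr_min}, and you have unwound it. That is, you take a trivial Artin fan, set $M=0$ and $M'=\mathbb{Z}^n$, use Proposition \ref{prop:complete_iff_bir_to_cpt}, and check that any $\Delta$ representing a complete $U$ is itself complete.

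Two small slips do not affect the argument. First, the identification should read $[T_{\log}/T]\cong S\times\mathcal{A}_{T^{\trop}_{\mathbb{Z}^n}}$ rather than $T_{\log}\cong\cdots$. Second, condition (6) of Definition \ref{def:stacky_fan} is not vacuous when $M=0$: it is precisely what makes $\Delta$ finite.
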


\begin{proof}
	This follows from the preceding corollary and Corollary \ref{cor:combinat_descr_min}.
\end{proof}

In the special case $S=\Spec(k)$ we obtain a generalization of the main result from \cite{Schmitt}:

\begin{corollary}
	\label{cor:lattice_colorings}
	There is a bijection between the set of birational equivalence
	classes of proper toric DM stacks with lattice $N$
	and the set of sublattice colorings (see \cite[Definition 4]{Schmitt}).
\end{corollary}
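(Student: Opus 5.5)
The plan is to specialize Theorem \ref{thm:combinat_class_toric_vars} to $S=\Spec(k)$ and then match the resulting combinatorial sets with Schmitt's sublattice colorings. A proper toric DM stack with lattice $N$ is a complete toric orbifold bundle over a point compactifying $\mathbb{G}_m^n$ with $N\cong\mathbb{Z}^n$. We first check that the toric birational equivalence used by Schmitt (toric, proper, representable, birational correspondences) coincides with the notion of birational equivalence of toric orbifold bundles defined above. For this we use Proposition \ref{prop:equiv_cat_tor_compact}, which identifies toric morphisms with morphisms over $T_{\log}$, together with the remark that birationality is automatic for morphisms compatible with the torus embedding (Lemma \ref{lem:bir_is_bir}). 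Theorem \ref{thm:combinat_class_toric_vars} then gives a bijection between birational classes and subsets
$$U=\bigcup_{\sigma\in\Delta}\sigma\cap N_\sigma\subseteq N,$$
where $\Delta$ ranges over complete stacky fans.

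Next we describe a sublattice coloring directly in terms of $U$. For $v\in N\otimes\mathbb{R}$, let $L_v$ be the minimal subspace $\Span(\sigma)$ over cones $\sigma\in\Delta$ whose relative interior contains $v$. We would show that $U\cap L$, restricted to the relevant relatively open region, is the trace of a finite-index sublattice, and that this "coloring" is independent of the chosen $\Delta$. The key point is that two stacky fans with the same $U$ have a common refinement: both are refined by their common refinement equipped with the induced sublattices. This refinement is a subdivision in the sense of Proposition \ref{prop:bir_iff_subdiv}, and Corollary \ref{cor:combinat_descr_min}(2) already guarantees that $U$ determines the minimal object. This yields a well-defined map from the sets $U$ to sublattice colorings in Schmitt's sense, namely an assignment of a finite-index sublattice to the pieces of a decomposition of $N_\mathbb{R}$ into linear-span strata, with the appropriate compatibility.

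Finally we prove this map is bijective.

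\textbf{Injectivity.} $U$ is recovered from the coloring as the union over strata of (stratum) $\cap$ (its sublattice).

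\textbf{Surjectivity.} Given a sublattice coloring, we realize it by a complete stacky fan. We take a complete fan refining the coloring's decomposition. This is possible since Schmitt's colorings are supported on rational polyhedral pieces, and common refinements of complete fans exist. On each cone we assign the sublattice dictated by the color of its relative interior, intersected with $\Span$.

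The main obstacle is verifying the face-compatibility condition (3) of a stacky fan, $N_{\tau_2}\cap\Span\tau_1=N_{\tau_1}$, in the surjectivity step. We would reduce this to exactly the compatibility axiom in \cite[Definition 4]{Schmitt}, which relates the sublattice on a smaller stratum to that on larger strata containing it in their closure. If that axiom is stated as an inclusion rather than an equality, we would instead pass to the minimal fan $S$ of Definition \ref{def:minimal_fan}, which only requires the union to be of stacky-fan type, and check that Schmitt's axiom is precisely what is needed for such a union.
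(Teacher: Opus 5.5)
Your outline (specialize Theorem \ref{thm:combinat_class_toric_vars} to $S=\Spec(k)$, then identify the sets $U$ with sublattice colorings) matches the paper. The identification itself, which is the entire content of the corollary, is not established, and the first problem is that the object you build is not an invariant of $U$. In the paper's usage, a coloring attaches a finite-index sublattice of $N$ only to full-dimensional regions $C_{N'}$, each a finite union of full-dimensional cones. Your decomposition into ``linear-span strata'' is read off from the chosen $\Delta$, and it changes under subdivisions that leave $U$ unchanged. For example, subdivide a maximal cone by an interior ray, keeping every lattice equal to $N$. Then $U=N$ in both cases, but for a point $v$ on the new ray, $L_v$ drops from $N_{\mathbb{R}}$ to a line. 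So the claimed independence of $\Delta$ fails for the object you describe. Only the full-dimensional part, taken as unions of cones, can be independent of $\Delta$.

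The missing idea is the generation lemma the paper proves just before the corollary. If a rational cone $\sigma$ spans $N_{\mathbb{R}}$ and $N'\subseteq N$ has finite index, then $(\sigma\cap N')^{\gp}=N'$: write $x=(ky+x)-ky$ with $y\in N'$ in the interior of $\sigma$ and $k\gg 0$.

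This lemma does three jobs in your argument.
\begin{itemize}
\item \emph{Well-definedness.} It gives $N_\sigma=(\sigma\cap U)^{\gp}$ for every full-dimensional $\sigma\in\Delta$. This lets the paper define $C_{N'}$ intrinsically, as the union of all full-dimensional rational cones $\sigma$ with $\sigma\cap N'=\sigma\cap U$. It then proves $C_{N'}=\bigcup_{\sigma\in\Delta,\,N_\sigma=N'}\sigma$ by applying the lemma to a full-dimensional overlap $\sigma\cap\sigma'$. No choice of $\Delta$ and no independence argument is needed.
\item \emph{Common refinements.} Your unproved claim that two stacky fans with the same $U$ have a common refinement ``with induced sublattices'' rests on the same lemma, applied inside $\Span(\sigma_1\cap\sigma_2)$. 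The two lattices agree there only because both $N_{\sigma_i}\cap\Span(\sigma_1\cap\sigma_2)$ are generated by $U\cap\sigma_1\cap\sigma_2$.
\item \emph{Your ``main obstacle''.} Applied inside $\Span(\tau)$, the lemma shows that condition (3) along a common face $\tau$ of cones colored $N_1$ and $N_2$ is equivalent to $\tau\cap N_1=\tau\cap N_2$. So surjectivity reduces to the two colors having the same lattice points on overlaps of regions. It no longer depends on guessing the exact wording of Schmitt's compatibility axiom.
\end{itemize}
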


For the proof, we need the following lemma:

\begin{lemma}
	Let $\sigma\subseteq N_{\mathbb{R}}$ be an RPC for some lattice $N$. Assume
	that $\Span(\sigma)=N_{\mathbb{R}}$. Then $(\sigma\cap N)^{\gp}\cong N$.
\end{lemma}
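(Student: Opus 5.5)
The plan is to show that the inclusion $\iota\colon(\sigma\cap N)^{\gp}\hookrightarrow N$ is surjective. The group $(\sigma\cap N)^{\gp}$ is the subgroup of $N$ generated by $\sigma\cap N$, since $\sigma\cap N$ is a submonoid of the torsion-free group $N$. Its groupification therefore embeds into $N$, and it suffices to write every $n\in N$ as a difference of two lattice points of $\sigma$.

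The key step uses the hypothesis $\Span(\sigma)=N_{\mathbb{R}}$. Because $\sigma$ is a full-dimensional convex cone, it has nonempty interior in $N_{\mathbb{R}}$. Since $\sigma$ is rational, its interior contains a lattice point: pick any interior point, and by openness a rational point nearby is also interior; clearing denominators gives some $v\in N\cap\sigma^{\circ}$.

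Next, for any $n\in N$, consider $n+kv$ with $k\in\mathbb{N}$. Write $n+kv=k(v+n/k)$. For $k$ large, $v+n/k$ lies in a small ball around $v$ contained in $\sigma$, so $v+n/k\in\sigma$, and hence $n+kv\in\sigma$ because $\sigma$ is a cone. Also $n+kv\in N$. Then
\[ n=(n+kv)-kv \]
with both $n+kv$ and $kv$ in $\sigma\cap N$. Hence $n\in(\sigma\cap N)^{\gp}$, and the inclusion is an isomorphism.

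There is no real obstacle here. The only point needing care is the existence of an interior lattice point, which follows from full-dimensionality together with density of rational points, as described above.
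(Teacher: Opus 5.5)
Your proposal is correct and follows essentially the same route as the paper: pick a lattice point $v$ in the interior of $\sigma$ and write $n=(n+kv)-kv$ for $k$ large. The only cosmetic difference is that the paper checks $n+kv\in\sigma$ using the finitely many ray generators $\phi_i$ of $\sigma^{\vee}$ (choosing $k$ with $\phi_i(kv)+\phi_i(n)\ge 0$ for all $i$), while you use an open ball around $v$; the two arguments are equivalent.
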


\begin{proof}
	Since $\sigma\cap N\subseteq N$, we have $(\sigma\cap N)^{\gp}\subseteq N$.
	It suffices to show that any $x\in N$ can be written as a difference
	of elements in $\sigma\cap N$. Let $y\in \sigma\cap N$ be in the
	relative interior of $\sigma$. Hence for every $\phi\in \sigma^{\vee}$
	we have $\phi(y)>0$. Let $\{\phi_1,\dots,\phi_r\}$ be the primitive generators
	of the rays of $\sigma^{\vee}$. Then $\sigma=\bigcap_{i=1}^{r}\{x\in N_{\mathbb{R}}|\phi_i(x)\ge 0\}$.
	Pick $n\in \mathbb{N}$ large enough such that $\phi_i(ny)+\phi_i(x)\ge 0$ for all $i$.
	Then $ny+x\in \sigma$ and hence $x=(ny+x)-ny\in (\sigma\cap N)^{\gp}$.
\end{proof}

\begin{proof}[Proof of Corollary \ref{cor:lattice_colorings}]
	By Theorem \ref{thm:combinat_class_toric_vars}
	we get a bijection between the set of birational equivalence classes of toric
	DM stacks and the set of subsets
	$S\subseteq N_{\mathbb{R}}$ of the form
	$$S=\bigcup_{\sigma\in \Delta}^{}\sigma\cap N_{\sigma},$$
	for some complete stacky fan $\Delta$. We claim that the
	datum of $S$ is equivalent to the datum of a sublattice
	coloring of $N$. Indeed, given a sublattice coloring
	$(C_{N'})_{N'\in \mathcal{N}}$, we obtain a set as above
	by letting:
	$$S:=\bigcup_{N'\in \mathcal{N}}^{}C_{N'}\cap N'.$$
	Conversely, given $S$, define for each finite-index
	sublattice $N'\subseteq N$ a set $C_{N'}$ by letting
	$C_{N'}$ be the union of all maximal dimensional rational
	polyhedral cones $\sigma$ in $N_{\mathbb{R}}$ such that
	$\sigma\cap N'=\sigma\cap S$. We claim that this is a
	sublattice coloring. For this, observe that
	$$C_{N'}\supseteq \bigcup_{\sigma\in \Delta:N_{\sigma}=N'}\sigma.$$
	It suffices to show that these two sets are equal.
	Let $\sigma\subseteq C_{N'}$ be a maximal dimensional RPC
	such that $\sigma\cap N'=\sigma\cap S$. Assume that
	$\sigma$ is not contained in the locus $\bigcup_{\sigma'\in \Delta:N_{\sigma'}=N'}\sigma'$.
	Hence there exists $\sigma'\in \Delta$ maximal such that
	$\sigma\cap \sigma'$ intersects the relative interior of $\sigma$
	and $N_{\sigma'}\neq N'$. In particular, $\sigma\cap \sigma'$ is maximal dimensional,
	i.e., $\Span(\sigma\cap \sigma')=N_{\mathbb{R}}$. By the preceding lemma, we have
	$$N_{\sigma'}=(\sigma\cap \sigma'\cap N_{\sigma'})^{\gp}=(\sigma\cap \sigma'\cap S)^{\gp}=(\sigma\cap \sigma'\cap N')^{\gp}=N',$$
	a contradiction. Hence equality holds, showing that
	$C_{N'}$ is a union of finitely many rational polyhedral cones.
	It is clear that there are only finitely many non-empty
	$C_{N'}$ and that the set of non-empty $C_{N'}$'s forms
	a sublattice coloring. The two constructions above are inverse
	to one another, showing the result.
\end{proof}

\bibliographystyle{alpha}
\bibliography{bibliography}
\end{document}